\newtheorem{remark}{Remark}[section]
\newtheorem{prelimdefinition}{Preliminary Definition}[section]
\newtheorem{definition}{Definition}[section]
\newtheorem{theorem}{Theorem}[section]
\newtheorem{proposition}{Proposition}[section]
\newtheorem{corollary}{Corollary}[theorem]
\newtheorem{lemma}[theorem]{Lemma}
\title{Global, Non-Scattering Solutions to the Energy Critical Yang-Mills Problem}
\date{}
\author{Mohandas Pillai}
\numberwithin{equation}{section}
\begin{document}
\maketitle
\begin{abstract} We consider the Yang-Mills problem on $\mathbb{R}^{1+4}$ with gauge group $SO(4)$. In an appropriate equivariant reduction, this Yang-Mills problem reduces to a single scalar semilinear wave equation. This semilinear equation admits a one-parameter family of solitons, each of which is a re-scaling of a fixed solution. In this work, we construct a class of solutions, each of which consists of a soliton whose length scale is asymptotically constant, coupled to large radiation, plus corrections which slowly decay to zero in the energy norm. Our class of solutions includes ones for which the radiation component is only ``logarithmically'' better than energy class. As such, the solutions are not constructed by apriori assuming the length scale to be constant. Instead, we use an approach similar to a previous work of the author regarding wave maps. In the setup of this work, the soliton length scale asymptoting to a constant is a necessary condition for the radiation profile to have finite energy. An interesting point of our construction is that, for each radiation profile, there exist one-parameter families of solutions consisting of the radiation profile coupled to a soliton, which has any asymptotic value of the length scale.
\end{abstract}
\tableofcontents
\section{Introduction}
We consider the Yang-Mills equation in $4+1$ dimensions, with gauge group $SO(4)$. This equation can be described by a gauge field, $A$, which is a $\text{Lie}(SO(4))$-valued one-form on $\mathbb{R}^{4+1}$. We write $A=A_{\mu} dx^{\mu}$, where, for each $\mu$, $A_{\mu}$ is a $\text{Lie}(SO(4))$-valued function, defined on $\mathbb{R}^{4+1}$. Defining $F$, a $\text{Lie}(SO(4))$-valued two-form on $\mathbb{R}^{4+1}$ by
\begin{equation}\label{Fintroeqn}F=\frac{1}{2}F_{\mu\nu} dx^{\mu} \wedge dx^{\nu}, \quad F_{\mu\nu} = \partial_{\mu}A_{\nu}-\partial_{\nu}A_{\mu}+[A_{\mu},A_{\nu}]\end{equation}
\noeqref{Fintroeqn}
the Yang-Mills equation can be written as 
\begin{equation}\label{fulleqns}-\partial_{t}F_{0\nu}-[A_{0},F_{0\nu}] + \sum_{\mu=1}^{4}\left(\partial_{\mu} F_{\mu \nu} + [A_{\mu},F_{\mu\nu}]\right)=0, \quad \text{ for } \nu=0,1,2,3,4\end{equation}
where $0$ on the right-hand is the zero in $\text{Lie}(SO(4))$. The Yang-Mills equation has the conserved energy 
\begin{equation}E_{\text{Yang-Mills}} = -\frac{1}{48 \pi^{2}} \int_{\mathbb{R}^{4}} \text{Tr}\left(F_{\mu\nu}(t,x) F_{\mu\nu}(t,x)\right) dx\end{equation}
(where repeated indices are summed over). The equation is invariant under the scaling symmetry $A_{\mu}(t,x) \rightarrow \lambda A_{\mu}(\lambda t, \lambda x)$. The components of $F$ transform under this symmetry as $F_{\mu\nu}(t,x) \rightarrow \lambda^{2} F_{\mu\nu}(\lambda t,\lambda x)$, which means that the energy $E_{\text{Yang-Mills}}$ is invariant under the scaling symmetry, because the equation is considered in 4 spatial dimensions. The Yang-Mills equation is also invariant under gauge transformations, which are transformations of $A$ of the form $A_{\mu} \rightarrow g A_{\mu} g^{-1} - \partial_{\mu}g g^{-1}$ where $g: \mathbb{R}^{1+4} \rightarrow SO(4)$. \\
\\
Small energy global well posedness for the $(4+1)$ dimensional Yang-Mills problem was established by Krieger and Tataru, \cite{kt}. In addition, the works of Tataru and Oh, \cite{tatoh0}, \cite{tatoh1}, \cite{tatoh2}, \cite{tatoh3}, \cite{tatoh4}, established a threshold theorem and dichotomy theorem for this problem, with any compact, non-abelian gauge group.

We make the equivariant ansatz (see also \cite{rr}, \cite{kstym})
\begin{equation}\label{equivgaugefield}A_{\mu}^{i,j}(t,x) = \left(\delta^{i}_{\mu}x^{j}-\delta^{j}_{\mu}x^{i}\right)\left(\frac{u(t,|x|)-1}{|x|^{2}}\right), \quad 0 \leq \mu \leq 4, \quad 1 \leq i,j \leq 4.\end{equation}
Note that $A_{0}(t,x)=0$ and $\sum_{k=1}^{4} x_{k} A_{k}(t,x)=0$. In particular, a general gauge field $A$ can not be brought into the form in \eqref{equivgaugefield} by performing a gauge transformation. 
With the equivariant ansatz, the Yang-Mills equation, \eqref{fulleqns}, reduces to
\begin{equation}\label{ym}-\partial_{tt}u(t,r)+\partial_{rr}u(t,r)+\frac{1}{r}\partial_{r}u(t,r)+\frac{2u(t,r)(1-u(t,r)^{2})}{r^{2}}=0, \quad (t,r) \in \mathbb{R} \times (0,\infty).\end{equation}
The energy $E_{\text{Yang-Mills}}$ reduces to the following energy, which is conserved by \eqref{ym}
\begin{equation}\label{eym} E_{YM}(u,\partial_{t}u) = \frac{1}{2} \int_{0}^{\infty} \left(\left(\partial_{t}u\right)^{2}+\left(\partial_{r}u\right)^{2}+\frac{(1-u^{2})^{2}}{r^{2}}\right) r dr.\end{equation}
\noeqref{eym}
The equation \eqref{ym} admits a soliton solution, namely $u(t,r) = Q_{1}(r) = \frac{1-r^{2}}{1+r^{2}}$. In addition, for any $\lambda>0$, $Q_{\lambda}(r) = Q_{1}(r\lambda)$ is a solution, and also a minimizer of $E_{YM}(u,\partial_{t}u)$ within a class of functions with appropriate boundary conditions. We will study perturbations of $Q_{\frac{1}{\lambda(t)}}$, and it will turn out that the ``main" component of such perturbations will involve solutions to the following linear wave equation
\begin{equation}\label{4poteqn}-\partial_{tt}u+\partial_{rr}u+\frac{1}{r}\partial_{r}u-\frac{4}{r^{2}} u =0.\end{equation}
The formally conserved energy for this equation is
\begin{equation}\label{linwaveeqnenergyintro}E(u,\partial_{t}u) = \frac{1}{2} \int_{0}^{\infty} \left((\partial_{t}u)^{2}+(\partial_{r}u)^{2}+\frac{4 u^{2}}{r^{2}}\right) r dr\end{equation}
\noeqref{linwaveeqnenergyintro}
Our goal in this work is to construct global, non-scattering solutions to \eqref{ym}. More precisely, our goal is to construct solutions to \eqref{ym} which can be decomposed as follows.
\begin{equation}\label{goalsoln} u(t,r) = Q_{\frac{1}{\lambda(t)}}(r) + v_{1}(t,r) + v_{e}(t,r)\end{equation} 
where $v_{1}$ represents radiation of the soliton, and solves \eqref{4poteqn}. On the other hand, the function $v_{e}$ is a correction such that $E(v_{e},\partial_{t}v_{e}) \rightarrow 0, \quad t \rightarrow \infty$.

For the 1-equivariant, critical wave maps equation with $\mathbb{S}^{2}$ target, global non-scattering solutions with topological degree 0 or 1, and energy in an appropriate range were classified in \cite{ckls1}, \cite{ckls}. As remarked in Appendix A of \cite{ckls1}, and remark 4 of \cite{ckls}, the methods used in this classification result also apply to \eqref{ym}. Our procedure to construct solutions of the form \eqref{goalsoln} to \eqref{ym} is outlined in a self-contained manner in section 3, but we remark that it is overall similar to that used by the author in \cite{wm}. In particular, our solutions are described as in \eqref{goalsoln}, and our procedure to construct these solutions will be to find a precise relation between the radiation and the dynamics of $\lambda(t)$. The main difference in the technical steps of this work arise from the fact that the initial data for the radiation considered in this work belong to a class of functions which can have much worse singularities at low frequencies, compared to the data for the radiation considered in \cite{wm}. This gives rise to new technical problems not encountered in \cite{wm}. To describe our main result, we define the following set of functions.
\begin{definition}\label{fdef}
For $b>\frac{2}{3}$, let $F_{b}$ denote the set of functions $f$ such that there exists $M>50$, and $C_{f,k}>0$, such that
\begin{equation}\label{fineqindef}f \in C^{\infty}([M,\infty)), \quad |f^{(k)}(t)| \leq \frac{C_{f,k}}{t^{k} \log^{b}(t)}, \text{ for }t \geq M \text{ and }k \geq 0.\end{equation}
\end{definition}
\begin{remark} We define the set $F_{b}$ only for $b>\frac{2}{3}$ because this is the range of the parameter $b$ for which our methods work. Restricting to $b>\frac{1}{2}$ is natural from the point of view of the energy of the radiation component of our solutions, see the discussion surrounding \eqref{v11withoutstr}. The further restriction $b>\frac{2}{3}$ is required for sufficient accuracy of our ansatz.\end{remark}
The class of radiation profiles of our solutions can be labeled by $F_{b}$ in the following way.  For $f \in F_{b}$, we have
\begin{equation}\label{v11eqnintro1}\widehat{v_{1,1}}(\xi) = \frac{8}{3 \pi \xi^{2}} \int_{0}^{\infty} \frac{(\psi \cdot f)'(t)}{t} \sin(t\xi) dt\end{equation}
\noeqref{v11eqnintro1}
where $\psi$ is an unimportant cutoff function defined in \eqref{psidefinition}, $\widehat{v_{1,1}}$ denotes the Hankel transform of order 2 of $v_{1,1}$, and the radiation profile $v_{1}$ is given by
\begin{equation}\label{v1eqnintro1}\begin{cases} -\partial_{tt}v_{1}+\partial_{rr}v_{1}+\frac{1}{r}\partial_{r}v_{1}-\frac{4 v_{1}}{r^{2}} =0\\
v_{1}(0)=0\\
\partial_{t}v_{1}(0) = v_{1,1}\end{cases}.\end{equation}
\noeqref{v1eqnintro1}
In order to describe the leading order behavior of $\lambda(t)$, we introduce the following family of functions. 
\begin{definition}For $b>\frac{2}{3}$, let $\Lambda_{b}$ denote the set of functions $\lambda_{0}$ for which there exists $T_{\lambda_{0}}>50$ such that $\lambda_{0} \in C^{\infty}([T_{\lambda_{0}},\infty))$, and the following two conditions hold: Firstly, there exists $f \in F_{b}$ such that 
\begin{equation}\label{lambda0str}\frac{\lambda_{0}''(t)}{\lambda_{0}(t)} = \frac{f'(t)}{t}, \quad t \geq T_{\lambda_{0}}.\end{equation} 
Secondly, 
\begin{equation}\label{lambda0req}\lambda_{0}(t)>0, \quad \frac{|\lambda_{0}'(t)|}{\lambda_{0}(t)} \leq \frac{C}{t \log^{b}(t)}, \quad t \geq T_{\lambda_{0}}.\end{equation}
\end{definition}
We remark that, given any $f \in F_{b}$, there exists $T_{\lambda_{0}}>50$, and a one-parameter family of $\lambda_{0}\in \Lambda_{b}$ satisfying \eqref{lambda0str} and \eqref{lambda0req}, see Remark \ref{lambda0fromfremark}.

The condition  \eqref{lambda0str}, rather than simply the symbol type estimates, is imposed so as to guarantee that the radiation profile of our solution has finite energy. Once we pick $f \in F_{b}$ and $\lambda_{0} \in \Lambda_{b}$ satisfying \eqref{lambda0str},  we define $\widehat{v_{1,1}}$ by \eqref{v11eqnintro1}. (Without the structural condition, \eqref{lambda0str}, we would have had
\begin{equation}\label{v11withoutstr} \widehat{v_{1,1}}(\xi) = \frac{8}{3\pi \xi^{2}} \int_{0}^{\infty} h(t) \sin(t\xi) dt\end{equation}
where $h \in C^{\infty}([0,\infty))$ is some extension of the function $\frac{\lambda_{0}''(t)}{\lambda_{0}(t)}$ ). The details of how \eqref{v11eqnintro1} leads to the radiation profile of our solution having finite energy are given just below \eqref{v11int}.

Note that the definition of $\Lambda_{b}$ implies that any $\lambda_{0} \in \Lambda_{b}$ satisfies $\lambda_{0}(t) \rightarrow \lambda_{1} >0$ as $t \rightarrow \infty$, despite the fact that some $\lambda_{0}\in \Lambda_{b}$ (for $b \leq 1$) satisfy 
\begin{equation}\label{absintegralintro1}\int_{t}^{\infty} \int_{x}^{\infty} \frac{|\lambda_{0}''(s)|}{\lambda_{0}(s)} ds dx = \infty.\end{equation}
\noeqref{absintegralintro1}
To see this, we write
\begin{equation}\label{feqnintro1}f(t) = - \lim_{M \rightarrow \infty} \int_{t}^{M} \frac{s \lambda_{0}''(s)}{\lambda_{0}(s)} ds = -\lim_{M \rightarrow \infty} \int_{t}^{M} \frac{\frac{d}{ds}\left(\lambda_{0}'(s) s -\lambda_{0}(s)\right)}{\lambda_{0}(s)} ds, \quad t > T_{\lambda_{0}}.\end{equation}
\noeqref{feqnintro1}
Integrating by parts and using \eqref{lambda0req}, and the fact that $b>\frac{2}{3}$, we see that
\begin{equation}\label{lambda0constintro1}\lim_{M \rightarrow \infty}\log(\lambda_{0}(M)) < \infty.\end{equation}
\noeqref{lambda0constintro1} 
Despite the fact that any $\lambda_{0} \in \Lambda_{b}$ is asymptotically constant, we do not directly use this fact in any quantitative estimates of the terms in our ansatz, and their associated error terms. For estimating the radiation profile, we use $\frac{\lambda_{0}''(t)}{\lambda_{0}(t)} = \frac{f'(t)}{t}$, but for the entirety of the rest of the argument, we only use the symbol-type estimates on $\frac{\lambda_{0}'(t)}{\lambda_{0}(t)}$ (which can be satisfied by non-asymptotically constant $\lambda_{0}$).\\
\\
Our main result is the following theorem.
\begin{theorem}\label{mainthm} For all $b>\frac{2}{3}$ and $f \in F_{b}$, let $\lambda_{0}$ be any element of $\Lambda_{b}$ satisfying \eqref{lambda0str} (and \eqref{lambda0req}). Then, there exists $T_{0}=T_{0}(\lambda_{0})$ and a finite energy solution, $u$, to \eqref{ym}, with the following properties.
$$u(t,r) = Q_{\frac{1}{\lambda(t)}}(r) + v_{1}(t,r) + v_{e}(t,r)$$
where
$\lambda(t) \in C^{4}([T_{0},\infty))$
$$-\partial_{tt}v_{1}+\partial_{rr}v_{1}+\frac{1}{r}\partial_{r}v_{1}-\frac{4}{r^{2}} v_{1}=0, \quad E(v_{1},\partial_{t}v_{1})<\infty$$
$$E(v_{e},\partial_{t}v_{e}) < \frac{C}{\log^{4b-2}(t)}, \quad t \geq T_{0}$$
and $\lambda(t) = \lambda_{0}(t)\left(1+e(t)\right)$ where, for some $\epsilon_{0}>0$, we have
$$|e^{(k)}(t)| \leq \frac{C}{t^{k} \log^{\epsilon_{0}}(t)}, \quad 4 \geq k \geq 0.$$
\end{theorem}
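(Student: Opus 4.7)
The plan is to build a highly accurate approximate solution to \eqref{ym} of the form
$$u_{\text{ans}}(t,r) = Q_{\frac{1}{\lambda_0(t)}}(r) + v_1(t,r) + \sum_{k\geq 2} v_k(t,r),$$
and then close the construction by perturbing $\lambda_0 \to \lambda(t) = \lambda_0(t)(1+e(t))$ and solving for a small correction $v_e$ via a contraction mapping scheme. First I would compute the error produced by inserting $Q_{1/\lambda_0}$ alone into \eqref{ym}; the leading contribution is $-\partial_{tt} Q_{1/\lambda_0}$, which, after expansion, is driven essentially by $\frac{\lambda_0''}{\lambda_0}$ multiplied by a fixed profile in the self-similar variable $R=r\lambda_0(t)$. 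At spatial infinity this profile decays like $1/R^{2}$, and since the linearization of the nonlinearity about $Q_1$ tends to $-4/r^2$ as $r\to\infty$, the far-field correction $v_1$ should solve \eqref{4poteqn} with initial data engineered to cancel this leading error. Taking the Hankel transform of order $2$ (which diagonalizes $\partial_{rr}+\frac{1}{r}\partial_r-\frac{4}{r^2}$) and unwinding the cancellation condition leads to the formula \eqref{v11eqnintro1}; the integration by parts that turns $\frac{\lambda_0''}{\lambda_0}$ into $\frac{(\psi f)'}{t}$ (using \eqref{lambda0str}) is what secures finite energy for $v_1$, as discussed around \eqref{v11withoutstr}.

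Second, I would build the higher corrections $v_k$ for $k\geq 2$ by iteratively solving elliptic or ODE-type problems in the variable $R$ that cancel successive pieces of the residual, choosing the order $k$ of truncation so that the final ansatz error, together with $v_e$-independent source terms, is small enough in weighted $L^2$ norms relative to the target decay rate $\log^{-(4b-2)}(t)$. Each correction $v_k$ will carry a prefactor involving a derivative of $\frac{\lambda_0'}{\lambda_0}$ or an iterated time derivative of the previous source, and the symbol-type bounds \eqref{lambda0req} are enough to control all of these. This part parallels the author's construction in \cite{wm}, and I would adapt those computations rather than redo them; the new technical twist is bookkeeping the weaker logarithmic decay coming from $F_b$ with $b$ close to $2/3$.

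Third, I would linearize the equation about the ansatz, writing $u = u_{\text{ans}} + v_e$ and $\lambda(t)=\lambda_0(t)(1+e(t))$. This yields a wave equation for $v_e$ with a time-dependent potential (the linearization about $Q_{1/\lambda}$), a source coming from the ansatz error and from the difference between $Q_{1/\lambda}$ and $Q_{1/\lambda_0}$, and a nonlinear remainder. Because $L_\lambda = -\partial_{rr}-\frac{1}{r}\partial_r - \frac{2(1-3Q_\lambda^2)}{r^2}$ has a zero-resonance $\Lambda Q_{1/\lambda}$ (the infinitesimal scaling generator), I would impose an orthogonality condition on $v_e$ against a suitably weighted $\Lambda Q_{1/\lambda}$ at each time. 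This orthogonality gives a modulation ODE for $e(t)$ whose solvability pins down $e$ with the stated decay $|e^{(k)}(t)|\lesssim t^{-k}\log^{-\epsilon_0}(t)$. For the remaining wave-equation piece, I would propagate backwards from $t=\infty$ using the linear evolution (plus distorted transference principle to handle the potential) and close a Banach fixed point in a space adapted to the target energy bound on $v_e$, exploiting the smallness of the ansatz error established in the second step.

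The main obstacle I anticipate is the fixed point step combined with the modulation equation: the radiation $v_1$ lies in an energy class only barely better than scaling critical, and when $b$ is close to $2/3$ the decay $\log^{-(4b-2)}(t)$ is extremely slow, so one has very little room for loss when estimating quadratic and cubic nonlinear terms or when controlling how the potential $-\frac{2(1-3Q_{1/\lambda}^2)}{r^2}$ acts on $v_e$ at intermediate scales $r\sim 1/\lambda$. Avoiding a logarithmic divergence will require carefully matched inner and outer norms on $v_e$, paralleling but somewhat sharper than those of \cite{wm}, together with exploiting the specific structural identity \eqref{lambda0str} to gain the extra derivative on $f$ that makes $v_1$ energy-class in the first place. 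The rest of the argument --- verifying the smoothness and decay of $\lambda$, finite energy of $v_1$, and the energy decay of $v_e$ --- follows by reading off the bounds obtained in the fixed point step.
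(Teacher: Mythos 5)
Your outline correctly identifies the radiation profile $v_1$, the formula \eqref{v11eqnintro1}, the role of \eqref{lambda0str} in making $v_1$ finite-energy, and the end-game (modulation to kill the resonance against the scaling generator, then a backwards-in-time contraction using the distorted Fourier transform and transference identity). However, the second step — constructing the $v_k$ for $k\geq 2$ — is where the proposal diverges from what actually works, and I think as stated it would fail.

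You propose to build the higher corrections by ``iteratively solving elliptic or ODE-type problems in the variable $R$'' and to truncate at some finite order $k$. The difficulty is that $v_1$ decays only like $1/\log^{b}(r)$ near the light cone (a direct consequence of the low-frequency singularity $\widehat{v_{1,1}}(\xi)\sim 1/(\xi\log^{b}(1/\xi))$) and $Q_1$ has no decay at spatial infinity, so the quadratic and cubic nonlinear error terms generated by $v_1$ scale like $r^{-2}\log^{-2b}(r)$ out to $r\sim t$. An elliptic/ODE correction localized in $R=r/\lambda(t)$ cannot touch this hyperbolic far-field error, and after one hyperbolic correction $v_2$ the new cross terms $v_1\cdot v_2$ gain only a single factor of $\log^{-b}$, not polynomial decay; the same is true at every order. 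The paper therefore solves backward wave equations for each $v_j$ (zero Cauchy data at infinity) and must sum the full infinite series $v_s=\sum_{j\geq3}v_j$ — the inductive estimates in Lemma \ref{vkinductionlemma} with the geometric constant $D_{n,k}=C_1^{nk}$ are precisely the bookkeeping needed to make that sum converge. A finite truncation leaves an error that is too large in exactly the slow-decay regime your own last paragraph flags as the danger zone.

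Beyond that, even after the full $v_s$ resummation the error still lacks the spatial decay required for the fixed-point step (compare \eqref{prewcerrorterm} with \eqref{f4estforcomp}); the paper introduces a second infinite family $w_k$, cut off outside an intermediate scale $g(t)=\lambda(t)\log^{b-2\epsilon}(t)$, to push the principal error $F_4$ down to compact support in $r\lesssim t$ and with symbol-type control. Your proposal does not include an analogue of this, and without it the oscillatory integrals involving $\mathcal{F}(\sqrt{\cdot}F_4(x,\cdot\lambda(x)))$ in the $Z$-space contraction (Lemma \ref{f4lemma}) would not close. Finally, a smaller point: the $C^4$ regularity of $\lambda$ needed for the theorem does not drop out for free from the modulation ODE; one has to bootstrap higher time-regularity of $w_c$ (the content of Lemmas \ref{e0ppplemma}, \ref{e0pppplemma} and the two appendices), and this should be flagged rather than absorbed into ``reading off the bounds.''
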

\begin{remark}\label{lambda0fromfremark}Given any $f \in F_{b}$, there exists $T_{\lambda_{0}}>50$, and a one-parameter family of $\lambda_{0}\in \Lambda_{b}$ satisfying \eqref{lambda0str} and \eqref{lambda0req}. This can be seen as follows. Given $f \in F_{b}$, we can first find $\omega$ satisfying 
\begin{equation}\label{omegaeqnintro1}\omega'(t) + \omega(t)^{2} = \frac{f'(t)}{t}, \quad |\omega(t)| \leq \frac{C}{t \log^{b}(t)}, \quad t \geq N\end{equation}
\noeqref{omegaeqnintro1}
(where $N > 50$ is sufficiently large) with a fixed point argument. By inspection of this equation, $\omega \in C^{\infty}([N,\infty))$. Then, we can define $T_{\lambda_{0}} = N+1$, and let $\lambda_{0}$ be given by
\begin{equation}\label{lambda0intermsoffeqnintro}\lambda_{0}(t) = c \exp\left(\int_{N+1}^{t} \omega(s) ds\right), \quad t \geq N+1, \quad \text{ any }c>0.\end{equation}
\noeqref{lambda0intermsoffeqnintro}
Then, we have \eqref{lambda0req} and \eqref{lambda0str}.
\end{remark}
\begin{remark}
An interesting feature of our solutions is that the radiation profile depends only on $f$ (as per \eqref{v11eqnintro1}). As we just showed, there is a one-parameter family of $\lambda_{0} \in \Lambda_{b}$, corresponding to a given $f \in F_{b}$. In particular, our family of solutions includes functions of the form $Q_{\frac{1}{\lambda(t)}}(r) + v_{1}(t,r)+o(1)$, for a one-parameter family of possible asymptotic values of $\lambda(t)$, and the \textbf{same} $v_{1}$. 
\end{remark}
\begin{remark} For $\frac{2}{3} < \beta < \alpha < 1$, we can let  
\begin{equation}\label{examplefintro}f(t) = \frac{\sin(\log^{\alpha}(t))}{\log^{\beta}(t)}, \quad t \geq 50.\end{equation}
\noeqref{examplefintro}
Then, $f \in F_{b}$ for any $\frac{2}{3} < b < \beta$. We then carry out the procedure discussed in Remark \ref{lambda0fromfremark}, to recover a $\lambda_{0} \in \Lambda_{b}$ satisfying \eqref{lambda0str} and \eqref{lambda0req}. In this case, we have
\begin{equation}\label{examplelambda0eqnintro}\frac{\lambda_{0}'(t)}{\lambda_{0}(t)} \sim \frac{-\alpha \log^{\alpha-1}(t) \cos(\log^{\alpha}(t))}{t \log^{\beta}(t)}.\end{equation}
\noeqref{examplelambda0eqnintro}
Since $1+\beta-\alpha < 1$, this gives rise to $\lambda_{0} \in \Lambda_{b}$ with $$\int_{t}^{\infty} \frac{|\lambda_{0}'(s)|}{\lambda_{0}(s)}ds = \infty.$$
Nevertheless, as pointed out in the computation \eqref{feqnintro1}, $\lambda_{0}$ is asymptotically constant.
\end{remark}
\begin{remark} By choosing 
$$f(t) = \frac{1}{\log^{b}(t)}, \quad t \geq 50, \quad b>\frac{2}{3}$$
we can show (see \eqref{v11def}) that 
$$\widehat{v_{1,1}}(\xi) = \frac{8}{3 \pi \xi \log^{b}(\frac{1}{\xi})} + O\left(\frac{1}{\xi \log^{b+1}(\frac{1}{\xi})}\right), \quad \xi \rightarrow 0$$
which shows that we can have radiation whose initial velocity has quite a large singularity at low frequencies. In fact, the condition for the radiation to have finite energy in our setting is $\widehat{v_{1,1}}(\xi) \in L^{2}((0,\infty), \xi d\xi)$. The initial velocity therefore satisfies this condition only ``logarithmically''. 
\end{remark}
\begin{remark} A more precise set of estimates on $e$ is given in Proposition \ref{choosinglambdaprop}, where $\delta$ is defined in \eqref{deltadef} and $\delta_{j}$ is defined in \eqref{delta2def}, \eqref{delta4def}, and \eqref{delta5def}, for $j=2,4$, and $5$, respectively.
\end{remark}

Now, we review previous related works. As mentioned before, the work \cite{kt} established small energy global well-posedness for the $(4+1)$ dimensional Yang-Mills problem. Regarding the large energy global well posedness of the Yang-Mills equation in $4+1$ dimensions, the works of Tataru and Oh, \cite{tatoh0}, \cite{tatoh1}, \cite{tatoh2}, \cite{tatoh3}, \cite{tatoh4}, established a threshold theorem and dichotomy theorem for the $(4+1)$-dimensional Yang-Mills equation, associated to any compact, non-abelian gauge group. \\
\\
As previously mentioned, our procedure in this paper is similar to that used in the previous work of the author, \cite{wm}. That work constructed infinite time blow-up solutions to the energy critical, 1-equivariant wave maps problem with $\mathbb{S}^{2}$ target, with a symbol class of possible asymptotic behaviors of the soliton length scale, $\lambda(t)$. The main difference in this work is that the class of initial data of the radiation considered here includes functions that are much more singular at low frequencies than that considered in \cite{wm}. This leads to extra technical difficulties related to the slow decay of the radiation, $v_{1}$. In addition, the constraint that the radiation has finite energy implies, in our setting, that $\lambda_{0}(t)$ must be asymptotically constant for large $t$, in contrast with \cite{wm}.\\
\\
The work \cite{jlr} constructs finite time blow-up solutions to the same wave maps problem just mentioned, by also understanding the relation between a prescribed radiation field and the dynamics of the soliton length scale, in the context of finite time blow-up. (The problem of finite time blow-up for this wave maps equation has also been studied in the preceding works \cite{rs}, \cite{rr}, \cite{kst}, \cite{gk}, \cite{km}). Another key reference for our work is the paper of Krieger, Schlag and Tataru \cite{kstym}, which constructs finite time blow-up solutions to the same equation considered here, \eqref{ym}. In our argument, we use the ``distorted Fourier transform'' of \cite{kstym}, as well as related technical information, most importantly, the transference identity of that paper. For completeness we also mention that there is an analog of \cite{kstym} for the energy critical, focusing semilinear wave equation in $\mathbb{R}^{1+3}$, namely \cite{kstslw}.\\
\\
Regarding other constructions of non-scattering solutions to \eqref{ym}, the work \cite{j} (which also applies to other energy critical wave equations) constructed two bubble solutions to \eqref{ym}. The work \cite{dk} constructed infinite time blow-up and infinite time relaxation solutions to the focusing, energy critical semilinear wave equation on $\mathbb{R}^{1+3}$. Finally, the work \cite{bkt} constructed global solutions to the energy critical wave maps problem with $\mathbb{S}^{2}$ target associated to a codimension two manifold of data. Also, given that our result can be interpreted as a preliminary step towards some form of stability of the soliton under perturbations, we mention the work \cite{ksmanifold}, which constructed a stable manifold for the quintic, focusing semilinear wave equation in $\mathbb{R}^{1+3}$, centered around the Aubin-Talentini soliton solution.
\subsection{Acknowledgments}
The author thanks his adviser, Daniel Tataru, for suggesting the problem, and many useful discussions. This material is based upon work partially supported by the National Science Foundation under Grant No.  DMS-1800294.
\section{Notation}
We will make use of the following notation. It will be slightly more convenient for our purposes to modify the usual definition of $\langle x\rangle$ as follows $\langle x \rangle = \sqrt{50^{2}+x^{2}}.$ If $f:D \subset \mathbb{R}^{k} \rightarrow \mathbb{R}$ is a differentiable function of $k$ arguments, we will use the notation $\partial_{i}f(x)$, for $1 \leq i \leq k$, to denote the $i$th partial derivative of $f$, evaluated at the point $x \in D$.
Occasionally, we will use $dA$ to denote two-dimensional Lebesgue measure.\\
\\
For $f:(0,\infty)\rightarrow \mathbb{R}$, we define the following norm (see also \cite{bkt})
$$||f||^{2}_{\dot{H}^{1}_{e}} = ||\partial_{r}f||^{2}_{L^{2}((0,\infty), rdr)} + ||\frac{f}{r}||^{2}_{L^{2}((0,\infty),r dr)}.$$
The elliptic part of the linear wave equation obtained by linearizing \eqref{ym} around $Q_{1}$ is
$$-\partial_{rr}u-\frac{1}{r}\partial_{r}u-\frac{2}{r^{2}}\left(1-3Q_{1}(r)^{2}\right) u.$$
As noted in \cite{rr}, this operator can be expressed as $L^{*}L$, for
\begin{equation}\label{Ldef}L(f) = -f'(r) +2 \left(\frac{1-r^{2}}{1+r^{2}}\right) \frac{f(r)}{r}\end{equation}
which has the formal adjoint on $L^{2}(r dr)$ given by
\begin{equation}\label{Lstardef}L^{*}(f) = f'(r) +2\left(\frac{1-r^{2}}{1+r^{2}}\right) \frac{f(r)}{r} + \frac{f(r)}{r}.\end{equation} 
We denote by $\phi_{0}$, the following eigenfunction of $L^{*}L$, with eigenvalue $0$.
\begin{equation}\label{phi0notation}\phi_{0}(r) = \frac{r^{2}}{(1+r^{2})^{2}}\end{equation}
Note that this definition of $\phi_{0}$ is different from that of \cite{kstym}, see \eqref{phi0tilde}.
\begin{definition}
Following Theorem 4.3 of \cite{kstym}, we define $\phi(r,\xi)$ to be the solution to
\begin{equation}\label{phiefuncdef}L^{*}L\left(\frac{\phi(r,\xi)}{\sqrt{r}}\right) = \xi \frac{\phi(r,\xi)}{\sqrt{r}}, \quad r,\xi >0\end{equation}
which satisfies $\phi(r,\xi) \sim r^{5/2} \text{ as } r \rightarrow 0.$ 
\end{definition}
We will also use the following representation formula for $\phi(r,\xi)$ from proposition 4.5 of \cite{kstym}.
\begin{equation}\label{phiseries}\phi(r,\xi) = \widetilde{\phi_{0}}(r) + \frac{1}{r^{3/2}}\sum_{j=1}^{\infty}(r^{2}\xi)^{j}\widetilde{\phi_{j}}(r^{2}), \quad |\widetilde{\phi_{j}}(r^{2})| \leq \frac{C^{j}}{j!} \frac{r^{4}}{\langle r^{2} \rangle}, \quad j \geq 1,\end{equation}
where we denote by $\widetilde{\phi_{0}}$ what is denoted by $\phi_{0}$ in that paper. So, we have
\begin{equation}\label{phi0tilde}\widetilde{\phi_{0}}(r) = \frac{r^{5/2}}{(1+r^{2})^{2}} = \sqrt{r} \phi_{0}(r)\end{equation}
Similarly, following Theorem 4.3 of \cite{kstym}, we will make use of the functions $\psi^{+}(r,\xi)$, which satisfy
$$L^{*}L(\frac{\psi^{+}(\cdot,\xi)}{\sqrt{\cdot}})(r) = \xi \frac{\psi^{+}(r,\xi)}{\sqrt{r} }, \quad r,\xi >0, \quad \psi^{+}(r,\xi) \sim \xi^{-1/4} e^{i \sqrt{\xi} r}, \quad r \rightarrow \infty.$$
From Lemma 4.7 of \cite{kstym}, we write
\begin{equation}\label{philgrasymp}\phi(r,\xi) = 2 \text{Re}\left(a(\xi) \psi^{+}(r,\xi)\right)\end{equation}
where\footnote{Following \cite{kstym}, when $a,b>0$, we write $a \approx b$ if there exists $C>0$ such that $C^{-1} a \leq b \leq Ca$, and $a \ll b$ if $a \leq \epsilon b$ for some small $\epsilon>0$.}
\begin{equation}\label{asymbol}|a(\xi)| \approx \begin{cases} 1, \quad \xi \ll 1\\
\frac{1}{\xi}, \quad \xi \gtrapprox 1\end{cases}, \quad |\left(\xi\partial_{\xi}\right)^{k}a(\xi)| \leq C_{k} |a(\xi)|, \quad\xi >0.\end{equation}
We will also make use of Proposition 4.6 of \cite{kstym}, which provides the formulae
\begin{equation}\label{psiplusdef}\psi^{+}(r,\xi) = \xi^{-1/4}e^{i r \sqrt{\xi}} \sigma(r \sqrt{\xi},r), \quad r^{2} \xi \gtrapprox 1\end{equation}
where, for all $a, b \geq 0$, $q>1$, and all sufficiently large integers $j_{0}$,
$$\sup_{r >0}\Bigl|\left(r \partial_{r}\right)^{a} \left(q \partial_{q}\right)^{b}\left(\sigma(q,r)-\sum_{j=0}^{j_{0}}q^{-j}\psi_{j}^{+}(r)\right)\Bigr| \leq C_{a,b,j_{0}} q^{-j_{0}-1}, \quad \sup_{r >0}\Bigl|\left(r\partial_{r}\right)^{k}\psi_{j}^{+}(r)\Bigr| <\infty.$$
Another important notion from \cite{kstym} which we use is the distorted Fourier transform. We use $\mathcal{F}$ to denote the distorted Fourier transform, rather than $\widehat{\cdot}$ used in \cite{kstym}. 
\begin{definition} From Theorem 4.3 of \cite{kstym}, the distorted Fourier transform is given by
\begin{equation}\label{distortedfouriernotation}\mathcal{F}(f)(\xi) = \lim_{M \rightarrow \infty} \int_{0}^{M} \phi(r,\xi) f(r) dr, \quad \xi \geq 0.\end{equation}\end{definition}
We follow the convention of \cite{kstym}, which regards the distorted Fourier transform of $f \in L^{2}((0,\infty))$ as a two-component vector $\begin{bmatrix} a\\
g(\cdot)\end{bmatrix}$, where $a=\langle f, \widetilde{\phi_{0}}\rangle_{L^{2}(dr)}$. The inversion formula is then
$$f(r) = a \frac{\widetilde{\phi_{0}}(r)}{||\widetilde{\phi_{0}}(r)||^{2}_{L^{2}((0,\infty),dr)}}+\lim_{M \rightarrow \infty} \int_{0}^{M} g(\xi) \phi(r,\xi) \rho(\xi) d\xi$$
where we use the function $\rho$, from Lemma 4.7 of \cite{kstym}, which satisfies
\begin{equation}\label{rhoest}\rho(\xi) = \frac{1}{\pi |a(\xi)|^{2}}, \quad \rho(\xi) \approx \begin{cases} 1, \quad \xi \ll 1\\
 \xi^{2}, \quad \xi \gtrapprox 1\end{cases}.\end{equation}
The distorted Fourier transform, $\mathcal{F}$, is an isometry from $L^{2}((0,\infty),dr)$ to $\mathbb{R}\oplus L^{2}((0,\infty),\rho(\xi)d\xi)$.\\
Following \cite{kstym}, we also let $\mathcal{K}$ denote the transference operator, defined by
\begin{equation}\label{transference}\mathcal{F}(R \partial_{R}u)(\xi) = \begin{bmatrix} 0\\
-2 \xi \partial_{\xi} \mathcal{F}(u)_{1}\end{bmatrix} + \mathcal{K}(\mathcal{F}(u))(\xi), \quad \text{where } \mathcal{F}(u)=\begin{bmatrix} \mathcal{F}(u)_{0}\\
\mathcal{F}(u)_{1}\end{bmatrix}.\end{equation}
We also recall the definition of the norm $L_{\rho}^{2,\alpha}$ from \cite{kstym}, namely
\begin{equation}\label{l2rhoalphanorm}||f||_{L_{\rho}^{2,\alpha}}^{2} = |f(0)|^{2} + \int_{0}^{\infty} |f(\xi)|^{2}\langle \xi \rangle^{2\alpha}\rho(\xi) d\xi.\end{equation}
We will make use of the Hankel transform of order 2, $\widehat{\cdot}$, and its inverse, $\widecheck{\cdot}$, which are defined by
$$\widehat{f}(\xi) = \int_{0}^{\infty} f(r) J_{2}(r \xi) r dr, \quad \widecheck{f}(r) = \int_{0}^{\infty} f(\xi) J_{2}(r \xi) \xi d\xi. $$
We let $K_{1}$ denote the modified Bessel function of the second kind and order 1. $K_{1}$ satisfies the asymptotics (see \cite{gr})
\begin{equation}\label{k1notation} K_{1}(x) \sim \begin{cases}\frac{1}{x} + O(x \left(1+|\log(x)|\right)),\quad \text{ as } x \rightarrow 0\\
\sqrt{\frac{\pi}{2 x}} e^{-x}, \quad \text{ as } x \rightarrow \infty\end{cases}.\end{equation}

\section{Summary of the proof}
The argument can be split broadly into two steps: constructing an ansatz, and then completing this ansatz to an exact solution. These two steps are explained in more detail below. \\
\\
\textbf{1. Strategy for constructing the ansatz} For $b > \frac{2}{3}$, we start by taking some $f \in F_{b}$, and $\lambda_{0} \in \Lambda_{b}$ satisfying \eqref{lambda0str}. Then, we  let $\lambda(t)$ (which will be chosen later) be any function of the form 
$$\lambda(t) = \lambda_{0}(t) \left(1+e(t)\right)$$
where $e$ is small in a $C^{2}$ sense, and consider first, $u_{1}(t,r) = Q_{\frac{1}{\lambda(t)}}(r) + v_{1}(t,r)$, where $v_{1}$ solves
$$\begin{cases} -\partial_{tt}v_{1}+\partial_{rr}v_{1}+\frac{1}{r}\partial_{r}v_{1}-\frac{4 v_{1}}{r^{2}} =0\\
v_{1}(0)=0\\
\partial_{t}v_{1}(0) = v_{1,1}\end{cases}$$ 
and $v_{1,1}$ is yet to be chosen. We note that the equation \eqref{ym}, linearized around $Q_{1}(r)$ has a zero eigenfunction, which we denote $\phi_{0}(r)$ (recall \eqref{phi0notation}). Therefore, we will choose $v_{1,1}$ (in Section \ref{cauchydatav1}), depending on $\lambda_{0}$, so that the principal part of the error term of our final ansatz (which is more complicated than $u_{1}$) is orthogonal to $\phi_{0}\left(\frac{r}{\lambda(t)}\right)$ in $L^{2}((0,\infty),r dr)$, for a choice of $\lambda(t)$ which is equal to $\lambda_{0}$ to leading order. The usefulness of such an orthogonality condition can be seen by noting that the inner product between the final correction to our ansatz and $\phi_{0}(\frac{\cdot}{\lambda(t)})$ has roughly two powers of $t$ less decay as $t \rightarrow \infty$ than the inner product between the error term of our ansatz and $\phi_{0}(\frac{\cdot}{\lambda(t)})$, see \eqref{Tmapintro}.\\
\\
In order to further describe how we choose the initial data of $v_{1}$, let us note that substituting $u=u_{1}+u_{2}$ into \eqref{ym} gives the equation 
\begin{equation}\begin{split}&-\partial_{tt}u_{2}+\partial_{rr}u_{2}+\frac{1}{r}\partial_{r}u_{2}+\frac{2}{r^{2}} u_{2}(t,r)\left(1-3Q_{\frac{1}{\lambda(t)}}^{2}(r)\right) \\
&= \text{err}_{1}(t,r):= \partial_{tt}Q_{\frac{1}{\lambda(t)}} - \frac{6}{r^{2}}\left(1-Q_{\frac{1}{\lambda(t)}}^{2}\right) v_{1} + \frac{6}{r^{2}}Q_{\frac{1}{\lambda(t)}}\left(v_{1}+u_{2}\right)^{2}  + \frac{2}{r^{2}}\left(v_{1}+u_{2}\right)^{3}.\end{split}\end{equation}
The function $u_{1}$ is not our final ansatz. However, computing the inner product of its error term with $\phi_{0}\left(\frac{\cdot}{\lambda(t)}\right)$ still allows us to see how to choose $v_{1,1}$. The $u_{2}$-independent terms on the right-hand side of the above equation which contribute to leading order to $\langle \text{err}_{1}(t,R\lambda(t)),\phi_{0}(R)\rangle_{L^{2}(R dR)}$  are the soliton error term $\partial_{tt}Q_{\frac{1}{\lambda(t)}}$ and the linear error term associated to $v_{1}$, which is $- \frac{6}{r^{2}}\left(1-Q_{\frac{1}{\lambda(t)}}^{2}\right) v_{1}$. We compute the relevant inner products of these terms in the following way. (In the main body of the paper, these computations are done just after Proposition \ref{choosinglambdaprop}). We have 
$$v_{1}(t,r) = \int_{0}^{\infty} d\xi J_{2}(r\xi) \sin(t\xi) \widehat{v_{1,1}}(\xi)$$
and
$$\int_{0}^{\infty} \frac{24 R^{3}}{(1+R^{2})^{4} \lambda(t)^{2}} J_{2}(R\lambda(t) \xi) dR = \frac{\xi^{3} \lambda(t)}{2} K_{1}(\xi \lambda(t))$$
(which follows from combining identities from \cite{gr}). Here, $K_{1}$ denotes the modified Bessel function of the second kind and order 1 (recall \eqref{k1notation}). We therefore get
\begin{equation}\label{v11ip}\langle - \frac{6}{r^{2}}\left(1-Q_{\frac{1}{\lambda(t)}}^{2}\right) v_{1}\Bigr|_{r = R\lambda(t)},\phi_{0}(R)\rangle_{L^{2}(R dR)} = -\int_{0}^{\infty}  \sin(t\xi) \widehat{v_{1,1}}(\xi) \frac{\xi^{3} \lambda(t)}{2} K_{1}(\xi \lambda(t))d\xi.\end{equation}
We also have 
$$\langle \partial_{tt}Q_{\frac{1}{\lambda(t)}}\Bigr|_{r=R\lambda(t)},\phi_{0}(R)\rangle_{L^{2}(R dR)} = \frac{2 \lambda''(t)}{3\lambda(t)}.$$
The modulation equation that we use to choose $\lambda(t)$ (which is done in Section \ref{choosinglambdasection}) is not simply to set the sum of these two inner products equal to zero, since we will need more to add more corrections to our ansatz. (The modulation equation is given in \eqref{lambdaeqn}). However, the leading order contribution to the modulation equation is indeed given by the sum of these two terms. Therefore, we choose the initial data of $v_{1}$ so as to make the sum of these two inner products vanish to leading order when $\lambda(t) = \lambda_{0}(t)$. We recall that $\frac{\lambda_{0}''(t)}{\lambda_{0}(t)} = \frac{f'(t)}{t}, \quad t \geq T_{\lambda_{0}}$, and extend this to a function $\frac{(\psi f)'(t)}{t}$ defined on $[0,\infty)$ with a cutoff $\psi$ (whose properties are not so important, as long as $\psi(x) =1$ for $x$ large enough, and which is defined in \eqref{psidefinition}). Then, we let
$$ \widehat{v_{1,1}}(\xi) = \frac{8}{3 \pi \xi^{2}} \int_{0}^{\infty} \frac{(\psi \cdot f)'(t)}{t} \sin(t\xi) dt.$$
This leads to $\widehat{v_{1,1}}(\xi)$ potentially having a singularity of size $\frac{1}{\xi \log^{b}(\frac{1}{\xi})}$ \footnote{Strictly speaking, the small $\xi$ singularity of $\widehat{v_{1,1}}(\xi)$ depends on $f$. What is meant here is that $f$ ranges through a class of functions which includes ones which would produce the aforementioned singularities of $\widehat{v_{1,1}}(\xi)$. This comment also applies to any discussion in this section, of the large $r$ behavior of $v_{1}(t,r)$.}. In addition to causing technical difficulties associated to very slow decay of the radiation $v_{1}$, this also constrains $\lambda_{0}(t)$ to asymptote to a constant for large $t$, in order that the radiation has finite energy, see Remark \ref{necessarycondremark}.

We have $\psi(x) =1$ for $x \geq 2T_{\lambda_{0}}$, which implies, by the inversion of the sine transform, that
$$\int_{0}^{\infty}  \sin(t\xi) \widehat{v_{1,1}}(\xi) \frac{\xi^{2}}{2} d\xi = \frac{2}{3} \frac{\lambda_{0}''(t)}{\lambda_{0}(t)}, \quad t \geq 2 T_{\lambda_{0}}.$$
This will be sufficient to allow $\lambda_{0}(t)$ to be a leading order solution to the eventual modulation equation for $\lambda$. In particular, in our setting, we can replace $K_{1}(\xi \lambda(t))$ appearing in \eqref{v11ip} by $\frac{1}{\xi \lambda(t)}$ (recall the asymptotics of $K_{1}$ given in \eqref{k1notation})  to get the leading order behavior of the integral as a function of $t$.\\
\\
As described earlier, the singularity of $\widehat{v_{1,1}}(\xi)$ for small $\xi$ causes technical difficulties, in part due to the fact that $v_{1}$ has a very slow ($\frac{1}{\log^{b}(r)}$) decay for large $r$. Recall also that $Q_{1}(r) = \frac{1-r^{2}}{1+r^{2}}$. In particular, $Q_{1}$ does not decay at infinity. So, the quadratic and cubic nonlinear error terms involving $v_{1}$ are estimated by $\frac{C}{r^{2} \log^{2b}(r)}$ for $r \geq \frac{t}{2}$, and are thus very far from having sufficient decay for large $r$, in order that the rest of our argument can be carried out. In contrast, the principal error, $F_{4}$, of our final ansatz satisfies
\begin{equation}\label{f4estforcomp}F_{4}(t,r) =0, \quad r \geq \frac{t}{2}, \quad |F_{4}(t,r)| \leq \frac{C \lambda(t)^{4} \log^{2}(t)}{t^{6} \log^{b}(t)}, \quad \frac{t}{4} \leq r \leq \frac{t}{2}.\end{equation}
This allows certain integrals in the $r$ variable involving $F_{4}(t,r)$ (as in \eqref{fourierest}) to have sufficient decay in $t$.\\
\\
Our first correction, introduced in Section \ref{v2section}, to improve the quadratic and cubic nonlinear error terms involving $v_{1}$ is denoted by $v_{2}$, which solves the following equation with zero Cauchy data at infinity.
$$-\partial_{tt}v_{2}+\partial_{rr}v_{2}+\frac{1}{r}\partial_{r}v_{2}-\frac{4}{r^{2}}v_{2}=\frac{6 Q_{\frac{1}{\lambda(t)}}(r)}{r^{2}}v_{1}^{2}+\frac{2}{r^{2}} v_{1}^{3}$$
On the other hand, we only estimate $v_{2}(t,r)$ by $\frac{C}{\log^{2b}(t)}$ for $r \geq \frac{t}{2}.$ Hence, there is only a logarithmic improvement in the $v_{1}$ and $v_{2}$ interactions, relative to the $v_{1}$ self interactions, and this is still much worse than the decay of $F_{4}$ in \eqref{f4estforcomp}. Therefore, we really need to add to $Q_{\frac{1}{\lambda(t)}}(r)+v_{1}(t,r)$, a correction, say $q(t,r)$, which satisfies
$$-\partial_{t}^{2} q(t,r) + \partial_{r}^{2}q(t,r) + \frac{1}{r}\partial_{r}q(t,r)-\frac{4}{r^{2}}q(t,r) = \frac{6 Q_{\frac{1}{\lambda(t)}}(r)}{r^{2}} \left(v_{1}(t,r)+q(t,r)\right)^{2}+\frac{2}{r^{2}} (v_{1}(t,r)+q(t,r))^{3}.$$
We choose to construct $q$ by summing a series of corrections, $v_{k}$, starting with the function $v_{2}$ we just discussed. In particular, in Section \ref{vjsection}, we successively add corrections, $v_{j}$, which solve
$$ -\partial_{tt}v_{j}+\partial_{rr}v_{j}+\frac{1}{r}\partial_{r}v_{j} - \frac{4}{r^{2}} v_{j} = RHS_{j}(t,r)$$
where
\begin{equation}\begin{split} RHS_{j}(t,r) &=\frac{6 Q_{\frac{1}{\lambda(t)}}(r)}{r^{2}} \left(\left(\sum_{k=1}^{j-1}v_{k}\right)^{2} - \left(\sum_{k=1}^{j-2}v_{k}\right)^{2}\right) + \frac{2}{r^{2}}\left(\left(\sum_{k=1}^{j-1}v_{k}\right)^{3} - \left(\sum_{k=1}^{j-2}v_{k}\right)^{3}\right)\end{split}\end{equation}
and prove that the series 
$$v_{s}:=\sum_{j=3}^{\infty} v_{j}$$
(as well as the series resulting from applying any first or second order derivative termwise) converges absolutely and uniformly on the set $\{(t,r)|t \geq T_{1}, r \geq 0\}$, where $T_{1}$ is some sufficiently large number. Moreover, we get that
\begin{equation}\nonumber \begin{split} &-\partial_{tt}v_{s}+\partial_{rr}v_{s}+\frac{1}{r}\partial_{r}v_{s}-\frac{4}{r^{2}}v_{s} \\
&= \frac{6 Q_{\frac{1}{\lambda(t)}}}{r^{2}} \left(2 v_{1} \left(v_{2}+v_{s}\right) + \left(v_{2}+v_{s}\right)^{2}\right) + \frac{2}{r^{2}} \left(3 v_{1}\left(v_{2}+v_{s}\right)^{2} + 3 v_{1}^{2} \left(v_{2}+v_{s}\right) + \left(v_{2}+v_{s}\right)^{3}\right).\end{split}\end{equation}
Let $v_{c}=v_{1}+v_{2}+v_{s}$. Then, the error term of the refined ansatz $u_{3}(t,r) = Q_{\frac{1}{\lambda(t)}}(r) + v_{c}(t,r)$ is estimated by \begin{equation}\label{prewcerrorterm}|\partial_{t}^{2} Q_{\frac{1}{\lambda(t)}}(r)| + |\frac{6v_{c}}{r^{2}}\left(1-Q_{\frac{1}{\lambda(t)}}^{2}(r)\right)| \leq \frac{C \lambda(t)^{2}}{r^{2} t^{2} \log^{b}(t)}, \quad r \geq \frac{t}{2}.\end{equation}
which has roughly two powers of $t$ improved decay in the region $r \geq \frac{t}{2}$ compared with the nonlinear error terms involving $v_{1}$. It turns out that even this major improvement over the ansatz $u_{1}$ still does not have an error term with sufficient decay for large $r$ and $t$. (Compare the inequality above with \eqref{f4estforcomp}). In order to rectify this, we introduce a length scale $g(t)$, and eliminate the portion of the $u_{3}$ error localized to the region $r \geq \frac{g(t)}{2}$. On one hand, we can not have $g(t)$ too small, since doing so would change the leading order behavior of the inner product between the error term of the final ansatz and $\phi_{0}(\frac{\cdot}{\lambda(t)})$, which is not desired. On the other hand, we can not have $g(t)$ too large, since the whole purpose of the next set of corrections is to improve the large $r$ behavior of the error term of $u_{3}$. We therefore find an intermediate scale $g(t)$ which suffices for our purposes. In particular, we choose
\begin{equation}\label{introgdef}g(t) = \lambda(t) \log^{b-2\epsilon}(t), \text{ where } 0 < \epsilon < \min\{\frac{3b-2}{1600},\frac{2b-1}{200},\frac{1}{900000},\frac{b}{900}\}.\end{equation} 
In section \ref{w2sec}, we then add a first correction, $w_{2}$ which improves the error term of $u_{3}$. On the other hand, there are now nonlinear interactions between $w_{2}$ and the previous corrections, which, due to the slow decay of $v_{1}$, are not perturbative. Similarly to the case with $v_{k}$, in section \ref{wjsection}, we add another series of corrections 
$$w_{s} = \sum_{k=3}^{\infty} w_{k}$$ 
to eventually eliminate all the nonlinear error terms involving $w_{j}$ and $v_{k}$. If we let $w_{c} = w_{2}+w_{s}$, then, we end up with our final ansatz 
\begin{equation}\label{ufinalansatzintro}u_{5}(t,r) = Q_{\frac{1}{\lambda(t)}}(r)+v_{c}(t,r)+w_{c}(t,r).\end{equation}
The error term of $u_{5}$ is then decomposed as $F_{4}+F_{5}$, where $F_{5}$ is sufficiently small in sufficiently many norms so as to allow it to be eventually treated perturbatively, even though it will end up not necessarily being orthogonal to $\phi_{0}(\frac{\cdot}{\lambda(t)})$. 
More precisely, if we substitute $u(t,r) = u_{5}(t,r) + v(t,r)$ into \eqref{ym}, we get the following equation (see also \eqref{veqnfinal})
$$-\partial_{tt}v+\partial_{rr}v+\frac{1}{r}\partial_{r}v+\frac{2}{r^{2}}\left(1-3Q_{\frac{1}{\lambda(t)}}(r)^{2}\right) v = F_{4}(t,r)+F_{5}(t,r)+F_{3}(t,r)$$
(where the term $F_{3}$ depends on $v$). We have
$$F_{5}(t,r) = \left(1-\chi_{\leq 1}(\frac{2 r}{t})\right)\left(\frac{-6}{r^{2}}\left(1-Q^{2}_{\frac{1}{\lambda(t)}}(r)\right) w_{c}(t,r)\right)$$
where $\chi_{\leq 1}$ is a cutoff whose properties are unimportant for the purposes of this discussion. The smallness of $F_{5}$ is more precisely
$$||F_{5}(t,R \lambda(t))||_{L^{2}(R dR)} \leq \frac{C \lambda(t)^{3}}{t^{5} \log^{b-2}(t)}, \quad ||L^{*}L\left(F_{5}(t,R\lambda(t))\right)||_{L^{2}(R dR)} \leq \frac{C \lambda(t)^{5} \log^{2}(t)}{g(t)^{2} \log^{b}(t) t^{5}}$$
where we recall that $L$ has been defined in \eqref{Ldef}. We then choose $\lambda(t)$ so that the term $F_{4}(t,r)$ is orthogonal to $\phi_{0}(\frac{\cdot}{\lambda(t)})$. Once we solve this equation for $\lambda$, we then prove that $\lambda$ has symbol-type estimates up to the fourth derivative. This then implies that $F_{4}(t,r)$ itself has symbol-type estimates, which will be important later on.\\
\\
\textbf{2. Completion of the ansatz to an exact solution of \eqref{ym}} To complete the ansatz $u_{5}$ to an exact solution of \eqref{ym}, we use the following approach. We substitute $u=u_{5}+v$ into \eqref{ym}, and use the ``distorted Fourier transform'' of \cite{kstym}, which we denote as $\mathcal{F}$ (recall \eqref{distortedfouriernotation}) to recast the resulting equation for $v$ into one for $y$, given by
\begin{equation}\label{yscalingintro}\mathcal{F}(\sqrt{\cdot} v(t,\cdot \lambda(t)))(\xi) = \begin{bmatrix} y_{0}(t)\\
y_{1}(t,\frac{\xi}{\lambda(t)^{2}})\end{bmatrix}.\end{equation}
The choice of re-scaling in $y_{1}$ is explained by noting that the resulting system of equations (which we will solve for all $t$ sufficiently large) for $y_{0}$ and $y_{1}$ takes the form
\begin{equation}\label{yeqnintro}\begin{bmatrix} -\partial_{tt}y_{0}\\
-\partial_{tt}y_{1}-\omega y_{1}\end{bmatrix} = F_{2} + \mathcal{F}\left(\sqrt{\cdot}\left(F_{3}+F_{4}+F_{5}\right)\left(t,\cdot \lambda(t)\right)\right)\left(\omega \lambda(t)^{2}\right), \quad \omega >0\end{equation}
where $F_{2}$ contains perturbative terms depending on $y$ and $\partial_{t}y$, some of which are estimated using the transference identity of \cite{kstym}, and $F_{3}$ contains other linear and nonlinear error terms depending on $v(y)$. To give the reader some idea of how the transference identity of \cite{kstym} is used, we recall that the soliton term in \eqref{ufinalansatzintro} is re-scaled by $\lambda(t)$, so it is natural to consider re-scaling the spatial argument of $v$ by $\lambda(t)$. When $t$ derivatives act on $v$ re-scaled in this way, one obtains terms involving the operator $r\partial_{r}$. Therefore, one must understand the composition of the distorted Fourier transform with $r \partial_{r}$. The transference identity does precisely this, see \eqref{transference}. The reference \cite{kst} has some more intuition regarding the transference identity.
\\
\\
We solve the equation \eqref{yeqnintro} for $t \geq T_{0}$ by finding a fixed-point of the map $T$ given by
\begin{equation}\label{Tmapintro}T(\begin{bmatrix} y_{0}\\
y_{1}\end{bmatrix})(t,\omega) =\begin{bmatrix} -\int_{t}^{\infty} ds \int_{s}^{\infty} ds_{1} \left(F_{2,0} + \mathcal{F}\left(\sqrt{\cdot}\left(F_{3}+F_{4}+F_{5}\right)\left(s_{1},\cdot \lambda(s_{1})\right)\right)_{0}\right)\\
\int_{t}^{\infty} dx \frac{\sin((t-x)\sqrt{\omega})}{\sqrt{\omega}} \left(F_{2,1}(x,\omega) + \mathcal{F}\left(\sqrt{\cdot}\left(F_{3}+F_{4}+F_{5}\right)\left(x,\cdot \lambda(x)\right)\right)_{1}\left(\omega \lambda(x)^{2}\right)\right)\end{bmatrix}\end{equation}
where the subscripts $i$ after, for example $F_{2}$, or $\mathcal{F}\left(\sqrt{\cdot}\left(F_{3}+F_{4}+F_{5}\right)\left(x,\cdot \lambda(x)\right)\right)$ mean the $i+1$st component of the vector, for $i=0,1$. $T$ is defined on a space $Z$, whose norm is precisely given in \eqref{znormdef}, but is roughly a weighted $L^{\infty}_{t} L^{2}_{\omega}$ norm of $y$ and $\partial_{t}y$. The most delicate terms on the right-hand side of \eqref{Tmapintro} are those involving $F_{4}$. Because of the orthogonality condition on $F_{4}$, we have 
$$\mathcal{F}\left(\sqrt{\cdot}\left(F_{4}\right)\left(s_{1},\cdot \lambda(s_{1})\right)\right)_{0}=0.$$
On the other hand, for the second component of \eqref{Tmapintro}, we treat the $F_{4}$ term by integrating by parts in the $x$ variable, using both the fact that  $$\mathcal{F}\left(\sqrt{\cdot}\left(F_{4}\right)\left(x,\cdot \lambda(x)\right)\right)_{1}(\xi) \rightarrow 0, \quad \xi \rightarrow 0$$
(which follows from the orthogonality condition on $F_{4}$) and the fact that $F_{4}$ has symbol-type estimates. The other terms in \eqref{Tmapintro} can be estimated without such a delicate argument. 

\section{Construction of the ansatz}
Let $b > \frac{2}{3}$, $f \in F_{b}$, and $\lambda_{0} \in \Lambda_{b}$ satisfying \eqref{lambda0str}. We let $M_{f}$ denote the constant $M$ for which \eqref{fineqindef} holds. By replacing $T_{\lambda_{0}}$ with $T_{\lambda_{0}}+M_{f}$ if needed, we may assume that $T_{\lambda_{0}}>M_{f}$. We will have to introduce some constants and parameters to describe our setup. Let $T_{0} > \exp{\left(900! + 2^{-\frac{3}{2(2b-1)}}\right)}\left(1+T_{\lambda_{0}}\right)$. 
\begin{prelimdefinition}
 Let $0 < \epsilon < \min\{\frac{3b-2}{1600},\frac{2b-1}{200},\frac{1}{900000},\frac{b}{900}\}$, and define \begin{equation} \label{deltadef} \delta = \min\{2b-1,3b-4\epsilon-2,5b-8\epsilon-3\}.\end{equation}
 \end{prelimdefinition}
Note that $\delta >0$, because $b > \frac{2}{3}$, and because of the constraints on $\epsilon$. Also, $1+\delta > b$. 
\begin{prelimdefinition}Let 
\begin{equation}\label{delta2def}\delta_{2}=\min\{\frac{1}{2}\left(\delta+1-b\right),\frac{\delta}{2}\}.\end{equation} 
Define a Banach space $X$ to be the set of functions $e \in C^{2}([T_{0},\infty))$ satisfying $||e||_{X} < \infty$, where
\begin{equation} \label{xnorm} ||e||_{X} = \sup_{t \geq T_{0}}\left(|e(t)| \log^{\delta-\delta_{2}}(t) + |e'(t)| t \log^{1+\delta-\delta_{2}}(t) + |e''(t)| t^{2} \log^{1+\delta-\delta_{2}}(t)\right).\end{equation}
\end{prelimdefinition}
Until more precisely chosen, $\lambda$ denotes any function of the form 
\begin{equation}\label{lambdarestr}\lambda(t) = \lambda_{0}(t)\cdot \left(1+e(t)\right), \quad e \in \overline{B}_{1}(0) \subset X.\end{equation} 
In particular, since $1+\delta-\delta_{2} > b$, we have
$$\frac{|\lambda'(t)|}{\lambda(t)} \leq \frac{C}{t \log^{b}(t)}, \quad \frac{|\lambda''(t)|}{\lambda(t)} \leq \frac{C}{t^{2} \log^{b}(t)}.$$ 
For later use, we make the following definition.
\begin{prelimdefinition}\label{gprelimdef} $g(t) = \lambda(t) \log^{b-2\epsilon}(t)$\end{prelimdefinition} 
Some motivation of this definition is given just before \eqref{introgdef} and in the beginning of Section \ref{w2sec}, which is the first place in the proof of Theorem \ref{mainthm} where we use $g(t)$.\\
\\
By the definition of $g$, constraints on $\lambda_{0}$, and \eqref{lambdarestr}, there exists $M_{1}$ sufficiently large so that
\begin{equation}\label{T0const}\log(t) \geq 2 |\log(g(t))|,\quad \frac{t |g'(t)|}{g(t)} \leq \frac{1}{900!}, \quad \frac{t}{g(t)} \geq 1600, \quad \text{ for } t \geq M_{1}.\end{equation}
Then, we further constrain $T_{0}$ to satisfy 
\begin{equation}\label{T0introconst}T_{0} > \exp{\left(900! + 2^{-\frac{3}{2(2b-1)}}\right)}\left(1+T_{\lambda_{0}}\right)+ M_{1}.\end{equation}
The main result of this section is the following theorem concerning the existence of an approximate solution to \eqref{ym}.
\begin{theorem}[Approximate solution to \eqref{ym}]\label{approxsolnthm}
For all $b>\frac{2}{3}$, $f \in F_{b}$, and all $\lambda_{0} \in \Lambda_{b}$ satisfying \eqref{lambda0str}, there exists $T_{3}>0$ such that for all $T_{0}>T_{3}$, there exists $v_{corr} \in C^{2}([T_{0},\infty),C^{2}((0,\infty)))$ and $\lambda \in C^{4}([T_{0},\infty))$, such that, if 
$$u(t,r)=Q_{\frac{1}{\lambda(t)}}(r)+v_{corr}(t,r)$$ 
then
$$ E_{YM}(u,\partial_{t}u) <\infty, \quad -\partial_{tt}u+\partial_{rr}u+\frac{1}{r}\partial_{r}u+\frac{2 u (1-u^{2})}{r^{2}} = -F_{4}(t,r)-F_{5}(t,r)$$
where
$$||F_{5}(t,R \lambda(t))||_{L^{2}(R dR)} \leq \frac{C \lambda(t)^{3}}{t^{5} \log^{b-2}(t)}, \quad ||L^{*}L\left(F_{5}(t,R\lambda(t))\right)||_{L^{2}(R dR)} \leq \frac{C \lambda(t)^{5} \log^{2}(t)}{g(t)^{2} \log^{b}(t) t^{5}}.$$
$$\langle F_{4}(t,R \lambda(t)),\phi_{0}(R)\rangle_{L^{2}(R dR)} =0$$
For $0 \leq j, k \leq 2$, and $j+k \leq 2$,
\begin{equation}\nonumber\begin{split}|r^{k}\partial_{r}^{k}t^{j}\partial_{t}^{j}F_{4}(t,r)| &\leq C \mathbbm{1}_{\{r \leq g(t)\}} \frac{r^{2} \lambda(t)^{2}}{t^{2} \log^{b}(t) (\lambda(t)^{2}+r^{2})^{2}} \\
&+ C \frac{\mathbbm{1}_{\{r \leq \frac{t}{2}\}} \lambda(t)^{2}}{(\lambda(t)^{2}+r^{2})^{2}} \begin{cases} \frac{r^{2} \lambda(t)^{2} \log(t)}{t^{2} g(t)^{2} \log^{b}(t)}, \quad r \leq g(t)\\
\frac{\lambda(t)^{2} \log(t)}{t^{2} \log^{b}(t)} \left(\log(2+\frac{r}{g(t)}) + \frac{\log(t)}{\log^{b}(t)}\right), \quad g(t) < r < \frac{t}{2}\end{cases}.\end{split}\end{equation}
$\lambda$ is given by $\lambda(t) = \lambda_{0}(t)\left(1+e(t)\right).$
\begin{equation}\nonumber\begin{split} |e(t)| \leq \frac{C}{\log^{\delta-\delta_{2}}(t)}, \quad |e^{k}(t)| \leq \begin{cases} \frac{C}{t^{k} \log^{1+\delta-\delta_{2}}(t)}, \quad k=1,2\\
\frac{C}{t^{3} \log^{b+\delta_{4}}(t)}, \quad k=3\\
\frac{C}{t^{4} \log^{b+\delta_{5}}(t)}, \quad k=4\end{cases}\end{split}\end{equation}
where $\delta, \delta_{2}$ are defined in \eqref{deltadef} and \eqref{delta2def}, respectively and $\delta_{4},\delta_{5}>0$.
Finally,
\begin{equation}\label{vcorrinftyest}\begin{split}&\left(||\frac{2 v_{corr}(t,R\lambda(t)) Q_{1}(R) + v_{corr}^{2}(t,R\lambda(t))}{R^{2}\lambda(t)^{2}}||_{L^{\infty}_{R}} + \sup_{R \geq 1}\left(\frac{|\partial_{R}\left(2 v_{corr}(t,R\lambda(t))Q_{1}(R) + v_{corr}^{2}(t,R\lambda(t))\right)|}{\lambda(t)^{2}R^{2}}\right)\right.\\
&+\left.\sup_{R \geq 1}\left(\frac{|\partial_{R}^{2}\left(2 v_{corr}(t,R\lambda(t))Q_{1}(R) + v_{corr}^{2}(t,R\lambda(t))\right)|}{R^{2}\lambda(t)^{2}}\right) \right.\\
&+\left. \sup_{R \leq 1}\left(\frac{|\partial_{R}\left(2 v_{corr}(t,R\lambda(t))Q_{1}(R) + v_{corr}^{2}(t,R\lambda(t))\right)|}{\lambda(t)^{2}R}\right)\right. \\
&+\left. \sup_{R \leq 1}\left(\frac{|\partial_{R}^{2}\left(2 v_{corr}(t,R\lambda(t))Q_{1}(R) + v_{corr}^{2}(t,R\lambda(t))\right)|}{\lambda(t)^{2}}\right)\right)\\
&\leq \frac{C}{t^{2} \log^{b}(t)}.\end{split}\end{equation}
\end{theorem}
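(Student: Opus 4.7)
The plan is to construct the ansatz iteratively with $\lambda(t) = \lambda_0(t)(1+e(t))$ treated as a free parameter ranging over the closed unit ball of the Banach space $X$, and only at the very end to pin down $e$ via a contraction mapping applied to the modulation equation derived from the orthogonality requirement on $F_4$. First I would build $v_1$: it depends only on $f$, not on $\lambda$, and is the solution to \eqref{4poteqn} with zero position and initial velocity $v_{1,1}$ defined by inverting \eqref{v11eqnintro1}. Using the Hankel representation $v_1(t,r)=\int_0^\infty J_2(r\xi)\sin(t\xi)\widehat{v_{1,1}}(\xi)\,d\xi$ and the structural estimates on $f\in F_b$, I would derive pointwise bounds on $v_1$ and its derivatives in the interior $r\lesssim t/2$, exterior $r\gtrsim t/2$, and transition regions; these form the quantitative input to everything that follows.

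Next, I would construct the inner corrections $v_j$ for $j\ge 2$ by solving the inhomogeneous equation $(-\partial_{tt}+\partial_{rr}+r^{-1}\partial_r-4r^{-2})v_j = RHS_j$ via the Duhamel formula run from $+\infty$ (so the Cauchy data vanish at infinity), where $RHS_2 = 6Q_{1/\lambda}r^{-2}v_1^2+2r^{-2}v_1^3$ and $RHS_j$ for $j\ge 3$ is the expression written in Section~3. An inductive estimate with a factorial denominator, arising from the nested temporal integrations in the Duhamel formula, yields absolute convergence of $v_s=\sum_{j\ge 3}v_j$ together with its first and second derivatives on $\{t\ge T_1,r\ge 0\}$, and shows that the error of $u_3=Q_{1/\lambda}+v_c$ with $v_c=v_1+v_2+v_s$ is the sum of the soliton term $\partial_{tt}Q_{1/\lambda}$ and the linear term $-6r^{-2}(1-Q_{1/\lambda}^2)v_c$, both bounded as in \eqref{prewcerrorterm}. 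Then I would introduce the intermediate length scale $g(t)$ of Preliminary Definition~\ref{gprelimdef}, localize the $u_3$ error to $r\le g(t)/2$, and construct a second iteration $w_2$, then $w_j$ for $j\ge 3$, by the same Duhamel-from-infinity scheme, producing a convergent series $w_s=\sum_{j\ge 3}w_j$. The ansatz is $u_5 = Q_{1/\lambda}+v_c+w_c$ with $w_c=w_2+w_s$, and plugging it into \eqref{ym} one reads off the decomposition of the residual as $F_4+F_5$, with $F_5$ arising precisely from the exterior cut-off of the $w_c$ interactions; the pointwise bounds on $F_4$ and the $L^2$ bounds on $F_5$ (and $L^*LF_5$) stated in the theorem then follow by direct calculation from the estimates established for each piece.

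The remaining task is to choose $e$ so that $\langle F_4(t,R\lambda(t)),\phi_0(R)\rangle_{L^2(RdR)}=0$. Using the computation at \eqref{v11ip}, the identity $\langle\partial_{tt}Q_{1/\lambda}(R\lambda(t)),\phi_0(R)\rangle = \tfrac{2}{3}\lambda''(t)/\lambda(t)$, and the cancellation built into the choice \eqref{v11eqnintro1} of $v_{1,1}$, the orthogonality reduces to an equation of the schematic form $\tfrac{2}{3}(\lambda''(t)/\lambda(t)) - \tfrac{2}{3}(\lambda_0''(t)/\lambda_0(t)) = N(e,t)$, where $N(e,t)$ collects subleading contributions from $v_2,v_s,w_c$ and from replacing $K_1(\xi\lambda(t))$ by $1/(\xi\lambda(t))$. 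Writing $\lambda = \lambda_0(1+e)$ this becomes a second-order ODE for $e$ with small inhomogeneity and small coefficients, which I would recast as a fixed-point equation $e = T(e)$ by integrating twice from $+\infty$, and then solve in $\overline{B}_1(0)\subset X$.

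The main obstacle is the verification that $T$ maps $\overline{B}_1(0)$ to itself and is a contraction in the norm \eqref{xnorm}. This requires showing that every term in $N(e,t)$ decays at least like $t^{-2}\log^{-(1+\delta-\delta_2)}(t)$, which is where the strict inequalities $\delta>0$ and $1+\delta-\delta_2>b$ become essential, and where the structural hypothesis $\lambda_0''/\lambda_0 = f'(t)/t$ with $f\in F_b$ is used in place of mere symbol bounds. Once the fixed point $e$ is obtained, differentiating the modulation equation twice and bootstrapping against the symbol-type bounds already proved for $f$ and for the various pieces of the ansatz upgrades $e$ from $C^2$ to $C^4$ with the stated bounds on $e^{(3)},e^{(4)}$ involving the exponents $\delta_4,\delta_5$; this is the step from which $\delta_4,\delta_5>0$ emerge, and it in turn yields $\lambda\in C^4([T_0,\infty))$. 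The estimate \eqref{vcorrinftyest} on $v_{corr}=v_c+w_c$ is then a consequence of combining the pointwise bounds on each piece of the ansatz evaluated at this chosen $\lambda$.
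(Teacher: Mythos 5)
Your overall strategy mirrors the paper's: define $v_{1}$ from $\lambda_{0}$, iterate and sum the $v_{j}$, introduce the intermediate scale $g(t)$ and iterate the $w_{j}$, decompose the residual as $F_{4}+F_{5}$, then pin down $e$ by a contraction on the modulation equation and bootstrap its regularity to $C^{4}$. Two points, however, are off in your sketch, and both would be stumbling blocks if you tried to execute it as written. First, the series $v_{s}=\sum_{j\geq 3}v_{j}$ (and likewise $w_{s}$) does not converge via a factorial gain from nested time integrations: the Duhamel formula here runs backwards from $+\infty$, so there is no Picard-type $T^{n}/n!$ factor to draw on. What the paper exploits instead is a gain of $\log^{-b}(t)$ per iteration --- each $RHS_{j}$ carries at least one extra factor of $v_{j-1}$ (hence of size $O(\log^{-b}(t))$) relative to $RHS_{j-1}$ --- and the resulting series is geometric, with the inductive constants $C_{1}^{nj}$ dominated by $\log^{-b}(T_{0})$ once $T_{0}$ is large. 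Second, the regularity upgrade from $e_{0}\in C^{2}$ to $C^{4}$ cannot be done merely by ``differentiating the modulation equation twice and bootstrapping against bounds already proved,'' because $WRHS_{2}$ (and hence each $w_{j}$) depends on $\lambda''$; one is not entitled to write $\partial_{t}w_{j}$ as a solution with source $\partial_{t}WRHS_{j}$ until one first establishes $\lambda\in C^{3}$, which is precisely what is being sought. The paper resolves this circularity by a two-step argument: from \eqref{e0eqndiff} one reads off a \emph{preliminary}, weaker bound on $e_{0}'''$ using only the preliminary (non-differentiated) estimates on $\partial_{t}w_{j}$, then uses that to justify the finer representation for $\partial_{t}w_{j}$ from Appendix A, which yields a self-improving inequality for $\sup_{x\geq t}x^{3/2}|e_{0}'''(x)|$ that closes the estimate; the same two-pass scheme is repeated for $e_{0}''''$. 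Neither issue invalidates your high-level plan, but your description of the convergence mechanism and of the regularity bootstrap are not quite the ones that make the argument go through.
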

\subsection{The Cauchy data for the radiation $v_{1}$}\label{cauchydatav1}
In this section, we will introduce the initial velocity for the first addition to the soliton in our ansatz, which we will denote by $v_{1}$.\\
\\
Let $\psi \in C^{\infty}([0,\infty))$ satisfy  
\begin{equation}\label{psidefinition}\psi(x) = \begin{cases} 0, \quad x \leq T_{\lambda_{0}}\\
1, \quad x \geq 2 T_{\lambda_{0}}\end{cases}, \quad \text{ and } 0 \leq \psi(x) \leq 1, \quad x \geq 0.\end{equation} 
Then, $t \mapsto \psi(t) f(t)$, apriori only defined on $[T_{\lambda_{0}},\infty)$, extends to a smooth function on $[0,\infty)$. Note that 
$$\frac{\lambda_{0}''(t)}{\lambda_{0}(t)} = \frac{(\psi \cdot f)'(t)}{t}, \quad t \geq 2 T_{\lambda_{0}}.$$
Finally, we define $v_{1,1}$ by specifying its Hankel transform of order 2:
\begin{equation}\label{v11def}\widehat{v_{1,1}}(\xi) = \frac{8}{3 \pi \xi^{2}} \int_{0}^{\infty} \frac{(\psi \cdot f)'(t)}{t} \sin(t\xi) dt.\end{equation}
As in \cite{wm}, this definition is made so as to allow $\lambda(t) = \lambda_{0}(t)$ to be a leading order solution to the eventual modulation equation for $\lambda$. Now, we record some pointwise estimates on $\widehat{v_{1,1}}$ and its derivatives. 
\begin{lemma}\label{v11lemma} For $k \geq 0$, there exist  constants $C_{k}, C(k,N)$, such that
\begin{equation}\label{v11hatptwse} |\partial_{\xi}^{k}\left(\xi^{2} \widehat{v_{1,1}}(\xi)\right)| \leq \begin{cases} \frac{C_{k}}{\xi^{k-1}\log^{b}(\frac{1}{\xi})}, \quad \xi \leq \frac{1}{4}\\
\frac{C(k,N)}{\xi^{k+4N}}, \quad \xi > \frac{1}{4}, N \geq 1\end{cases}. \end{equation}\end{lemma}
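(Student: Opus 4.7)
The statement is equivalent to bounds on $\partial_\xi^k I(\xi)$, where
\[I(\xi) := \int_0^\infty \frac{(\psi \cdot f)'(t)}{t}\sin(t\xi)\, dt,\]
since $\xi^2 \widehat{v_{1,1}}(\xi) = \frac{8}{3\pi} I(\xi)$. Writing $\phi_k(x) := \sin(x+k\pi/2)$, differentiation under the integral sign gives
\[\partial_\xi^k I(\xi) = \int_0^\infty t^{k-1}(\psi \cdot f)'(t)\, \phi_k(t\xi)\, dt,\]
interpreted as an improper integral convergent at infinity by oscillation. Two structural facts drive the argument: $\psi \equiv 0$ on $[0,T_{\lambda_0}]$ forces $(\psi f)^{(j)}$ to vanish identically near $t=0$ for every $j$; and the symbol bounds $|f^{(j)}(t)| \leq C_j/(t^j \log^b t)$ from Definition \ref{fdef}, together with the Leibniz rule, yield
\[\bigl|(t^{k-1}(\psi f)')^{(m)}(t)\bigr| \leq C_{k,m}/(t^{m+2-k}\log^b t)\]
for $t \geq 2T_{\lambda_0}$ (and the quantity is a smooth bounded function on the compact transition region $[T_{\lambda_0},2T_{\lambda_0}]$, identically zero on $[0,T_{\lambda_0}]$).

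For the large-$\xi$ estimate ($\xi>1/4$), I would integrate by parts $k+4N$ times in $t$, each step absorbing one derivative into the oscillatory factor and gaining $\xi^{-1}$. The boundary contributions at $t=0$ vanish by the first structural fact; once at least $k$ integrations have been performed, the boundary contributions at $t=\infty$ also vanish by the amplitude estimate above. The remaining volume integral is bounded by $\xi^{-(k+4N)}\int_{2T_{\lambda_0}}^\infty dt/(t^{4N+2}\log^b t)$, producing the required $C(k,N)\,\xi^{-(k+4N)}$.

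For the small-$\xi$ estimate ($\xi\leq 1/4$), my plan is to split the integral at $t=1/\xi$. On $[0,1/\xi]$ I would Taylor expand $\phi_k(t\xi)=\sum_{j=0}^{J}a_j(t\xi)^j + R_J(t\xi)$ with $|R_J(x)|\leq C|x|^{J+1}$ on $|x|\leq 1$. Each polynomial piece $a_j\xi^j\int_0^{1/\xi}t^{k-1+j}(\psi f)'(t)\,dt$, integrated by parts once using $(\psi f)(0)=0$, equals
\[a_j\xi^j\left[(1/\xi)^{k-1+j}(\psi f)(1/\xi) - (k-1+j)\int_0^{1/\xi}t^{k-2+j}(\psi f)(t)\,dt\right],\]
and the decay $|(\psi f)(t)|\leq C/\log^b\langle t\rangle$ bounds both terms by $C\xi^{1-k}/\log^b(1/\xi)$ after direct integration. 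The remainder contribution is $O\bigl(\xi^{J+1}\int_0^{1/\xi}t^{k+J}|(\psi f)'(t)|\,dt\bigr)=O(\xi^{1-k}/\log^b(1/\xi))$ once $J$ is taken large. On $[1/\xi,\infty)$ I would integrate by parts $m\geq k$ times: the boundary contributions at infinity vanish as in the previous paragraph, while each boundary at $t=1/\xi$ takes the form $\xi^{-j}(t^{k-1}(\psi f)')^{(j-1)}\big|_{t=1/\xi}\cdot O(1)=O\bigl(\xi^{-j}\cdot\xi^{j-k+1}/\log^b(1/\xi)\bigr)=O(\xi^{1-k}/\log^b(1/\xi))$ by the amplitude estimate. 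The remaining volume integral $\xi^{-m}\int_{1/\xi}^\infty dt/(t^{m+2-k}\log^b t)$ is bounded by the same quantity once $m\geq k$.

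The main technical hurdle is justifying the repeated integration by parts on the unbounded tail $[1/\xi,\infty)$ when $k\geq 3$, because the amplitude $t^{k-1}(\psi f)'(t)$ itself fails to vanish at infinity and intermediate oscillatory boundary terms at $t=\infty$ do not converge individually. This is standardly resolved by first inserting a smooth cutoff $\chi(t/R)$, performing all $m$ integrations by parts simultaneously on the truncated integral, and then passing to the limit $R\to\infty$; the cutoff derivatives contribute terms that vanish in the limit by uniform control of the amplitude, leaving only the fully integrated remainder and the boundary at $t=1/\xi$, which combine to yield the stated bound.
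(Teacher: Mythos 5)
Your proposal is correct and follows essentially the same route as the paper: split at $t=1/\xi$, exploit that $\psi$ vanishes identically on $[0,T_{\lambda_0}]$ to kill all lower boundary terms, and integrate by parts repeatedly using the symbol bounds on $f$ to control the tail. The one notable presentational difference is that the paper first substitutes $\sigma=t\xi$, after which the differentiated integrals are absolutely convergent (the amplitude scales like $\xi^{j}/(\sigma^{2}\log^{b}(\sigma/\xi))$), so it avoids the improper-integral and cutoff-truncation bookkeeping you flag at the end.
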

\begin{proof} We have
\begin{equation}\label{v11int} \begin{split} \widehat{v_{1,1}}(\xi) &=\frac{8}{3 \pi \xi^{2}} \int_{0}^{\frac{1}{\xi}} \frac{\left(\psi f\right)'(t)}{t} t \xi dt + \frac{8}{3 \pi \xi^{2}} \int_{0}^{\frac{1}{\xi}} \frac{\left(\psi f\right)'(t)}{t} \left(\sin(t\xi)-t\xi\right) dt + \frac{8}{3 \pi \xi^{2}} \int_{\frac{1}{\xi}}^{\infty} \frac{\left(\psi f\right)'(t)}{t} \sin(t\xi) dt.\end{split}\end{equation}
We start with the region $\xi \leq \frac{1}{4}$. For the first term on the right-hand side of \eqref{v11int}, we have
$$|\frac{8}{3 \pi \xi^{2}} \int_{0}^{\frac{1}{\xi}} \frac{\left(\psi f\right)'(t)}{t} t \xi dt| = |\frac{8}{3 \pi \xi} \left(\psi \cdot f\right)(\frac{1}{\xi})| \leq \frac{C}{\xi \log^{b}(\frac{1}{\xi})}.$$
For the other two terms on the right-hand side of \eqref{v11int}, we have
\begin{equation}\begin{split}&|\frac{8}{3 \pi \xi^{2}} \int_{0}^{\frac{1}{\xi}} \frac{\left(\psi f\right)'(t)}{t} \left(\sin(t\xi)-t\xi\right) dt|+|\frac{8}{3 \pi \xi^{2}} \int_{\frac{1}{\xi}}^{\infty} \frac{\left(\psi f\right)'(t)}{t} \sin(t\xi) dt|\\
&\leq \frac{C}{\xi^{2}} \begin{cases} 0, \quad \frac{1}{T_{\lambda_{0}}}< \xi\\
\int_{T_{\lambda_{0}}}^{\frac{1}{\xi}} \frac{t \xi^{3} dt}{\log^{b}(t)}, \quad \xi \leq \frac{1}{T_{\lambda_{0}}}\end{cases} + \frac{C}{\xi^{2}} \int_{\frac{1}{\xi}}^{\infty} \frac{dt}{t^{2}\log^{b}(t)}\leq \frac{C}{\xi \log^{b}(\frac{1}{\xi})}.\end{split}\end{equation}
In total, we thus have
\begin{equation}\label{v20nearorigin}|\widehat{v_{1,1}}(\xi)| \leq \frac{C}{\xi \log^{b}(\frac{1}{\xi})}, \quad \xi \leq \frac{1}{4}.\end{equation}
Note that the first term on the right-hand side of \eqref{v11int} is where we use the condition that 
\begin{equation}\label{lambda0str2}\frac{\lambda_{0}''(t)}{\lambda_{0}(t)} = \frac{f'(t)}{t}, \quad |f(t)| \leq \frac{C}{\log^{b}(t)}, \quad t \geq T_{\lambda_{0}}.\end{equation}
This condition, along with $b>\frac{2}{3}$ guarantees that $\widehat{v_{1,1}} \in L^{2}((0,\infty), \xi d\xi)$. (We will see shortly that $\widehat{v_{1,1}}(\xi)$ is rapidly decreasing for large $\xi$). Continuing our estimates, for each $k \geq 1$, there exist constants $C_{j,k}$ such that
$$\partial_{\xi}^{k}\left(\left(\psi f\right)'(\frac{\sigma}{\xi})\right) = \sum_{j=2}^{k+1} \frac{\left(\psi f\right)^{(j)}(\frac{\sigma}{\xi}) \sigma^{j-1} C_{j,k}}{\xi^{k+j-1}}.$$
To estimate $|\partial_{\xi}^{k}\left(\xi^{2} \widehat{v_{1,1}}(\xi)\right)|$ for $k \geq 1$, in the region $\xi \leq \frac{1}{4}$, it suffices to consider the case $\xi \leq \frac{4}{T_{\lambda_{0}}}$, since $$|\partial_{\xi}^{k}\left(\xi^{2} \widehat{v_{1,1}}(\xi)\right)|\leq C_{k}, \quad \frac{4}{T_{\lambda_{0}}} \leq \xi \leq \frac{1}{4}.$$ 
In the case $T_{\lambda_{0}}\xi \leq 4$, we let $\sigma =t \xi$ in the integral defining $\widehat{v_{1,1}}$, and differentiate under the integral sign. Then, we use the support properties of $\psi$, and treat the integral over $\sigma \in [T_{\lambda_{0}}\xi,4]$ and $(4,\infty)$ separately.  Hence, for some constant $C_{k}$, whose value may change from line to line:
\begin{equation}\label{dxikv20int} \partial_{\xi}^{k}\left(\xi^{2} \widehat{v_{1,1}}(\xi)\right) = \frac{8}{3\pi} \sum_{j=2}^{k+1} \frac{C_{j,k}}{\xi^{k+j-1}} \int_{T_{\lambda_{0}}\xi}^{4} \left(\psi f\right)^{(j)}(\frac{\sigma}{\xi}) \sigma^{j-1} d\sigma + \text{Err}(\xi)\end{equation}
where
\begin{equation}\nonumber\begin{split}|Err(\xi)| &\leq C \sum_{j=2}^{k+1} \int_{T_{\lambda_{0}}\xi}^{4} \frac{C_{j,k} C_{j} \sigma d\sigma}{\log^{b}(\frac{\sigma}{\xi}) \xi^{k-1}} + \sum_{j=2}^{k+1}\frac{C_{j,k}}{\xi^{k-1} \log^{b}(\frac{1}{\xi})}\leq \frac{C_{k}}{\xi^{k-1} \log^{b}(\frac{1}{\xi})}, \quad \xi \leq \frac{4}{T_{\lambda_{0}}}.\end{split}\end{equation}
By induction, for $j \geq 1$,
$$\int_{a}^{b} \left(\psi f\right)^{(j)}(x) x^{j-1} dx = (-1)^{j-1} (j-1)! \sum_{q=0}^{j-1}\frac{(-1)^{q}}{q!} b^{q}\left(\psi f\right)^{(q)}(b), \quad \text{ if } \left(\psi f\right)^{(n)}(a) =0, \text{ for }n\geq 0.$$
Using this fact, and the support properties of $\psi$, we return to \eqref{dxikv20int} and \eqref{v20nearorigin} and get, for $k \geq 0$,
\begin{equation}\label{dxikv20nearorigin} |\partial_{\xi}^{k}\left(\xi^{2} \widehat{v_{1,1}}(\xi)\right)| \leq \frac{C_{k}}{\xi^{k-1}\log^{b}(\frac{1}{\xi})}, \quad \xi \leq \frac{1}{4}.\end{equation}

Finally, for $k \geq 1$,
$$\partial_{\xi}^{k}\left(\xi^{2} \widehat{v_{1,1}}(\xi)\right) = \frac{8}{3 \pi} \sum_{j=2}^{k+1} \int_{0}^{\infty} \frac{\left(\psi f\right)^{(j)}(\frac{\sigma}{\xi})}{\xi^{k+j-1}} C_{j,k} \sigma^{j-2} \sin(\sigma) d\sigma$$
implies 
\begin{equation} |\partial_{\xi}^{k}\left(\xi^{2} \widehat{v_{1,1}}(\xi)\right)| \leq \frac{C(k,N)}{\xi^{k+4N}}, \quad \xi > \frac{1}{4}, N \geq 1, k \geq 0.\end{equation}
This completes the proof of the lemma.\end{proof}
\begin{remark}\label{necessarycondremark} \textup{Since $v_{1,1}$ will end up being the initial velocity of our radiation component of the solution, $v_{1}$, Lemma \ref{v11lemma} implies that $v_{1}$ has finite energy}. \textup{The condition \eqref{lambda0str2}, which was important in the proof that $\widehat{v_{1,1}}\in L^{2}((0,\infty),\xi d\xi)$, also implies that $\lambda(t)$ must asymptote to a constant as $t$ approaches infinity, as shown in \eqref{feqnintro1}.}\\
\\
\textup{If we did not assume the condition \eqref{lambda0str2}, we would have defined $\widehat{v_{1,1}}(\xi)$ by
$$\widehat{v_{1,1}}(\xi) = \frac{8}{3 \pi \xi^{2}} \int_{0}^{\infty} h(t) \sin(t\xi) dt$$
where $h$ is some extension of $\frac{\lambda_{0}''(t)}{\lambda_{0}(t)}$, which satisfies
$$h(t) = \frac{\lambda_{0}''(t) }{\lambda_{0}(t)}, \quad t \geq 2T_{\lambda_{0}}, \quad h\in C^{\infty}([0,\infty)).$$
Now we show that if we didn't assume the structural condition \eqref{lambda0str2}, but rather, only the symbol-type estimates
\begin{equation}\label{lambda0symb2}\frac{|\lambda_{0}^{(k)}(t)|}{\lambda_{0}(t)} \leq \frac{C_{k}}{t^{k}\log^{b}(t)}, \quad k \geq 1, \quad t \geq T_{\lambda_{0}}\end{equation} 
and regardless of how one chooses to extend $\frac{\lambda_{0}''(t)}{\lambda_{0}(t)}$ from a function defined on, say $[2T_{\lambda_{0}},\infty)$ to one defined on $[0,\infty)$ we would still need $\lambda_{0}(t)\rightarrow c$ to have $\widehat{v_{1,1}}\in L^{2}((0,\infty),\xi d\xi)$. We show this as follows. Let $h(t)$ be any extension of $\frac{\lambda_{0}''(t)}{\lambda_{0}(t)}$, which satisfies
$$h(t) = \frac{\lambda_{0}''(t)}{\lambda_{0}(t)}, \quad t \geq 2T_{\lambda_{0}}, \quad h\in C^{\infty}([0,\infty)).$$ 
If $\widehat{v_{1,1}} \in L^{2}((0,\infty),\xi d\xi)$, then, $\widehat{v_{1,1}} \in L^{2}((0,\frac{1}{2T_{\lambda_{0}}}),\xi d\xi)$. The analog of \eqref{v11int} is 
\begin{equation}\label{v11int2}\widehat{v_{1,1}}(\xi) =\frac{8}{3 \pi \xi^{2}} \int_{0}^{\frac{1}{\xi}} h(t) t \xi dt + \frac{8}{3 \pi \xi^{2}} \int_{0}^{\frac{1}{\xi}} h(t) \left(\sin(t\xi)-t\xi\right) dt + \frac{8}{3 \pi \xi^{2}} \int_{\frac{1}{\xi}}^{\infty}h(t) \sin(t\xi) dt.\end{equation}
Even without the structural condition \eqref{lambda0str2}, the second and third terms of \eqref{v11int2} are bounded above in absolute value by
$$\frac{C}{\xi \log^{b}(\frac{1}{\xi})}, \quad \xi \leq \frac{1}{2 T_{\lambda_{0}}}.$$
(Recall that $\frac{1}{2T_{\lambda_{0}}} \leq \frac{1}{4})$. Therefore, the condition $\widehat{v_{1,1}} \in L^{2}((0,\frac{1}{2T_{\lambda_{0}}}),\xi d\xi)$ implies that the first term on the right-hand side of \eqref{v11int2} is in $L^{2}((0,\frac{1}{2T_{\lambda_{0}}}),\xi d\xi)$. 
If we let
$$G(x) = \frac{8}{3\pi}\left(\int_{0}^{2T_{\lambda_{0}}} h(t) t dt+ \int_{2T_{\lambda_{0}}}^{x} \frac{t \lambda_{0}''(t)}{\lambda_{0}(t)} dt\right), \quad x > 2 T_{\lambda_{0}}$$
then, the first term of \eqref{v11int2} is
$$\frac{G(\frac{1}{\xi})}{\xi} \in L^{2}((0,\frac{1}{2T_{\lambda_{0}}}),\xi d\xi).$$
Therefore,
$$\int_{0}^{\frac{1}{2T_{\lambda_{0}}}} \frac{|G(\frac{1}{\xi})|^{2}}{\xi} d\xi = \int_{\log(2T_{\lambda_{0}})}^{\infty} (G(e^{u}))^{2} du < \infty, \quad u = \log(\frac{1}{\xi}).$$
But,
$$\frac{d}{du} \left(G(e^{u})\right)^{2} = 2 G(e^{u}) G'(e^{u}) e^{u} = 2 G(e^{u}) \frac{8}{3\pi}\left(  \frac{e^{u} \lambda_{0}''(e^{u})}{\lambda_{0}(e^{u})}\right) e^{u}.$$
Therefore,
$$|\frac{d}{du}\left(G(e^{u})\right)^{2}| \leq C |G(e^{u})| \frac{1}{\log^{b}(e^{u})} \leq C \left(1+\frac{1}{\log^{b-1}(e^{u})}\right) \frac{1}{\log^{b}(e^{u})} \leq C, \quad u \geq \log(2T_{\lambda_{0}})$$
where we used $b>\frac{2}{3} > \frac{1}{2}$, and
$$|G(x)| \leq C +  C \int_{2T_{\lambda_{0}}}^{x} \frac{dt}{t \log^{b}(t)} \leq C \left(1+\frac{1}{\log^{b-1}(x)}\right), \quad x \geq 2T_{\lambda_{0}}$$
and we stress again that we only use the symbol-type estimates \eqref{lambda0symb2}, and \emph{not} the structural condition \eqref{lambda0str2} for this discussion.\\
\\
But, now we can conclude that $u \mapsto (G(e^{u}))^{2}$ is Lipschitz, whence, the condition
$$\int_{2T_{\lambda_{0}}}^{\infty} (G(e^{u}))^{2} du < \infty$$
implies that $\lim_{u \rightarrow \infty} (G(e^{u}))^{2} =0$, which is to say that 
$$\lim_{x \rightarrow \infty} \int_{2T_{\lambda_{0}}}^{x} \frac{t \lambda_{0}''(t)}{\lambda_{0}(t)} dt < \infty, \quad \text{ or, equivalently, that } \lim _{\xi \rightarrow 0^{+}}\int_{2T_{\lambda_{0}}}^{\frac{1}{\xi}} \frac{t \lambda_{0}''(t)}{\lambda_{0}(t)} dt < \infty.$$
Therefore, a necessary (but in general insufficient) condition for $\widehat{v_{1,1}} \in L^{2}((0,\infty), \xi d\xi)$ is that
$$\lim_{\xi \rightarrow 0} \int_{2T_{\lambda_{0}}}^{\frac{1}{\xi}} \frac{\lambda_{0}''(t)}{\lambda_{0}(t)} t dt <\infty.$$
(In particular, the limit has to exist). Repeating the same computation done in \eqref{feqnintro1}, we again get
$$\lim_{\xi \rightarrow 0} \log(\lambda_{0}(\frac{1}{\xi})) <\infty$$
which implies that, in our setting, $\lambda_{0}(t)$ asymptoting to a non-zero constant for large $t$ is necessary for the radiation $v_{1}$ to have finite energy.}
\end{remark}
\subsection{Estimates on $v_{1}$}
We define $v_{1}$ to be the solution to the following Cauchy problem
\begin{equation}\begin{cases} -\partial_{tt}v_{1}+\partial_{rr}v_{1}+\frac{1}{r}\partial_{r}v_{1}-\frac{4 v_{1}}{r^{2}} =0\\
v_{1}(0)=0\\
\partial_{t}v_{1}(0) = v_{1,1}\end{cases}.\end{equation}
From now onwards, we always restrict the $t$ coordinate to satisfy $t \geq T_{0}$. Recall that $T_{0}>0$ has so far been constrained to satisfy \eqref{T0introconst}, and will be further constrained as the proof progresses.
\begin{lemma}\label{v1lemma} We have the following estimates 
\begin{equation}\label{v1est} |v_{1}(t,r)| \leq \begin{cases} \frac{C r^{2}}{t^{2} \log^{b}(t)}, \quad r \leq \frac{t}{2}\\
\frac{C}{\log^{b}(r)}, \quad r \geq \frac{t}{2}\end{cases}.\end{equation}
For $1 \leq j+k$, and $0 \leq j, k \leq 2$,
\begin{equation}\label{dv1est} |\partial_{t}^{j}\partial_{r}^{k} v_{1}(t,r)| \leq \begin{cases} C \frac{r^{2-k}}{t^{2+j} \log^{b}(t)}, \quad r \leq \frac{t}{2}\\
\frac{C}{\sqrt{r}\log^{b}(\langle t-r\rangle) \langle t-r\rangle^{\frac{1}{2}+j+k-1} },\quad r > \frac{t}{2} \end{cases}.\end{equation}
\end{lemma}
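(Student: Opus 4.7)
The plan is to work directly from the Hankel-of-order-2 representation of $v_{1}$. Since $J_{2}(r\xi)$ diagonalizes $\partial_{rr}+\frac{1}{r}\partial_{r}-\frac{4}{r^{2}}$ with eigenvalue $-\xi^{2}$, the solution to the Cauchy problem is
$$v_{1}(t,r)=\int_{0}^{\infty}\widehat{v_{1,1}}(\xi)\,J_{2}(r\xi)\,\sin(t\xi)\,d\xi.$$
The pointwise estimates on $\partial_{\xi}^{k}(\xi^{2}\widehat{v_{1,1}}(\xi))$ from Lemma \ref{v11lemma} will drive all subsequent bounds, combined with the small-argument behaviour $J_{2}(x)=\tfrac{x^{2}}{8}+O(x^{4})$ and the large-argument expansion $J_{2}(x)=\sqrt{\tfrac{2}{\pi x}}\cos(x-\tfrac{5\pi}{4})+O(x^{-3/2})$.

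For the inner region $r\leq t/2$, I first factor $J_{2}(r\xi)=(r\xi)^{2}g(r\xi)$ with $g(x):=J_{2}(x)/x^{2}$ smooth on $[0,\infty)$ and $|x^{j}g^{(j)}(x)|\leq C$ for all $j,x$. This exhibits
$$v_{1}(t,r)=r^{2}\int_{0}^{\infty}F(\xi)\sin(t\xi)\,d\xi,\qquad F(\xi):=\xi^{2}\widehat{v_{1,1}}(\xi)\,g(r\xi),$$
and the bound $|\xi^{2}\widehat{v_{1,1}}(\xi)|\leq C\xi/\log^{b}(1/\xi)$ from Lemma \ref{v11lemma} gives $F(0)=0$. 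Two integrations by parts in $\xi$ (all boundary terms vanish, using $F(0)=0$, $\sin(0)=0$, and the rapid decay of $\widehat{v_{1,1}}$ at infinity) yield $v_{1}(t,r)=-(r^{2}/t^{2})\int_{0}^{\infty}F''(\xi)\sin(t\xi)\,d\xi$. The remaining integral is not absolutely convergent for $b\leq1$ because $|F''(\xi)|\lesssim\xi^{-1}\log^{-b}(1/\xi)$ near zero. I handle this by splitting at $\xi=1/t$: on $[0,1/t]$ the bound $|\sin(t\xi)|\leq t\xi$ tames the $\xi^{-1}$ singularity and yields $O(\log^{-b}t)$; on $[1/t,\infty)$ a further integration by parts produces a boundary term of size $|F''(1/t)|/t\lesssim\log^{-b}t$ and an $|F'''|$-integral controlled using $|\partial_{\xi}^{3}(\xi^{2}\widehat{v_{1,1}}(\xi))|\leq C\xi^{-2}\log^{-b}(1/\xi)$, again of order $\log^{-b}t$. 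This gives $|v_{1}|\leq Cr^{2}/(t^{2}\log^{b}t)$.

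For the outer region $r>t/2$, I split at $\xi=1/r$. On $[0,1/r]$ the bounds $|J_{2}(r\xi)|\leq C(r\xi)^{2}$ and $|\widehat{v_{1,1}}(\xi)|\leq C\xi^{-1}\log^{-b}(1/\xi)$ give directly a contribution of size $C/\log^{b}r$. On $[1/r,\infty)$ I insert the large-argument expansion of $J_{2}(r\xi)$ and write $\cos(r\xi-\tfrac{5\pi}{4})\sin(t\xi)$ as a sum of $\sin((t\pm r)\xi+\mathrm{const})$ terms. The $(t+r)$-phase term is handled by a single integration by parts and is of lower order because $t+r\gtrsim r$. For the $(t-r)$-phase term the integrand is of size $r^{-1/2}\xi^{-3/2}\log^{-b}(1/\xi)$ on $[1/r,1/4]$ and rapidly decreasing for $\xi\geq 1/4$; since $\int_{1/r}^{1/4}\xi^{-3/2}d\xi\lesssim r^{1/2}$, this yields the required $C/\log^{b}r$.

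The derivative estimates follow the same scheme: differentiating under the integral sign inserts a factor $\xi^{j+k}$ and replaces $J_{2}$, $\sin$ by their derivatives. In the inner region, the $r^{2-k}$ factor comes from the Taylor expansion of $J_{2}^{(k)}$ at zero (which vanishes to order $2-k$ for $0\leq k\leq 2$), while the $1/t^{2+j}$ decay follows from $j+2$ integrations by parts in $\xi$, with boundary terms vanishing by the same mechanism (the amplitude $\xi^{j}\cdot\xi^{2}\widehat{v_{1,1}}(\xi)\cdot g(r\xi)$ vanishes to order $1+j$ at $\xi=0$). In the outer region, the extra $\xi^{j+k}$ factor allows $j+k$ further integrations by parts in the $(t-r)$-phase to produce the $\langle t-r\rangle^{-(j+k-1/2)}\log^{-b}\langle t-r\rangle$ decay, with the $\log^{-b}\langle t-r\rangle$ factor arising from the behaviour of $\widehat{v_{1,1}}$ at the cutoff scale $\xi\sim 1/\langle t-r\rangle$. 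The main technical obstacle is managing the non-absolute convergence of the post-IBP integrals caused by the $\log^{-b}$ weight near $\xi=0$: the boundary contributions at the splitting scales $\xi=1/t$, $\xi=1/r$, $\xi=1/\langle t-r\rangle$ must be tracked carefully so that none exceeds the claimed bound.
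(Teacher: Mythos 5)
Your proposal is broadly correct but takes a genuinely different route from the paper in the inner region, and a similar-in-spirit but technically different route in the outer region, and there are a few concrete slips in the stated estimates. Let me go through both.

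\textbf{Comparison of routes.} In the inner region $r \leq t/2$, the paper does \emph{not} do stationary-phase/IBP. It instead uses the Poisson integral representation of $J_{2}$ together with the sine-transform inversion identity (which holds because $\widehat{v_{1,1}}$ is by construction the sine transform of $\tfrac{8}{3\pi t}(\psi f)'(t)$) to produce an \emph{exact} closed form, see \eqref{v1rep},
\[
v_{1}(t,r)=\frac{2r^{2}}{9\pi}\int_{0}^{\pi}\sin^{4}(\theta)\Bigl(\frac{(\psi f)'(t_{+})}{t_{+}}+\frac{(\psi f)'(t_{-})}{t_{-}}\Bigr)\,d\theta,\quad t_{\pm}=t\pm r\cos\theta\in[t/2,3t/2],
\]
from which \eqref{v1est} and the inner-region half of \eqref{dv1est} drop out by direct differentiation, with no boundary terms to track. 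Your argument uses only the symbol-type bounds of Lemma \ref{v11lemma} and generic oscillatory-integral estimates; it is more robust in the sense of not needing the special structure of $\widehat{v_{1,1}}$, but considerably more painful, since the non-absolute convergence beyond two IBPs has to be managed by hand. In the outer region you use direct IBP in $\xi$ with a split at scale $1/\langle t-r\rangle$; the paper instead performs the Fresnel change of variables $\xi=\omega^{2}$ (see \eqref{Fcosterm}), which renormalizes the low-frequency singularity of $\widehat{v_{1,1}}$ before integrating by parts. These do the same job.

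\textbf{Slips to fix.} First, the assertion ``$|x^{j}g^{(j)}(x)|\leq C$ for all $j$'' for $g(x)=J_{2}(x)/x^{2}$ is false for $j\geq 3$: since $J_{2}^{(m)}(x)=O(x^{-1/2})$ for all $m$, one has the \emph{symbol} estimate $|g^{(j)}(x)|\lesssim\langle x\rangle^{-5/2}$, hence $x^{j}g^{(j)}(x)=O(x^{j-5/2})$, unbounded for $j\geq 3$. For the two IBPs this does not yet hurt ($|F''|\lesssim\xi^{-1}\log^{-b}(1/\xi)$ does hold uniformly), but when you pass to the third derivative the Leibniz term $\xi^{2}\widehat{v_{1,1}}(\xi)\,r^{3}g'''(r\xi)$ is of size $r^{1/2}\xi^{-3/2}\log^{-b}(1/\xi)$ in the range $r\xi>1$, which \emph{exceeds} the claimed $\xi^{-2}\log^{-b}$ there; the resulting $\xi$-integral is still $O(\log^{-b}t)$, but only after invoking $r\leq t/2$, which you omit. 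The same mechanism recurs at the higher orders $|F^{(j+3)}|$ needed for the tail IBP in the derivative estimates. Second, in the outer region ``the $(t+r)$-phase term is handled by a single integration by parts'' is insufficient once $j+k\geq 2$: with amplitude $r^{-1/2}\xi^{j+k-3/2}\log^{-b}(1/\xi)$, one IBP leaves $\frac{1}{t+r}\int_{1/r}^{1/4}\xi^{j+k-5/2}\log^{-b}d\xi$, which for $j+k=2$ is $O(r^{-3/2})$, not $O(r^{-2}\log^{-b}r)$; you need to perform $j+k$ IBPs in the $(t+r)$-phase as well, exactly as in the $(t-r)$-phase. Third, the $O(x^{-3/2})$ remainder in the $J_{2}$ asymptotic also cannot be bounded crudely once $j+k\geq 2$ (it contributes $O(r^{-3/2})$ for $j+k=2$); one must either carry at least $N>j+k$ terms of the asymptotic series before truncating, or use the full phase-amplitude representation $J_{2}^{(k)}(x)=\mathrm{Re}(e^{ix}x^{-1/2}\sigma_{k}(x))$ with $\sigma_{k}$ a zero-order symbol, in the spirit of \eqref{psiplusdef}. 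All three are repairable with the tools you already have, but as written the proposal underestimates the remainder contributions in exactly the regimes $r\xi>1$ and $j+k\geq 2$.
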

\begin{proof}
From \eqref{v11def}, we have
\begin{equation} \frac{4 (\psi\cdot f)'(t)}{3 t} = \int_{0}^{\infty} \sin(t\xi) \widehat{v_{1,1}}(\xi) \xi^{2} d\xi, \quad t \geq 0.\end{equation}
We study the region $r \leq \frac{t}{2}$ (we could just as well study any region $r \leq a t$ with $a<1$) and have
\begin{equation}\label{v1rep}\begin{split} v_{1}(t,r) &= \int_{0}^{\infty} \sin(t\xi) J_{2}(r\xi) \widehat{v_{1,1}}(\xi) d\xi = \frac{r^{2}}{6\pi} \int_{0}^{\pi} \sin^{4}(\theta) d\theta \int_{0}^{\infty} \xi^{2}\left(\sin(\xi t_{+})+\sin(\xi t_{-})\right) \widehat{v_{1,1}}(\xi) d\xi\\
&=\frac{2r^{2}}{9\pi} \int_{0}^{\pi} \sin^{4}(\theta) \left(\frac{(\psi\cdot f)'(t_{+})}{t_{+}} + \frac{(\psi\cdot f)'(t_{-})}{t_{-}}\right)d\theta\end{split}\end{equation}
where $t_{\pm} = t\pm r \cos(\theta) \geq \frac{t}{2}, \text{ for } r \leq \frac{t}{2}.$ So,
\begin{equation}|v_{1}(t,r)| \leq \frac{C r^{2}}{t^{2} \log^{b}(t)}, \quad r \leq \frac{t}{2}.\end{equation}
With the same procedure, we get, for $0 \leq j,k \leq 2$
\begin{equation}|\partial_{r}^{k}\partial_{t}^{j}v_{1}(t,r)| \leq C \frac{r^{2-k}}{t^{2+j} \log^{b}(t)}, \quad r \leq \frac{t}{2}.\end{equation}
Because of the singularity\footnote{What is meant here is that $f$ ranges through a class of functions including ones for which the associated $\widehat{v_{1,1}}(\xi)$ could have singularities at low frequencies $\xi$.} of $\widehat{v_{1,1}}(\xi)$ for small $\xi$, $v_{1}$ does not decay like $\frac{1}{\sqrt{r}}$ for large $r$. On the other hand, its derivatives do decay like $\frac{1}{\sqrt{r}}$ near the cone, because of their improved low frequency behavior. In particular, we prove \eqref{v1est} for $r \geq \frac{t}{2}$ as follows.
\begin{equation}\label{v1fourierrep}\begin{split}v_{1}(t,r) &= \int_{0}^{\infty} \sin(t\xi) J_{2}(r\xi) \widehat{v_{1,1}}(\xi) d\xi= \int_{0}^{\frac{1}{r}} \sin(t\xi) J_{2}(r\xi) \widehat{v_{1,1}}(\xi) d\xi + \int_{\frac{1}{r}}^{\infty} \sin(t\xi) J_{2}(r\xi) \widehat{v_{1,1}}(\xi) d\xi.\end{split}\end{equation}
So,
\begin{equation}\begin{split}|v_{1}(t,r)|&\leq C \int_{0}^{\frac{1}{r}} \frac{r^{2} \xi^{2} d\xi}{\xi \log^{b}(\frac{1}{\xi})} + C \int_{\frac{1}{r}}^{\frac{1}{4}} \frac{d\xi}{\sqrt{r} \xi^{3/2} \log^{b}(\frac{1}{\xi})} + C \int_{\frac{1}{4}}^{\infty} \frac{d\xi}{\sqrt{r \xi} \xi^{7}}\leq \frac{C}{\log^{b}(r)}, \quad r \geq \frac{t}{2}.\end{split}\end{equation}
We remark that, in the above estimate, we used
$$|J_{2}(x)| \leq \begin{cases} C x^{2}, \quad x \leq 1\\
 \frac{C}{\sqrt{x}}, \quad x \geq 1\end{cases}.$$
To establish \eqref{dv1est} in the region $r \geq \frac{t}{2}$, we start by using the following simple argument, considering first $\partial_{t}v_{1}$.
\begin{equation}\begin{split} |\partial_{t}v_{1}(t,r)| &= |\int_{0}^{\infty} \cos(t\xi) \xi J_{2}(r\xi) \widehat{v_{1,1}}(\xi) d\xi| \leq C \int_{0}^{\frac{1}{4}} \frac{\sqrt{\xi} d\xi}{\xi \log^{b}(\frac{1}{\xi}) \sqrt{r}} + C \int_{\frac{1}{4}}^{\infty} \frac{\sqrt{\xi}}{\sqrt{r}} \frac{1}{\xi^{900}} d\xi\leq \frac{C}{\sqrt{r}}\end{split}\end{equation}
In other words, we simply estimate $J_{2}(x)$ by $\frac{C}{\sqrt{x}}$ globally, even though $J_{2}(x)$ is significantly smaller for small $x$. The identical procedure is used for all other derivatives of $v_{1}$, resulting in the following.
\begin{equation}\label{v1sqrt} |\partial_{t}^{j}\partial_{r}^{k} v_{1}(t,r)| \leq \frac{C}{\sqrt{r}}, \quad r \geq \frac{t}{2}, \quad 1 \leq j+k \leq 2\end{equation} Next, we start with
\begin{equation}\begin{split} \partial_{t}v_{1}(t,r) &= \int_{0}^{\infty} J_{2}(r\xi) \xi \cos(t\xi) \widehat{v}_{1,1}(\xi) d\xi\end{split}\end{equation}
Using $|J_{2}(x)| \leq C x^{2}, \quad x \leq 1$, and the large $x$ asymptotics of $J_{2}(x)$, namely
\begin{equation} J_{2}(x) = -\sqrt{\frac{2}{\pi x}} \cos(\frac{\pi}{4}-x) + O\left(\frac{1}{x^{3/2}}\right), \quad x \rightarrow\infty,\end{equation}
we get
\begin{equation}\label{dtv1nearconeexp} \partial_{t}v_{1}(t,r) = \text{Err}(t,r) +\frac{F(t-r)}{\sqrt{r}}\end{equation}
with
$$|\text{Err}(t,r)| \leq \frac{C}{r \log^{b}(r)}, \quad r \geq \frac{t}{2}$$
and
$$F(x) = \frac{-1}{2 \sqrt{\pi}} \int_{0}^{\infty} \sqrt{\xi} \widehat{v_{1,1}}(\xi)\left(\cos(x \xi) - \sin(x\xi)\right) d\xi.$$
Then, we make the change of variable $\xi =\omega^{2}$ to get
$$ \int_{0}^{\infty} \cos(|t-r|\xi) \sqrt{\xi} \widehat{v_{1,1}}(\xi) d\xi = \int_{-\infty}^{\infty} \cos(|t-r|\omega^{2}) \widehat{v_{1,1}}(\omega^{2}) \omega^{2} d\omega.$$
We directly estimate the integral in the region $\omega^{2}|t-r|\lesssim 1$, and integrate by parts in $\omega$ (integrating $\omega \cos(|t-r| \omega^{2})$) otherwise. This leads to
\begin{equation}\label{Fcosterm}|\int_{0}^{\infty} \cos(|t-r|\xi) \sqrt{\xi} \widehat{v_{1,1}}(\xi) d\xi| \leq \frac{C}{\sqrt{|t-r|} \log^{b}(|t-r|)}, |t-r| \geq 50.\end{equation}
We then use \eqref{dtv1nearconeexp}, \eqref{Fcosterm}, along with \eqref{v1sqrt} in the region $|r-t| < 50$. The $\sin$ term in the expression for $F$, and the other derivatives of $v_{1}$ are treated similarly.\end{proof}

\subsection{Estimates on $v_{2}$, the first iterate}\label{v2section}
$v_{2}$ is defined as the solution to
\begin{equation}-\partial_{tt}v_{2}+\partial_{rr}v_{2}+\frac{1}{r}\partial_{r}v_{2}-\frac{4}{r^{2}}v_{2}=\frac{6 Q_{\frac{1}{\lambda(t)}}(r)}{r^{2}}v_{1}^{2}+\frac{2}{r^{2}} v_{1}^{3}:=RHS_{2}(t,r)\end{equation}
with zero Cauchy data at infinity.
Now, we record estimates on $RHS_{2}$, and its various derivatives. 
\begin{lemma}\label{rhs2lemma}For $0 \leq j, k \leq 2$,
\begin{equation} |\partial_{t}^{j}\partial_{r}^{k} RHS_{2}(t,r)| \leq \frac{C r^{2-k}}{t^{4+j} \log^{2b}(t)}, \quad r \leq \frac{t}{2}.\end{equation}
\begin{equation} |RHS_{2}(t,r)| \leq  \frac{C}{r^{2} \log^{2b}(r)}, \quad r \geq \frac{t}{2}\end{equation}
For $1 \leq j+k \leq 2$, and $j,k \geq 0$,
\begin{equation}\label{firstderivsrhs2larger} |\partial_{t}^{j}\partial_{r}^{k}RHS_{2}(t,r)| \leq  \frac{C}{r^{5/2}  \langle t-r \rangle^{-\frac{1}{2}+j+k} \log^{b}(r)  \log^{b}(\langle t-r\rangle)}, \quad r \geq \frac{t}{2} \end{equation}
If $3 \leq j+k$ and $0 \leq j, k \leq 2$, then,
\begin{equation} |\partial_{t}^{j} \partial_{r}^{k} RHS_{2}(t,r)| \leq \frac{C}{r^{5/2} \langle t-r \rangle^{\frac{1}{2}+j+k-1} \log^{2b}(\langle t-r \rangle)}, \quad r \geq \frac{t}{2}.\end{equation}
Let $s\geq s_{0} \geq T_{0}$, and $\frac{s_{0}}{2} \leq r_{0} < s_{0}$. Then,
\begin{equation}\label{d2rhs2}\begin{split}&||\left(\partial_{r}+\frac{2}{r}\right)\partial_{s}RHS_{2}(s,r)\mathbbm{1}_{\leq 0}(r-(s-s_{0}+r_{0}))||_{L^{2}(r dr)} + ||\partial_{s}^{2} RHS_{2}(s,r) \mathbbm{1}_{\leq 0}(r-(s-s_{0}+r_{0}))||_{L^{2}(r dr)} \\
&\leq \frac{C}{s^{2} \langle s_{0}-r_{0}\rangle\log^{b}(s)\log^{b}(\langle s_{0}-r_{0}\rangle) }. \end{split}\end{equation}
\end{lemma}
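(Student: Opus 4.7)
The plan is to establish all four families of bounds by expanding $RHS_{2} = \frac{6 Q_{1/\lambda(t)}(r)}{r^{2}} v_{1}^{2} + \frac{2}{r^{2}} v_{1}^{3}$ with the Leibniz rule and invoking the pointwise estimates on $v_{1}$ from Lemma \ref{v1lemma}, together with the boundedness $|Q_{1/\lambda(t)}| \leq 1$ and symbol-type estimates on the $(t,r)$-derivatives of $Q_{1/\lambda(t)}$ (which follow from $|\lambda'(t)/\lambda(t)| \leq C/(t\log^{b}(t))$ and the explicit form of $Q_{1}$). Throughout, the cubic term $v_{1}^{3}/r^{2}$ gains a harmless extra factor of $C/\log^{b}(r)$ (in the outer region) or $C r^{2}/(t^{2}\log^{b}(t))$ (in the inner region) compared to the quadratic term, so the quadratic piece is always dominant and it suffices to verify the claimed bounds for $Q_{1/\lambda(t)} v_{1}^{2}/r^{2}$.

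In the inner region $r \leq t/2$, each Leibniz expansion of $\partial_{t}^{j}\partial_{r}^{k}(v_{1}^{2})$ produces products of the form $\partial_{t}^{a_{1}}\partial_{r}^{b_{1}} v_{1} \cdot \partial_{t}^{a_{2}}\partial_{r}^{b_{2}} v_{1}$ with total orders $a_{1}+a_{2}=j$, $b_{1}+b_{2}=k$. Applying the bounds in \eqref{v1est}, \eqref{dv1est} gives a product of $r^{2-b_{i}}/(t^{2+a_{i}}\log^{b}(t))$, which multiplied out and combined with the $1/r^{2}$ yields $r^{2-k}/(t^{4+j}\log^{2b}(t))$. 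The derivatives on $Q_{1/\lambda(t)}$ only ever improve matters via factors of $\lambda'/\lambda$.

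In the outer region $r \geq t/2$, the undifferentiated bound follows immediately from $|v_{1}| \leq C/\log^{b}(r)$. For $1 \leq j+k \leq 2$, the Leibniz expansion contains terms where one factor of $v_{1}$ is undifferentiated, giving $|v_{1}| \leq C/\log^{b}(r)$ times the other factor's near-cone decay $C/(\sqrt{r}\,\langle t-r\rangle^{-1/2+j+k}\log^{b}(\langle t-r\rangle))$ from \eqref{dv1est}; divided by $r^{2}$, this reproduces \eqref{firstderivsrhs2larger}. All other Leibniz terms are smaller. For $3 \leq j+k \leq 4$ with $j,k \leq 2$, the same worst case (all derivatives on one $v_{1}$) still yields $\frac{C}{r^{5/2}\langle t-r\rangle^{j+k-1/2} \log^{b}(r)\log^{b}(\langle t-r\rangle)}$, and one then observes that in the region $r \geq t/2 \geq T_{0}/2$ one has $\langle t - r\rangle \leq Cr$, so $\log^{b}(r) \geq c\log^{b}(\langle t-r\rangle)$ and hence $\log^{b}(r)\log^{b}(\langle t-r\rangle) \geq c\log^{2b}(\langle t-r\rangle)$; all remaining Leibniz terms place derivatives on both $v_{1}$'s and therefore directly produce $\log^{2b}(\langle t-r\rangle)$.

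Finally, \eqref{d2rhs2} reduces to integrating the squared pointwise bounds for $\partial_{r}\partial_{s}RHS_{2}$, $(1/r)\partial_{s}RHS_{2}$, and $\partial_{s}^{2}RHS_{2}$ against $r\,dr$ over $[0, s-s_{0}+r_{0}]$. Split at $r = s/2$: in $[0,s/2]$ the inner bound contributes $\lesssim 1/(s^{3}\log^{2b}(s))$, which is strictly better than required; in $[s/2, s-s_{0}+r_{0}]$ the change of variables $u = s - r$ produces $u \geq s_{0}-r_{0}$, so $\langle s-r\rangle \geq \langle s_{0}-r_{0}\rangle$, and the dominant term is the $\partial_{r}\partial_{s}$ contribution, whose $L^{2}(r\,dr)^{2}$ norm is bounded by
\begin{equation*}
\frac{C}{s^{4}\log^{2b}(s)}\int_{s_{0}-r_{0}}^{s/2} \frac{du}{\langle u\rangle^{3}\log^{2b}(\langle u\rangle)} \leq \frac{C}{s^{4}\log^{2b}(s)\langle s_{0}-r_{0}\rangle^{2}\log^{2b}(\langle s_{0}-r_{0}\rangle)}.
\end{equation*}
Taking square roots yields the claim. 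The main obstacle is the bookkeeping in the Leibniz expansion needed to confirm the switch from the mixed logarithm $\log^{b}(r)\log^{b}(\langle t-r\rangle)$ for $j+k \leq 2$ to the pure $\log^{2b}(\langle t-r\rangle)$ for $j+k \geq 3$; once the comparison $\langle t-r\rangle \lesssim r$ in the outer region is used, this is clean, and everything else is a direct application of Lemma \ref{v1lemma}.
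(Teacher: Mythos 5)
Your overall framework---Leibniz expansion of $RHS_{2}$, invocation of the pointwise bounds from Lemma \ref{v1lemma}, and reduction to the $Q_{1/\lambda(t)}v_{1}^{2}/r^{2}$ piece---is the same as the paper's, but your proposal misses exactly the two ingredients the paper singles out as the only non-elementary features of the argument, and in addition your reduction of \eqref{d2rhs2} does not quite work.

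First, in the outer region with $1 \leq j+k \leq 2$, the dismissal ``all other Leibniz terms are smaller'' hides a real inequality. The cross term $\partial_{t}^{j_{1}}\partial_{r}^{k_{1}}v_{1}\cdot\partial_{t}^{j_{2}}\partial_{r}^{k_{2}}v_{1}/r^{2}$ with both factors differentiated is bounded by $\frac{C}{r^{3}\langle t-r\rangle^{j+k-1}\log^{2b}(\langle t-r\rangle)}$, and comparing this to the target $\frac{C}{r^{5/2}\langle t-r\rangle^{j+k-1/2}\log^{b}(r)\log^{b}(\langle t-r\rangle)}$ one needs $\frac{\log^{b}(r)}{\log^{b}(\langle t-r\rangle)}\leq \frac{C\sqrt{r}}{\sqrt{\langle t-r\rangle}}$. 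The comparison $\langle t-r\rangle\lesssim r$, which is the only tool you cite, produces $\log^{b}(\langle t-r\rangle)\leq C\log^{b}(r)$---the wrong direction. What is needed, and what the paper highlights, is the monotonicity of $x\mapsto \sqrt{x}/\log^{b}(x)$, which converts to $\frac{1}{\sqrt{r}\log^{b}(\langle t-r\rangle)}\leq\frac{C}{\log^{b}(r)\sqrt{\langle t-r\rangle}}$ for $r>t/2$ and is just strong enough to absorb the logarithm deficit into a half-power of $r/\langle t-r\rangle$.

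Second, the claim that derivatives of $Q_{1/\lambda(t)}$ ``only ever improve matters via factors of $\lambda'/\lambda$'' is insufficient in the far field. From $\partial_{t}Q_{1/\lambda(t)}(r)=\frac{4r^{2}\lambda(t)\lambda'(t)}{(\lambda(t)^{2}+r^{2})^{2}}$ the uniform bound is only $\frac{C}{t\log^{b}(t)}$, and using that alone in $r\geq t/2$ gives $\partial_{t}Q_{1/\lambda}\,v_{1}^{2}/r^{2}\lesssim \frac{1}{r^{2}t\log^{b}(t)\log^{2b}(r)}$, which for $r\gg t$ exceeds the required $\frac{C}{r^{5/2}\langle t-r\rangle^{1/2}\log^{b}(r)\log^{b}(\langle t-r\rangle)}\approx\frac{C}{r^{3}\log^{2b}(r)}$. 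What rescues the estimate is the extra factor $\lambda(t)^{2}/r^{2}\leq 4\lambda(t)^{2}/t^{2}$, which is small only because $\lambda(t)\leq C_{a}t^{a}$ for every $a>0$; this growth bound is the second point the paper explicitly invokes, and it is absent from your reasoning. Finally, for \eqref{d2rhs2} you propose to bound $\partial_{r}\partial_{s}RHS_{2}$ and $\frac{2}{r}\partial_{s}RHS_{2}$ separately; but the second has pointwise size $\frac{C}{r^{7/2}\langle s-r\rangle^{1/2}\log^{b}(r)\log^{b}(\langle s-r\rangle)}$ near the cone, one power of $\langle s-r\rangle$ short, and a direct check shows the resulting $L^{2}(r\,dr)$ contribution misses the target by a factor of $\log^{1/2}(s)$ when $s_{0}-r_{0}\sim s$. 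You must keep the combination intact, using $\left(\partial_{r}+\frac{2}{r}\right)(r^{-2}f)=r^{-2}\partial_{r}f$ so that $\left(\partial_{r}+\frac{2}{r}\right)\partial_{s}RHS_{2}=r^{-2}\partial_{r}\partial_{s}\bigl(6Q_{1/\lambda}v_{1}^{2}+2v_{1}^{3}\bigr)$, which does carry the full $\langle s-r\rangle^{3/2}$ decay.
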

\begin{proof} The estimates in the lemma follow from elementary manipulations using Lemma \ref{v1lemma}. The only important features to note are the following. Note that, although the expression for (for instance) $\partial_{tr}RHS_{2}$ includes a term involving $\partial_{t}v_{1} \partial_{r}v_{1}$, and estimates for both $\partial_{t}v_{1}$ and $\partial_{r}v_{1}$ only have a factor of $\frac{1}{\log^{b}(\langle t-r \rangle)}$, as opposed to a factor of $\frac{1}{\log^{b}(r)}$, we still obtain the stated estimates above. This is because
$$\frac{1}{\sqrt{r} \log^{b}(\langle t-r \rangle)} \leq \frac{C}{\log^{b}(r) \sqrt{\langle t-r \rangle}}, \quad r > \frac{t}{2}$$
which can be proven by noting that
$$x \mapsto \frac{\sqrt{x}}{\log^{b}(x)} \text{ is increasing for } x > e^{2b}, \text{ and } \frac{1}{r} \leq \frac{1}{|t-r|}, \quad r > \frac{t}{2}.$$
In addition, we remark that, for any $a>0$, there exists $C_{a}>0$ such that 
\begin{equation}\label{lambdagrowth}\lambda(t) \leq C_{a}t^{a}\end{equation} 
(which follows from $\frac{|\lambda'(t)|}{\lambda(t)} \leq \frac{C}{t \log^{b}(t)}$ and $b > \frac{2}{3}$). This is used (for some fixed, sufficiently small $a>0$) to estimate some terms involving $t$ derivatives of $Q_{\frac{1}{\lambda(t)}}(r)$.
\end{proof}
We note one more useful estimate. By the definition of $v_{2}$, and $L^{2}$ isometry property of the Hankel transform of order 2, we have
\begin{equation}\label{enest}\begin{split} |v_{2}(t,r)| &= |\int_{t}^{\infty}  \int_{0}^{\infty}  \sin((t-s)\xi) \widehat{RHS_{2}}(s,\xi) J_{2}(r\xi)d\xi ds| \\
&\leq C \int_{t}^{\infty}  \left(\int_{0}^{\frac{1}{r}}  r^{2} \xi^{2} |\widehat{RHS_{2}}(s,\xi)|d\xi + \int_{\frac{1}{r}}^{\infty}  \frac{|\widehat{RHS_{2}}(s,\xi)|}{\sqrt{r\xi}}d\xi\right)ds\\
&\leq C \int_{t}^{\infty}  \left(||RHS_{2}(s)||_{L^{2}(r dr)}  r^{2} \left(\int_{0}^{\frac{1}{r}}  \xi^{3}d\xi\right)^{1/2} + \frac{C}{\sqrt{r}} ||RHS_{2}(s)||_{L^{2}(r dr)} \left(\int_{\frac{1}{r}}^{\infty} \frac{d\xi}{\xi^{2}}\right)^{1/2}\right)ds\\
&\leq C \int_{t}^{\infty}  ||RHS_{2}(s)||_{L^{2}(r dr)}ds. \end{split}\end{equation}
Then, we use an $8$ step procedure to estimate all quantities related to $v_{2}$:
\begin{lemma} \label{v2lemma} We have the following estimates on $v_{2}$. For $0 \leq j, k \leq 2$,
\begin{equation}\label{v2nearorigin} |\partial_{t}^{j}\partial_{r}^{k}v_{2}(t,r)| \leq C \frac{r^{2-k}}{t^{2+j} \log^{2b}(t)}, \quad r \leq \frac{t}{2}.\end{equation}
\begin{equation}\label{v2est} |v_{2}(t,r)|+r |\partial_{t}v_{2}(t,r)| + r |\partial_{r}v_{2}(t,r)| \leq \frac{C}{\log^{2b}(t)},\quad r > \frac{t}{2}\end{equation}
\begin{equation}\label{d2v2est} |\partial_{tr}v_{2}(t,r)| + |\partial_{r}^{2}v_{2}(t,r)|+|\partial_{t}^{2}v_{2}(t,r)| \leq \frac{C}{t\langle t-r\rangle \log^{b}(t) \log^{b}(\langle t-r\rangle )}, \quad t > r >\frac{t}{2}\end{equation}
\begin{equation}\label{d2v2linfty} ||\partial_{tr}v_{2}(t,r)||_{L^{\infty}_{r}(\{r \geq \frac{t}{2}\})} + ||\partial_{t}^{2}v_{2}(t,r)||_{L^{\infty}_{r}(\{r \geq \frac{t}{2}\})} + ||\partial_{r}^{2} v_{2}(t,r)||_{L^{\infty}_{r}(\{r \geq \frac{t}{2}\})} \leq \frac{C}{t^{3/2} \log^{b}(t)}, \end{equation}
For $j+k \geq 3$ and $0 \leq j, k \leq 2$, 
\begin{equation} \label{d3v2est} |\partial_{t}^{j}\partial_{r}^{k}v_{2}(t,r)| \leq \frac{C}{\sqrt{t} \langle t-r \rangle ^{\frac{1}{2}+j+k-1} \log^{2b}(\langle t-r \rangle)}, \quad t > r > \frac{t}{2}.\end{equation}
\end{lemma}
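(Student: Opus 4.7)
The starting point is the Duhamel representation for $v_{2}$, obtained from the order-2 Hankel transform as used in \eqref{enest}:
\[
v_{2}(t,r) \;=\; -\int_{t}^{\infty} \int_{0}^{\infty} \sin((t-s)\xi)\,\widehat{RHS_{2}}(s,\xi)\,J_{2}(r\xi)\,d\xi\,ds,
\]
and all eight estimates in the lemma will be derived by combining this with the pointwise, $L^{2}$, and truncated-$L^{2}$ bounds on $RHS_{2}$ and its derivatives supplied by Lemma \ref{rhs2lemma}. The proof splits cleanly according to whether $r$ lies in the interior region $r \le t/2$ or in the exterior region $r > t/2$.

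For the interior bounds \eqref{v2nearorigin}, I would follow the strategy used for $v_{1}$ in the proof of Lemma \ref{v1lemma}: insert the integral representation $J_{2}(z) = \tfrac{z^{2}}{6\pi}\int_{0}^{\pi}\sin^{4}\theta\,\cos(z\cos\theta)\,d\theta$ to factor out the prefactor $r^{2}$ and rewrite the $\xi$-integral as a sine transform evaluated at the retarded arguments $t_{\pm} = t - s \pm r\cos\theta$. Because $r \le t/2 \le s/2$, one has $|t_{\pm}| \ge s/2$, so the evaluation points stay away from the light cone and only the interior pointwise bound on $RHS_{2}$ in Lemma \ref{rhs2lemma} is probed; integrating in $s$ produces the claimed $t^{-(2+j)}\log^{-2b}(t)$ factor. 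Differentiation under the integral yields the bounds with $j+k \ge 1$, since each spatial derivative trades the extracted $r^{2}$ factor for $r^{2-k}$, while each time derivative brings down a $\xi$ and hence a time derivative of the evaluated $RHS_{2}$, for which the analogous interior bound of Lemma \ref{rhs2lemma} applies.

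For the exterior estimates \eqref{v2est}, \eqref{d2v2est}, \eqref{d2v2linfty}, and \eqref{d3v2est} in the region $r > t/2$, I would instead rely on the $L^{2}$-style bounds. The zeroth-order estimate \eqref{v2est} follows directly from \eqref{enest} once one uses the pointwise bounds of Lemma \ref{rhs2lemma} to show that $\|RHS_{2}(s,\cdot)\|_{L^{2}(r\,dr)}$ has the required logarithmic decay; the first-derivative bounds on $v_{2}$ follow in the same manner after differentiating under the $s$-integral in the Duhamel formula and invoking the corresponding $L^{2}$ bound on $\partial_{t}RHS_{2}$ or $\partial_{r}RHS_{2}$ from \eqref{firstderivsrhs2larger}. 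The sharp near-cone bound \eqref{d2v2est} will require localising the Duhamel integral to the backward characteristic cone through $(t,r)$ and bounding the contribution using the \emph{truncated}-cone $L^{2}$ estimate \eqref{d2rhs2} with $r_{0}=r$, $s_{0}=t$; integration in $s$ then produces the factor of $\langle t-r\rangle^{-1}$. The global $L^{\infty}_{r}$ estimate \eqref{d2v2linfty} is obtained by a Cauchy--Schwarz argument in $\xi$ as in \eqref{enest}, now applied to the twice-differentiated Duhamel formula, using again the $L^{2}$ bounds on second derivatives of $RHS_{2}$. Finally, the third-order bounds \eqref{d3v2est} follow by using the equation $\partial_{t}^{2}v_{2} = \partial_{r}^{2}v_{2} + r^{-1}\partial_{r}v_{2} - 4r^{-2}v_{2} - RHS_{2}$ and its once-differentiated versions to trade one of the three second-order derivatives for a combination of the others plus $\partial RHS_{2}$, and then invoking the already-established bound \eqref{d2v2est} together with the third-derivative bound on $RHS_{2}$ in Lemma \ref{rhs2lemma}.

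The main technical obstacle is the sharp near-cone second-derivative estimate \eqref{d2v2est}: recovering simultaneously the $t^{-1}$ and the $\langle t-r\rangle^{-1}\log^{-b}(\langle t-r\rangle)$ decay requires the carefully truncated-cone $L^{2}$ bound \eqref{d2rhs2}, rather than a pointwise bound of the type \eqref{firstderivsrhs2larger}, since the latter would leak an additional log factor after integration in $s$. Once \eqref{d2v2est} is in hand, the remaining estimates are, as indicated above, either routine consequences of the energy inequality \eqref{enest} applied to higher derivatives, or algebraic consequences of the PDE.
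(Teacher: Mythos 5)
There is a genuine gap in your exterior estimates, and it is quantitative. You propose to derive \eqref{v2est} from \eqref{enest} together with the pointwise bounds of Lemma \ref{rhs2lemma}, i.e.\ by estimating $|v_{2}(t,r)| \le C \int_{t}^{\infty} \|RHS_{2}(s,\cdot)\|_{L^{2}(r\,dr)}\,ds$. But from Lemma \ref{rhs2lemma} one gets $\|RHS_{2}(s,\cdot)\|_{L^{2}(r\,dr)} \le \frac{C}{s\log^{2b}(s)}$, and integrating in $s$ yields $\frac{C}{\log^{2b-1}(t)}$, not the $\frac{C}{\log^{2b}(t)}$ claimed in \eqref{v2est}: the $L^{2}$-in-$r$ energy method loses a full power of $\log(t)$. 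The paper avoids this loss precisely by \emph{not} using \eqref{enest} for \eqref{v2est}; instead it estimates $p_{2}=(\partial_{r}+\tfrac{2}{r})v_{2}$ pointwise via a $(2+1)$-dimensional spherical-means formula (the angular factor $\cos\theta$ makes this possible), with a delicate decomposition $p_{2,I}+p_{2,II,a}+p_{2,II,b}+p_{2,II,c}$, and then recovers $v_{2}(t,r)=\frac{1}{r^{2}}\int_{0}^{r}p_{2}(t,x)\,x^{2}\,dx$ in a way that preserves the full $\log^{2b}$ decay. The same problem afflicts your proposed Cauchy--Schwarz-in-$\xi$ argument for \eqref{d2v2linfty}: the $L^{2}$ bound on $\partial_{t}^{2}RHS_{2}$ only delivers $\frac{C}{t\log^{b}(t)}$ after the $s$-integration, not $\frac{C}{t^{3/2}\log^{b}(t)}$; the paper gets the extra $t^{-1/2}$ from the pointwise spherical-means analysis, exploiting the extra factor $\langle t-r\rangle^{-1}$ that $\partial_{t}^{2}RHS_{2}$ has over $\partial_{r}RHS_{2}$.

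Two further issues are worth flagging. First, your interior argument imports verbatim the ``sine-transform-at-retarded-arguments'' trick from Lemma \ref{v1lemma}, but that trick depended on \eqref{v11def}, which says that $\xi^{2}\widehat{v_{1,1}}(\xi)$ \emph{is} explicitly a sine transform; for $v_{2}$ the quantity $\int_{0}^{\infty}\xi^{2}\sin(\tau\xi)\widehat{RHS_{2}}(s,\xi)\,d\xi$ has no such inversion, and unwinding it honestly leads you back to the $(6+1)$-dimensional spherical-means formula (with the $(\tfrac{1}{t}\partial_{t})^{2}$ derivative hitting the spherical averages), which is exactly what the paper writes down directly via $v_{2}=r^{2}\widetilde{w_{2}}$. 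Second, \eqref{d3v2est} cannot be recovered from \eqref{d2v2est} by ``trading derivatives in the PDE'': for $(j,k)\in\{(1,2),(2,1),(2,2)\}$, substituting the equation produces $\partial_{t}^{3}v_{2}$ or $\partial_{r}^{3}v_{2}$, neither of which appears in the lemma, so no bound is available for them. The paper treats these cases by repeating the representation-formula arguments for $(\partial_{r}+\tfrac{1}{r})(\partial_{r}+\tfrac{2}{r})\partial_{t}^{j}v_{2}$ in Step 8, rather than by algebraic substitution.
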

\begin{proof}
\textbf{Step 1}: We use the fact that, if $v_{2}=r^{2}\widetilde{w_{2}}$, then, $\widetilde{w_{2}}$ solves the following equation, with zero Cauchy data at infinity.
$$-\partial_{t}^{2}\widetilde{w_{2}}+\partial_{r}^{2}\widetilde{w_{2}}+\frac{5}{r}\partial_{r}\widetilde{w_{2}}=\frac{RHS_{2}(t,r)}{r^{2}}$$
We then estimate $v_{2}$ in the region $r \leq \frac{t}{2}$ by using Duhamel's principle, and the $6+1$ dimensional spherical means formula, as follows.
\begin{equation}\label{wforv2form} \widetilde{w_{2}}(t,r) = -\int_{t}^{\infty} f_{2}(s-t,r \textbf{e}_{1}) ds\end{equation}
and we use the spherical means formula for $f_{2}$, namely (see, for instance, \cite{evans})
$$f_{2}(t,x) = \frac{1}{8 \pi^{3}} \left(\frac{1}{t}\partial_{t}\right)^{2} \int_{B_{t}(0)}\frac{RHS_{2}(s,|x+y|)}{|x+y|^{2} \sqrt{t^{2}-|y|^{2}}} dy = \frac{1}{8 \pi^{3}} \left(\frac{1}{t}\partial_{t}\right)^{2}\left(t^{5} \int_{B_{1}(0)} \frac{RHS_{2}(s,|x+t z|)}{|x+tz|^{2} \sqrt{1-|z|^{2}}} dz\right)$$
where, to ease notation, we write $x=r \textbf{e}_{1} \in \mathbb{R}^{6}$. We then differentiate under the integral sign in the equation above, and let $z=\frac{y}{t}$. Using spherical  coordinates,
\begin{equation}\label{ydefstep1}y=\rho(\cos(\phi),\sin(\phi)\cos(\phi_{2}),\dots,\sin(\phi)\sin(\phi_{2})\cdot \dots \cdot \sin(\phi_{5})) \in \mathbb{R}^{6}\end{equation}
and recalling \eqref{wforv2form}, we get
\begin{equation}\begin{split} |v_{2}(t,r)| \leq C r^{2} &\int_{t}^{\infty}\frac{ds}{(s-t)^{4}}\int_{0}^{s-t} \frac{\rho^{5}d\rho}{\sqrt{(s-t)^{2}-\rho^{2}}} \int_{0}^{\pi} I\text{ } d\phi\end{split}\end{equation}
where
\begin{equation}\begin{split}I=  \frac{\sin^{4}(\phi)}{|x+y|^{2}}&\left(|RHS_{2}(s,|x+y|)|\left(1+\frac{\rho^{2}}{|x+y|^{2}}\right)+|\partial_{2}RHS_{2}(s,|x+y|)|\rho\left(1+\frac{\rho}{|x+y|}\right)\right.\\
&\left.+|\partial_{2}^{2} RHS_{2}(s,|x+y|)| \rho^{2}\right)\end{split}\end{equation} 
and we note that $|x+y| = \sqrt{\rho^{2}+r^{2}+2 r \rho\cos(\phi)}.$ By using Cauchy's residue theorem appropriately, we get, for all $\rho \neq r$,
\begin{equation}\label{phiint} \int_{0}^{\pi} \frac{\sin^{4}(\phi) d\phi}{\rho^{2}+r^{2}+2 r \rho\cos(\phi)} = \frac{1}{2} \int_{0}^{2\pi} \frac{\sin^{4}(\phi) d\phi}{\rho^{2}+r^{2}+2 r \rho\cos(\phi)} = \frac{-\pi \left(\text{min}\{r,\rho\}^{2}-3\text{max}\{r,\rho\}^{2}\right)}{8(\text{max}\{\rho,r\})^{4}}.\end{equation}
Then, we use the estimates for $RHS_{2}$ from Lemma \ref{rhs2lemma} to get \eqref{v2nearorigin} for $j=k=0$.\\
\\
\textbf{Step 2}: To estimate $\partial_{r}v_{2}$ in the region $r \leq \frac{t}{2}$, we first use the fact that, if $r u_{2}:=\left(\partial_{r}+\frac{2}{r}\right) v_{2} $, then, $u_{2}$ solves
$$-\partial_{tt}u_{2}+\partial_{rr}u_{2}+\frac{3}{r}\partial_{r}u_{2} = \frac{1}{r}\left(\partial_{r}+\frac{2}{r}\right)RHS_{2}(t,r)$$ 
with zero Cauchy data at infinity. Then, we use the spherical means formula for $u_{2}$. We first get
\begin{equation}|\left(\partial_{r}+\frac{2}{r}\right)v_{2}| \leq C r \int_{t}^{\infty} ds \frac{1}{(s-t)^{2}} \int_{0}^{s-t} \frac{\rho^{3} d\rho}{\sqrt{(s-t)^{2}-\rho^{2}}} \int_{0}^{\pi} \sin^{2}(\phi)  I_{2} d\phi\end{equation}
where
\begin{equation}\begin{split}I_{2}= &\left(\frac{|\partial_{2}^{2}RHS_{2}(s,|x+y|)|\rho}{|x+y|} + \frac{|\partial_{2}RHS_{2}(s,|x+y|)|}{|x+y|}\left(1+\frac{\rho}{|x+y|}\right)\right.\\
&\left.+\frac{|RHS_{2}(s,|x+y|)|}{|x+y|^{2}}\left(1+\frac{\rho}{|x+y|}\right)\right).\end{split}\end{equation}
We then use \eqref{phiint} to get
\begin{equation} \int_{0}^{\pi} \frac{\rho \sin^{2}(\phi) d\phi}{\sqrt{\rho^{2}+r^{2}+2 r \rho\cos(\phi)}} \leq C \int_{0}^{\pi}\left(1+\frac{\sin^{4}(\phi) \rho^{2}}{\rho^{2}+r^{2}+2 r \rho\cos(\phi)}\right)d\phi \leq C\end{equation}
which gives
\begin{equation} |\left(\partial_{r}+\frac{2}{r}\right)v_{2}(t,r)| \leq \frac{C r}{t^{2} \log^{2b}(t)}, \quad r \leq \frac{t}{2}.\end{equation}
\textbf{Step 3}: We estimate $\partial_{r}^{2} v_{2}$ in the region $r \leq \frac{t}{2}$ using the fact that, if $z_{2}:=\left(\partial_{r}+\frac{1}{r}\right)\left(\partial_{r}+\frac{2}{r}\right) v_{2}$, then $z_{2}$ solves
$$-\partial_{tt}z_{2}+\partial_{rr}z_{2}+\frac{1}{r}\partial_{r}z_{2}=\left(\partial_{r}+\frac{1}{r}\right)\left(\partial_{r}+\frac{2}{r}\right) RHS_{2}(t,r)$$
with zero Cauchy data at infinity, and using the spherical means formula for $z_{2}$. The details for this step are very similar to those of Steps 1 and 2.\\
\\
\textbf{Step 4}: Differentiating the formulae for $v_{2}, r u_{2}$, and $z_{2}$ with respect to $t$, we show that, for $j=1,2$, $\partial_{t}^{j}v_{2}$ solves the same equation as $v_{2}$, except with $\partial_{t}^{j}RHS_{2}$ on the right-hand side, and zero Cauchy data at infinity. Then, we use the same procedure as in steps 1-3 to  obtain estimates on all remaining derivatives of $v_{2}$ of the form $\partial_{t}^{j}\partial_{r}^{k} v_{2}$ in the region $r \leq \frac{t}{2}$, for $0 \leq j, k \leq 2$.\\
\\
\textbf{Step 5}: Next, we estimate $\left(\partial_{r}+\frac{2}{r}\right)v_{2}$ in the region $r \geq \frac{t}{2}$, using a slightly different representation formula than what was used in Step 2. Using the Fundamental Theorem of Calculus, we then estimate $v_{2}$ in the region $r \geq \frac{t}{2}$.\\
\\
We first consider the case $r > 2t$, and note that $p_{2}(t,r):=\left(\partial_{r}+\frac{2}{r}\right) v_{2}(t,r)$ solves
$$-\partial_{t}^{2}p_{2}+\partial_{r}^{2}p_{2}+\frac{1}{r}\partial_{r}p_{2}-\frac{p_{2}}{r^{2}}=\left(\partial_{r}+\frac{2}{r}\right)RHS_{2}(t,r)$$
with zero Cauchy data at infinity. Then, we use the following procedure to get a representation formula for $p_{2}$  which does not involve any derivatives of $\left(\partial_{r}+\frac{2}{r}\right) RHS_{2}$:
Let $g_{2}:[T_{0},\infty) \times \mathbb{R}^{2} \rightarrow \mathbb{R}$ be defined by
$$g_{2}(t,r\cos(\theta),r\sin(\theta)) = p_{2}(t,r) \cos(\theta).$$
Then,
\begin{equation}\begin{split}\left(-\partial_{t}^{2} +\Delta_{\mathbb{R}^{2}}\right)g_{2}(t,r\cos(\theta),r\sin(\theta)) &= \cos(\theta)\left(-\partial_{t}^{2}p_{2}+\partial_{r}^{2}p_{2}+\frac{1}{r}\partial_{r}p_{2}-\frac{1}{r^{2}}p_{2}\right)=\cos(\theta) \left(\partial_{r}+\frac{2}{r}\right)RHS_{2}(t,r).\end{split}\end{equation}
We then use Duhamel's principle and the 2+1 dimensional spherical means formula to get
\begin{equation} \begin{split} g_{2}(t,x) &= -\frac{1}{2\pi} \int_{t}^{\infty} ds \int_{B_{s-t}(0)} \left(\frac{x_{1}+y_{1}}{|x+y|}\right) \frac{\left(\partial_{2}+\frac{2}{|x+y|}\right)RHS_{2}(s,|x+y|)}{\sqrt{(s-t)^{2}-|y|^{2}}} dy.\end{split}\end{equation}
where we write $x=(x_{1},x_{2}) \in \mathbb{R}^{2}$. Finally, we get
\begin{equation}\label{p2repform} p_{2}(t,r)=g_{2}(t,r,0)= \frac{-1}{2\pi} \int_{t}^{\infty} ds \int_{0}^{s-t}\frac{\rho d\rho}{\sqrt{(s-t)^{2}-\rho^{2}}} \int_{0}^{2\pi} d\theta \frac{\left(\partial_{2}+\frac{2}{|x+y|}\right)RHS_{2}(s,|x+y|)}{|x+y|} \left(\widehat{x}\cdot(x+y)\right)\end{equation}
where we now regard $x=r \textbf{e}_{1} \in \mathbb{R}^{2}$, $y=(\rho \cos(\theta),\rho\sin(\theta))$, and we have $|x+y| = \sqrt{r^{2}+\rho^{2}+2 r \rho \cos(\theta)}.$ Then, we treat several pieces of $p_{2}$ separately. Precisely, we make the following definitions. 
\begin{equation}\label{p2Idef}p_{2,I}(t,r):=\frac{-1}{2\pi} \int_{t}^{\infty} ds \int_{0}^{s-t}\frac{\rho d\rho}{\sqrt{(s-t)^{2}-\rho^{2}}} \int_{0}^{2\pi} d\theta \frac{\left(\partial_{2}+\frac{2}{|x+y|}\right)RHS_{2}(s,|x+y|)}{|x+y|} \left(\widehat{x}\cdot(x+y)\right)\mathbbm{1}_{\{|x+y| \leq \frac{s}{2}\}}\end{equation}
The term $p_{2,I}$ is defined this way simply because our estimates for $RHS_{2}(t,r)$ and its derivatives (from Lemma \ref{rhs2lemma}) are recorded in the regions $r \leq \frac{t}{2}$ and $r \geq \frac{t}{2}$ separately.\\
\\
We then define $p_{2,II}$ by $p_{2,II} := p_{2}-p_{2,I}$. We have $p_{2,II}=p_{2,II,a}+p_{2,II,b}+p_{2,II,c}$, for the following definitions:
\begin{equation} \label{p2IIadef} \begin{split}&p_{2,II,a}(t,r) \\
&= \frac{-1}{2\pi} \int_{t}^{\infty} ds \int_{0}^{s-t}\frac{\rho d\rho}{\sqrt{(s-t)^{2}-\rho^{2}}} \int_{0}^{2\pi} d\theta \frac{\left(\partial_{2}+\frac{2}{|x+y|}\right)RHS_{2}(s,|x+y|)}{|x+y|} \left(\widehat{x}\cdot(x+y)\right)\mathbbm{1}_{\{s-t-r>|x+y| > \frac{s}{2}\}}\end{split}\end{equation}
The point of the definition of $p_{2,II,a}$ is the following. We insert the estimate \eqref{firstderivsrhs2larger} into the integrand of $p_{2,II,a}$, and note that, by the support properties of the integrand, $\frac{1}{\sqrt{\langle s-|x+y|\rangle}} \leq \frac{C}{\sqrt{r+t}}$; hence, we can use a relatively simple argument to study $p_{2,II,a}$.
\begin{equation}\label{p2IIbdef}\begin{split}&p_{2,II,b}(t,r) \\
&= \frac{-1}{2\pi} \int_{t}^{t+\frac{r}{8}} ds \int_{0}^{s-t}\frac{\rho d\rho}{\sqrt{(s-t)^{2}-\rho^{2}}} \int_{0}^{2\pi} d\theta \frac{\left(\widehat{x}\cdot(x+y)\right)\left(\partial_{2}+\frac{2}{|x+y|}\right)RHS_{2}(s,|x+y|)}{|x+y|} \mathbbm{1}_{\{|x+y| > \frac{s}{2}\}} \mathbbm{1}_{\{|x+y| \geq s-t-r\}}\end{split}\end{equation}
The reason for truncating the $s$ integral in the definition of $p_{2,II,b}$ is also because of additional smallness of the factor $\frac{1}{\sqrt{\langle s-|x+y|\rangle}}$ in this region.\\
\\
Finally, we are left with the last piece of $p_{2}$, which we denote $p_{2,II,c}$.
$$p_{2,II,c}=p_{2}-p_{2,I}-p_{2,II,a}-p_{2,II,b}.$$
\begin{equation}\label{p2IIcdef}\begin{split}&p_{2,II,c}(t,r)\\
&=\frac{-1}{2\pi} \int_{t+\frac{r}{8}}^{\infty} ds \int_{0}^{s-t}\frac{\rho d\rho}{\sqrt{(s-t)^{2}-\rho^{2}}} \int_{0}^{2\pi} d\theta \frac{\left(\partial_{2}+\frac{2}{|x+y|}\right)RHS_{2}(s,|x+y|)}{|x+y|} \left(\widehat{x}\cdot(x+y)\right)\mathbbm{1}_{\{|x+y| > \frac{s}{2}\}} \mathbbm{1}_{\{|x+y| \geq s-t-r\}}\\
& = \frac{-1}{2\pi} \int_{t+\frac{r}{8}}^{\infty} ds \int_{B_{s-t}(0) \cap (B_{\frac{s}{2}} (-x))^{c} \cap (B_{s-t-r}(-x))^{c}}\frac{dA(y)}{\sqrt{(s-t)^{2}-|y|^{2}}} \frac{\left(\partial_{2}+\frac{2}{|x+y|}\right)RHS_{2}(s,|x+y|)}{|x+y|} \left(\widehat{x}\cdot(x+y)\right)\end{split}\end{equation}
We start by estimating $p_{2,I}$, which we recall is given by \eqref{p2Idef}. We first note that the integrand in the definition of $p_{2,I}$ vanishes unless $s \geq \frac{2}{3}(t+r)$: If $s < \frac{2}{3}(t+r)$ and $|x+y| \leq \frac{s}{2}$, then, $\rho = |y| \geq |x|-|x+y| \geq r-\frac{s}{2} \geq \frac{-t+2r}{3} > s-t$. On the other hand, the $\rho$ integration is constrained to the region $\rho \leq s-t$. 
So,
$$|p_{2,I}(t,r)| \leq C \int_{\frac{2}{3}(t+r)}^{\infty} ds \int_{0}^{s-t} \frac{\rho d\rho}{\sqrt{(s-t)^{2}-\rho^{2}}} \int_{0}^{2\pi} \frac{d\theta}{s^{3} \log^{2b}(s)} \leq \frac{C}{r \log^{2b}(r)}.$$
We recall the definition of $p_{2,II,a}$ in \eqref{p2IIadef}. For $|x+y|$ in the support of the characteristic functions appearing in \eqref{p2IIadef}, $|s-|x+y|| = s-|x+y| \geq s-(s-t-r) = t+r.$ Finally, the integrand vanishes unless $s > 2(t+r)$. Then, we use the estimates on $RHS_{2}$ from Lemma \ref{rhs2lemma} to get
$$|p_{2,II,a}(t,r)| \leq C \int_{2(t+r)}^{\infty} ds \int_{0}^{s-t} \frac{\rho d\rho}{\sqrt{(s-t)^{2}-\rho^{2}}} \int_{0}^{2\pi} d\theta \frac{1}{s^{5/2} \sqrt{r} \log^{2b}(r)} \leq \frac{C}{r \log^{2b}(r)}.$$
Next, we treat $p_{2,II,b}$ (defined in \eqref{p2IIbdef}). This time, for $|x+y|$ in the support of the characteristic functions appearing in \eqref{p2IIbdef}, we have
$$|s-|x+y|| = |x+y|-s \geq |x|-|y|-s \geq r-(s-t)-s \geq r-2t-\frac{r}{4} + t \geq \frac{r}{4} + \frac{r}{2} -t \geq \frac{r}{4}.$$
Also, $|x+y| \geq |x|-|y| \geq r-(s-t) \geq \frac{7r}{8}$. So, we get
$$|p_{2,II,b}(t,r)| \leq C \int_{t}^{t+\frac{r}{8}} ds \int_{0}^{s-t} \frac{\rho d\rho}{\sqrt{(s-t)^{2}-\rho^{2}}} \int_{0}^{2\pi} d\theta \frac{1}{r^{5/2} \sqrt{r} \log^{2b}(r)} \leq \frac{C}{r \log^{2b}(r)}$$
Finally, we estimate $p_{2,II,c}$, whose definition is reproduced here, for the reader's convenience.
\begin{equation}\begin{split}&p_{2,II,c}(t,r)\\
& = \frac{-1}{2\pi} \int_{t+\frac{r}{8}}^{\infty} ds \int_{B_{s-t}(0) \cap (B_{\frac{s}{2}} (-x))^{c} \cap (B_{s-t-r}(-x))^{c}}\frac{dA(y)}{\sqrt{(s-t)^{2}-|y|^{2}}} \frac{\left(\partial_{2}+\frac{2}{|x+y|}\right)RHS_{2}(s,|x+y|)}{|x+y|} \left(\widehat{x}\cdot(x+y)\right)\end{split}\end{equation}
We will prove estimates on $p_{2,II,c}$ which are valid for any $r \geq \frac{t}{2}$ for later use, even though we assumed $r > 2t$ in the very beginning of this argument. Note that the intersection of the balls in the integral is empty, unless $s \geq 2(t-r)$, since $\frac{s}{2} \leq |x+y| \leq s-t+r$. Then, we use polar coordinates centered at $x$. More precisely, we write $z=y+x = (\rho\cos(\theta),\rho\sin(\theta))$.  The integrand of the $s$ integral above is then bounded above in absolute value by
$$ C \int_{|r-(s-t)|}^{s-t+r} \rho d\rho \int_{0}^{\theta^{*}} \frac{|\left(\partial_{2}+\frac{2}{\rho}\right) RHS_{2}(s,\rho)| \mathbbm{1}_{\{\rho > \max{\left(\frac{s}{2}, s-t-r\right)}\}} d\theta}{\sqrt{(s-t)^{2}-r^{2}-\rho^{2}+2 r \rho \cos(\theta)}}$$
where $$\theta^{*} = \arccos\left(\frac{\rho^{2}+r^{2}-(s-t)^{2}}{2 r \rho}\right).$$
To get this, we first used the inequality $\frac{|\left(\widehat{x}\cdot(x+y)\right)|}{|x+y|} \leq 1$. Then, the only $\theta$-dependent term remaining in the integrand of the $s$ integral in the expression for $p_{2,II,c}$ is 
$$\frac{1}{\sqrt{(s-t)^{2}-|z-x|^{2}}} = \frac{1}{\sqrt{(s-t)^{2}-\rho^{2}-r^{2}+2 r \rho \cos(\theta)}}.$$ 
Next, we used the facts that $\cos(2\pi -\theta) = \cos(\theta)$, and the integrand is supported in the region $\rho > s-t-r$. Note also that $\theta^{*}$ is well-defined, for all $\rho$ in the region of integration in the expression above, and $0 \leq \theta^{*} \leq \pi$. Then, we note
\begin{equation} \int_{0}^{\theta^{*}} \frac{d\theta}{\sqrt{(s-t)^{2}-r^{2}-\rho^{2}+2 r \rho \cos(\theta)}} = \frac{1}{\sqrt{2 r \rho}} \int_{0}^{\theta^{*}} \frac{d\delta}{\sqrt{\cos(\theta^{*})\left(\cos(\delta)-1\right) + \sin(\theta^{*}) \sin(\delta)}}=\frac{1}{\sqrt{2 r \rho}}f(\theta^{*})\end{equation}
where we made the substitution $\delta =  \theta^{*}-\theta$ in the first integral. We then have
$$|f(\theta^{*})| \leq C \langle \log(\pi - \theta^{*})\rangle \leq \frac{C}{\sqrt{\pi-\theta^{*}}}.$$
(Although the singularity of $f$ as $\theta^{*}$ approaches $\pi$ is much better than $\frac{1}{\sqrt{\pi-\theta^{*}}}$, the above inequality suffices for our purposes, and slightly simplifies some of our estimates). Using
$$\pi-\theta^{*} = \arccos\left(1+\frac{(s-t)^{2}-(\rho+r)^{2}}{2 \rho r}\right)$$
we get
\begin{equation} |\int_{0}^{\theta^{*}} \frac{d\theta}{\sqrt{(s-t)^{2}-r^{2}-\rho^{2}+2 r \rho \cos(\theta)}}| \leq \frac{C}{(r\rho)^{1/4} (\rho+r-(s-t))^{1/4} (\rho+r+s-t)^{1/4}}.\end{equation} So, we have
\begin{equation}\begin{split}&|p_{2,II,c}(t,r)| \\
&\leq C \int_{\max\{t+\frac{r}{8},2(t-r)\}}^{\infty} ds \int_{\max\{s-t-r,\frac{s}{2}\}}^{s-t+r} \rho d\rho |\left(\partial_{2}+\frac{2}{\rho}\right) RHS_{2}(s,\rho)|  \int_{0}^{\theta^{*}} \frac{d\theta}{\sqrt{(s-t)^{2}-\rho^{2}-r^{2}+2 r \rho \cos(\theta)}}\end{split}\end{equation}
and our above estimates give
\begin{equation}\label{p2iicintest} |p_{2,II,c}(t,r)| \leq \frac{C}{r^{1/4}} \int_{t+\frac{r}{8}}^{\infty} ds \int_{s-t-r}^{s-t+r} \frac{\sqrt{\rho} d\rho}{(\rho+r-(s-t))^{1/4}} \frac{1}{s^{5/2} \sqrt{\langle s-\rho \rangle} \log^{b}(\rho) \log^{b}(\langle s-\rho \rangle)} \mathbbm{1}_{\{\rho \geq \frac{s}{2}\}}.\end{equation}
Let 
$$p_{2,II,c,i}(t,r):=\frac{1}{r^{1/4}} \int_{t+r}^{\infty} ds \int_{s-t-r}^{s-t+r} \frac{\sqrt{\rho} d\rho}{(\rho+r-(s-t))^{1/4}} \frac{1}{s^{5/2} \sqrt{\langle s-\rho \rangle} \log^{b}(\rho) \log^{b}(\langle s-\rho \rangle)} \mathbbm{1}_{\{\rho \geq \frac{s}{2}\}}$$
and 
$$p_{2,II,c,ii}(t,r):=\frac{1}{r^{1/4}} \int_{t+\frac{r}{8}}^{t+r} ds \int_{s-t-r}^{s-t+r} \frac{\sqrt{\rho} d\rho}{(\rho+r-(s-t))^{1/4}} \frac{1}{s^{5/2} \sqrt{\langle s-\rho \rangle} \log^{b}(\rho) \log^{b}(\langle s-\rho \rangle)} \mathbbm{1}_{\{\rho \geq \frac{s}{2}\}}.$$
Note that, in the expression for $p_{2,II,c,i}$, $s \geq t+r$, so that $s-t-r >0$. Then, we  consider separately two regions of the $\rho$ integration. In the region $s-t-r \leq \rho \leq s-t-\frac{r}{2}$, we have
$$|s-\rho| = s-\rho \geq s-(s-t-\frac{r}{2}) = t+\frac{r}{2}.$$
In the region $s-t-\frac{r}{2} \leq \rho \leq s-t+r$, we have $\frac{1}{(p-(s-t-r))^{1/4}} \leq \frac{C}{r^{1/4}}.$ So,
\begin{equation}\begin{split}|p_{2,II,c,i}(t,r)|&\leq \frac{C}{r^{1/4}} \int_{t+r}^{\infty} ds \int_{s-t-r}^{s-t-\frac{r}{2}} \frac{\sqrt{\rho} d\rho}{(\rho+r-(s-t))^{1/4}} \frac{1}{s^{5/2} \sqrt{r} \log^{2b}(r)}\\
&+\frac{C}{r^{1/4}} \int_{t+r}^{\infty} ds \int_{s-t-\frac{r}{2}}^{s-t+r} \frac{\sqrt{\rho}d\rho}{r^{1/4} s^{5/2} \sqrt{\langle s-\rho \rangle} \log^{b}(s)\log^{b}(\langle s-\rho \rangle)}.\end{split}\end{equation}
Then, we use
$$\int_{t-r}^{t+\frac{r}{2}} \frac{dx}{\sqrt{\langle x\rangle} \log^{b}(\langle x\rangle)} \leq C \frac{\sqrt{\langle t-r \rangle}}{\log^{b}(\langle t-r\rangle)}+C \frac{\sqrt{t+r}}{\log^{b}(t+r)}$$
along with $|t-r| \leq r, \quad r \geq \frac{t}{2}$ to get 
$$|p_{2,II,c,i}(t,r)| \leq \frac{C}{r \log^{2b}(r)}.$$
On the other hand, in the expression for $p_{2,II,c,ii}$, $s < t+r$. So, we make use of the $\mathbbm{1}_{\{\rho \geq \frac{s}{2}\}}$ in the integrand of \eqref{p2iicintest}. Also, in this case
$$(\rho+r-(s-t))^{1/4} \geq \rho^{1/4}.$$
This gives
$$|p_{2,II,c,ii}(t,r)| \leq \frac{C}{r^{1/4}} \int_{t+\frac{r}{8}}^{t+r} ds \int_{\frac{s}{2}}^{s-t+r} \frac{\rho^{1/4} d\rho}{s^{5/2} \sqrt{\langle s-\rho \rangle }\log^{b}(\rho) \log^{b}(\langle s-\rho \rangle)}$$
which can be treated in the same way as we treated $p_{2,II,c,i}$. In the very beginning of this argument, we considered the region $r > 2t$. This was so that we could estimate $p_{2,II,b}$. If $\frac{t}{2} \leq r \leq 2t$, then, we instead decompose $p_{2,II}$ as
$$p_{2,II}(t,r) = p_{2,II,a}(t,r) + p_{2,II,d}(t,r).$$
We then estimate $p_{2,II,d}$ with the identical procedure used to estimate $p_{2,II,c}$. We obtain the same final estimate for $p_{2,II,d}$ as we did for $p_{2,II,c}$. Even though we only have $s \geq t$ in the integral defining $p_{2,II,d}$, as opposed to $s \geq t+\frac{r}{8}$ for $p_{2,II,c}$, the fact that $\frac{t}{2} \leq r \leq 2t$ ensures that we do indeed get the same final estimate for $p_{2,II,d}$. In total, we finally get
$$|p_{2}(t,r)| \leq \frac{C}{r \log^{2b}(r)}, \quad r \geq \frac{t}{2}.$$
Then, we recover $v_{2}$ from $p_{2}$:
$$v_{2}(t,r) = \frac{1}{r^{2}} \int_{0}^{r} p_{2}(t,x) x^{2} dx$$
If $r \geq \frac{t}{2}$, then,
$$|v_{2}(t,r)| \leq \frac{C}{r^{2}} \int_{0}^{\frac{t}{2}} \frac{x^{3} dx}{t^{2} \log^{2b}(t)} + \frac{C}{r^{2}} \int_{\frac{t}{2}}^{r} \frac{x dx}{\log^{2b}(x)}$$
and we finally get
$$|v_{2}(t,r)| \leq \frac{C}{\log^{2b}(t) }, \quad r \geq \frac{t}{2}.$$
\textbf{Step 6}: Similarly, we estimate $\partial_{t}v_{2}$ in the region $r \geq \frac{t}{2}$, using the fact that it solves the same equation as $v_{2}$, except with $\partial_{t}RHS_{2}$ on the right-hand side. In particular, we first note that, if $u$ solves
$$-\partial_{tt}u(t,r)+\partial_{rr}u(t,r)+\frac{1}{r}\partial_{r}u(t,r)-\frac{4}{r^{2}} u(t,r)=F(t,r), \quad t \geq T_{0},\quad r>0$$
and $w:[T_{0},\infty) \times \mathbb{R}^{2}$ is defined by
$$w(t,r \cos(\theta),r \sin(\theta))=u(t,r) \cos(2\theta), \quad r>0$$
then, $w$ solves
$$-\partial_{tt}w+\Delta_{\mathbb{R}^{2}} w =\left(2\frac{x_{1}^{2}}{|x|^{2}}-1\right) F(t,|x|), \quad x \neq 0.$$
Now, we apply this procedure to the case $u=\partial_{t}v_{2}$, and $F=\partial_{t}RHS_{2}$. Using $u(t,r) = w(t,r,0)$, we get
\begin{equation}\label{dtv2noderiv}\begin{split}&\partial_{t}v_{2}(t,r) = \frac{-1}{2\pi} \int_{t}^{\infty} ds \int_{0}^{s-t} \frac{\rho d\rho}{\sqrt{(s-t)^{2}-\rho^{2}}} \int_{0}^{2\pi} d\theta  I_{dtv2}\end{split}\end{equation}
where
$$I_{dtv2}= \partial_{1}RHS_{2}(s,\sqrt{r^{2}+\rho^{2}+2 r \rho \cos(\theta)}) \left(\frac{r^{2}+2 r \rho \cos(\theta) + \rho^{2}(1-2\sin^{2}(\theta))}{r^{2}+2 r \rho \cos(\theta)+\rho^{2}}\right).$$
Because our estimates on $\partial_{t}RHS_{2}$ from Lemma \ref{rhs2lemma} are just as good (in fact, slightly better in the region $r \leq \frac{t}{2}$) as those for $\partial_{r} RHS_{2}$, we can repeat the same procedure used to estimate $p_{2}$, to get
$$|\partial_{t}v_{2}(t,r)| \leq \frac{C}{r \log^{2b}(r)}, \quad r \geq \frac{t}{2}.$$
\textbf{Step 7}: We estimate $\partial_{t}^{2} v_{2}$ and $\partial_{tr}v_{2}$ in the region $t> r \geq \frac{t}{2}$ by using a procedure based on \eqref{enest}, which also takes advantage of the finite speed of propagation. Then, we estimate $||\partial_{tt}v_{2}(t,\cdot)||_{L^{\infty}_{r}(\{r \geq \frac{t}{2}\})}$ by using the fact that $\partial_{tt}v_{2}$ solves the same equation as $v_{2}$, with zero Cauchy data at infinity, except with $\partial_{t}^{2} RHS_{2}$ on the right-hand side. We estimate $||\partial_{tr}v_{2}(t,\cdot)||_{L^{\infty}_{r}(\{r \geq \frac{t}{2}\})}$ similarly. Finally, we use the equation solved by $v_{2}$ to read off estimates on $\partial_{r}^{2} v_{2}$ in the region $t> r \geq \frac{t}{2}$, and to estimate $||\partial_{r}^{2}v_{2}(t,\cdot)||_{L^{\infty}_{r}(\{r \geq \frac{t}{2}\})}$\\
\\
Using \eqref{d2rhs2}, the finite speed of propagation, as well as an appropriate analog of \eqref{enest}, we get 
$$|\partial_{t}^{2}v_{2}(t,r)|+|\partial_{tr}v_{2}(t,r)| \leq \frac{C}{t \langle t-r \rangle \log^{b}(t) \log^{b}(\langle t-r \rangle)}, \quad t>r > \frac{t}{2}.$$
We then argue as we did for $\partial_{t}v_{2}$, to get
$$\partial_{t}^{2}v_{2}(t,r) = \frac{-1}{2\pi} \int_{t}^{\infty} ds \int_{0}^{s-t} \frac{\rho d\rho}{\sqrt{(s-t)^{2}-\rho^{2}}} \int_{0}^{2\pi} d\theta I_{dttv2}$$
where
$$I_{dttv2} = \partial_{1}^{2}RHS_{2}(s,\sqrt{r^{2}+\rho^{2}+2 r \rho \cos(\theta)}) \left(\frac{r^{2}+2 r \rho \cos(\theta) + \rho^{2}(1-2\sin^{2}(\theta))}{r^{2}+2 r \rho \cos(\theta)+\rho^{2}}\right).$$
Then, we carry out the same procedure used to estimate $p_{2}$. The difference here is that we have an extra factor of $\frac{1}{\langle t-r \rangle}$ in the pointwise estimates for $\partial_{t}^{2}RHS_{2}(t,r)$, relative to those for $\partial_{r}RHS_{2}(t,r)$ (recall Lemma \ref{rhs2lemma}). This leads to 
$$|\partial_{t}^{2} v_{2}(t,r)| \leq \frac{C}{r^{3/2} \log^{b}(t)}, \quad r > \frac{t}{2}.$$
Also, if $m_{2}=\left(\partial_{r}+\frac{2}{r}\right)\partial_{t}v_{2}$, then, $m_{2}$ solves
$$-\partial_{tt}m_{2}+\partial_{rr}m_{2}+\frac{1}{r}\partial_{r}m_{2}-\frac{1}{r^{2}} m_{2} = \left(\partial_{r}+\frac{2}{r}\right)\partial_{t} RHS_{2}(t,r)$$
with zero Cauchy data at infinity. Using the analog of the $p_{2}$ representation formula, \eqref{p2repform}, we can repeat the same argument used for $\partial_{t}^{2} v_{2}$, and use the previous estimates on $\partial_{t}v_{2}$, to get
$$|\partial_{tr}v_{2}(t,r)| \leq \frac{C}{r^{3/2} \log^{b}(t)}, \quad r > \frac{t}{2}.$$
\textbf{Step 8}: We estimate $\partial_{ttr}v_{2}$ in the region $t > r \geq \frac{t}{2}$ by using the same procedure as for $\partial_{tr}v_{2}$. Then, we estimate $\partial_{trr} v_{2}$ and $\partial_{ttrr}v_{2}$ in the region $t > r \geq \frac{t}{2}$, by using the same representation formulae for $\left(\partial_{r}+\frac{1}{r}\right)\left(\partial_{r}+\frac{2}{r}\right) \partial_{t}^{j}v_{2}$ (for $j=1,2$) as was used in step 4.
\end{proof}
\subsection{Summation of the higher iterates, $v_{j}$}\label{vjsection}
We now proceed to recursively define subsequent corrections, $v_{j}$. For $j \geq 3$, define $RHS_{j}$ by
\begin{equation}\label{rhsj}\begin{split} RHS_{j}(t,r) &=\frac{6 Q_{\frac{1}{\lambda(t)}}(r)}{r^{2}} \left(\left(\sum_{k=1}^{j-1}v_{k}\right)^{2} - \left(\sum_{k=1}^{j-2}v_{k}\right)^{2}\right) + \frac{2}{r^{2}}\left(\left(\sum_{k=1}^{j-1}v_{k}\right)^{3} - \left(\sum_{k=1}^{j-2}v_{k}\right)^{3}\right)\\
&=\frac{6 Q_{\frac{1}{\lambda(t)}}}{r^{2}} \left(2\sum_{k=1}^{j-2}v_{k}v_{j-1}+v_{j-1}^{2}\right) + \frac{2}{r^{2}}\left(3\left(\sum_{k=1}^{j-2}v_{k}\right)^{2} v_{j-1} + 3\sum_{k=1}^{j-2}v_{k} v_{j-1}^{2} + v_{j-1}^{3}\right).\end{split}\end{equation}
Then, we let $v_{j}$ be the solution to the following equation, with zero Cauchy data at infinity
\begin{equation} -\partial_{tt}v_{j}+\partial_{rr}v_{j}+\frac{1}{r}\partial_{r}v_{j} - \frac{4}{r^{2}} v_{j} = RHS_{j}(t,r).\end{equation}
We proceed to prove estimates on $v_{k}$ by induction.
\begin{lemma} \label{vkinductionlemma}Let $C_{1}>9$ be such that \eqref{v1est}, \eqref{dv1est}, and \eqref{v2nearorigin} through \eqref{d3v2est} hold, with the constant $C = C_{1}$ on the right-hand side. Then, there exists $n>900$ and $T_{1}>0$ such that, if
$$D_{n,k} = \begin{cases} C_{1}, \quad k=2\\
C_{1}^{nk}, \quad k \geq 3\end{cases}$$
then, we have the following estimates for $k \geq 2$, and all $t \geq T_{1}$.
\begin{equation} \label{vknearorigin} |\partial_{t}^{p} \partial_{r}^{m} v_{k}(t,r)| \leq \frac{D_{n,k} r^{2-m}}{t^{2+p} \log^{bk}(t)}, \quad r \leq \frac{t}{2}, \quad 0 \leq p,m \leq 2\end{equation}
\begin{equation}\label{vkassump} |v_{k}(t,r)|+r |\partial_{t}v_{k}(t,r)| +r |\partial_{r}v_{k}(t,r)| \leq \frac{D_{n,k}}{\log^{bk}(t)}, \quad r > \frac{t}{2}\end{equation}
\begin{equation}\label{d2vkassump} |\partial_{rr}v_{k}(t,r)|+ |\partial_{tt}v_{k}(t,r)| +|\partial_{tr}v_{k}(t,r)| \leq \frac{D_{n,k}}{t \langle t -r\rangle \log^{b}(\langle t-r \rangle) \log^{b(k-1)}(t)}, \quad t> r > \frac{t}{2}\end{equation}
\begin{equation}\label{d2vkinftyassump} ||\partial_{tr}v_{k}(t,r)||_{L^{\infty}_{r}\{r \geq \frac{t}{2}\}}+||\partial_{rr}v_{k}(t,r)||_{L^{\infty}_{r}\{r \geq \frac{t}{2}\}}+||\partial_{tt}v_{k}(t,r)||_{L^{\infty}_{r}\{r \geq \frac{t}{2}\}} \leq \frac{D_{n,k}}{t^{3/2} \log^{b(k-1)}(t)}\end{equation}
\begin{equation} \label{d3vkassump} |\partial_{trr}v_{k}(t,r)| + |\partial_{ttr}v_{k}(t,r)| \leq \frac{D_{n,k}}{\sqrt{t} \langle t-r \rangle^{5/2} \log^{2b}(\langle t-r \rangle) \log^{b(k-2)}(t)}, \quad t > r > \frac{t}{2}\end{equation}
and, for $t > r > \frac{t}{2}$,
\begin{equation}\label{dttrrvkassump2} |\partial_{ttrr}v_{k}(t,r)| \leq \begin{cases} \frac{D_{n,k}}{\sqrt{t} \langle t-r \rangle^{7/2} \log^{2b}(\langle t-r \rangle)}, \quad k=2\\
\frac{D_{n,k}}{\sqrt{t} \langle t-r \rangle^{7/2} \log^{3b}(\langle t-r \rangle) \log^{b(k-3)}(t)}, \quad 3 \leq k \end{cases}\end{equation}
\end{lemma}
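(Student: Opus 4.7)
My plan is to prove the lemma by strong induction on $k$, with the base case $k=2$ being exactly Lemma \ref{v2lemma} after choosing $C_{1}$ large enough so that its bounds, together with those of Lemma \ref{v1lemma} for $v_{1}$, hold with the constant $C_{1}$ in place of the generic $C$. For the inductive step I assume the estimates \eqref{vknearorigin}--\eqref{dttrrvkassump2} for $v_{j}$, $2\leq j\leq k-1$, and derive them for $v_{k}$ in two phases: first controlling $RHS_{k}$ and its derivatives pointwise, then applying the same eight-step representation-formula argument used in Lemma \ref{v2lemma}.

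In the first phase, I exploit the structural observation that every term in \eqref{rhsj} contains at least one factor of $v_{k-1}$, so after applying the product rule to $\partial_{t}^{p}\partial_{r}^{m}RHS_{k}$ with $p+m\leq 2$, every resulting summand carries a factor that is (at worst) a derivative of $v_{k-1}$ of order $\leq 2$. In the region $r\leq t/2$, combining \eqref{vknearorigin} for $v_{k-1}$ with \eqref{v1est}, \eqref{dv1est}, and \eqref{vknearorigin} for the other $v_{j}$, together with $|r^{-2}Q_{1/\lambda(t)}|\leq C\lambda(t)^{2}/(r^{2}(r^{2}+\lambda(t)^{2}))$ and the bound \eqref{lambdagrowth}, yields estimates of the type in Lemma \ref{rhs2lemma} but with $\log^{2b}(t)$ replaced by $\log^{bk}(t)$ and with an overall constant $\leq C\,D_{n,k-1}\bigl(\sum_{j=1}^{k-2}D_{n,j}\bigr)^{2}$. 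In the region $r>t/2$, the $v_{k-1}$ factor supplies the $\log^{b(k-1)}$ decay, and at least one other factor is either $v_{1}$ or one of its derivatives, which supplies the remaining $\log^{b}$ (via \eqref{v1est}, \eqref{dv1est}, together with the interpolation $(\sqrt{r}\log^{b}(\langle t-r\rangle))^{-1}\leq C(\log^{b}(r)\sqrt{\langle t-r\rangle})^{-1}$ from the proof of Lemma \ref{rhs2lemma}).

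In the second phase, I feed these $RHS_{k}$ estimates into precisely the eight steps of the proof of Lemma \ref{v2lemma}: the $6+1$ dimensional spherical-means representation for $v_{k}=r^{2}\widetilde{w_{k}}$ in $r\leq t/2$; the representation \eqref{p2repform} (derived from the $2+1$ dimensional spherical-means formula applied to $g_{k}(t,r\cos\theta,r\sin\theta):=p_{k}(t,r)\cos\theta$ with $p_{k}:=(\partial_{r}+2/r)v_{k}$) in $r>t/2$; the decomposition $p_{k}=p_{k,I}+p_{k,II,a}+p_{k,II,b}+p_{k,II,c}$; recovery of $v_{k}$ by integration; and the analogous representations for $\partial_{t}^{p}\partial_{r}^{m}v_{k}$ using the fact that $\partial_{t}^{p}v_{k}$ solves the same equation with $\partial_{t}^{p}RHS_{k}$ on the right. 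Because the representation formulae and all the geometric integrations in the proof of Lemma \ref{v2lemma} are identical, the only change is the replacement of $\|RHS_{2}\|_{\cdot}$ bounds by the $k$-dependent bounds from phase one, producing the same structural estimates \eqref{vknearorigin}--\eqref{dttrrvkassump2} with a constant of size $C' D_{n,k-1}\bigl(\sum_{j}D_{n,j}\bigr)^{2}$ for a fixed $C'$ independent of $k$. Choosing $n$ large enough so that $C' C_{1}^{n(k-1)}(k\,C_{1}^{n(k-2)})^{2}\leq C_{1}^{nk}$ for all $k\geq 3$ (which reduces to $C' k^{2} C_{1}^{-n}\leq 1$, achievable for some fixed $n>900$ once $T_{1}$ is taken large enough to absorb the dependence on $k$ via an extra logarithmic margin) closes the induction.

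The main obstacle is the near-cone region $r\approx t$ when two derivatives are distributed among two low-order factors, e.g.\ the contribution $(\partial_{t}v_{1})(\partial_{r}v_{k-1})$ to $\partial_{tr}RHS_{k}$, where the $v_{1}$ derivative only decays like $r^{-1/2}\langle t-r\rangle^{-1/2}\log^{-b}(\langle t-r\rangle)$ rather than supplying a $\log(t)$ factor. Here the trick from Lemma \ref{rhs2lemma}, trading a power of $r$ for $\langle t-r\rangle$ via $r\geq\langle t-r\rangle$ when $r>t/2$ and using monotonicity of $x\mapsto\sqrt{x}/\log^{b}(x)$, is essential to rearrange the bound into the shape demanded by \eqref{d2vkassump} and \eqref{d3vkassump}. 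The split in \eqref{dttrrvkassump2} between $k=2$ and $k\geq 3$ reflects exactly this phenomenon: for $k=2$ the quadratic $v_{1}^{2}$ term dictates the behavior, while for $k\geq 3$ the product $v_{1}v_{k-1}$ supplies an extra $\log^{b}(\langle t-r\rangle)$ decay at the cone. Once these near-cone bookkeeping issues are resolved, the remainder of the argument is a mechanical repetition of Lemma \ref{v2lemma}'s eight steps.
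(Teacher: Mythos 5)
Your overall skeleton — strong induction on $k$ with base case $v_{2}$; first bound $RHS_{k}$ and its derivatives, then rerun the eight-step spherical-means argument of Lemma~\ref{v2lemma}; exploit the near-cone trade $\left(\sqrt{r}\log^{b}(\langle t-r\rangle)\right)^{-1}\leq C\left(\log^{b}(r)\sqrt{\langle t-r\rangle}\right)^{-1}$ — matches the paper. The gap is in the constant bookkeeping, and it is fatal: your proposed inductive constant cannot close.

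You claim that $RHS_{k}$ obeys the Lemma~\ref{rhs2lemma}-type bounds with overall constant $\leq C\,D_{n,k-1}\bigl(\sum_{j=1}^{k-2}D_{n,j}\bigr)^{2}$, and that the resulting $v_{k}$ bounds therefore carry a constant $C'D_{n,k-1}\bigl(\sum_{j}D_{n,j}\bigr)^{2}$. But for $k\geq 5$ the sum $\sum_{j=1}^{k-2}D_{n,j}$ is dominated by its largest term $C_{1}^{n(k-2)}$, so your proposed constant is of size $C_{1}^{n(k-1)}\cdot C_{1}^{2n(k-2)}=C_{1}^{n(3k-5)}$, which exceeds the target $D_{n,k}=C_{1}^{nk}$ by a factor $C_{1}^{n(2k-5)}\to\infty$ as $k\to\infty$ for any fixed $n$. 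Your reduction of the closure condition to ``$C'k^{2}C_{1}^{-n}\leq 1$'' is an algebra slip — it is valid only at $k=3$; the correct exponent is $C'k^{2}C_{1}^{n(2k-5)}\leq 1$, which fails for every $k\geq 3$ regardless of $n$ and $T_{1}$. The suggestion to salvage this with ``an extra logarithmic margin'' by taking $T_{1}$ large cannot work, because $T_{1}$ must be fixed before the induction runs over $k$.

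What your bookkeeping has discarded, and what the paper's proof retains, is the full $\log^{bj}(t)$ decay carried by each intermediate $v_{j}$. When you bound $\bigl|\sum_{j\leq k-2}v_{j}\bigr|$ by $\sum_{j}D_{n,j}$, you have thrown away the factors $\log^{-bj}(t)$ (equivalently, the a priori smallness $\|v_{j}\|_{\infty}\leq e^{-j(900!)}$ noted just after the lemma). Kept in place, they turn the sum over $j$ into a geometric series with ratio $C_{1}^{n}/\log^{b}(t)<1$ once $T_{0}$ is large enough, so that the constant picked up at step $k$ is $C\cdot D_{n,k}\bigl(C_{1}^{1-n}+C_{1}^{n}\log^{-b}(t)\bigr)$ with $C$ independent of $n$, $k$, and $t$: the $C_{1}^{1-n}$ piece comes from the dominant $v_{1}v_{k-1}$ interaction (where $D_{n,1}D_{n,k-1}=C_{1}^{1-n}D_{n,k}$ for $k\geq 4$), and the $C_{1}^{n}\log^{-b}(t)$ piece collects the geometric tail. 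One then first chooses $n=n_{0}$ large enough to make $\max\{C,1\}C_{1}^{90-n_{0}}<e^{-(900!)}$, and only afterwards chooses $T_{0,n_{0}}$ so that $\max\{C,1\}C_{1}^{90n_{0}}\log^{-b}(T_{0,n_{0}})<e^{-(900!)}$; this two-step parameter choice is the heart of closing the induction and is absent from your argument. With your crude constant, no choice of $n$ and $T_{1}$, in either order, can make the induction close.
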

After proving the lemma, we then prove the following corollary.
\begin{corollary}\label{vkinductioncorollary} The series 
$$v_{s}:=\sum_{j=3}^{\infty} v_{j}$$
as well as the series resulting from applying any first or second order derivative termwise, converges absolutely and uniformly on the set $\{(t,r)|t \geq T_{1}, r \geq 0\}$. Moreover, 
\begin{equation}\nonumber \begin{split} &-\partial_{tt}v_{s}+\partial_{rr}v_{s}+\frac{1}{r}\partial_{r}v_{s}-\frac{4}{r^{2}}v_{s} \\
&= \frac{6 Q_{\frac{1}{\lambda(t)}}}{r^{2}} \left(2 v_{1} \left(v_{2}+v_{s}\right) + \left(v_{2}+v_{s}\right)^{2}\right) + \frac{2}{r^{2}} \left(3 v_{1}\left(v_{2}+v_{s}\right)^{2} + 3 v_{1}^{2} \left(v_{2}+v_{s}\right) + \left(v_{2}+v_{s}\right)^{3}\right).\end{split}\end{equation}
\end{corollary}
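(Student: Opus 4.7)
The plan is to leverage the fact that the bounds in Lemma \ref{vkinductionlemma} are designed to give geometric convergence once $t$ is large enough, and then to exploit the telescoping algebraic structure built into the definition \eqref{rhsj}. I would begin by enlarging $T_1$ once more if needed, so that $\log^b(T_1) \geq 4 C_1^n$. For $t \geq T_1$ and $k \geq 3$, the bound \eqref{vkassump} gives $\sup_{r \geq t/2}|v_k(t,r)| \leq (C_1^n/\log^b(t))^k \leq 4^{-k}$, while \eqref{vknearorigin} with $p = m = 0$ yields $|v_k(t,r)| \leq D_{n,k}/(4 \log^{bk}(t))$ on $\{r \leq t/2\}$. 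Summing the geometric series in $k$ shows that $\sum_{k=3}^{\infty} v_k$ converges absolutely and uniformly on $\{t \geq T_1,\, r \geq 0\}$.

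The same mechanism handles each of the termwise first- and second-order $t$- and $r$-derivatives. The estimates \eqref{vknearorigin}, \eqref{d2vkassump}, \eqref{d2vkinftyassump}, \eqref{d3vkassump}, and \eqref{dttrrvkassump2} all carry the factor $D_{n,k} = C_1^{nk}$ together with a logarithmic attenuation of the form $\log^{-b(k-m)}(t)$ where $m \in \{0,1,2,3\}$ is independent of $k$. Enlarging $T_1$ once more to absorb at most three lost $\log^b$ factors, each derivative series is uniformly Cauchy on $\{t \geq T_1,\, r \geq 0\}$, which justifies termwise differentiation. Hence $v_s$ is $C^2$ in both variables on the stated region and
\begin{equation}
-\partial_{tt} v_s + \partial_{rr} v_s + \tfrac{1}{r}\partial_r v_s - \tfrac{4}{r^2} v_s = \sum_{j=3}^{\infty} RHS_j(t,r),
\end{equation}
with pointwise (indeed uniform) convergence of the right-hand side.

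For the final identification, I would write $S_N = \sum_{k=1}^{N} v_k$, so that \eqref{rhsj} reads $RHS_j = \frac{6 Q_{1/\lambda(t)}}{r^2}(S_{j-1}^2 - S_{j-2}^2) + \frac{2}{r^2}(S_{j-1}^3 - S_{j-2}^3)$. The $N$th partial sum telescopes:
\begin{equation}
\sum_{j=3}^{N} RHS_j = \frac{6 Q_{1/\lambda(t)}}{r^2}\bigl(S_{N-1}^2 - v_1^2\bigr) + \frac{2}{r^2}\bigl(S_{N-1}^3 - v_1^3\bigr).
\end{equation}
Letting $N \to \infty$, using the uniform convergence $S_{N-1} \to v_1 + v_2 + v_s$, and expanding $(v_1+v_2+v_s)^2 - v_1^2$ and $(v_1+v_2+v_s)^3 - v_1^3$ as polynomials in $v_2 + v_s$, produces exactly the expression claimed in the corollary. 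The only point requiring a moment's care is the continuity of this identity down to $r = 0$, which is ensured by the quadratic vanishing $v_k = O(r^2)$ from \eqref{vknearorigin}: this makes the $1/r^2$ prefactor harmless. Overall I do not expect any serious obstacle; the main content is the quantitative geometric summability, and the rest is algebraic bookkeeping that the definition \eqref{rhsj} was set up precisely to make painless.
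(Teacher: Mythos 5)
Your proposal is correct and follows essentially the same route as the paper: derive geometric summability in $k$ from the $D_{n,k} = C_1^{nk}$ factors and the $\log^{-b(k-m)}(t)$ attenuation in Lemma \ref{vkinductionlemma} (the paper builds the needed smallness into the choice of $T_{0,n_0}$ inside the induction so that $\|v_j\|_{L^\infty} \leq e^{-j(900!)}$ for $t \geq T_1$ without a further enlargement, but this is a presentational difference only), then exploit the telescoping structure of \eqref{rhsj} and pass to the limit by uniform convergence. Your $S_N$-form of the telescoping identity is the same algebra as the paper's displayed partial-sum formula, rewritten in more compact notation.
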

For completeness, we also include the following corollary, which is the result of combining Lemma \ref{v1lemma}, Lemma \ref{v2lemma}, and Lemma \ref{vkinductionlemma}.
\begin{corollary}\label{vkinductioncorollary2} Let $v_{c}$ be given by $v_{c}(t,r) := v_{1}(t,r)+v_{2}(t,r)+v_{s}(t,r).$ Then, for $0 \leq p,m \leq 2$, and $t \geq T_{1}$, we have the following estimates.
$$|\partial_{t}^{p}\partial_{r}^{m} v_{c}(t,r)| \leq \frac{C r^{2-m}}{t^{2+p}\log^{b}(t)}, \quad r \leq \frac{t}{2}$$
$$|v_{c}(t,r)| \leq \frac{C}{\log^{b}(r)} + \frac{C}{\log^{2b}(t)}, \quad r \geq \frac{t}{2}$$
$$|\partial_{t}v_{c}(t,r)| + |\partial_{r}v_{c}(t,r)| \leq \frac{C}{\sqrt{r} \log^{b}(\langle t-r \rangle) \sqrt{\langle t-r \rangle}}+\frac{C}{r \log^{2b}(t)}, \quad r \geq \frac{t}{2}$$
For $r \geq \frac{t}{2}$,
\begin{equation}\label{d2vcest}\begin{split}&|\partial_{r}^{2}v_{c}(t,r)| + |\partial_{t}^{2}v_{c}(t,r)| + |\partial_{tr}v_{c}(t,r)| \\
&\leq \frac{C}{\sqrt{r} \log^{b}(\langle t-r \rangle) \langle t-r \rangle ^{3/2}}+\begin{cases} \frac{C}{t \langle t-r \rangle \log^{b}(t) \log^{b}(\langle t-r \rangle)}, \quad t>r>\frac{t}{2}\\
\frac{1}{t^{3/2} \log^{b}(t)}, \quad r > \frac{t}{2}\end{cases}\end{split}\end{equation}
$$|\partial_{trr}v_{c}(t,r)| + |\partial_{ttr}v_{c}(t,r)| \leq \frac{C}{\sqrt{t} \log^{b}(\langle t-r\rangle) \langle t-r \rangle^{5/2}}, \quad t > r > \frac{t}{2}$$
$$|\partial_{ttrr}v_{c}(t,r) \leq \frac{C}{\sqrt{t} \log^{b}(\langle t-r \rangle) \langle t-r \rangle^{7/2}}, \quad t > r > \frac{t}{2}$$
\end{corollary}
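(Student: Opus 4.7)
The plan is to verify each asserted inequality by simply adding the corresponding estimates from Lemma \ref{v1lemma} (for $v_1$), Lemma \ref{v2lemma} (for $v_2$), and the summation over $k \geq 3$ of the estimates from Lemma \ref{vkinductionlemma} (for $v_s = \sum_{k \geq 3} v_k$). By Corollary \ref{vkinductioncorollary}, every mixed derivative $\partial_t^p \partial_r^m v_c$ with $p+m \leq 2$ equals the sum of the corresponding derivatives of $v_1$, $v_2$, and the termwise derivatives of $v_s$, so no new analytic input is needed beyond the three lemmas and absolute convergence of the resulting series.

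The first step is to enlarge $T_1$, if necessary, so that $\log^b(t) \geq 4 C_1^n$ for all $t \geq T_1$. With this, each $k$-indexed series that appears is geometric, and one has, for $t \geq T_1$,
\begin{equation}
\sum_{k \geq 3} \frac{D_{n,k}}{\log^{bk}(t)} \leq \frac{C}{\log^{3b}(t)}, \qquad \sum_{k \geq 3} \frac{D_{n,k}}{\log^{b(k-1)}(t)} \leq \frac{C}{\log^{2b}(t)},
\end{equation}
\begin{equation}
\sum_{k \geq 3} \frac{D_{n,k}}{\log^{b(k-2)}(t)} \leq \frac{C}{\log^b(t)}, \qquad \sum_{k \geq 3} \frac{D_{n,k}}{\log^{b(k-3)}(t)} \leq C.
\end{equation}
These four sums are the only tail estimates needed to handle the contributions of $v_s$ to each of the five pointwise bounds.

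With those sums in hand, the proof reduces to term-by-term addition. In the inner region $r \leq t/2$, the three contributions share the factor $r^{2-m}/t^{2+p}$, and the worst logarithmic factor, $1/\log^b(t)$, comes from $v_1$, while $v_2$ and $v_s$ produce better powers $1/\log^{2b}(t)$ and $1/\log^{3b}(t)$ respectively, which are absorbed into $C/\log^b(t)$. In the outer region $r \geq t/2$, the $v_1$ contribution produces the wave-zone terms involving $1/(\sqrt{r}\, \log^b(\langle t-r\rangle)\, \langle t-r\rangle^{1/2 + p + m - 1})$, while $v_2$ and $v_s$ together produce the remaining pieces: $C/\log^{2b}(t)$ for $v_c$ itself, $C/(r \log^{2b}(t))$ for first derivatives, and the $1/(t \langle t-r\rangle \log^b(t) \log^b(\langle t-r\rangle))$ or $1/(t^{3/2} \log^b(t))$ bounds for second derivatives, with the $v_s$ tail in each case better by one factor of $\log^b(t)$ than the $v_2$ bound, hence absorbed. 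The two bounds on mixed third- and fourth-order derivatives in $\{t>r>t/2\}$ follow by the same direct addition from \eqref{d3v2est}, \eqref{d3vkassump}, and \eqref{dttrrvkassump2}.

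No substantive obstacle is expected — the statement is really a consolidated restatement of previously proved estimates. The only point requiring minor care is the second-derivative bound, where the $v_2$ estimate carries a factor $1/\log^b(t)$ rather than $1/\log^{2b}(t)$, so one must verify that the corresponding $v_s$ series sums to something at least as good (which it does, by the $k \geq 3$ shift producing $1/\log^{2b}(t)$). Once this is observed, the corollary follows by inspection.
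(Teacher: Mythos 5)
Your proof is correct and takes exactly the route the paper implicitly prescribes: the corollary is announced as "the result of combining" Lemmas \ref{v1lemma}, \ref{v2lemma}, and \ref{vkinductionlemma}, with no separate proof given, and your termwise addition together with the geometric tail sums $\sum_{k\geq 3} D_{n,k}/\log^{b(k-\ell)}(t)$ (for $\ell=0,1,2,3$) is precisely that combination. The enlargement of $T_{1}$ you propose is harmless but already unnecessary, since the choice of $T_{0,n_{0}}$ in the proof of Lemma \ref{vkinductionlemma} guarantees $C_{1}^{n_{0}}/\log^{b}(t)<1$ for $t\geq T_{1}$, which is all the geometric summation needs.
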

\emph{Proof} (of Lemma \ref{vkinductionlemma})
Let $n > 900$ be otherwise arbitrary, and let $T_{0,n}>e^{(900!)^{1+\frac{1}{b}}}$ satisfy $\frac{C_{1}^{90 n}}{\log^{b}(T_{0,n})} < e^{-(900!)}$ and be otherwise arbitrary. Our goal is to show that, for a sufficiently large $T_{0,n}$, we can prove estimates on $v_{j}$ (and its derivatives), valid for all $t \geq T_{0,n}+\exp{\left(900! + 2^{-\frac{3}{2(2b-1)}}\right)}\left(1+T_{\lambda_{0}}\right)+ M_{1}$ and all $r \geq 0$, by induction. In the following estimates, we assume $t \geq \exp{\left(900! + 2^{-\frac{3}{2(2b-1)}}\right)}\left(1+T_{\lambda_{0}}\right)+ M_{1}+T_{0,n}.$ Let 
$$D_{n,k} = \begin{cases} C_{1}, \quad k=2\\
C_{1}^{nk}, \quad k \geq 3\end{cases}.$$
Suppose, for any $j \geq 3$, and all $k$ with $2 \leq k \leq j-1$, that
\begin{equation}  |\partial_{t}^{p} \partial_{r}^{m} v_{k}(t,r)| \leq \frac{D_{n,k} r^{2-m}}{t^{2+p} \log^{bk}(t)}, \quad r \leq \frac{t}{2}, \quad 0 \leq p,m \leq 2\end{equation}
\begin{equation} |v_{k}(t,r)| +r |\partial_{t}v_{k}(t,r)| + r|\partial_{r}v_{k}(t,r)| \leq \frac{D_{n,k}}{\log^{bk}(t)}, \quad r > \frac{t}{2}\end{equation}
\begin{equation} |\partial_{rr}v_{k}(t,r)|+ |\partial_{tt}v_{k}(t,r)| +|\partial_{tr}v_{k}(t,r)| \leq \frac{D_{n,k}}{t \langle t -r\rangle \log^{b}(\langle t-r \rangle) \log^{b(k-1)}(t)}, \quad t> r > \frac{t}{2}\end{equation}
\begin{equation} ||\partial_{tr}v_{k}(t,r)||_{L^{\infty}_{r}\{r \geq \frac{t}{2}\}}+||\partial_{rr}v_{k}(t,r)||_{L^{\infty}_{r}\{r \geq \frac{t}{2}\}}+||\partial_{tt}v_{k}(t,r)||_{L^{\infty}_{r}\{r \geq \frac{t}{2}\}} \leq \frac{D_{n,k}}{t^{3/2} \log^{b(k-1)}(t)}\end{equation}
\begin{equation}  |\partial_{trr}v_{k}(t,r)| + |\partial_{ttr}v_{k}(t,r)| \leq \frac{D_{n,k}}{\sqrt{t} \langle t-r \rangle^{5/2} \log^{2b}(\langle t-r \rangle) \log^{b(k-2)}(t)}, \quad t > r > \frac{t}{2}\end{equation}
and, for $t > r > \frac{t}{2}$,
\begin{equation} |\partial_{ttrr}v_{k}(t,r)| \leq \begin{cases} \frac{D_{n,k}}{\sqrt{t} \langle t-r \rangle^{7/2} \log^{2b}(\langle t-r \rangle)}, \quad j=3,k=2\\
\begin{cases} \frac{D_{n,k}}{\sqrt{t} \langle t-r \rangle^{7/2} \log^{2b}(\langle t-r \rangle)}, \quad k=2\\
\frac{D_{n,k}}{\sqrt{t} \langle t-r \rangle^{7/2} \log^{3b}(\langle t-r \rangle) \log^{b(k-3)}(t)}, \quad 3 \leq k \leq j-1\end{cases}, \quad j > 3\end{cases}.\end{equation}
Then, for some $C$ \emph{independent} of $n$ (and $t$) we have
$$|\partial_{t}^{p}\partial_{r}^{m} RHS_{j}(t,r)| \leq C\left( C_{1}^{1-n}+\frac{C_{1}^{n}}{\log^{b}(t)}\right) \frac{C_{1}^{nj} r^{2-m}}{t^{4+p} \log^{bj}(t)}, \quad r \leq \frac{t}{2}, \quad 0 \leq p,m \leq 2$$
$$|RHS_{j}(t,r)| \leq \frac{C_{1}^{nj}}{r^{2} \log^{bj}(t)}, \quad r > \frac{t}{2}$$
\begin{equation}\begin{split}|&\partial_{r}RHS_{j}(t,r)|+|\partial_{t}RHS_{j}(t,r)| \\
&\leq C \left(\frac{C_{1}^{nj}}{r^{3} \log^{bj}(t)} \left(C_{1}^{1-n}+\frac{C_{1}^{n}}{\log^{b}(t)}\right) + \frac{C_{1}^{nj} C_{1}^{1-n}}{r^{5/2} \sqrt{\langle t-r \rangle} \log^{b(j-1)}(t) \log^{b}(\langle t-r \rangle)}\right), \quad r > \frac{t}{2}\end{split}\end{equation}
\begin{equation}\nonumber \begin{split}&|\partial_{tr}RHS_{j}(t,r)| + |\partial_{r}^{2}RHS_{j}(t,r)| + |\partial_{t}^{2}RHS_{j}(t,r)| \\
&\leq \frac{C C_{1}^{nj}\left(C_{1}^{1-n}+\frac{C_{1}^{n}}{\log^{b}(t)}\right)}{\log^{b(j-1)}(t) \log^{b}(\langle t-r \rangle) r^{5/2} \langle t-r \rangle^{3/2}} + \frac{C C_{1}^{nj}\left(\frac{C_{1}^{2-n}}{\log^{b}(\langle t-r \rangle)} + \frac{C_{1}^{3n}}{\log^{2b}(t) \log^{b}(\langle t-r \rangle)}\right)}{\log^{b(j-1)}(t) r^{3} \langle t-r \rangle \log^{b}(\langle t-r \rangle)}, \quad t > r > \frac{t}{2}\end{split}.\end{equation}
We will also require another estimate on $\partial_{t}^{2}RHS_{j}$ and $\partial_{tr}RHS_{j}$, which is valid for all $r >\frac{t}{2}$:
\begin{equation}\begin{split}&|\partial_{t}^{2}RHS_{j}(t,r)|+|\partial_{tr}RHS_{j}(t,r)|\\
& \leq \frac{C C_{1}^{nj}C_{1}^{1-n}}{r^{5/2}\langle t-r \rangle^{3/2} \log^{b}(\langle t-r \rangle) \log^{b(j-1)}(t)} + \frac{C C_{1}^{nj}}{r^{2}t^{3/2} \log^{b(j-1)}(t)} \left(\frac{C_{1}^{n}}{\log^{b}(t)} + \frac{C_{1}^{1-n}}{\log^{b}(t)}\right)\\
& + \frac{C C_{1}^{nj}}{r^{3} \langle t-r \rangle} \frac{\left(C_{1}^{-n+2}+\frac{C_{1}^{3n}}{\log^{2b}(t)}\right)}{\log^{b(j-1)}(t) \log^{2b}(\langle t-r \rangle)}, \quad r > \frac{t}{2}\end{split}\end{equation}
$$|\partial_{ttr}RHS_{j}(t,r)| + |\partial_{trr}RHS_{j}(t,r)| \leq \frac{C C_{1}^{nj} \left(C_{1}^{1-n}+\frac{C_{1}^{n}}{\log^{b}(t)}\right)}{r^{5/2} \log^{2b}(\langle t-r \rangle) \log^{b(j-2)}(t) \langle t-r \rangle^{5/2}}, \quad t>r>\frac{t}{2}$$
$$|\partial_{ttrr}RHS_{j}(t,r)| \leq \frac{C C_{1}^{nj}}{r^{5/2} \langle t-r \rangle^{7/2}}\frac{\left(C_{1}^{1-n}+\frac{C_{1}^{n}}{\log^{b}(t)}\right)}{\log^{3b}(\langle t-r \rangle) \log^{b(j-1)}(t)}, \quad t>r>\frac{t}{2}$$
Then, we repeat the analogs of steps 1-8 used to estimate $v_{2}$, and get (for $C$ \emph{independent} of $n$ (and $t$)):
\begin{equation} |\partial_{t}^{p}\partial_{r}^{m}v_{j}(t,r)| \leq C \frac{C_{1}^{nj}r^{2-m}}{t^{2+p} \log^{bj}(t)} \left(C_{1}^{1-n}+\frac{C_{1}^{n}}{\log^{b}(t)}\right), \quad r \leq \frac{t}{2}, \quad 0 \leq p,m \leq 2\end{equation}
\begin{equation} |v_{j}(t,r)| +r |\partial_{r}v_{j}(t,r)| + r|\partial_{t}v_{j}(t,r)| \leq \frac{C C_{1}^{nj}\left(C_{1}^{1-n}+\frac{C_{1}^{n}}{\log^{b}(t)}\right)}{\log^{bj}(t)}, \quad r > \frac{t}{2}\end{equation}
\begin{equation} |\partial_{t}^{2}v_{j}(t,r)| + |\partial_{r}^{2}v_{j}(t,r)| + |\partial_{tr}v_{j}(t,r)| \leq \frac{C C_{1}^{nj}}{t \langle t-r\rangle \log^{b(j-1)}(t) \log^{b}(\langle t-r \rangle)}\left(C_{1}^{2-n} + \frac{C_{1}^{2n}}{\log^{b}(t)}\right), \quad t > r > \frac{t}{2}\end{equation}
\begin{equation} ||\partial_{t}^{2}v_{j}(t,r)||_{L^{\infty}_{r}\{r > \frac{t}{2}\}} +  ||\partial_{tr}v_{j}(t,r)||_{L^{\infty}_{r}\{r > \frac{t}{2}\}}+||\partial_{r}^{2}v_{j}(t,r)||_{L^{\infty}_{r}\{r > \frac{t}{2}\}} \leq \frac{C C_{1}^{nj}}{t^{3/2} \log^{b(j-1)}(t)} \left(\frac{C_{1}^{2n}}{\log^{b}(t)} + C_{1}^{2-n}\right)\end{equation}
\begin{equation} |\partial_{ttr}v_{j}(t,r)| + |\partial_{trr}v_{j}(t,r)| \leq \frac{C C_{1}^{nj}\left(C_{1}^{2-n}+\frac{C_{1}^{3n}}{\log^{b}(t)}\right)}{\sqrt{t} \langle t-r \rangle^{5/2} \log^{b(j-2)}(t) \log^{2b}(\langle t-r \rangle)}, \quad t > r > \frac{t}{2}\end{equation}
Finally,
\begin{equation} |\partial_{ttrr}v_{j}(t,r)| \leq \frac{C C_{1}^{nj} \left(C_{1}^{2-n}+\frac{C_{1}^{3n}}{\log^{b}(t)}\right)}{\sqrt{t} \langle t-r \rangle^{7/2} \log^{b(j-3)}(t) \log^{3b}(\langle t-r \rangle)}, \quad t > r >\frac{t}{2}.\end{equation}
Since $C$ is independent of $n$, there exists $n_{0}$ such that $\max\{C,1\} C_{1}^{90-n_{0}} < e^{-(900!)}.$ Now, since $C$ is also independent of $T_{0,n_{0}}$, we can choose $T_{0,n_{0}}$ to satisfy, in addition to our previous constraints at the beginning of this argument, the following inequality  $$\max\{C,1\} \frac{C_{1}^{90n_{0}}}{\log^{b}(T_{0,n_{0}})} < e^{-(900!)}.$$
By mathematical induction, the above results imply that \eqref{vknearorigin} through \eqref{dttrrvkassump2} are true (with $n=n_{0}$) for all $k \geq 2$, and $t \geq T_{0,n_{0}}+\exp{\left(900! + 2^{-\frac{3}{2(2b-1)}}\right)}\left(1+T_{\lambda_{0}}\right)+ M_{1}:=T_{1}$.\qed\\
\\
Now, we prove Corollary \ref{vkinductioncorollary}. From here on, for the rest of the paper, we further restrict $T_{0}$ to satisfy $T_{0} \geq T_{1}$. Also, for all $j \geq 2$,
$$||v_{j}(t,r)||_{L^{\infty}\{r \geq 0, t \geq T_{1}\}} \leq e^{-j(900!)}$$
$$||\partial_{t}v_{j}(t,r)||_{L^{\infty}\{r \geq 0, t \geq T_{1}\}}+||\partial_{r}v_{j}(t,r)||_{L^{\infty}\{r \geq 0, t \geq T_{1}\}} \leq \frac{2}{T_{1}} e^{-j(900!)}$$
$$||\partial_{tr}v_{j}(t,r)||_{L^{\infty}\{r \geq 0, t \geq T_{1}\}}+||\partial_{t}^{2}v_{j}(t,r)||_{L^{\infty}\{r \geq 0, t \geq T_{1}\}}+||\partial_{r}^{2}v_{j}(t,r)||_{L^{\infty}\{r \geq 0, t \geq T_{1}\}} \leq \frac{e^{-(900!)(j-1)}}{T_{1}^{3/2}}.$$
The series (in Corollary \ref{vkinductioncorollary}) defining $v_{s}$ (as well as the series resulting from applying any first or second order derivative termwise) converges absolutely and uniformly on the set $\{(t,r)|t \geq T_{1}, r \geq 0\}$. Moreover, using the first line of \eqref{rhsj}, we get, for any $N \geq 4$,
$$\sum_{j=3}^{N} RHS_{j}(t,r) = \frac{6 Q_{\frac{1}{\lambda(t)}}(r)}{r^{2}} \left(2 v_{1}\sum_{k=2}^{N-1} v_{k} + \left(\sum_{k=2}^{N-1} v_{k}\right)^{2}\right) + \frac{2}{r^{2}} \left(3 v_{1} \left(\sum_{k=2}^{N-1} v_{k}\right)^{2} + 3v_{1}^{2} \sum_{k=2}^{N-1} v_{k} + \left(\sum_{k=2}^{N-1} v_{k}\right)^{3}\right)$$
(where the argument $(t,r)$ of all instances of $v_{k},v_{1}$ has been omitted, for clarity).
Using the uniformity of the convergence of the series defining $v_{s}$, we get
\begin{equation}\nonumber \begin{split} &-\partial_{tt}v_{s}+\partial_{rr}v_{s}+\frac{1}{r}\partial_{r}v_{s}-\frac{4}{r^{2}}v_{s} = \lim_{N\rightarrow \infty} \sum_{j=3}^{N} \left(-\partial_{tt}v_{j}+\partial_{rr}v_{j}+\frac{1}{r}\partial_{r}v_{j}-\frac{4}{r^{2}}v_{j}\right)=\lim_{N \rightarrow \infty} \sum_{j=3}^{N} RHS_{j}(t,r) \\
&= \frac{6 Q_{\frac{1}{\lambda(t)}}}{r^{2}} \left(2 v_{1} \left(v_{2}+v_{s}\right) + \left(v_{2}+v_{s}\right)^{2}\right) + \frac{2}{r^{2}} \left(3 v_{1}\left(v_{2}+v_{s}\right)^{2} + 3 v_{1}^{2} \left(v_{2}+v_{s}\right) + \left(v_{2}+v_{s}\right)^{3}\right).\end{split}\end{equation}
\subsection{Improvement of the large $r$ behavior of the remaining error terms}\label{w2sec}
We recall the function $v_{c}$ defined in Corollary \ref{vkinductioncorollary2}. Despite the major improvement of the decay of the error terms accomplished via the resummation of the $v_{k}$ above, we will still need to improve the decay of the error terms which result from substituting $Q_{\frac{1}{\lambda(t)}} + v_{c}$ into \eqref{ym}. The soliton error term, and the linear error term resulting from $v_{c}$, namely $\partial_{tt}Q_{\frac{1}{\lambda(t)}} - \frac{6v_{c}}{r^{2}}\left(1-Q_{\frac{1}{\lambda(t)}}^{2}(r)\right),$ both contribute to leading order in the modulation equation. Therefore, any improvement of these error terms should not change their leading order inner product with the appropriately re-scaled zero eigenfunction $\phi_{0}$. Keeping this in mind, we let
\begin{equation}\label{wrhs2firstdef}WRHS_{2}(t,r):= \chi_{\geq 1}(\frac{r}{g(t)}) \left(\partial_{tt}Q_{\frac{1}{\lambda(t)}} - \frac{6v_{c}}{r^{2}}\left(1-Q_{\frac{1}{\lambda(t)}}^{2}(r)\right)\right), \text{ where }g(t) = \lambda(t) \log^{b-2\epsilon}(t)\end{equation}
where $\chi_{\geq 1}(x) \in C^{\infty}(\mathbb{R}), \quad 0 \leq \chi_{\geq 1}(x) \leq 1, \quad \chi_{\geq 1}(x) = \begin{cases} 0, \quad x \leq \frac{1}{2}\\
1, \quad x \geq 1\end{cases}$ and define $w_{2}$ to be the solution to the following equation, with zero Cauchy data at infinity
\begin{equation}\label{w2eqn}-\partial_{tt}w_{2}+\partial_{rr}w_{2}+\frac{1}{r}\partial_{r}w_{2}-\frac{4}{r^{2}}w_{2}=WRHS_{2}(t,r).\end{equation}
Now, we will prove estimates on $w_{2}$. Note that $WRHS_{2}$ depends on $\lambda''$. Because our only assumptions on $\lambda$ do not include any more regularity than $\lambda \in C^{2}([T_{0},\infty))$, we can not consider $\partial_{t}WRHS_{2}(t,r)$ until we first choose a specific $\lambda$ by solving the modulation equation, and then prove that this $\lambda$ has higher than $C^{2}$ regularity. In terms of estimating $w_{2}$, this means that we can not differentiate $WRHS_{2}$ in time, at this point. Therefore, we will first record a set of preliminary estimates on derivatives of $w_{2}$, which only involve $WRHS_{2}$, and its $r$ derivatives. After choosing $\lambda$, we can then obtain more optimal, final estimates on the derivatives of $w_{2}$. This is very similar to the argument used in the wave maps paper of the author, \cite{wm}. 
\begin{lemma}\label{w2prelim}[Preliminary estimates on $w_{2}$]
We have the following \emph{preliminary} estimates on $w_{2}$
\begin{equation} |w_{2}(t,r)| \leq \begin{cases} \frac{C r^{2} \lambda(t)^{2} \log(2+\frac{r}{g(t)}) \log(t)}{(g(t)^{2}+r^{2}) t^{2} \log^{b}(t)}, \quad r \leq \frac{t}{2}\\
\frac{C \lambda(t)^{2} \log(t)}{t^{2}\log^{b}(t)}, \quad r > \frac{t}{2}\end{cases} \quad |\partial_{r}w_{2}(t,r)| \leq \begin{cases} \frac{C r \lambda(t)^{2} \log(t)}{t^{2} \log^{b}(t) g(t)^{2}}, \quad r \leq g(t)\\
\frac{C \lambda(t)^{2} \log(t)}{t^{2} \log^{b}(t) g(t)}, \quad r > g(t)\end{cases}\end{equation}
For $j+k=2$ or $j=1,k=0$,
\begin{equation}|\partial_{t}^{j}\partial_{r}^{k}w_{2}(t,r)| \leq \frac{C \lambda(t)^{2} \log(t)}{t^{2} \log^{b}(t) g(t)^{j+k}}, \quad r >0\end{equation}
\end{lemma}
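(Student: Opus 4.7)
The plan is to adapt the argument from Lemma \ref{v2lemma} (used for $v_{2}$) to $w_{2}$, exploiting the crucial structural feature of $WRHS_{2}$: the cutoff $\chi_{\geq 1}(r/g(t))$ localizes the source to the region $r \geq g(t)/2$, and in this region $|WRHS_{2}(t,r)| \lesssim \lambda(t)^{2}/(r^{2} t^{2} \log^{b}(t))$ by combining the bounds on $\partial_{tt}Q_{\frac{1}{\lambda(t)}}(r)$ (using $|\lambda''|/\lambda \lesssim 1/(t^{2}\log^{b}(t))$), on $(1-Q_{\frac{1}{\lambda(t)}}^{2}(r))/r^{2}$ (which gives a factor $\lambda(t)^{2}/r^{2}$ for $r \geq \lambda(t)$), and on $v_{c}$ from Corollary \ref{vkinductioncorollary2}. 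A first step is to record pointwise estimates on $\partial_{r}^{k}WRHS_{2}(t,r)$ for $k \leq 4$: away from the transition zone $g(t)/2 \leq r \leq g(t)$ each $r$-derivative simply produces an extra factor of $1/r$, while the cutoff derivatives contribute terms of size $1/g(t)^{k}$ supported in the transition zone. Only $r$-derivatives are needed, which matters because at this stage $\lambda \in C^{2}$ is all we have and $\partial_{t}WRHS_{2}$ would require $\lambda'''$.

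For $|w_{2}(t,r)|$, the plan is to write $w_{2} = r^{2}\widetilde{w}_{2}$, so that $\widetilde{w}_{2}$ satisfies the radial $6+1$ dimensional wave equation
$$-\partial_{t}^{2}\widetilde{w}_{2} + \partial_{r}^{2}\widetilde{w}_{2} + \frac{5}{r}\partial_{r}\widetilde{w}_{2} = \frac{WRHS_{2}(t,r)}{r^{2}}$$
with zero Cauchy data at infinity, then use Duhamel's principle and the Kirchhoff-type formula (as in Step 1 of Lemma \ref{v2lemma})
$$\widetilde{w}_{2}(t,r) = -\int_{t}^{\infty} f_{2}(s-t,r\textbf{e}_{1})\, ds, \qquad f_{2}(\tau,x) = \frac{1}{8\pi^{3}}\Bigl(\tfrac{1}{\tau}\partial_{\tau}\Bigr)^{2}\!\left(\tau^{5}\int_{B_{1}(0)} \frac{WRHS_{2}(s,|x+\tau z|)}{|x+\tau z|^{2}\sqrt{1-|z|^{2}}}\, dz\right).$$
After the rescaling $y=\tau z$, the $\tau$-derivatives land on the spatial argument of $WRHS_{2}$, producing $r$-derivatives rather than $s$-derivatives, which is precisely what the regularity of $\lambda$ permits. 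Parametrizing $y \in \mathbb{R}^{6}$ in spherical coordinates as in \eqref{ydefstep1} and using the angular identity \eqref{phiint} to handle the $\phi$-integration, the problem reduces to a double integral in $(\rho, s)$ restricted by the support condition $|x+y| \geq g(s)/2$. Splitting the $(\rho,s)$-integration into the regimes $r \leq g(t)$, $g(t) < r \leq t/2$, and $r > t/2$, and inserting the pointwise bound on $WRHS_{2}$, yields the claimed estimates: the factor $\log(2+r/g(t))$ comes from an integral of the form $\int_{g(t)}^{r}d\rho'/\rho'$ that accumulates once $r$ exceeds the inner scale, while the $\log(t)$ factor comes from the $s$-integration in the representation formula.

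For $\partial_{r}w_{2}$ and $\partial_{rr}w_{2}$, the plan is to invoke the analogues of Steps 2 and 3 of Lemma \ref{v2lemma}: set $r u_{2}:=(\partial_{r}+2/r)w_{2}$, which solves a radial $4+1$ dimensional wave equation with source $(1/r)(\partial_{r}+2/r)WRHS_{2}$, and $z_{2}:=(\partial_{r}+1/r)(\partial_{r}+2/r)w_{2}$, which solves a radial $2+1$ dimensional wave equation with source $(\partial_{r}+1/r)(\partial_{r}+2/r)WRHS_{2}$. Applying Duhamel and the corresponding spherical means formulae, together with the $r$-derivative bounds on $WRHS_{2}$ and the support condition $r \geq g(s)/2$, produces the $g(t)^{-1}$ scaling of $|\partial_{r}w_{2}|$ in the exterior region $r \geq g(t)$ and the linear-in-$r$ behaviour $r\cdot g(t)^{-2}$ in the interior region $r \leq g(t)$, and analogously the $g(t)^{-2}$ behaviour of $|\partial_{rr}w_{2}|$.

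The main obstacle is the time derivatives, since $\partial_{t}WRHS_{2}$ involves $\lambda'''$ which is not yet available. To get $\partial_{t}w_{2}$ and $\partial_{tr}w_{2}$, I plan to differentiate the integral representations of $w_{2}$ and of $ru_{2}$ directly in $t$: after the $y=(s-t)z$ rescaling, the $t$-dependence of the integrand enters only through $WRHS_{2}(s,|x+(s-t)z|)$, so $\partial_{t}$ produces an $r$-derivative of $WRHS_{2}$, which is controlled. For $\partial_{tt}w_{2}$, I simply read off the bound from the equation
$$\partial_{tt}w_{2} = \partial_{rr}w_{2}+\tfrac{1}{r}\partial_{r}w_{2}-\tfrac{4}{r^{2}}w_{2} - WRHS_{2},$$
combining the previously obtained estimates on $w_{2},\partial_{r}w_{2},\partial_{rr}w_{2}$ with the pointwise bound on $WRHS_{2}$. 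All the resulting estimates carry the expected prefactor $\lambda(t)^{2}\log(t)/(t^{2}\log^{b}(t))$ and the correct $g(t)^{-(j+k)}$ scaling stated in the lemma.
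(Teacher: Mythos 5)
Your plan for $|w_{2}(t,r)|$ in the region $r\leq \tfrac{t}{2}$, and for $\partial_{r}w_{2},\partial_{r}^{2}w_{2}$, matches the paper's Step-1/2/3 style argument (the $6+1$, $4+1$, and $2+1$ conjugated wave equations), and the key structural observation you single out — that $WRHS_{2}$ only involves $r$-derivatives of $\lambda$ up to $\lambda''$, so spatial derivatives of $WRHS_{2}$ are accessible while $\partial_{t}WRHS_{2}$ is not — is exactly the point driving the "preliminary" nature of the lemma.

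However, there is a genuine gap in your treatment of $\partial_{t}w_{2}$ and $\partial_{tr}w_{2}$. You propose to differentiate the $6+1$ spherical means formula for $w_{2}$ (resp.\ the $4+1$ formula for $(\partial_{r}+\tfrac{2}{r})w_{2}$) directly in $t$, after the substitution $y=(s-t)z$. But those formulas already carry two (resp.\ one) internal derivatives from $(\tfrac{1}{\tau}\partial_{\tau})^{2}$ (resp.\ $(\tfrac{1}{\tau}\partial_{\tau})$) applied to the source, plus — in the $4+1$ case — one more derivative from $(\partial_{r}+\tfrac{2}{r})WRHS_{2}$. Differentiating once more in $t$ therefore lands on $\partial_{r}^{3}WRHS_{2}$. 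Since $WRHS_{2}$ contains $v_{c}$, this requires pointwise control of $\partial_{r}^{3}v_{c}$ (and, by your own accounting, of $\partial_{r}^{k}WRHS_{2}$ for $k$ up to $4$). But Lemma \ref{vkinductionlemma} and Corollary \ref{vkinductioncorollary2} only control $\partial_{r}^{m}v_{c}$ for $m\leq 2$, so your assertion that ``each $r$-derivative simply produces an extra factor of $1/r$'' does not follow from any established estimate once $m\geq 3$. The paper avoids this entirely: it returns to the internal-derivative-free $2+1$ representation \eqref{w2noderiv}, makes the substitution $\rho=q(s-t)$ as in \eqref{w2fordt}, and then differentiates under the integral sign. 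In that representation, $\partial_{t}$ produces at most $\partial_{2}WRHS_{2}$, and $\partial_{tr}$ at most $\partial_{2}^{2}WRHS_{2}$, both of which are available. To repair your argument you would either have to adopt this $2+1$ representation for the time derivatives, or else first extend the iteration estimates on $v_{j}$ to three $r$-derivatives, which is strictly more work than the lemma requires.

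A smaller divergence: for $|w_{2}|$ in the region $r>\tfrac{t}{2}$, the paper also switches to the $2+1$ representation \eqref{w2noderiv}, whereas you propose to continue with the $6+1$ formula. The $r^{2}$ prefactor in $w_{2}=r^{2}\widetilde{w}_{2}$ then has to be cancelled by the $\rho$- and angular integrals, and the angular identity \eqref{phiint} only gives $\max(r,\rho)^{-2}$, not $r^{-2}$, uniformly; this requires more care than you indicate but is not obviously fatal.
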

\begin{proof}
We start proving the estimates of the lemma statement by considering the region $r \leq \frac{t}{2}$.
Using the estimates  for $v_{c}(s,|x+y|)$, from Corollary \ref{vkinductioncorollary2}, we get 
\begin{equation}\begin{split}&\sum_{k=0}^{2}\frac{|\partial_{2}^{k}WRHS_{2}(s,|x+y|)|}{|x+y|^{2-k}} \leq \begin{cases}\frac{C \lambda(s)^{2} \mathbbm{1}_{\{ |x+y| \geq \frac{g(s)}{2}\}}}{s^{2} \log^{b}(s) (g(s)^{2}+|x+y|^{2})^{2}}, \quad |x+y| < \frac{s}{2}\\
\frac{C \lambda(s)^{2}}{|x+y|^{4} \sqrt{s} \langle s-|x+y|\rangle^{3/2} \log^{b}(\langle s-|x+y|\rangle)}, \quad \frac{s}{2} \leq |x+y| < s\end{cases}.\end{split}\end{equation}
Using the same procedure and notation as in step 1 of the proof of Lemma \ref{v2lemma}, we get  
\begin{equation}\label{w2splitting}\begin{split}|w_{2}(t,r)| &\leq C r^{2} \int_{t}^{\infty} ds \int_{0}^{s-t}\frac{\rho d\rho}{\sqrt{(s-t)^{2}-\rho^{2}}}\int_{0}^{\pi} d\phi \sin^{4}(\phi) \left(\frac{\mathbbm{1}_{\{|x+y| > \frac{g(s)}{2}\}}\mathbbm{1}_{\{|x+y| \leq \frac{s}{2}\}} \lambda(s)^{2}}{s^{2} \log^{b}(s) (g(s)^{2}+|x+y|^{2})^{2}}\right)\left(1+\frac{\rho^{2}}{|x+y|^{2}}\right)\\
&+C r^{2} \int_{t}^{\infty} ds \int_{0}^{s-t} \frac{\rho d\rho}{\sqrt{(s-t)^{2}-\rho^{2}}} \int_{0}^{\pi} d\phi \sin^{4}(\phi) \left(\frac{\lambda(s)^{2} \mathbbm{1}_{\{|x+y| > \frac{g(s)}{2}\}}\mathbbm{1}_{\{|x+y|>\frac{s}{2}\}}\left(1+\frac{\rho^{2}}{|x+y|^{2}}\right)}{|x+y|^{4} \sqrt{s} \langle s-|x+y|\rangle^{3/2} \log^{b}(\langle s-|x+y|\rangle)}\right)\end{split}\end{equation}
Denote the first line of \eqref{w2splitting} by $w_{2,I}$. Then, we consider several pieces of $w_{2,I}$ separately. Let
$$w_{2,I,a}(t,r) := r^{2} \int_{t}^{t+\frac{r}{8}} ds \int_{0}^{s-t}\frac{\rho d\rho}{\sqrt{(s-t)^{2}-\rho^{2}}}\int_{0}^{\pi} d\phi \sin^{4}(\phi) \left(\frac{\mathbbm{1}_{\{|x+y| > \frac{g(s)}{2}\}}\mathbbm{1}_{\{|x+y| \leq \frac{s}{2}\}} \lambda(s)^{2}}{s^{2} \log^{b}(s) (g(s)^{2}+|x+y|^{2})^{2}}\right)\left(1+\frac{\rho^{2}}{|x+y|^{2}}\right).$$
Recall that $|x+y| = \sqrt{\rho^{2}+r^{2}+2 r \rho \cos(\phi)}$. For $w_{2,I,a}$, we have $\rho \leq s-t \leq \frac{r}{8} \implies |x+y| \geq C(r+\rho).$ Therefore,
$$|w_{2,I,a}(t,r)| \leq C \int_{t}^{t+\frac{r}{8}}ds \int_{0}^{s-t} \frac{\rho d\rho}{\sqrt{(s-t)^{2}-\rho^{2}}} \frac{r^{2} \lambda(s)^{2}}{s^{2} \log^{b}(s)(g(s)^{2}+r^{2})^{2}} \leq \frac{C r^{2} \lambda(t)^{2}}{(g(t)^{2}+r^{2}) t^{2} \log^{b}(t)}$$
where we used \eqref{T0const} to conclude that
\begin{equation}\label{gdec}s \mapsto \frac{1}{s (g(s)^{2}+r^{2})^{2}} \text{ is decreasing on }[T_{0},\infty).\end{equation}
For the next integrals to consider, we first appropriately use Cauchy's residue theorem to conclude
$$ \int_{0}^{2\pi} \frac{\sin^{4}(\phi) d\phi}{(g(s)^{2}+\rho^{2}+r^{2}+2 r \rho \cos(\phi))^{2}} \leq \frac{C}{(g(s)^{2}+\rho^{2}+r^{2})^{2}}$$
$$\int_{0}^{2\pi} \frac{\sin^{4}(\phi) d\phi}{(g(s)^{2}+r^{2}+\rho^{2}+2 r \rho \cos(\phi))^{3}} \leq \frac{C}{(g(s)^{2}+r^{2}+\rho^{2})^{5/2} \sqrt{g(s)^{2}+(\rho-r)^{2}}}.$$
The important point in the above integrals is that the factor $\sin^{4}(\phi)$ vanishes when $\phi=\pi$, which is precisely when the vectors $x=r\textbf{e}_{1}$ and $y$ (defined in \eqref{ydefstep1}) are antiparallel. Therefore, we get much more decay in $\rho^{2}+r^{2}+g(s)^{2}$ than we would have without the $\sin$ factor. Now, we consider $w_{2,I,b}$ defined by
\begin{equation}\nonumber\begin{split}&w_{2,I,b}(t,r) \\
&= r^{2} \int_{t+\frac{r}{8}}^{t+\frac{r}{8}+g(t)} ds \int_{0}^{s-t}\frac{\rho d\rho}{\sqrt{(s-t)^{2}-\rho^{2}}}\int_{0}^{\pi} d\phi \sin^{4}(\phi) \left(\frac{\mathbbm{1}_{\{|x+y| > \frac{g(s)}{2}\}}\mathbbm{1}_{\{|x+y| \leq \frac{s}{2}\}} \lambda(s)^{2}}{s^{2} \log^{b}(s) (g(s)^{2}+|x+y|^{2})^{2}}\right)\left(1+\frac{\rho^{2}}{|x+y|^{2}}\right)\end{split}\end{equation}
which gives
$$|w_{2,I,b}(t,r)| \leq C r^{2} \int_{t+\frac{r}{8}}^{t+\frac{r}{8}+g(t)} ds \int_{0}^{s-t} \frac{\rho d\rho}{\sqrt{(s-t)^{2}-\rho^{2}}} \frac{\lambda(s)^{2}}{s^{2} \log^{b}(s) g(s)(g(s)^{2}+r^{2})^{3/2}} \leq C \frac{r^{2} \lambda(t)^{2}}{t^{2} \log^{b}(t) (g(t)^{2}+r^{2})}$$
where we again use \eqref{T0const} to justify the analog of \eqref{gdec}. Next, we consider
$$w_{2,I,c}(t,r) := r^{2} \int_{t+\frac{r}{8}+g(t)}^{\infty} \frac{ds}{(s-t)} \int_{0}^{s-t}\rho d\rho\int_{0}^{\pi} d\phi \sin^{4}(\phi) \left(\frac{\mathbbm{1}_{\{|x+y| > \frac{g(s)}{2}\}}\mathbbm{1}_{\{|x+y| \leq \frac{s}{2}\}} \lambda(s)^{2}}{s^{2} \log^{b}(s) (g(s)^{2}+|x+y|^{2})^{2}}\right)\left(1+\frac{\rho^{2}}{|x+y|^{2}}\right).$$
The point of this definition is to utilize the decay of the integrand in $s-t$ in order to do the $s$ integral. As we will show later, the difference $\frac{1}{\sqrt{(s-t)^{2}-\rho^{2}}}-\frac{1}{(s-t)}$ decays sufficiently fast in $s$ so as to allow an argument which does the $s$ integral first, before the $\rho$ integral. A similar procedure was also used in the author's work regarding wave maps\cite{wm}, in the estimation of the correction denoted by $v_{4}$ in that paper. We start by noting
\begin{equation}\label{rhoint1forw2Ic}\int_{0}^{s-t} \rho d\rho \frac{\mathbbm{1}_{\{\rho \leq \frac{r}{2}\}}}{(g(s)^{2}+\rho^{2}+r^{2})^{3/2} \sqrt{g(s)^{2}+(\rho-r)^{2}}} \leq C \int_{0}^{s-t} \frac{\rho d\rho}{(g(s)^{2}+r^{2}+\rho^{2})^{3/2}\sqrt{g(s)^{2}+r^{2}}} \leq \frac{C}{g(s)^{2}+r^{2}}\end{equation}
$$\int_{0}^{s-t} \rho d\rho \frac{\mathbbm{1}_{\{\frac{r}{2} \leq \rho \leq 2 r\}}}{(g(s)^{2}+\rho^{2}+r^{2})^{3/2} \sqrt{g(s)^{2}+(\rho-r)^{2}}} \leq \frac{C r}{(g(s)^{2}+r^{2})^{3/2}} \int_{\frac{r}{2}}^{2r} \frac{d\rho}{\sqrt{g(s)^{2}+(\rho-r)^{2}}} \leq C \frac{\log(1+\frac{r}{g(s)})}{g(s)^{2}+r^{2}}$$
\begin{equation}\label{rhoint3forw2Ic}\int_{0}^{s-t} \rho d\rho \frac{\mathbbm{1}_{\{\rho > 2r\}}}{(g(s)^{2}+\rho^{2}+r^{2})^{3/2} \sqrt{g(s)^{2}+(\rho-r)^{2}}} \leq C \int_{0}^{s-t} \frac{\rho d\rho}{(g(s)^{2}+r^{2}+\rho^{2})^{2}} \leq \frac{C}{g(s)^{2}+r^{2}}\end{equation}
which gives
$$|w_{2,I,c}(t,r)| \leq \frac{C r^{2} \lambda(t)^{2} \log(2+\frac{r}{g(t)}) \log(t)}{t^{2} \log^{b}(t) (g(t)^{2}+r^{2})}.$$
Finally, we consider
$$w_{2,I,d}(t,r) :=r^{2} \int_{t+\frac{r}{8}+g(t)}^{\infty} ds \int_{0}^{s-t}\rho d\rho\left(\frac{1}{\sqrt{(s-t)^{2}-\rho^{2}}}-\frac{1}{(s-t)}\right)\int_{0}^{\pi}  I_{w_{2,I,d} }d\phi$$
where
$$I_{w_{2,I,d}} = \sin^{4}(\phi) \left(\frac{\mathbbm{1}_{\{|x+y| > \frac{g(s)}{2}\}}\mathbbm{1}_{\{|x+y| \leq \frac{s}{2}\}} \lambda(s)^{2}}{s^{2} \log^{b}(s) (g(s)^{2}+|x+y|^{2})^{2}}\right)\left(1+\frac{\rho^{2}}{|x+y|^{2}}\right).$$
Using again an analog of the observation \eqref{gdec}, and the $\phi$ integrals noted above, we first do the $\phi$ integral. Then, we switch the order of the $s$ and $\rho$ and $\phi$ integrals, to do the $s$ integral first:
\begin{equation}\nonumber\begin{split}&|w_{2,I,d}(t,r)| \\
&\leq Cr^{2} \int_{0}^{\infty}  \rho d\rho \int_{\rho+t}^{\infty} ds \left(\frac{1}{\sqrt{(s-t)^{2}-\rho^{2}}}-\frac{1}{(s-t)}\right) \frac{\lambda(t)^{2}}{\log^{b}(t) t^{2} (g(t)^{2}+r^{2}+\rho^{2})^{3/2} \sqrt{g(t)^{2}+(\rho-r)^{2}}}.\end{split}\end{equation}
Then, we use the same method used to study the $\rho$ integrals \eqref{rhoint1forw2Ic} through \eqref{rhoint3forw2Ic}, to get
$$|w_{2,I,d}(t,r)| \leq \frac{ C r^{2} \lambda(t)^{2} \log(2+\frac{r}{g(t)})}{t^{2} \log^{b}(t) (g(t)^{2}+r^{2})}.$$
In total, we get
$$|w_{2,I}(t,r)| \leq \frac{C r^{2} \lambda(t)^{2} \log(2+\frac{r}{g(t)}) \log(t)}{t^{2} \log^{b}(t) (g(t)^{2}+r^{2})}.$$
It remains to treat the second line of \eqref{w2splitting}, which we denote as $w_{2,II}$. If $r \leq \frac{t}{2}$, then, as in the case of estimating $v_{2}$, we use that $s-|x+y| \geq s-(r+(s-t)) \geq t-r \geq \frac{t}{2}$, and we use the estimates on $v_{c}(s,|x+y|)$ in the region $|x+y| \geq \frac{s}{2}$ to get
$$|w_{2,II}(t,r)| \leq C r^{2} \int_{t}^{\infty} ds \frac{(s-t) \lambda(s)^{2}}{s^{9/2} t^{3/2} \log^{b}(t)} \leq \frac{C r^{2} \lambda(t)^{2}}{t^{4} \log^{b}(t)}, \quad r \leq \frac{t}{2}.$$
Finally, we need to estimate $w_{2}(t,r)$ for $r > \frac{t}{2}$. We use the analog  of \eqref{dtv2noderiv} to get
\begin{equation}\label{w2noderiv}w_{2}(t,r) = \frac{-1}{2\pi} \int_{t}^{\infty} ds \int_{0}^{s-t} \frac{\rho d\rho}{\sqrt{(s-t)^{2}-\rho^{2}}} \int_{0}^{2\pi} d\theta WRHS_{2}(s,|x+y|) \left(1-\frac{2 \rho^{2} \sin^{2}(\theta)}{r^{2}+\rho^{2}+2 r \rho \cos(\theta)}\right)\end{equation}
Again, we insert $1=\mathbbm{1}_{\{|x+y| \leq \frac{s}{2}\}} + \mathbbm{1}_{\{|x+y| > \frac{s}{2}\}}$ into the integrand of the above expression, and define $w_{2,IV}$ by
$$w_{2,IV}(t,r) = \frac{-1}{2\pi} \int_{t}^{\infty} ds \int_{0}^{s-t} \frac{\rho d\rho}{\sqrt{(s-t)^{2}-\rho^{2}}} \int_{0}^{2\pi} d\theta \mathbbm{1}_{\{|x+y| > \frac{s}{2}\}} WRHS_{2}(s,|x+y|) \left(1-\frac{2 \rho^{2} \sin^{2}(\theta)}{r^{2}+\rho^{2}+2 r \rho \cos(\theta)}\right)$$
and $w_{2,III}(t,r) = w_{2}(t,r)-w_{2,IV}(t,r).$
$$|w_{2,IV}(t,r)| \leq C \int_{t}^{\infty} ds \int_{0}^{s-t} \frac{\rho d\rho}{\sqrt{(s-t)^{2}-\rho^{2}}} \int_{0}^{2\pi} d\theta \left(\frac{\lambda(s)^{2}}{s^{4} \log^{b}(s)}\right) \leq \frac{C \lambda(t)^{2}}{t^{2} \log^{b}(t)}$$
For $w_{2,III}$ in the region $r > \frac{t}{2}$, we again consider several integrals separately. We have
\begin{equation}\nonumber\begin{split}&w_{2,III,a}(t,r) \\
&= \frac{-1}{2\pi} \int_{t}^{t+\frac{r}{8}} ds \int_{0}^{s-t} \frac{\rho d\rho}{\sqrt{(s-t)^{2}-\rho^{2}}} \int_{0}^{2\pi} d\theta \mathbbm{1}_{\{|x+y| \leq \frac{s}{2}\}} WRHS_{2}(s,|x+y|) \left(1-\frac{2 \rho^{2} \sin^{2}(\theta)}{r^{2}+\rho^{2}+2 r \rho \cos(\theta)}\right)\end{split}\end{equation}
which gives, via the same reasoning as used to estimate $w_{2,I,a}$,
$$|w_{2,III,a}(t,r)| \leq C \int_{t}^{t+\frac{r}{8}} ds \int_{0}^{s-t} \frac{\rho d\rho}{\sqrt{(s-t)^{2}-\rho^{2}}} \int_{0}^{2\pi} d\theta \frac{r^{2} \lambda(s)^{2}}{s^{2}\log^{b}(s)(g(s)^{2}+r^{2})^{2}} \leq \frac{C \lambda(t)^{2}}{t^{2} \log^{b}(t)}, \quad r > \frac{t}{2}.$$
Next, we have
\begin{equation}\nonumber\begin{split}&w_{2,III,b}(t,r)\\
&=\frac{-1}{2\pi} \int_{t+\frac{r}{8}}^{t+\frac{r}{8}+g(t)} ds \int_{0}^{s-t} \frac{\rho d\rho}{\sqrt{(s-t)^{2}-\rho^{2}}} \int_{0}^{2\pi} d\theta \mathbbm{1}_{\{|x+y| \leq \frac{s}{2}\}} WRHS_{2}(s,|x+y|) \left(1-\frac{2 \rho^{2} \sin^{2}(\theta)}{r^{2}+\rho^{2}+2 r \rho \cos(\theta)}\right).\end{split}\end{equation}
We use
\begin{equation}\label{thetaint1}\int_{0}^{2\pi} \frac{r^{2}+\rho^{2} + 2 r \rho \cos(\theta)}{(g(s)^{2}+r^{2}+\rho^{2}+2 r \rho \cos(\theta))^{2}} d\theta \leq \frac{C}{\sqrt{g(s)^{2}+\rho^{2}+r^{2}}\sqrt{g(s)^{2}+(\rho-r)^{2}}}\end{equation}
to get
\begin{equation}\begin{split}|w_{2,III,b}(t,r)| &\leq C \int_{t+\frac{r}{8}}^{t+\frac{r}{8}+g(t)}ds \int_{0}^{s-t} \frac{\rho d\rho}{\sqrt{(s-t)^{2}-\rho^{2}}} \frac{\lambda(s)^{2}}{s^{2}\log^{b}(s)} \frac{1}{g(s) \sqrt{g(s)^{2}+r^{2}}} \\
&\leq C \int_{t+\frac{r}{8}}^{t+\frac{r}{8}+g(t)} ds (s-t) \frac{\lambda(s)^{2}}{s^{2}\log^{b}(s) g(s) \sqrt{g(s)^{2}+r^{2}}} \leq \frac{C \lambda(t)^{2}}{t^{2} \log^{b}(t)}.\end{split}\end{equation}
The next integral to treat is
\begin{equation}\nonumber\begin{split}&w_{2,III,c}(t,r)\\
&=\frac{-1}{2\pi} \int_{t+\frac{r}{8}+g(t)}^{\infty} \frac{ds}{(s-t)} \int_{0}^{s-t} \rho \mathbbm{1}_{\{\frac{r}{2} \leq \rho \leq 2r\}} d\rho \int_{0}^{2\pi} d\theta \mathbbm{1}_{\{|x+y| \leq \frac{s}{2}\}} WRHS_{2}(s,|x+y|) \left(1-\frac{2 \rho^{2} \sin^{2}(\theta)}{r^{2}+\rho^{2}+2 r \rho \cos(\theta)}\right).\end{split}\end{equation}
So,
\begin{equation}\begin{split}|w_{2,III,c}(t,r)| &\leq C \int_{t+\frac{r}{8}+g(t)}^{\infty} \frac{ds}{(s-t)} \frac{\lambda(s)^{2}}{s^{2} \log^{b}(s)} \int_{\frac{r}{2}}^{2r} \frac{\rho d\rho}{\sqrt{g(s)^{2}+(\rho-r)^{2}}\sqrt{g(s)^{2}+r^{2}}} \\
&\leq C r \int_{t+\frac{r}{8}+g(t)}^{\infty} \frac{ds}{s(s-t)} \frac{\log(1+\frac{r}{g(s)}) \lambda(s)^{2}}{s \log^{b}(s) \sqrt{g(s)^{2}+r^{2}}} \leq \frac{C \lambda(t)^{2} \log(1+\frac{r}{g(t)})}{r t \log^{b}(t)}\end{split}\end{equation}
where we used the fact that $\frac{1}{(s-t)} \leq \frac{C}{r}$. Next, we have
\begin{equation}\begin{split}w_{2,III,d}(t,r):=\frac{-1}{2\pi} \int_{t+\frac{r}{8}+g(t)}^{\infty} \frac{ds}{(s-t)} &\int_{0}^{s-t}  \rho \left(\mathbbm{1}_{\{\rho < \frac{r}{2}\}}+\mathbbm{1}_{\{\rho > 2r\}}\right) d\rho \\
&\int_{0}^{2\pi} d\theta \mathbbm{1}_{\{|x+y| \leq \frac{s}{2}\}} WRHS_{2}(s,|x+y|) \left(1-\frac{2 \rho^{2} \sin^{2}(\theta)}{r^{2}+\rho^{2}+2 r \rho \cos(\theta)}\right)\end{split}\end{equation}
which gives
\begin{equation}\begin{split}|w_{2,III,d}(t,r)| &\leq C \int_{t+\frac{r}{8}+g(t)}^{\infty} \frac{ds}{(s-t)} \int_{0}^{s-t} \frac{\rho d\rho \lambda(s)^{2}}{s^{2} \log^{b}(s)} \left(\frac{\mathbbm{1}_{\{\rho < \frac{r}{2}\}}+\mathbbm{1}_{\{2r < \rho\}}}{(r^{2}+\rho^{2}+g(s)^{2})}\right) \\
&\leq C \int_{t+\frac{r}{8} +g(t)}^{\infty} \frac{ds}{(s-t)} \frac{\lambda(s)^{2}}{s^{2} \log^{b}(s)} \left(1+\log(s-t)+\log(r)\right)\leq \frac{C \lambda(t)^{2}}{t \log^{b}(t)}\frac{\log(r)}{r}, \quad r \geq \frac{t}{2}.\end{split}\end{equation}
The final integral to estimate is
\begin{equation}\begin{split}w_{2,III,e}(t,r):=\frac{-1}{2\pi} \int_{t+\frac{r}{8}+g(t)}^{\infty} ds &\int_{0}^{s-t}  \rho \left(\frac{1}{\sqrt{(s-t)^{2}-\rho^{2}}}-\frac{1}{(s-t)}\right) d\rho \\
&\int_{0}^{2\pi} d\theta \mathbbm{1}_{\{|x+y| \leq \frac{s}{2}\}} WRHS_{2}(s,|x+y|) \left(1-\frac{2 \rho^{2} \sin^{2}(\theta)}{r^{2}+\rho^{2}+2 r \rho \cos(\theta)}\right).\end{split}\end{equation}
We again switch the order of integration, as previously done to estimate $w_{2,I,d}$. The only difference here is that we use $\frac{1}{s} \leq \frac{1}{\rho+t}$ for one of the factors of $\frac{1}{s}$ which appear in the integrand of $w_{2,III,e}$ once the estimates for $WRHS_{2}$ are substituted. This gives
$$|w_{2,III,e}(t,r)| \leq C \int_{0}^{\infty} \frac{\rho d\rho \lambda(t)^{2}}{(\rho+t) \log^{b}(t) t \sqrt{g(t)^{2}+\rho^{2}+r^{2}}\sqrt{g(t)^{2}+(\rho-r)^{2}}}$$
and
$$|w_{2,III,e}(t,r)| \leq \frac{C \lambda(t)^{2} \log(r) }{r t\log^{b}(t)}, \quad r \geq \frac{t}{2}.$$
Combining the above gives the final pointwise estimate on $w_{2}$:
\begin{equation} |w_{2}(t,r)| \leq \begin{cases} \frac{C r^{2}\lambda(t)^{2} \log(2+\frac{r}{g(t)})\log(t)}{(g(t)^{2}+r^{2}) t^{2} \log^{b}(t)}, \quad r \leq \frac{t}{2}\\
\frac{C \lambda(t)^{2} \log(t)}{t^{2}\log^{b}(t)}, \quad r > \frac{t}{2}\end{cases}\end{equation}
where we used the fact that $\frac{\log(1+\frac{r}{g(t)})}{r} \leq \frac{C \log(t)}{t}, \quad r \geq \frac{t}{2}.$ Next, we consider the derivatives of $w_{2}$. We recall the remarks prior to the estimation of $w_{2}$ regarding preliminary estimates, and prove a preliminary estimate on $\partial_{r}w_{2}$. Here, we will start with the case $r \leq g(t)$. We first note that
$$\sin^{2}(\phi)\frac{\rho}{\sqrt{r^{2}+\rho^{2}+2 r \rho \cos(\phi)}} \leq \begin{cases} C , \quad \rho < \frac{r}{2}, \text{ or } \rho > 2r\\
\frac{C \sin^{2}(\frac{\phi}{2}) \cos^{2}(\frac{\phi}{2})r}{\sqrt{(r-\rho)^{2}+4 r \rho \cos^{2}(\frac{\theta}{2})}} \leq C, \quad \frac{r}{2} \leq \rho \leq 2r\end{cases}.$$
We also note that 
$$\int_{0}^{2\pi} \frac{d\phi}{(g(t)^{2}+\rho^{2}+r^{2}+2 r \rho \cos(\phi))^{2}} \leq \frac{C}{\sqrt{g(t)^{2}+r^{2}+\rho^{2}}(g(t)^{2}+(r-\rho)^{2})^{3/2}}.$$
Then, we proceed as in step 2 of the estimation of $\partial_{r}v_{2}+\frac{2}{r}v_{2}$, to get
\begin{equation}\begin{split}|\left(\partial_{r}+\frac{2}{r}\right)w_{2}(t,r)|&\leq C r \int_{t}^{\infty} ds \int_{0}^{s-t} \frac{\rho d\rho}{\sqrt{(s-t)^{2}-\rho^{2}}} \int_{0}^{\pi} d\phi \left(\frac{\mathbbm{1}_{\{|x+y| < \frac{s}{2}\}}\lambda(s)^{2}}{s^{2}\log^{b}(s) (g(s)^{2}+r^{2}+\rho^{2}+2 r \rho \cos(\phi))^{2}}\right)\\
&+ C r \int_{t}^{\infty} ds \int_{0}^{s-t} \frac{\rho d\rho}{\sqrt{(s-t)^{2}-\rho^{2}}} \int_{0}^{\pi} d\phi \frac{\mathbbm{1}_{\{|x+y| > \frac{s}{2}\}}\lambda(s)^{2}}{|x+y|^{4} \sqrt{s} \langle s-|x+y| \rangle^{3/2} \log^{b}(\langle s-|x+y|\rangle)}.\end{split}\end{equation}
Denote the first line of the above expression by $q_{I}(t,r)$, and the second by $q_{II}(t,r)$. Then, we estimate $q_{I}(t,r)$ using the same procedure used for $w_{2}$. The main difference here is that we have the factor 
$$\frac{1}{\sqrt{g(t)^{2}+r^{2}+\rho^{2}}(g(t)^{2}+(r-\rho)^{2})^{3/2}} \text{ instead of } \frac{1}{\sqrt{g(t)^{2}+(r-\rho)^{2}}(g(t)^{2}+r^{2}+\rho^{2})^{3/2}}.$$ 
This leads to an extra factor of $\frac{r}{g(t)}$ when we estimate certain $\rho$ integrals in the region $\frac{r}{2} \leq \rho \leq 2r$. Since we are considering the region $r \leq g(t)$, we end up with
$$|q_{I}(t,r)| \leq \frac{C r \lambda(t)^{2}}{t^{2} \log^{b}(t) g(t)^{2}} \log(t), \quad r \leq g(t).$$
The same procedure used for $w_{2,II}$ gives
$$|q_{II}(t,r)| \leq \frac{C r \lambda(t)^{2}}{t^{4} \log^{b}(t)}, \quad r \leq g(t).$$
For the region $r \geq g(t)$, we will differentiate our formula \eqref{w2noderiv} directly. We emphasize again that the estimate on $\partial_{r}w_{2}$ which we will obtain now, in the region $r \geq g(t)$ is a preliminary estimate, and it will be improved later, once we choose $\lambda(t)$, and show that $\lambda \in C^{3}([T_{0},\infty))$, thereby allowing us to estimate $\partial_{t}w_{2}$ and $\left(-\partial_{t}+\partial_{r}\right)w_{2}$. We have
\begin{equation}\begin{split} &\partial_{r}w_{2}(t,r) \\
&= \frac{-1}{2\pi} \int_{t}^{\infty} ds \int_{0}^{s-t} \frac{\rho d\rho}{\sqrt{(s-t)^{2}-\rho^{2}}} \int_{0}^{2\pi} d\theta \partial_{r}\left(WRHS_{2}(s,|x+y|) \left(1-\frac{2 \rho^{2}\sin^{2}(\theta)}{r^{2}+\rho^{2}+2 r \rho \cos(\theta)}\right)\right) \mathbbm{1}_{\{|x+y| \leq \frac{s}{2}\}}\\
&- \frac{1}{2\pi} \int_{t}^{\infty} ds \int_{0}^{s-t} \frac{\rho d\rho}{\sqrt{(s-t)^{2}-\rho^{2}}} \int_{0}^{2\pi} d\theta \partial_{r}\left(WRHS_{2}(s,|x+y|) \left(1-\frac{2 \rho^{2}\sin^{2}(\theta)}{r^{2}+\rho^{2}+2 r \rho \cos(\theta)}\right)\right) \mathbbm{1}_{\{|x+y| > \frac{s}{2}\}}\end{split}\end{equation}
where we again denote the first line of the right-hand side of the above expression by $q_{III}$, and the second by $q_{IV}$.
\begin{equation}\begin{split}|q_{III}(t,r)| \leq C \int_{t}^{\infty}ds \int_{0}^{s-t} \frac{\rho d\rho}{\sqrt{(s-t)^{2}-\rho^{2}}} \int_{0}^{2\pi} d\theta &\left(\frac{|x+y| \lambda(s)^{2}}{s^{2} \log^{b}(s) (g(s)^{2}+|x+y|^{2})^{2}} \frac{|r+\rho \cos(\theta)|}{|x+y|} \right.\\
&+\left. \frac{|x+y|^{2} \lambda(s)^{2}}{s^{2} \log^{b}(s) (g(s)^{2}+|x+y|^{2})^{2}} \frac{|r+\rho \cos(\theta)| \rho^{2}\sin^{2}(\theta)}{|x+y|^{4}}\right)\end{split}\end{equation}
We then use $|r+\rho \cos(\theta)| = |(r-\rho)+\rho(\cos(\theta)+1)| \leq |r-\rho| + \rho (1+\cos(\theta))$
$$\int_{0}^{2\pi} \frac{d\theta}{(g(t)^{2}+\rho^{2}+r^{2}+2 r \rho \cos(\theta))^{2}} \leq \frac{C}{\sqrt{g(t)^{2}+\rho^{2}+r^{2}} (g(t)^{2}+(\rho-r)^{2})^{3/2}}$$
$$\frac{\rho^{2}\sin^{2}(\theta)}{\rho^{2}+r^{2}+2 r \rho \cos(\theta)} \leq C \frac{\rho^{2} \cos^{2}(\frac{\theta}{2})}{(r-\rho)^{2}+4 r \rho \cos^{2}(\frac{\theta}{2})} \leq C$$
$$\int_{0}^{2\pi} \frac{(1+\cos(\theta))}{(g(s)^{2}+r^{2}+\rho^{2}+2 r \rho \cos(\theta))^{2}} d\theta \leq \frac{C}{\sqrt{(\rho-r)^{2}+g(s)^{2}}((\rho+r)^{2}+g(s)^{2})^{3/2}}$$
and the identical procedure used to estimate $w_{2,III}$, to get
$$|q_{III}(t,r)| \leq \frac{C \lambda(t)^{2}}{t^{2} \log^{b}(t) g(t)} \log(t), \quad r \geq g(t).$$
Finally, 
$$|q_{IV}(t,r)| \leq C \int_{t}^{\infty} ds \int_{0}^{s-t} \frac{\rho d\rho}{\sqrt{(s-t)^{2}-\rho^{2}}} \left(\frac{\lambda(s)^{2}}{s^{9/2} \langle s-|x+y|\rangle^{1/2} \log^{b}(\langle s-|x+y|\rangle)}\right) \leq \frac{C \lambda(t)^{2}}{t^{5/2}}.$$
Then, we use the same observation as in step 3 of estimating $v_{2}$, along with the identical argument used to estimate $\left(\partial_{r}+\frac{2}{r}\right) w_{2}$, to get
$$|\partial_{r}^{2}w_{2}(t,r) + \frac{3}{r}\partial_{r}w_{2}(t,r)| \leq \frac{C \lambda(t)^{2} \log(t)}{t^{2} \log^{b}(t) g(t)^{2}}, \quad r > 0.$$
We can estimate $\partial_{t}^{2}w_{2}$ by using the equation solved by $w_{2}$. It then remains to estimate $\partial_{t}w_{2}$. For this, we return to \eqref{w2noderiv}, and make the substitution $\rho = q(s-t)$.
\begin{equation}\label{w2fordt}w_{2}(t,r)= \frac{-1}{2\pi} \int_{t}^{\infty} ds \int_{0}^{1} \frac{(s-t)q dq}{\sqrt{1-q^{2}}}\int_{0}^{2\pi} d\theta I_{w_{2,1}}\end{equation}
where
$$I_{w_{2,1}} = WRHS_{2}(s,\sqrt{q^{2}(s-t)^{2}+r^{2}+2 r q(s-t) \cos(\theta)})\left(1-\frac{2q^{2}(s-t)^{2} \sin^{2}(\theta)}{r^{2}+q^{2}(s-t)^{2} + 2 r q (s-t) \cos(\theta)}\right).$$
Then, we can differentiate under the integral sign. The resulting integrals can be estimated with the same procedure used to estimate analogous integrals arising in the expressions for $\partial_{r}w_{2}$ and $w_{2}$. We get
$$|\partial_{t}w_{2}(t,r)| \leq \frac{C \lambda(t)^{2} \log(t)}{t^{2} \log^{b}(t) g(t)}, \quad r >0.$$
The same procedure is used to estimate $\partial_{tr}w_{2}$, and this concludes the proof of the lemma. \end{proof}
\subsection{Summation of the higher corrections, $w_{k}$}\label{wjsection}
The nonlinear interactions between $w_{2}$ and $v_{c}$ can not be treated perturbatively in our final argument. Therefore, we will need to define corrections $w_{k}$, in a similar manner as the corrections $v_{k}$ were defined, and sum a series of the form $\sum_{k=3}^{\infty} w_{k}$. Because the estimates for $w_{j}$ and $w_{2}$ will be of a slightly different form, we will first (define and) estimate $w_{3}$, then prove estimates on $w_{j}$ for $j \geq 4$ (these $w_{j}$ are defined in \eqref{wjeqn})  by induction. We let 
$$WRHS_{3}(t,r) = \frac{6}{r^{2}}\left(Q_{\frac{1}{\lambda(t)}}+v_{c}\right) w_{2}^{2}+\frac{2 w_{2}^{3}}{r^{2}} + \frac{6 w_{2}}{r^{2}}\left(v_{c}^{2}+2v_{c}Q_{\frac{1}{\lambda(t)}}\right)$$
and define $w_{3}$ to be the solution to the following equation with 0 Cauchy data at infinity.
$$-\partial_{t}^{2}w_{3}+\partial_{r}^{2}w_{3}+\frac{1}{r}\partial_{r}w_{3}-\frac{4}{r^{2}}w_{3}=WRHS_{3}(t,r)$$

Then,
$$|WRHS_{3}(t,r)| \leq C \begin{cases} \frac{r^{2} \lambda(t)^{2} \log(2+\frac{r}{g(t)}) \log(t)}{(g(t)^{2}+r^{2})t^{4}\log^{2b}(t)}, \quad r \leq \frac{t}{2}\\
\frac{\lambda(t)^{2} \log^{2}(t)}{t^{5/2}r^{3/2} \log^{2b}(t)}, \quad r > \frac{t}{2}\end{cases}\quad |\partial_{r}WRHS_{3}(t,r)| \leq \begin{cases} \frac{C r \lambda(t)^{2} \log(t)}{g(t)^{2} t^{4} \log^{2b}(t)}, \quad r \leq g(t)\\
\frac{C \lambda(t)^{2} \log(t)}{t^{4} \log^{2b}(t) g(t)}, \quad g(t) < r \leq \frac{t}{2}\\
\frac{C \lambda(t)^{2} \log(t)}{t^{5/2} \log^{2b}(t) r^{3/2} g(t)}, \quad r > \frac{t}{2}\end{cases}$$
$$|\partial_{r}^{2}WRHS_{3}(t,r)| \leq \begin{cases} \frac{C \lambda(t)^{2} \log(t)}{t^{4} \log^{2b}(t) g(t)^{2}}, \quad r \leq \frac{t}{2}\\
\frac{C \lambda(t)^{2} \log(t)}{t^{5/2}r^{3/2} \log^{2b}(t) g(t)^{2}}, \quad r > \frac{t}{2}\end{cases}.$$
Now, we estimate $w_{3}$, starting with the region $r \leq g(t)$. The same remarks concerning the nature of the preliminary estimates on the derivatives of $w_{2}$ apply here for $w_{3}$, and eventually for $w_{j}$.
\begin{lemma}\label{w3prelim}[Preliminary estimates on $w_{3}$]
We have the following \emph{preliminary} estimates on $w_{3}$
\begin{equation}\label{w3est}|w_{3}(t,r)| \leq \begin{cases} \frac{C r^{2} \lambda(t)^{2} \log(t)}{t^{2}g(t)^{2} \log^{2b}(t)}, \quad r \leq g(t)\\
\frac{C \lambda(t)^{2} \log^{2}(t)}{t^{2} \log^{2b}(t)}, \quad r > g(t)\end{cases} \quad |\partial_{r}w_{3}(t,r)| \leq \begin{cases} \frac{C r \lambda(t)^{2} \log(t)}{t^{2} g(t)^{2} \log^{2b}(t)}, \quad r \leq g(t)\\
\frac{C \lambda(t)^{2} \log(t)}{t^{2} \log^{2b}(t) g(t)}, \quad r > g(t)\end{cases}\end{equation}
$$|\partial_{r}^{2}w_{3}(t,r)| \leq  \frac{C \lambda(t)^{2} \log(t)}{t^{2} g(t)^{2} \log^{2b}(t)} \quad |\partial_{t}^{2}w_{3}(t,r)| \leq \begin{cases} \frac{C \lambda(t)^{2} \log(t)}{t^{2} g(t)^{2} \log^{2b}(t)}, \quad r \leq g(t)\\
\frac{C \lambda(t)^{2} \log^{2}(t)}{t^{2} g(t)^{2} \log^{2b}(t)}, \quad r >g(t)\end{cases}$$
For $j=0,1$,
$$|\partial_{t}\partial_{r}^{j}w_{3}(t,r)| \leq \frac{C \lambda(t)^{2} \log(t)}{t^{2} g(t)^{1+j} \log^{2b}(t)}$$
\end{lemma}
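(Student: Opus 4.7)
The plan is to mirror the proof of Lemma \ref{w2prelim} essentially verbatim, using the pointwise bounds on $WRHS_3$ and its $r$-derivatives recorded immediately above the lemma in place of the corresponding bounds on $WRHS_2$. Since the equation solved by $w_3$ differs from that of $w_2$ only in the source term, we reuse the two representation formulas derived there: writing $w_3 = r^2 \widetilde{w}_3$, so that $\widetilde{w}_3$ solves the radial $6{+}1$-dimensional wave equation $-\partial_t^2 \widetilde{w}_3 + \partial_r^2 \widetilde{w}_3 + (5/r)\partial_r \widetilde{w}_3 = WRHS_3/r^2$ with zero Cauchy data at infinity, yields the same $s,\rho,\phi$-integral representation for $w_3(t,r)$ in the inner region $r \leq t/2$; and the $2{+}1$-dimensional embedding $g_3(t,r\cos\theta,r\sin\theta) := w_3(t,r)\cos(2\theta)$ yields the analogue of \eqref{w2noderiv} in the outer region $r > t/2$. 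For the derivatives, the embeddings into $\mathbb{R}^{2+1}$ and $\mathbb{R}^{4+1}$ used in steps 2--3 of Lemma \ref{v2lemma} and in Lemma \ref{w2prelim} give the corresponding representations for $(\partial_r + \tfrac{2}{r}) w_3$ and $(\partial_r + \tfrac{1}{r})(\partial_r + \tfrac{2}{r}) w_3$.

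In the inner region $r \leq g(t)$ I partition the integration domain according to whether $|x+y| \leq s/2$ or not, further subdividing the range $|x+y| \leq s/2$ into $s \in [t,t+r/8]$, $s \in [t+r/8,t+r/8+g(t)]$, and $s \geq t + r/8 + g(t)$ (the last of these further split using $\tfrac{1}{\sqrt{(s-t)^2-\rho^2}} - \tfrac{1}{s-t}$), exactly mirroring the $w_{2,I,a}$--$w_{2,I,d}$ and $w_{2,II}$ decomposition. In the region $r > g(t)$ I use the $2{+}1$-dimensional representation and decompose it as the analogues of $w_{2,III,a}$--$w_{2,III,e}$ and $w_{2,IV}$. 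The angular integrals \eqref{phiint} and \eqref{thetaint1} and the $\rho$-integral identities \eqref{rhoint1forw2Ic}--\eqref{rhoint3forw2Ic} already carried out apply unchanged, and the improved pointwise bound on $WRHS_3$ relative to $WRHS_2$ (roughly an extra factor $\lambda(t)^2 \log(t)/(t^2 \log^b(t))$, supported where $r \gtrsim g(t)/2$) propagates linearly through these integrals, producing the asserted bounds for $w_3$ and $\partial_r w_3$. The bound on $\partial_r^2 w_3$ then follows from the $4{+}1$-dimensional spherical-means representation for $(\partial_r + \tfrac{1}{r})(\partial_r + \tfrac{2}{r}) w_3$, together with the bounds just obtained on $\partial_r w_3$ and $w_3$, via $\partial_r^2 w_3 = (\partial_r + \tfrac{1}{r})(\partial_r + \tfrac{2}{r}) w_3 - (3/r)\partial_r w_3 + (2/r^2) w_3$.

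Because we do not yet have any regularity for $\lambda$ beyond $C^2$, we cannot differentiate $WRHS_3$ in time. For $\partial_t w_3$ and $\partial_{tr} w_3$ I therefore change variables $\rho = q(s-t)$ in the representation formula, as in \eqref{w2fordt}, removing the $t$-dependence from the $\rho$-integration bounds, and differentiate under the integral sign; the resulting integrals have the same structure as the undifferentiated ones and are controlled by the same partitioning. Finally, $\partial_t^2 w_3$ is read off directly from the equation,
\begin{equation*}
\partial_t^2 w_3 = \partial_r^2 w_3 + \tfrac{1}{r}\partial_r w_3 - \tfrac{4}{r^2} w_3 - WRHS_3,
\end{equation*}
combining the bounds just obtained with the pointwise bound on $WRHS_3$ (using the $r > t/2$ bound in the subregion $r > g(t) \cap r > t/2$ to recover the extra $\log(t)$ factor asserted there).

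The main technical work is bookkeeping the logarithmic factors through the shell $\{r/2 \leq \rho \leq 2r\}$, where the angular integral produces a $\log(1+r/g(t))$, and through the subrange $s \geq t+r/8+g(t)$, where the decomposition $\tfrac{1}{\sqrt{(s-t)^2-\rho^2}} - \tfrac{1}{s-t}$ is needed to justify exchanging the $s$- and $\rho$-integrations. Each of these estimates reproduces one already carried out for $w_2$, so no genuinely new difficulty arises; the $\log^2(t)$ factor in the $r > g(t)$ bound for $w_3$ simply reflects the single additional $\log(t)$ accumulated from the $WRHS_3$ bound compared with $WRHS_2$.
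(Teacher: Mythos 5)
Your overall approach (mirror the $w_2$ argument using the $WRHS_3$ bounds) is sound and would establish the lemma, but it is substantially more elaborate than what is actually needed and than what the paper does. The crucial simplification you do not exploit is the extra smallness of $WRHS_3$: using the elementary pointwise inequality $\log(2+u/g(s))/(g(s)^2+u^2) \leq C/g(s)^2$, one has $|WRHS_3(s,u)|/u^2 \leq C\lambda(s)^2 \log(s)/(s^4 \log^{2b}(s) g(s)^2)$ uniformly for $u \leq s/2$, and also $|WRHS_3(s,u)| \leq C\lambda(s)^2 \log^2(s)/(s^4 \log^{2b}(s))$ uniformly in $u>0$. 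Because the integrands in the spherical-means representations for $w_3$ (the $6{+}1$-dimensional formula, which carries $WRHS_3/|x+y|^2$, and the $2{+}1$-dimensional one, which carries $WRHS_3$) are therefore controlled by bounds carrying no $|x+y|$-dependence, the $\rho$- and $\phi$- (or $\theta$-) integrations become trivial: one inserts the uniform bound and performs the $s$-integration directly, with no need for the $w_{2,I,a}$ through $w_{2,I,d}$ or $w_{2,III,a}$ through $w_{2,III,e}$ decompositions, which were forced on $w_2$ by the weight $1/(g(s)^2+|x+y|^2)^2$ in $WRHS_2$. The same comment applies to the $\partial_r$, $\partial_r^2$ estimates. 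Your remaining steps (changing variables $\rho = q(s-t)$ as in \eqref{w2fordt} to obtain $\partial_t w_3$ and $\partial_{tr}w_3$, and reading off $\partial_t^2 w_3$ from the equation) coincide with the paper's. One small slip: you assert that $(\partial_r+\tfrac{1}{r})(\partial_r+\tfrac{2}{r})w_3$ admits a $(4+1)$-dimensional spherical-means representation; in fact this conjugate solves a wave equation with radial part $\partial_r^2 + \tfrac{1}{r}\partial_r$, i.e.\ two spatial dimensions (Step 3 of Lemma \ref{v2lemma}), while it is the first-order conjugate $(\partial_r+\tfrac{2}{r})w_3$ that leads to the $(4+1)$-dimensional representation (Step 2).
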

\begin{proof}
 Using the analog of step 1 of the proof of Lemma \ref{v2lemma}, we get
$$|w_{3}(t,r)| \leq C r^{2} \int_{t}^{\infty} ds \int_{0}^{s-t} \frac{\rho d\rho}{\sqrt{(s-t)^{2}-\rho^{2}}} \int_{0}^{\pi} d\phi I_{w_{3}}$$
where
$$I_{w_{3}}= \sin^{4}(\phi) \left(1+\frac{\rho^{2}}{|x+y|^{2}}\right) \left(\frac{\lambda(s)^{2} \log(2+\frac{|x+y|}{g(s)}) \log(s)}{(g(s)^{2}+|x+y|^{2}) s^{4} \log^{2b}(s)} + \frac{\lambda(s)^{2} \log(s)}{g(s)^{2}s^{4} \log^{2b}(s)}\right).$$
We use $\frac{\log(2+\frac{|x+y|}{g(s)})}{g(s)^{2}+|x+y|^{2}} \leq \frac{C}{g(s)^{2}}$ to get
$$|w_{3}(t,r)| \leq \frac{C r^{2} \lambda(t)^{2} \log(t)}{t^{2} g(t)^{2} \log^{2b}(t)}, \quad r \leq g(t).$$
Using the same procedure as in steps 2 and 3 of the proof of Lemma \ref{v2lemma}, we get
$$|\partial_{r}w_{3}(t,r)| \leq \frac{C r \lambda(t)^{2} \log(t)}{t^{2} g(t)^{2} \log^{2b}(t)}, \quad r \leq g(t)$$
and
$$|\partial_{r}^{2}w_{3}(t,r)| \leq \frac{C \lambda(t)^{2} \log(t)}{t^{2} g(t)^{2} \log^{2b}(t)}, \quad r \leq g(t).$$
To estimate $w_{3}$ in the region $r > g(t)$, we use the analog of \eqref{w2noderiv}, and we get
$$|w_{3}(t,r)| \leq C \int_{t}^{\infty} ds \int_{0}^{s-t} \frac{\rho d\rho}{\sqrt{(s-t)^{2}-\rho^{2}}} \int_{0}^{2\pi} d\theta \frac{\lambda(s)^{2} \log^{2}(s)}{s^{4} \log^{2b}(s)} \leq \frac{C \lambda(t)^{2} \log^{2}(t)}{t^{2} \log^{2b}(t)}, \quad r > g(t).$$
Differentiating the analog of \eqref{w2noderiv}, we get
$$|\partial_{r}w_{3}(t,r)| \leq \frac{C \lambda(t)^{2} \log(t)}{t^{2} \log^{2b}(t) g(t)}, \quad r > g(t).$$
We then use the observation of step 3 of the proof of Lemma \ref{v2lemma} to get
$$|\partial_{r}^{2} w_{3}(t,r)| \leq \frac{C \lambda(t)^{2} \log(t)}{t^{2} \log^{2b}(t) g(t)^{2}}, \quad r \geq g(t).$$
Using the same procedure that was used to estimate $\partial_{t}w_{2}$, we get
$$|\partial_{t}w_{3}(t,r)| \leq \frac{C \lambda(t)^{2} \log(t)}{t^{2} g(t) \log^{2b}(t)}, \quad r >0.$$
We then read off estimates on $\partial_{t}^{2}w_{3}$, based on the equation solved by $w_{3}$, and the previous estimates. This completes the proof of the lemma. \end{proof}
Now, for $j\geq 4$, we define $w_{j}$ to be the solution to the following equation, with zero Cauchy data at infinity.
\begin{equation}\label{wjeqn}-\partial_{tt}w_{j}+\partial_{rr}w_{j}+\frac{1}{r}\partial_{r}w_{j}-\frac{4}{r^{2}}w_{j}=WRHS_{j}(t,r)\end{equation}
where
\begin{equation}\begin{split}WRHS_{j}(t,r)&:= \frac{6}{r^{2}}\left(Q_{\frac{1}{\lambda(t)}}+v_{c}\right)\left(w_{j-1}^{2}+2 \sum_{k=2}^{j-2} w_{k}w_{j-1}\right)+\frac{2}{r^{2}}\left(w_{j-1}^{3}+3 w_{j-1}^{2} \sum_{k=2}^{j-2} w_{k} + 3 w_{j-1} \left(\sum_{k=2}^{j-2} w_{k}\right)^{2}\right) \\
&+ \frac{6 w_{j-1}}{r^{2}}\left(v_{c}^{2}+2v_{c}Q_{\frac{1}{\lambda(t)}}\right).\end{split}\end{equation}
As with $v_{j}$, we will now prove estimates on $w_{j}$ by induction.
\begin{lemma}\label{wkinductionlemma}  Let $C_{2}>9$ be such that the estimates of lemmas \ref{w2prelim} and \ref{w3prelim} hold, with the constant $C = C_{2}$ on the right-hand side. Then, there exists $p>900$ and $T_{2}>0$ such that, if
$$D_{p,k} = \begin{cases} C_{2}, \quad k=3\\
C_{2}^{pk}, \quad k \geq 4\end{cases}, \quad \text{ and } q_{k} = \begin{cases} 1, \quad k =3\\
2, \quad k \geq 4\end{cases}$$
then, the following estimates are true, for all $k \geq 3$ and $t \geq T_{2}$.
\begin{equation} \label{wkassump} |w_{k}(t,r)| \leq \begin{cases} \frac{D_{p,k} r^{2} \lambda(t)^{2} \log^{q_{k}}(t)}{t^{2} g(t)^{2}\log^{b(k-1)}(t)}, \quad r \leq g(t)\\
\frac{D_{p,k} \lambda(t)^{2} \log^{2}(t)}{t^{2} \log^{b(k-1)}(t)}, \quad r > g(t)\end{cases} \quad |\partial_{r}w_{k}(t,r)| \leq \begin{cases} \frac{D_{p,k} r \lambda(t)^{2} \log^{2}(t)}{t^{2} g(t)^{2} \log^{b(k-1)}(t)}, \quad r \leq g(t)\\
\frac{D_{p,k} \lambda(t)^{2} \log^{2}(t)}{t^{2} \log^{b(k-1)}(t) g(t)}, \quad r > g(t)\end{cases}\end{equation}
\begin{equation}\label{dtwkassump} |\partial_{t}w_{k}(t,r)| \leq \frac{D_{p,k} \lambda(t)^{2} \log^{2}(t)}{t^{2} g(t) \log^{b(k-1)}(t)}\end{equation}
For $j+k=2$,
\begin{equation}\label{dtrwkassump} |\partial_{t}^{j}\partial_{r}^{k}w_{k}(t,r)| \leq \frac{D_{p,k} \lambda(t)^{2} \log^{2}(t)}{t^{2} g(t)^{2} \log^{b(k-1)}(t)}\end{equation}
\end{lemma}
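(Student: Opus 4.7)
The plan is to prove \eqref{wkassump}--\eqref{dtrwkassump} by strong induction on $k$, completely parallel to the induction used in Lemma \ref{vkinductionlemma} for the $v_k$. The base case $k=3$ is exactly the content of Lemma \ref{w3prelim}, with $q_3=1$ matching the prefactor in \eqref{wkassump} and the other bounds following from inspection. For the inductive step, I fix $j \geq 4$ and assume \eqref{wkassump}--\eqref{dtrwkassump} hold for $3 \leq k \leq j-1$, together with the estimates on $w_2$ from Lemma \ref{w2prelim} and on $v_c$ from Corollary \ref{vkinductioncorollary2}.

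The first substantive step is to estimate $WRHS_j(t,r)$ and its spatial derivatives (up to order two) in the two regions $r \leq g(t)$, $g(t) < r \leq \tfrac{t}{2}$, and $r > \tfrac{t}{2}$. Every monomial in $WRHS_j$ contains exactly one factor of $w_{j-1}$ together with two factors drawn from $\{w_2,\ldots,w_{j-2}, Q_{1/\lambda(t)}, v_c\}$. Substituting the inductive hypothesis, each such monomial carries (i) the constant $D_{p,j-1}$ from the distinguished $w_{j-1}$ factor, and (ii) an additional smallness factor of the form $C_2^{1-p}$ (if both remaining factors are some $w_k$ with $k\geq 3$) or $\lambda(t)^2/\log^b(t)$ type gains (if one factor is $Q_{1/\lambda(t)}$ or $v_c$, using $|Q_{1/\lambda(t)}|\leq 1$, the decay of $v_c$ from Corollary \ref{vkinductioncorollary2}, and the extra factor of $r^{-2}$ in $WRHS_j$). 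Keeping only leading pieces, this yields bounds on $WRHS_j$ of the same shape as those derived just before Lemma \ref{w3prelim} for $WRHS_3$, but multiplied by $D_{p,j-1}$ and by an additional factor of order $C_2^{1-p} + C_2^p/\log^b(t)$.

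The second step is to feed these bounds on $WRHS_j$ into the same representation formulas used to estimate $w_2$ and $w_3$. Concretely, for the region $r \leq g(t)$ I use Duhamel together with the $6+1$-dimensional spherical-means formula as in step 1 of Lemma \ref{v2lemma}, together with the $\phi$-integral identities employed in the proof of Lemma \ref{w2prelim} (the $\sin^4(\phi)$ cancellation near $\phi=\pi$ is essential for obtaining the $r^2/(g(t)^2+r^2)$ structure). Derivatives $\partial_r w_j$ and $\partial_r^2 w_j$ in this region are handled by the analogues of steps 2--3 of Lemma \ref{v2lemma}, using the equations solved by $(\partial_r + \tfrac{2}{r})w_j$ and $(\partial_r+\tfrac{1}{r})(\partial_r+\tfrac{2}{r})w_j$. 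For $r > g(t)$, I use the $2+1$-dimensional reduction (the analogue of \eqref{w2noderiv}) together with the regional splittings $w_{2,I,a}, \ldots, w_{2,III,e}$ appearing in the proof of Lemma \ref{w2prelim}; these splittings depend only on $(s,\rho,\theta)$ and not on the specific form of the source, so they apply verbatim to $WRHS_j$. The bound on $\partial_t w_j$ (still at the preliminary level, since we do not yet know $\lambda \in C^3$) comes from differentiating the change of variable $\rho = q(s-t)$ in \eqref{w2fordt}, and $\partial_t^2 w_j$ is then read off from the equation \eqref{wjeqn}; cross terms $\partial_{tr} w_j$ are obtained from the same change-of-variable argument.

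Carrying out these computations produces bounds for $w_j$ of the shape asserted in \eqref{wkassump}--\eqref{dtrwkassump}, but with constant $C \cdot D_{p,j-1}\cdot \bigl(C_2^{1-p} + C_2^p/\log^b(T_2)\bigr)$, where $C$ is independent of $p$ and $T_2$. Since $D_{p,j} = C_2^{pj} = C_2^p \cdot D_{p,j-1}$ for $j \geq 5$ (and for $j=4$ with $D_{p,3}=C_2$), choosing $p$ large enough that $C\cdot C_2^{1-p}$ beats $e^{-(900!)}$, and then $T_2$ large enough that $C\cdot C_2^p/\log^b(T_2)$ does the same, closes the induction. The main obstacle is the bookkeeping of logarithmic factors: the $\log^{q_k}(t)$ prefactor in \eqref{wkassump} jumps from $\log^1(t)$ at $k=3$ to $\log^2(t)$ at $k\geq 4$, reflecting a loss of one log when one passes from products involving $w_2$ (whose bound near the origin has $\log^1(t)$ from Lemma \ref{w2prelim}) to products involving $w_3$ or later (which already carry $\log^2(t)$ in the region $r > g(t)$). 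One must verify that this single-step jump does not propagate further, which amounts to checking that the quadratic self-interaction $w_{j-1}^2$ and the mixed interactions $w_{j-1}\cdot v_c$ both contribute only $\log^2(t)$ (and not $\log^3(t)$ or more) to $w_j$; this is exactly what the splittings $w_{2,III,c}$ and $w_{2,III,e}$ in Lemma \ref{w2prelim} guarantee, via the $\log(2+r/g(t))$ factor rather than a full $\log(t)$ in the large-$r$ estimates on the sources.
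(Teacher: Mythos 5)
Your plan is essentially the same as the paper's proof of this lemma: strong induction on $k$ with base case $k=3$ from Lemma \ref{w3prelim}, inductive estimation of $WRHS_j$ and its $r$-derivatives, reuse of the $w_2$/$w_3$ representation formula machinery (the $6+1$-dimensional spherical means near the origin, the $2+1$-dimensional reduction and regional splittings for $r>g(t)$, the change of variable $\rho=q(s-t)$ for $\partial_t w_j$, and the PDE to read off $\partial_t^2 w_j$), and closing by choosing $p$ and then $T_2$ large, exactly as the paper picks $p_0$ with $C\,C_2^{p_0(j-1)}\leq C_2^{p_0 j}$ and then enlarges $T_{0,p_0}$. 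One small factual slip: not every monomial in $WRHS_j$ contains \emph{exactly} one factor of $w_{j-1}$ — the terms $w_{j-1}^2$ and $w_{j-1}^3$ contain two and three — but since the inductive bound on $w_{j-1}$ is itself small for $t\geq T_2$ (by the $T_{0,p}$ constraint $\tfrac{C_2^{90p}}{\log^b(t)}+\tfrac{C_2^{90p}\lambda(t)^2\log^2(t)}{g(t)^2\log^b(t)}<e^{-(900!)}$), the extra copies of $w_{j-1}$ do no harm and your bookkeeping still closes.
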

Just like for the $v_{k}$ corrections, we then obtain the following corollary.
\begin{corollary}\label{wkinductioncorollary} The series
$$w_{s}:=\sum_{j=3}^{\infty} w_{j}$$ 
and the series resulting from applying any first or second order derivative termwise, converges absolutely and uniformly on the set $\{(t,r)| t \geq T_{2}, \quad r >0\}$. Moreover, if 
$$w_{c}(t,r):= w_{2}(t,r)+w_{s}(t,r)$$ 
then
\begin{equation}\label{wceqn}\begin{split}-\partial_{tt}w_{c}+\partial_{rr}w_{c}+\frac{1}{r}\partial_{r}w_{c}-\frac{4 w_{c}}{r^{2}} &= \chi_{\geq 1}(\frac{r}{g(t)}) \left(\partial_{t}^{2}Q_{\frac{1}{\lambda(t)}} - \frac{6 v_{c}}{r^{2}}\left(1-Q_{\frac{1}{\lambda(t)}}^{2}(r)\right)\right) + \frac{6}{r^{2}}\left(Q_{\frac{1}{\lambda(t)}}+v_{c}\right) w_{c}^{2}\\
&+\frac{2}{r^{2}}w_{c}^{3} + \frac{6}{r^{2}} w_{c} \left(v_{c}^{2}+2 v_{c}Q_{\frac{1}{\lambda(t)}}\right).\end{split}\end{equation}
\end{corollary}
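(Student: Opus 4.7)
The plan is to combine the inductive bounds of Lemma \ref{wkinductionlemma} with a Weierstrass M-test to secure absolute and uniform convergence of $w_s$ together with its first and second derivatives on $\{(t,r): t \geq T_2,\, r>0\}$, and then to exploit the telescoping structure built into the definition of $WRHS_j$ to identify the PDE satisfied by $w_c = w_2 + w_s$.

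For the convergence, I note that each of the bounds \eqref{wkassump}, \eqref{dtwkassump}, and \eqref{dtrwkassump} is of the form $|\partial_t^{j}\partial_r^{m} w_k(t,r)| \leq B_{j,m}(t)\cdot D_{p,k}/\log^{b(k-1)}(t)$ for $j+m \leq 2$, where $B_{j,m}(t)$ is a $\log$-polynomial factor (built from $\lambda(t), g(t)$) that is bounded uniformly in $r > 0$ and in $t \geq T_2$: indeed, the $r$-dependence of the pointwise bounds (such as $r^{2}/g(t)^{2}$ or $r/g(t)$ on $r\leq g(t)$, and $r$-independent factors on $r > g(t)$) peaks at $r = g(t)$, where the two regimes agree up to constants. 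For $k \geq 4$ we have
\begin{equation*}
\frac{D_{p,k}}{\log^{b(k-1)}(t)} = \frac{C_2^{p}}{\log^{b}(t)} \cdot \left(\frac{C_2^{p}}{\log^{b}(t)}\right)^{k-2},
\end{equation*}
and the choice of $T_2$ in Lemma \ref{wkinductionlemma}, analogous to the thresholds $\max\{C,1\}C_1^{90-n_0}<e^{-(900!)}$ and $C_1^{90n_0}/\log^{b}(T_{0,n_0})<e^{-(900!)}$ used in the preceding lemma, forces $C_2^{p}/\log^{b}(T_2) \ll 1$. The Weierstrass M-test then gives absolute and uniform convergence of $\sum_{k=3}^{\infty} \partial_t^{j}\partial_r^{m} w_k$ on $\{t \geq T_2,\, r > 0\}$ for each $j+m \leq 2$. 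Standard real-analysis arguments upgrade this to $w_s \in C^{2}$ with derivatives computed termwise.

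To identify the PDE, introduce $S_j := \sum_{k=2}^{j} w_k$, with $S_1 := 0$. Applying $(A+B)^{2}-A^{2} = 2AB+B^{2}$ and $(A+B)^{3}-A^{3} = 3A^{2}B+3AB^{2}+B^{3}$ with $A = S_{j-2}$, $B = w_{j-1}$ to the definition of $WRHS_j$ gives, for every $j \geq 3$,
\begin{equation*}
WRHS_{j} = \frac{6(Q_{\frac{1}{\lambda(t)}}+v_c)}{r^{2}}\bigl(S_{j-1}^{2}-S_{j-2}^{2}\bigr) + \frac{2}{r^{2}}\bigl(S_{j-1}^{3}-S_{j-2}^{3}\bigr) + \frac{6(v_c^{2}+2v_c Q_{\frac{1}{\lambda(t)}})}{r^{2}}(S_{j-1}-S_{j-2}).
\end{equation*}
Summing $j = 3, \dots, N$ produces perfect telescoping, and using $S_{1} = 0$ one obtains
\begin{equation*}
\sum_{j=3}^{N} WRHS_{j} = \frac{6(Q_{\frac{1}{\lambda(t)}}+v_c)}{r^{2}} S_{N-1}^{2} + \frac{2}{r^{2}} S_{N-1}^{3} + \frac{6(v_c^{2}+2v_c Q_{\frac{1}{\lambda(t)}})}{r^{2}} S_{N-1}.
\end{equation*}
Since $S_{N-1} \to w_c$ uniformly on $\{r>0\}$ by the previous paragraph (and $w_c$ is bounded, so the quadratic and cubic expressions also converge uniformly), and since the PDE for $w_2$ in \eqref{w2eqn} supplies the $WRHS_2$ term containing the $\chi_{\geq 1}(r/g(t))$-localized soliton and $v_c$-linear error, passing $N\to\infty$ in the sum of the PDEs for $w_2,w_3,\dots,w_N$ produces exactly \eqref{wceqn}, with termwise differentiation justified by the uniform convergence of the termwise derivatives.

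The main obstacle here is really the algebraic verification that the particular form of $WRHS_j$ telescopes correctly; this is mainly a matter of careful bookkeeping with the cubic expansion, and was built into the design of $WRHS_j$ in Section \ref{wjsection} precisely for this reason. The analytic part -- justifying termwise differentiation and the limit -- is routine once the geometric-series bound of the second paragraph is in hand. A minor point to track is uniformity of the M-test bounds as $r\to 0^{+}$: the factors $r^{2}/g(t)^{2}$ and $r/g(t)$ in \eqref{wkassump} only help there, while for $r > g(t)$ the bounds are already $r$-independent up to a constant, so a single $r$-uniform bound suffices without splitting the domain further.
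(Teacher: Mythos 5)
Your proof is correct and follows essentially the same route as the paper: both use the geometric decay of the bounds $D_{p,k}/\log^{b(k-1)}(t)$ from Lemma \ref{wkinductionlemma} (via a Weierstrass M-test, leveraging that $C_2^p/\log^b(T_2)\ll 1$) to secure absolute/uniform convergence of the series and its termwise derivatives, and both identify the telescoping form of $WRHS_j$ in terms of partial sums before summing and passing to the limit in the PDE, with the $w_2$ equation supplying the $\chi_{\geq 1}$-localized forcing term. Your introduction of $S_j$ and explicit use of the binomial difference identities merely makes the bookkeeping more transparent than the paper's terser presentation.
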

\noindent\emph{Proof} (of Lemma \ref{wkinductionlemma}. Let $C_{2}$ be as in the lemma statement, and let $p > 900$ be otherwise arbitrary, and let $T_{0,p}>e^{(900!)^{1+\frac{1}{b}}}$ satisfy $\frac{C_{2}^{90 p}}{\log^{b}(t)} + \frac{C_{2}^{90p} \lambda(t)^{2} \log^{2}(t)}{g(t)^{2}\log^{b}(t)} < e^{-(900!)}, \quad t \geq T_{0,p}$ and be otherwise arbitrary. (We recall that $\frac{\lambda(t)}{g(t)} = \frac{1}{\log^{b-2\epsilon}(t)}$, and $b > \frac{2}{3}$, so, such a $T_{0,p}$ exists). Our goal is to show that, for a sufficiently large $T_{0,p}$, we can prove estimates on $w_{j}$ (and its derivatives), valid for all $t \geq T_{1}+T_{0,p}$ and all $r \geq 0$, by induction. In the following estimates, we assume $t \geq T_{1}+T_{0,p}$. Let 
$$D_{p,k} = \begin{cases} C_{2}, \quad k=3\\
C_{2}^{pk}, \quad k \geq 4\end{cases}, \quad q_{k} = \begin{cases} 1, \quad k =3\\
2, \quad k \geq 4\end{cases}.$$
Suppose, for any $j \geq 4$, and all $k$ with $3 \leq k \leq j-1$, that
\begin{equation}  |w_{k}(t,r)| \leq \begin{cases} \frac{D_{p,k} r^{2} \lambda(t)^{2} \log^{q_{k}}(t)}{t^{2} g(t)^{2}\log^{b(k-1)}(t)}, \quad r \leq g(t)\\
\frac{D_{p,k} \lambda(t)^{2} \log^{2}(t)}{t^{2} \log^{b(k-1)}(t)}, \quad r > g(t)\end{cases} \quad |\partial_{r}w_{k}(t,r)| \leq \begin{cases} \frac{D_{p,k} r \lambda(t)^{2} \log^{2}(t)}{t^{2} g(t)^{2} \log^{b(k-1)}(t)}, \quad r \leq g(t)\\
\frac{D_{p,k} \lambda(t)^{2} \log^{2}(t)}{t^{2} \log^{b(k-1)}(t) g(t)}, \quad r > g(t)\end{cases}\end{equation}
For $n+m=2$ or $n=1,m=0$,
\begin{equation} |\partial_{r}^{m}\partial_{t}^{n}w_{k}(t,r)|\leq  \frac{D_{p,k} \lambda(t)^{2} \log^{2}(t)}{t^{2} g(t)^{n+m} \log^{b(k-1)}(t)}, \quad r >0.\end{equation}
Then, for some constant $C$ \emph{independent} of $t,p,j$, we have the following estimates, for $t \geq T_{1}+T_{0,p}$.
$$|WRHS_{j}(t,r)| \leq C \begin{cases} \frac{C_{2}^{p(j-1)} r^{2} \lambda(t)^{2} \log^{2}(t)}{t^{4} g(t)^{2} \log^{b(j-1)}(t)}, \quad r \leq g(t)\\
\frac{C_{2}^{p(j-1)} \lambda(t)^{2} \log^{2}(t)}{t^{4} \log^{b(j-1)}(t)}, \quad g(t) < r \leq \frac{t}{2}\\
\frac{C_{2}^{p(j-1)} \lambda(t)^{2} \log^{2}(t)}{t^{5/2} r^{3/2} \log^{b(j-1)}(t)} \left(1+\frac{\lambda(t)^{2} \log^{2}(r)}{t^{2} \log^{b}(t)}\right), \quad r > \frac{t}{2}\end{cases}$$
$$|\partial_{r}WRHS_{j}(t,r)| \leq C \begin{cases} \frac{C_{2}^{p(j-1)} r \lambda(t)^{2} \log^{2}(t)}{t^{4} g(t)^{2} \log^{b(j-1)}(t)}, \quad r \leq g(t)\\
\frac{C_{2}^{p(j-1)} \lambda(t)^{2} \log^{2}(t)}{t^{4} g(t) \log^{b(j-1)}(t)}, \quad g(t) < r \leq \frac{t}{2}\\
\frac{C_{2}^{p(j-1)} \lambda(t)^{2} \log^{2}(t)}{t^{3} r \log^{b(j-1)}(t) g(t)} + \frac{C_{2}^{p(j-1)} \lambda(t)^{2} \log^{2}(t)}{r^{2}t^{5/2} \log^{b(j-2)}(t) \sqrt{\langle t-r \rangle} \log^{b}(\langle t-r \rangle)}, \quad r > \frac{t}{2}\end{cases}$$
$$|\partial_{r}^{2}WRHS_{j}(t,r)| \leq C \begin{cases} \frac{C_{2}^{p(j-1)} \lambda(t)^{2} \log^{2}(t)}{t^{4} g(t)^{2} \log^{b(j-1)}(t)}, \quad r \leq \frac{t}{2}\\
\frac{C_{2}^{p(j-1)} \lambda(t)^{2} \log^{2}(t)}{r^{3/2} t^{5/2} g(t)^{2} \log^{b(j-1)}(t)} \left(1+\frac{\lambda(t)^{2} \log^{2}(r)}{t^{2} \log^{b}(t)}\right), \quad r > \frac{t}{2}\end{cases}$$
Using the same procedure used to estimate $w_{3}$, we get the following estimates, where the constant $C$ is \emph{independent} of $t,j,p$, and $t \geq T_{1}+T_{0,p}$.
\begin{equation}\label{result1}|w_{j}(t,r)| \leq \begin{cases} \frac{C r^{2} C_{2}^{p(j-1)} \lambda(t)^{2} \log^{2}(t)}{t^{2} g(t)^{2} \log^{b(j-1)}(t)}, \quad r \leq g(t)\\
\frac{C C_{2}^{p(j-1)} \lambda(t)^{2} \log^{2}(t)}{t^{2} \log^{b(j-1)}(t)}, \quad r>g(t)\end{cases}\quad |\partial_{r}w_{j}(t,r)| \leq \begin{cases} \frac{C r C_{2}^{p(j-1)} \lambda(t)^{2} \log^{2}(t)}{t^{2} g(t)^{2} \log^{b(j-1)}(t)}, \quad r \leq g(t)\\
\frac{C C_{2}^{p(j-1)} \lambda(t)^{2} \log^{2}(t)}{t^{2} g(t) \log^{b(j-1)}(t)}, \quad r > g(t)\end{cases}\end{equation}
For $n+m=2$ or $n=1,m=0$,
$$|\partial_{r}^{m}\partial_{t}^{n}w_{j}(t,r)| \leq  \frac{C C_{2}^{p(j-1)} \lambda(t)^{2} \log^{2}(t)}{t^{2} g(t)^{m+n} \log^{b(j-1)}(t)}.$$
Therefore, there exists $p_{0}>900$ such that $C C_{2}^{p_{0}(j-1)} \leq C_{2}^{p_{0}j}$. Then, by mathematical induction, \eqref{wkassump} through \eqref{dtrwkassump} are true for all $j \geq 3$, provided that $T_{0,p_{0}}$ is chosen sufficiently large (though we have slightly better estimates on $w_{3}$ than what we supposed for the purposes of the induction argument).\qed\\
\\ 
Now, we can prove Corollary \ref{wkinductioncorollary}. By Lemma \ref{wkinductionlemma} (with $T_{2}=T_{1}+T_{0,p_{0}}$) the series (in Corollary \ref{wkinductioncorollary}) defining $w_{s}$, and the series resulting from applying any first or second order derivative termwise converges absolutely and uniformly on the set $\{(t,r)| t \geq T_{1}+T_{0,p_{0}}, \quad r >0\}$. From here on, we will further restrict $T_{0}$ to satisfy $T_{0} > T_{1}+T_{0,p_{0}}$. Then, we define $w_{c}$ as in Corollary \ref{wkinductioncorollary}. Using the fact that
\begin{equation}\begin{split} WRHS_{j}(t,r)&= \frac{6}{r^{2}}\left(Q_{\frac{1}{\lambda(t)}}+v_{c}\right) \left(\left(\sum_{k=2}^{j-1} w_{k}\right)^{2}-\left(\sum_{k=2}^{j-2} w_{k}\right)^{2}\right) + \frac{2}{r^{2}}\left(\left(\sum_{k=2}^{j-1} w_{k}\right)^{3}-\left(\sum_{k=2}^{j-2} w_{k}\right)^{3}\right)\\
&+\frac{6}{r^{2}} w_{j-1} \left(v_{c}^{2}+2v_{c}Q_{\frac{1}{\lambda(t)}}\right)\end{split}\end{equation}
we proceed as in the case of $v_{c}$, to get \eqref{wceqn}, which will be useful for us in the next section.
\subsection{Choosing $\lambda(t)$}\label{choosinglambdasection}
Let \begin{equation}\label{f4def}\begin{split}F_{4}(t,r)& = \left(1-\chi_{\geq 1}(\frac{r}{g(t)})\right)\left(\partial_{t}^{2}Q_{\frac{1}{\lambda(t)}}(r) - \frac{6 v_{c}(t,r)}{r^{2}} \left(1-Q_{\frac{1}{\lambda(t)}}^{2}(r)\right)\right) -\frac{6}{r^{2}} \chi_{\leq 1}(\frac{2 r}{t})\left(1-Q^{2}_{\frac{1}{\lambda(t)}}(r)\right) w_{c}(t,r)\end{split}\end{equation}
\begin{equation}\label{f5def}F_{5}(t,r) =-\frac{6}{r^{2}} \left(1-\chi_{\leq 1}(\frac{2 r}{t})\right)\left(1-Q^{2}_{\frac{1}{\lambda(t)}}(r)\right) w_{c}(t,r)\end{equation}
where
\begin{equation}\label{chidef}\chi_{\leq 1}(x) \in C^{\infty}(\mathbb{R}), \quad 0 \leq \chi_{\leq 1}(x) \leq 1, \quad \chi_{\leq 1}(x) = \begin{cases} 1, \quad x \leq \frac{1}{2}\\
0, \quad x \geq 1\end{cases}.\end{equation}
If we substitute $u(t,r) = Q_{\frac{1}{\lambda(t)}}(r) + v_{c}(t,r) + w_{c}(t,r)+ v(t,r)$ into \eqref{ym}, we get
\begin{equation}\begin{split}-\partial_{tt}v+\partial_{rr}v+\frac{1}{r}\partial_{r}v+\frac{2}{r^{2}}\left(1-3Q_{\frac{1}{\lambda(t)}}(r)^{2}\right) v &=F_{4}(t,r)+F_{5}(t,r) + \frac{2 v^{3}}{r^{2}} + \frac{6}{r^{2}}\left(Q_{\frac{1}{\lambda(t)}}(r) + v_{c}+w_{c}\right) v^{2} \\
&+ \frac{6 v}{r^{2}}\left(\left(v_{c}+Q_{\frac{1}{\lambda(t)}}+w_{c}\right)^{2}-Q_{\frac{1}{\lambda(t)}}^{2}\right).\end{split}\end{equation}
\subsubsection{Estimates on $F_{5}$}
We will now show that $F_{5}$ decays sufficiently quickly in sufficiently many norms, so that we do not need to include it in the modulation equation for $\lambda$. By directly substituting the estimates of the previous sections into the definition of $F_{5}$, we get: there exists $C>0$ such that for all $\lambda$ satisfying \eqref{lambdarestr}, we have
\begin{equation}\label{f5l2}||F_{5}(t,R \lambda(t))||_{L^{2}(R dR)} \leq \frac{C \lambda(t)^{3}}{t^{5} \log^{b-2}(t)}\end{equation}
\begin{equation}\label{lstarlf5l2}||L^{*}L\left(F_{5}(t,R\lambda(t))\right)||_{L^{2}(R dR)} \leq \frac{C \lambda(t)^{5} \log^{2}(t)}{g(t)^{2} \log^{b}(t) t^{5}}.\end{equation}
\subsubsection{Solving the modulation equation}
Now, we will choose $\lambda(t)$ so that
$$\langle F_{4}(t,R\lambda(t)),\phi_{0}(R)\rangle_{L^{2}(R dR)} =0.$$
This equation can be re-written in the form
\begin{equation}\label{lambdaeqn}\begin{split}&\langle \partial_{tt}Q_{\frac{1}{\lambda(t)}} - \frac{6}{r^{2}} v_{1} \left(1-Q_{\frac{1}{\lambda(t)}}^{2}(r)\right)\Bigr|_{r=R\lambda(t)},\phi_{0}(R)\rangle_{L^{2}(R dR)} \\
&= \langle \frac{6}{r^{2}} \sum_{k=2}^{\infty} v_{k} \left(1-Q_{\frac{1}{\lambda(t)}}^{2}(r)\right)\Bigr|_{r=R\lambda(t)}, \phi_{0}(R)\rangle_{L^{2}(R dR)}\\
&+\langle \chi_{\geq 1}(\frac{R \lambda(t)}{g(t)}) \left(\partial_{tt}Q_{\frac{1}{\lambda(t)}} - \frac{6}{r^{2}} v_{c} \left(1-Q_{\frac{1}{\lambda(t)}}^{2}(r)\right)\right)\Bigr|_{r=R\lambda(t)}, \phi_{0}(R)\rangle_{L^{2}(R dR)}\\
&+\langle \frac{\chi_{\leq 1}(\frac{2R \lambda(t)}{t})}{r^{2}} 6 \left(1-Q_{\frac{1}{\lambda(t)}}^{2}(r)\right)w_{c}(t,r)\Bigr|_{r=R\lambda(t)}, \phi_{0}(R)\rangle_{L^{2}(R dR)}\end{split} \end{equation}
The main result of this section is
\begin{proposition}\label{choosinglambdaprop} There exists $T_{3}>0$ such that for all $T_{0} \geq T_{3}$, there exists a solution, $\lambda$ (which is of the form \eqref{lambdarestr}) to \eqref{lambdaeqn}, for $t \geq T_{0}$. In addition, $\lambda(t) \in C^{4}([T_{0},\infty))$, and satisfies
$$\lambda(t) = \lambda_{0}(t)\left(1+e(t)\right)$$
where
\begin{equation}\nonumber\begin{split} |e(t)| \leq \frac{C}{\log^{\delta-\delta_{2}}(t)}, \quad |e^{k}(t)| \leq \begin{cases} \frac{C}{t^{k} \log^{1+\delta-\delta_{2}}(t)}, \quad k=1,2\\
\frac{C}{t^{3} \log^{b+\delta_{4}}(t)}, \quad k=3\\
\frac{C}{t^{4} \log^{b+\delta_{5}}(t)}, \quad k=4\end{cases}\end{split}\end{equation}
where $\delta$, $\delta_{2}$ are defined in \eqref{deltadef}, \eqref{delta2def}, respectively, and $\delta_{4}, \delta_{5}>0$.
\end{proposition}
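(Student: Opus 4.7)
The plan is to recast \eqref{lambdaeqn} as a second-order ODE for $e$, and solve it by a contraction mapping argument in the Banach space $X$. First, I would expand the left-hand side using
\[
\frac{\lambda''(t)}{\lambda(t)} = \frac{\lambda_0''(t)}{\lambda_0(t)} + \frac{2\lambda_0'(t) e'(t)}{\lambda_0(t)(1+e(t))} + \frac{e''(t)}{1+e(t)},
\]
so that $\langle \partial_{tt}Q_{1/\lambda}|_{r=R\lambda},\phi_0\rangle = \frac{2}{3}(\frac{\lambda_0''}{\lambda_0} + \frac{2\lambda_0' e'}{\lambda_0(1+e)} + \frac{e''}{1+e})$. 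For the $v_1$ inner product I would substitute the identity \eqref{v11ip}, replace $K_1(\xi\lambda)$ by $\frac{1}{\xi\lambda} + (K_1(\xi\lambda)-\frac{1}{\xi\lambda})$, and use the defining relation $\int_0^\infty\sin(t\xi)\widehat{v_{1,1}}(\xi)\frac{\xi^2}{2}d\xi=\frac{2}{3}\frac{\lambda_0''(t)}{\lambda_0(t)}$ for $t\geq 2T_{\lambda_0}$; this produces the exact cancellation with $\frac{2}{3}\frac{\lambda_0''}{\lambda_0}$ when $e\equiv 0$ and $\lambda=\lambda_0$, leaving only quantitatively small remainders. The remaining RHS contributions (the $\sum_{k\geq 2} v_k$ terms, the large-$r$ piece of the soliton plus $v_c$ error, the $w_c$ term) are controlled by the estimates in Lemmas \ref{v2lemma}, \ref{vkinductionlemma}, and \ref{wkinductionlemma}.

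Once reorganized, the modulation equation reads schematically $e''(t) = N[e](t)$, where $N[e]$ collects all terms of size $\lesssim t^{-2}\log^{-(1+\delta)}(t)$, linear plus higher-order in $e$ and with drift coefficient $\frac{\lambda_0'}{\lambda_0}e'$ absorbable by the $X$-norm since $|\lambda_0'/\lambda_0|\lesssim 1/(t\log^b(t))$. I would define the nonlinear map
\[
T(e)(t) := \int_t^\infty\!\!\int_s^\infty \tfrac{3}{2}\bigl(N[e](s_1)+\text{drift}\bigr)\,ds_1\,ds,
\]
so that $T(e)$ automatically satisfies the correct decay as $t\to\infty$ and $T(e)''=N[e]$ modulo the absorbed drift. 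The choice of $X$-norm weights in \eqref{xnorm}, with powers $\delta-\delta_2$, $1+\delta-\delta_2$, $1+\delta-\delta_2$, is calibrated precisely so that: (i) inserting an $e\in\overline{B}_1(0)\subset X$ into $N$ yields an output of size $t^{-2}\log^{-(1+\delta)}(t)$, so $T(e)$ gains one log factor after two integrations and lands back in $\overline{B}_1(0)$; (ii) the difference $N[e_1]-N[e_2]$ is bounded by $c\|e_1-e_2\|_X$ with $c<1/2$ once $T_0$ is large enough, giving the contraction. Verifying these estimates term by term is the main technical grind: the soliton-derivative pieces use $|\lambda''/\lambda|\lesssim t^{-2}\log^{-b}(t)$; the $v_1$ remainder $\int\sin(t\xi)\widehat{v_{1,1}}(\xi)\xi^3\lambda(K_1(\xi\lambda)-\frac{1}{\xi\lambda})d\xi$ gains two extra powers of $\log t$ (since $K_1(x)-\frac{1}{x}=O(x\log x)$ and after scaling $\xi\lambda\ll 1$ in the relevant range); the $v_2$ and higher $v_k$, $w_c$ inner products give contributions $\lesssim t^{-2}\log^{-2b}(t)$ or smaller, which sit comfortably below the target $\log^{-(1+\delta)}(t)$ by definition \eqref{deltadef}.

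The main obstacle is the higher-regularity claim $\lambda\in C^4$ with the sharper estimates on $e'''$ and $e''''$. Having found $e\in X$ with $e\in C^2$, I would bootstrap as follows: differentiate the identity $e''=\frac{3}{2}(N[e]+\text{drift})$ once in $t$. Every term on the right is of the form of a soliton derivative, a weighted integral against $\widehat{v_{1,1}}$, or an inner product against $\phi_0$ of $v_k,w_c$ with their explicit formulas; each admits one more $t$-derivative, producing at most one extra power of $t^{-1}$ with a fixed log-gain quantified by some $\delta_4>0$. The delicate point is that $WRHS_2$ depends on $\lambda''$, so differentiating the $w_c$-inner product rigorously requires first establishing that $\lambda\in C^3$ using only those pieces of $N[e]$ that are genuinely $C^1$ in $t$; once this is done, the author's remark about the improved estimates on $\partial_t w_c$ in Section \ref{w2sec} can be invoked, and a second differentiation gives $\lambda\in C^4$ with the $\log^{-(b+\delta_5)}$ bound. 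The constants $\delta_4,\delta_5>0$ emerge as the minimum of the log-gains available in each of the differentiated source terms, and it is straightforward (if tedious) to verify they are strictly positive under the constraint $b>\frac{2}{3}$ and the smallness of $\epsilon$.
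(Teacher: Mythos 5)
Your overall strategy --- recast \eqref{lambdaeqn} as an ODE for $e$, solve by a contraction in $\overline{B}_1(0)\subset X$, then bootstrap to $C^3$ and $C^4$ --- matches the paper's Section \ref{choosinglambdasection}, and you correctly identify the key cancellation via \eqref{v11def} and the $K_1$-expansion, and the key difficulty that $WRHS_2$ depends on $\lambda''$. Putting the drift $\frac{2\lambda_0'}{\lambda_0}e'$ directly inside the double integral rather than using the paper's integrating factor $\lambda_0^{\pm 2}$ is a harmless variant. However, one step as you state it would fail. The inner product $\langle\chi_{\geq 1}(R\lambda/g)\,\partial_t^2Q_{1/\lambda}\bigr|_{r=R\lambda},\phi_0\rangle$ inside $lin_{ip}$ contains a piece directly proportional to $\frac{\lambda''}{\lambda}$, hence, after substituting $\lambda=\lambda_0(1+e)$, a term of the form $i_0(t)e''(t)$ with $|i_0(t)|\lesssim\log^{-2(b-2\epsilon)}(t)$ sitting on the right-hand side of your schematic identity $e''=N[e]$. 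If you leave it there, then in your bootstrap step (``differentiate the identity $e''=\frac32(N[e]+\text{drift})$ once in $t$'') the right-hand side produces $i_0(t)e'''(t)$, which is of the same order in $t$ as the left-hand side; your assertion that ``each term admits one more $t$-derivative producing at most one extra power of $t^{-1}$'' is false for this term, and you cannot then read off $C^3$ regularity or the $e_0'''$ estimate directly.

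The fix, which the paper carries out explicitly, is to isolate the multiplicative $e''$-dependence before differentiating: rewrite the equation as
$e_0''=\dfrac{\frac{3}{2}G_{\text{rest}}(t,\lambda(t))(1+e_0(t))-\frac{2\lambda_0'(t)}{\lambda_0(t)}e_0'(t)}{1-i_0(t)}$
(equation \eqref{e0eqndiff}), where $G_{\text{rest}}=G-\frac{2i_0e_0''}{3(1+e_0)}$. With this quotient form, $e_0\in C^3$ follows immediately since $v_s,w_s\in C^2$ makes the right-hand side $C^1$, and the only residual dependence of $\partial_tG_{\text{rest}}$ on $e_0'''$ --- via $\partial_tw_c$, through $\partial_tWRHS_2$ and thus $\lambda'''$ --- is of a genuinely smaller integral character and is exactly what the $\sup_{x\ge t}(x^{3/2}|e_0'''(x)|)$-bootstrap you describe is designed to close (and similarly at the $e_0''''$ level). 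With this one adjustment your plan reproduces the paper's argument; the Lipschitz estimates for $v_{sip},lin_{ip},w_{c,ip}$ in $e\in X$ that you label ``the main technical grind'' are indeed the bulk of the proof, requiring the $w_{j,lip,0}/w_{j,lip,1}$ splitting and induction of the paper.
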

We start by computing the left-hand side of \eqref{lambdaeqn}. Firstly, we have
$$\langle \partial_{tt}Q_{\frac{1}{\lambda(t)}}\Bigr|_{r=R\lambda(t)},\phi_{0}(R)\rangle_{L^{2}(R dR)} = \frac{2 \lambda''(t)}{3\lambda(t)}.$$
Next, we start by noting that
$$\frac{6\left(1-Q_{1}^{2}(R)\right)}{R^{2} \lambda(t)^{2}} \phi_{0}(R) = \frac{24 R^{2}}{\lambda(t)^{2}(1+R^{2})^{4}}.$$
Then, we note that
$$\frac{24}{\lambda(t)^{2}} \int_{0}^{\infty} \frac{R^{3} J_{2}(\xi R \lambda(t)) dR}{(1+R^{2})^{4}} = \frac{\xi^{3}\lambda(t)}{2} K_{1}(\xi\lambda(t))$$
(which follows from combining integral identities of \cite{gr}). Finally, we recall
$$v_{1}(t,R \lambda(t)) = \int_{0}^{\infty} \widehat{v_{1,1}}(\xi) \sin(t\xi) J_{2}(R \lambda(t)\xi) d\xi.$$
 Therefore,
 \begin{equation} \begin{split} &\langle- \frac{6}{r^{2}} v_{1} \left(1-Q_{\frac{1}{\lambda(t)}}^{2}(r)\right)\Bigr|_{r=R\lambda(t)},\phi_{0}(R)\rangle_{L^{2}(R dR)} \\
 &= \frac{-24}{\lambda(t)^{2}} \int_{0}^{\infty} v_{1}(t,R\lambda(t)) \frac{R^{3}}{(1+R^{2})^{4}} dR= \frac{-24}{\lambda(t)^{2}} \int_{0}^{\infty} d\xi \sin(t\xi) \widehat{v_{1,1}}(\xi) \int_{0}^{\infty} J_{2}(R \lambda(t) \xi) \frac{R^{3} dR}{(1+R^{2})^{4}}\\
 &=-\int_{0}^{\infty} d\xi \sin(t\xi) \widehat{v_{1,1}}(\xi) \frac{\xi^{3} \lambda(t)}{2} K_{1}(\xi\lambda(t))\\
 &=-\frac{\lambda(t)}{2}\int_{0}^{\infty} d\xi \sin(t\xi) \widehat{v_{1,1}}(\xi) \xi^{3} \cdot \frac{1}{\xi \lambda(t)}-\frac{\lambda(t)}{2} \int_{0}^{\infty} d\xi \sin(t\xi) \widehat{v_{1,1}}(\xi) \xi^{3} \left(K_{1}(\xi \lambda(t))-\frac{1}{\xi \lambda(t)}\right).\end{split}\end{equation}
Next, recalling the formula for $\widehat{v_{1,1}}$, namely \eqref{v11def}, and using the sin transform inversion formula, we get
\begin{equation}\label{v1firstip}\langle- \frac{6}{r^{2}} v_{1} \left(1-Q_{\frac{1}{\lambda(t)}}^{2}(r)\right)\Bigr|_{r=R\lambda(t)},\phi_{0}(R)\rangle_{L^{2}(R dR)} = -\frac{2 \lambda_{0}''(t)}{3\lambda_{0}(t)} + E_{v_{1},ip}(t,\lambda(t))\end{equation}
where
\begin{equation}\nonumber \begin{split}E_{v_{1},ip}(t,\lambda(t)) &= \frac{-\lambda(t)}{2} \int_{0}^{\infty} d\xi \sin(t\xi) \widehat{v_{1,1}}(\xi) \xi^{3}\left(K_{1}(\xi\lambda(t))-\frac{1}{\xi \lambda(t)}\right)\\
& = \frac{\lambda(t)}{2} \int_{0}^{\infty} d\xi \frac{\cos(t\xi)}{t^{3}} \partial_{\xi}^{3}\left(\widehat{v_{1,1}}(\xi) \xi^{3}\left(K_{1}(\xi \lambda(t))-\frac{1}{\xi\lambda(t)}\right)\right).\end{split}\end{equation}
Using the symbol-type estimates on $\widehat{v_{1,1}}$ (from Lemma \ref{v11lemma}), asymptotics of the modified Bessel function of the second kind (from \eqref{k1notation}, and its analogs for derivatives of $K_{1}$), \eqref{lambdarestr}, and the observation \eqref{lambdagrowth}  we get
$$|E_{v_{1},ip}(t,\lambda(t))| \leq \frac{C \log(t)}{t^{5/2}}$$
(where the power of $t$ in the denominator could be improved, but is sufficient for our purposes). On the other hand, we have
$$\sum_{k=2}^{\infty} |v_{k}(t,r)| \leq \begin{cases} \frac{C_{1}^{2n_{0}} r^{2}}{t^{2} \log^{2b}(t)}, \quad r \leq \frac{t}{2}\\
\frac{C_{1}^{2n_{0}}}{\log^{2b}(t)}, \quad r > \frac{t}{2}\end{cases}$$
and this gives
$$|v_{sip}(t,\lambda(t))| \leq \frac{C}{t^{2} \log^{2b}(t)} $$
where
$$v_{sip}(t,\lambda(t)) = \langle \frac{6}{r^{2}} \sum_{k=2}^{\infty} v_{k} \left(1-Q_{\frac{1}{\lambda(t)}}^{2}(r)\right)\Bigr|_{r=R\lambda(t)}, \phi_{0}(R)\rangle_{L^{2}(R dR)}.$$
Using our estimates from previous sections, we get
$$|lin_{ip}(t,\lambda(t))| \leq \frac{C}{t^{2} \log^{b}(t) \log^{2(b-2\epsilon)}(t)}$$
where
$$lin_{ip}(t,\lambda(t)) := \langle \chi_{\geq 1}(\frac{R \lambda(t)}{g(t)}) \left(\partial_{tt}Q_{\frac{1}{\lambda(t)}} - \frac{6}{r^{2}} v_{c} \left(1-Q_{\frac{1}{\lambda(t)}}^{2}(r)\right)\right)\Bigr|_{r=R\lambda(t)}, \phi_{0}(R)\rangle_{L^{2}(R dR)}$$
and we recall the definition of $g$: $g(t) = \lambda(t) \log^{b-2\epsilon}(t)$. Next, for $j \geq 3$, we use the estimates on $w_{j}$ given in \eqref{wkassump}, to get
$$|w_{c,ip}(t,\lambda(t))| \leq \frac{C}{t^{2}}\left(\frac{1}{\log^{3b-4\epsilon-1}(t)}+\frac{1}{\log^{5b-8\epsilon-2}(t)}\right)$$
where
$$w_{c,ip}(t,\lambda(t)) = \langle \frac{\chi_{\leq 1}(\frac{2R \lambda(t)}{t})}{r^{2}} 6 \left(1-Q_{\frac{1}{\lambda(t)}}^{2}(r)\right)w_{c}(t,r)\Bigr|_{r=R\lambda(t)}, \phi_{0}(R)\rangle_{L^{2}(R dR)}.$$
Substituting $\lambda(t) = \lambda_{0}(t) \left(1+e(t)\right), \quad e \in \overline{B}_{1}(0) \subset X$ (where we recall \eqref{lambdarestr} and \eqref{xnorm}) into \eqref{lambdaeqn}, we get 
\begin{equation} \label{eeqn} e''(t) + \frac{2 \lambda_{0}'(t)}{\lambda_{0}(t)} e'(t) = \frac{3 G(t,\lambda_{0}(t)\left(1+e(t)\right))}{2}\left(1+e(t)\right)\end{equation}
where
\begin{equation} G(t,\lambda(t)) = v_{sip}(t,\lambda(t))+lin_{ip}(t,\lambda(t))+w_{c,ip}(t,\lambda(t))-E_{v_{1},ip}(t,\lambda(t)).\end{equation}
Let $\mathcal{B} :=\overline{B}_{1}(0) \subset X$. Our goal is to solve \eqref{eeqn} for $e \in \mathcal{B}$ using a fixed point argument. So, we define $T$ on  $\mathcal{B}$ by
$$T(e)(t) = \int_{t}^{\infty} \frac{dx}{\lambda_{0}(x)^{2}} \int_{x}^{\infty} \frac{3}{2} \lambda_{0}(s)^{2} G(s,\lambda_{0}(s)\left(1+e(s)\right))\left(1+e(s)\right) ds.$$
Combining our estimates above, we get
$$|G(t,\lambda_{0}(t)\left(1+e(t)\right))| \leq \frac{C}{t^{2} \log^{1+\delta}(t)}$$
where we recall the definition of $\delta$ in \eqref{deltadef}.
 This gives
$$|\int_{x}^{\infty} \frac{3}{2}\lambda_{0}(s)^{2} G(s,\lambda_{0}(s)\left(1+e(s)\right))\left(1+e(s)\right) ds| \leq C \int_{x}^{\infty} \frac{\lambda_{0}(s)^{2} ds}{s^{2} \log^{1+\delta}(s)}.$$
Then, we integrate by parts to get
$$\int_{x}^{\infty} \frac{\lambda_{0}(s)^{2} ds}{s^{2} \log^{1+\delta}(s)} = \frac{\lambda_{0}(x)^{2}}{x \log^{1+\delta}(x)} + \int_{x}^{\infty}\frac{ds}{s}\left(\frac{2 \lambda_{0}(s)\lambda_{0}'(s)}{\log^{1+\delta}(s)}-\frac{(\delta+1) \lambda_{0}(s)^{2}}{s \log^{\delta+2}(s)}\right). $$
Therefore, 
$$|\int_{x}^{\infty} \frac{\lambda_{0}(s)^{2} ds}{s^{2} \log^{1+\delta}(s)}| \leq C \frac{\lambda_{0}(x)^{2}}{x \log^{1+\delta}(x)} + \left(\frac{C}{\log^{b}(x)}+\frac{C}{\log(x)}\right) \int_{x}^{\infty} \frac{\lambda_{0}(s)^{2} ds}{s^{2} \log^{1+\delta}(s)}$$
So, there exists $T_{2}>T_{1}+T_{0,p}$, and $C>0$ such that, for all $x \geq T_{2}$,
\begin{equation}\label{i2b}|\int_{x}^{\infty} \frac{\lambda_{0}(s)^{2} ds}{s^{2} \log^{1+\delta}(s)}| \leq C \frac{\lambda_{0}(x)^{2}}{x \log^{1+\delta}(x)}.\end{equation}
So, for all $T_{0} \geq T_{2}$, we have
$$|T(e)'(t)| \leq \frac{C}{t \log^{\delta+1}(t)}, \quad t \geq T_{0}$$
$$|T(e)(t)| \leq C \int_{t}^{\infty} \frac{dx}{x \log^{\delta+1}(x)} \leq \frac{C}{\log^{\delta}(t)}, \quad t \geq T_{0}$$
and
$$|T(e)''(t)| \leq \frac{C}{t^{2} \log^{1+\delta}(t)}, \quad t \geq T_{0}.$$
In particular, $T: \mathcal{B} \rightarrow \mathcal{B}$. Now, we will study the Lipschitz properties of $T$. We recall that $v_{sip}$ depends on $\lambda$. To emphasize the dependence of $v_{k}$ on $\lambda$, we will write $v_{k}=v_{k}^{\lambda}$. Similarly, we denote the previously defined functions $RHS_{k}$ by $RHS_{k}^{\lambda}$. Our goal is to understand the Lipschitz (in $e$) dependence of $v_{sip}(t,\lambda_{0}(t)\left(1+e(t)\right))$ and $E_{v_{1},ip}(t,\lambda_{0}(t)\left(1+e(t)\right))$, for $e \in \mathcal{B}$. For $i=1,2$, let $e_{i} \in \mathcal{B}$, and let $\lambda_{i}(t) = \lambda_{0}(t)\left(1+e_{i}(t)\right)$. Let 
$$F(r,\lambda(t)) = \frac{\left(1-Q_{1}^{2}(\frac{r}{\lambda(t)})\right) \phi_{0}(\frac{r}{\lambda(t)})}{r^{2}\lambda(t)^{2}} r.$$
Then, 
$$v_{sip}(t,\lambda(t)) = 6 \int_{0}^{\infty} \sum_{k=2}^{\infty} v_{k}^{\lambda}(t,r) F(r,\lambda(t)) dr.$$
We start with 
$$|\partial_{2} F(r,\lambda(t))| \leq \frac{C r^{3}\lambda(t)}{(r^{2}+\lambda(t)^{2})^{4}}, \quad |F(r,\lambda(t))| \leq \frac{C r^{3} \lambda(t)^{2}}{(r^{2}+\lambda(t)^{2})^{4}}.$$
To understand the Lipschitz (in $e$) dependence of $v_{k}^{\lambda_{0}\left(1+e\right)}$, we start by noting that $v_{2}^{\lambda_{1}}-v_{2}^{\lambda_{2}}$ solves the following equation with zero Cauchy data at infinity.
\begin{equation} \left(-\partial_{tt}+\partial_{rr}+\frac{1}{r}\partial_{r}-\frac{4}{r^{2}}\right)\left(v_{2}^{\lambda_{1}}-v_{2}^{\lambda_{2}}\right) = \frac{6 v_{1}(t,r)^{2}}{r^{2}} \left(Q_{\frac{1}{\lambda_{1}}(t)}(r)-Q_{\frac{1}{\lambda_{2}(t)}}(r)\right)\end{equation}
There exists an absolute constant $C$ such that, for all $e \in \mathcal{B}$ we have
$$C^{-1} \lambda_{0}(t) \leq \lambda_{0}(t)|\left(1+e(t)\right)| \leq C \lambda_{0}(t), \quad t \geq T_{0}.$$
Using this, we get $|Q_{\frac{1}{\lambda_{1}(t)}}(r)-Q_{\frac{1}{\lambda_{2}(t)}}(r)| \leq \frac{C|\lambda_{2}(t)-\lambda_{1}(t)| \lambda_{0}(t)}{r^{2}}$ and this gives
$$||RHS_{2}^{\lambda_{1}}(s,r)-RHS_{2}^{\lambda_{2}}(s,r)||_{L^{2}(r dr)} \leq \frac{C |\lambda_{2}(s)-\lambda_{1}(s)| \lambda_{0}(s)}{s^{3} \log^{2b}(s)}.$$
Using the procedure of \eqref{enest}, and the estimate \eqref{i2b}, we get
$$|v_{2}^{\lambda_{1}}-v_{2}^{\lambda_{2}}|(t,r) \leq \frac{C ||e_{1}-e_{2}||_{X}}{t \log^{\delta-\delta_{2}}(t)}\frac{\lambda_{0}(t)^{2}}{t \log^{2b}(t)}, \quad r \geq 0, \quad t \geq T_{0}.$$
Then, a similar induction procedure used to construct $v_{s}$ shows that there exists $C_{2},m_{0},T_{3}>T_{2}$ such that, for $t \geq T_{3}$,
$$|v_{j}^{\lambda_{1}}-v_{j}^{\lambda_{2}}|(t,r) \leq \frac{C_{2}^{m_{0} j} ||e_{1}-e_{2}||_{X} \lambda_{0}(t)^{2}}{t^{2} \log^{bj+\delta-\delta_{2}}(t)}, \quad j \geq 2.$$
(The main difference between the procedure used to establish the above estimates, and that used to construct $v_{s}$ is that here, we need only inductively prove estimates on $RHS_{j}^{\lambda_{1}}-RHS_{j}^{\lambda_{2}}$, and use the procedure of \eqref{enest} to estimate $v_{j}^{\lambda_{1}}-v_{j}^{\lambda_{2}}$). The above estimates imply that, if $T_{0} \geq T_{3}$ (which we will assume from now on) then, we have
$$\sum_{k=2}^{\infty} |v_{k}^{\lambda_{1}}-v_{k}^{\lambda_{2}}|(t,r) \leq \frac{C ||e_{1}-e_{2}||_{X} \lambda_{0}(t)^{2}}{t^{2} \log^{2b}(t) \log^{\delta-\delta_{2}}(t)}, \quad t \geq T_{0}, \quad r \geq 0.$$
This gives
$$|v_{sip}(t,\lambda_{1}(t))-v_{sip}(t,\lambda_{2}(t))| \leq \frac{C ||e_{1}-e_{2}||_{X}}{t^{2} \log^{2b}(t) \log^{\delta-\delta_{2}}(t)}, \quad t \geq T_{0}.$$
Again using properties of the modified Bessel function of the second kind, we get
$$|E_{v_{1},ip}(t,\lambda_{1}(t))-E_{v_{1},ip}(t,\lambda_{2}(t))| \leq C \frac{||e_{1}-e_{2}||_{X} \log(t)}{t^{5/2} \log^{\delta-\delta_{2}}(t)}.$$
Next, we estimate $lin_{ip}(t,\lambda_{1}(t)) - lin_{ip}(t,\lambda_{2}(t))$. We start by noting that
$$|\chi_{\geq 1}(\frac{r}{g_{1}(t)}) - \chi_{\geq 1}(\frac{r}{g_{2}(t)})| \leq \frac{C ||e_{1}-e_{2}||_{X} \mathbbm{1}_{\{r \geq \frac{g_{0}(t)}{4}\}}}{\log^{\delta-\delta_{2}}(t)}.$$ 
Next, we let $F_{s}(r,\lambda(t),\lambda'(t),\lambda''(t)) = \frac{\partial_{tt}Q_{1}(\frac{r}{\lambda(t)}) \phi_{0}(\frac{r}{\lambda(t)})}{\lambda(t)^{2}}.$ Then, we use
\begin{equation}\nonumber\begin{split}&F_{s}(r,\lambda_{1}(t),\lambda_{1}'(t),\lambda_{1}''(t)) - F_{s}(r,\lambda_{2}(t),\lambda_{2}'(t),\lambda_{2}''(t)) \\
&= \int_{0}^{1} DF_{s}(r,\lambda_{\sigma}(t)) \cdot \left(\lambda_{1}(t)-\lambda_{2}(t),\lambda_{1}'(t)-\lambda_{2}'(t),\lambda_{1}''(t)-\lambda_{2}''(t)\right) d\sigma\end{split}\end{equation}
where $DF_{s}$ denotes the gradient in the last three arguments of $F_{s}$, and
$$\lambda_{\sigma}(t) = \left(\sigma \lambda_{1}(t)+(1-\sigma)\lambda_{2}(t),\sigma \lambda_{1}'(t)+(1-\sigma)\lambda_{2}'(t),\sigma \lambda_{1}''(t)+(1-\sigma)\lambda_{2}''(t)\right).$$
This gives
$$|F_{s}(r,\lambda_{1}(t),\lambda_{1}'(t),\lambda_{1}''(t)) - F_{s}(r,\lambda_{2}(t),\lambda_{2}'(t),\lambda_{2}''(t))| \leq \frac{C ||e_{1}-e_{2}||_{X} \lambda_{0}(t)^{2} r^{4}}{t^{2} \log^{\delta-\delta_{2}}(t) (r^{2}+\lambda_{0}(t)^{2})^{4}}\left(\frac{1}{\log^{b}(t)}+\frac{1}{\log(t)}\right).$$
Then, we use our estimates above, and recall that $b > \frac{2}{3}$ to conclude
$$|lin_{ip}(t,\lambda_{1}(t)) - lin_{ip}(t,\lambda_{2}(t))| \leq \frac{C ||e_{1}-e_{2}||_{X}}{t^{2} \log^{\delta-\delta_{2}}(t)} \left(\frac{1}{\log^{b}(t)}+\frac{1}{\log(t)}\right)\frac{1}{\log^{2b-4\epsilon}(t)}.$$
To study $w_{c,ip}(t,\lambda_{1}(t))-w_{c,ip}(t,\lambda_{2}(t))$, we will need to estimate $w_{k}^{\lambda_{1}}-w_{k}^{\lambda_{2}}$, where we use the same notational convention as we used for $v_{k}$. For later use, let $g_{i}(t) = \log^{b-2\epsilon}(t) \lambda_{i}(t), \quad i =0,1,2$. We start with $k=2$. We split $WRHS_{2}^{\lambda_{1}}-WRHS_{2}^{\lambda_{2}}$ as follows. We define
\begin{equation}\begin{split}WRHS_{2,lip,0}(t,r) &:= \chi_{\geq 1}(\frac{r}{g_{2}(t)}) \left(\partial_{t}^{2}Q_{1}(\frac{r}{\lambda_{1}(t)})-\partial_{t}^{2}Q_{1}(\frac{r}{\lambda_{2}(t)})-\frac{6 v_{c}^{\lambda_{2}}(t,r)}{r^{2}}\left(Q_{1}^{2}(\frac{r}{\lambda_{2}(t)})-Q_{1}^{2}(\frac{r}{\lambda_{1}(t)})\right)\right)\\
&+\left(\chi_{\geq 1}(\frac{r}{g_{1}(t)})-\chi_{\geq 1}(\frac{r}{g_{2}(t)})\right)\left(\partial_{t}^{2}Q_{\frac{1}{\lambda_{1}(t)}}(r) - \frac{6 v_{c}^{\lambda_{1}}(t,r)}{r^{2}}\left(1-Q_{1}^{2}(\frac{r}{\lambda_{1}(t)})\right)\right)\end{split}\end{equation}
and $$WRHS_{2,lip,1}(t,r) = WRHS_{2}^{\lambda_{1}}(t,r)-WRHS_{2}^{\lambda_{2}}(t,r)-WRHS_{2,lip,0}(t,r)$$ 
and write $w_{2}^{\lambda_{1}}(t,r)-w_{2}^{\lambda_{2}}(t,r) = w_{2,lip,0}(t,r)+w_{2,lip,1}(t,r)$, where $w_{2,lip,j}$ solves the following equation with zero Cauchy data at infinity.
$$-\partial_{tt}w_{2,lip,j}+\partial_{rr}w_{2,lip,j}+\frac{1}{r}\partial_{r}w_{2,lip,j}-\frac{4}{r^{2}}w_{2,lip,j} = WRHS_{2,lip,j}$$
The point of this splitting is that we will need to use a more complicated procedure to estimate $w_{2,lip,0}$, since too many logarithmic losses in estimates are insufficient for our purposes. We have
$$|WRHS_{2,lip,1}(t,r)| \leq \frac{C ||e_{1}-e_{2}||_{X} \mathbbm{1}_{\{r \geq \frac{g_{0}(t)}{4}\}} \lambda_{0}(t)^{4}}{t^{2} \log^{\delta-\delta_{2}}(t) \log^{2b}(t) (g_{0}(t)^{2}+r^{2})^{2}}.$$
Using the analog of \eqref{w2noderiv}, and a similar procedure used to estimate various integrals arising in the $w_{2}$ estimates above, we get
$$|w_{2,lip,1}(t,r)| \leq \frac{C ||e_{1}-e_{2}||_{X} \lambda_{0}(t)^{4} \log(t)}{t^{2} \log^{\delta-\delta_{2}}(t) \log^{2b}(t) g_{0}(t)^{2}}, \quad r >0.$$
In particular, the procedure used to estimate $w_{2,lip,1}$ does not involve any derivatives of $WRHS_{2,lip,1}$, which is why we did not need to prove any estimates on derivatives of $v_{c}^{\lambda_{1}}-v_{c}^{\lambda_{2}}$. (Note that $v_{c}^{\lambda_{1}}-v_{c}^{\lambda_{2}}$ arises in some terms of $WRHS_{2,lip,1}$). Next, we note that
\begin{equation}\begin{split}&|\partial_{r}WRHS_{2,lip,0}(t,r)| + \frac{|WRHS_{2,lip,0}(t,r)|}{r} \\
&\leq C \mathbbm{1}_{\{r \geq \frac{g_{0}(t)}{4}\}} \left(\frac{r \lambda_{0}(t)^{2} ||e_{1}-e_{2}||_{X}}{t^{2} \log^{\delta-\delta_{2}}(t) (r^{2}+\lambda_{0}(t)^{2})^{2}}\right)\left(\frac{1}{\log^{b}(t)}+\frac{1}{\log(t)}\right) \\
&+ C \mathbbm{1}_{\{r \geq \frac{g_{0}(t)}{4}\}} \frac{\lambda_{0}(t)^{2} ||e_{1}-e_{2}||_{X}}{\log^{\delta-\delta_{2}}(t) (\lambda_{0}(t)^{2}+r^{2})^{2}} \begin{cases} \frac{r}{t^{2} \log^{b}(t)}, \quad r \leq \frac{t}{2}\\
\frac{1}{\sqrt{r} \sqrt{\langle t-r \rangle} \log^{b}(\langle t-r \rangle)}, \quad r > \frac{t}{2}\end{cases}\end{split}\end{equation}
and
\begin{equation}\begin{split} &|\partial_{r}^{2}WRHS_{2,lip,0}(t,r)| \\
&\leq C \mathbbm{1}_{\{r \geq \frac{g_{0}(t)}{4}\}} \frac{\lambda_{0}(t)^{2} ||e_{1}-e_{2}||_{X}}{t^{2} \log^{\delta-\delta_{2}}(t) (r^{2} + \lambda_{0}(t)^{2})^{2}}\left(\frac{1}{\log^{b}(t)}+\frac{1}{\log(t)}\right) \\
&+ \frac{C \mathbbm{1}_{\{r \geq \frac{g_{0}(t)}{4}\}} \lambda_{0}(t)^{2} ||e_{1}-e_{2}||_{X}}{\log^{\delta-\delta_{2}}(t) (\lambda_{0}(t)^{2}+r^{2})^{2}} \begin{cases} \frac{1}{t^{2} \log^{b}(t)}, \quad r \leq \frac{t}{2}\\
\frac{1}{r^{3/2} \sqrt{\langle t-r \rangle} \log^{b}(\langle t-r \rangle)}, \quad r > \frac{t}{2}\end{cases}\\
&+C \frac{\mathbbm{1}_{\{r \geq \frac{g_{0}(t)}{4}\}}\lambda_{0}(t)^{2} ||e_{1}-e_{2}||_{X}}{\log^{\delta-\delta_{2}}(t) (r^{2}+\lambda_{0}(t)^{2})^{2}} \begin{cases} \frac{1}{t^{2} \log^{b}(t)}, \quad r \leq \frac{t}{2}\\
 \begin{cases}\frac{1}{\sqrt{r} \log^{b}(\langle t-r \rangle) \langle t-r \rangle^{3/2}}+\frac{1}{t \langle t-r \rangle \log^{b}(t) \log^{b}(\langle t-r \rangle)}, \quad t > r > \frac{t}{2}\\
\frac{1}{\sqrt{r} \log^{b}(\langle t-r \rangle) \langle t-r \rangle^{3/2}}+\frac{1}{t^{3/2} \log^{b}(t)}, \quad r > \frac{t}{2}\end{cases}\end{cases}.\end{split}\end{equation}
(Note that the estimate \eqref{d2vcest}, on $\partial_{r}^{2}v_{c}$, is what gives rise to the form of the estimates recorded above). Using the same procedure that we used to estimate $\partial_{r}^{k}w_{2}$ for $k=0,1,2$, we get
$$|w_{2,lip,0}(t,r)| \leq\begin{cases} \frac{C r^{2} \lambda_{0}(t)^{2} \log(2+\frac{r}{g_{0}(t)}) \log(t) ||e_{1}-e_{2}||_{X}}{t^{2} (g_{0}(t)^{2}+r^{2}) \log^{\delta-\delta_{2}}(t)}\left(\frac{1}{\log^{b}(t)}+\frac{1}{\log(t)}\right), \quad r \leq \frac{t}{2}\\
\frac{C \lambda_{0}(t)^{2} ||e_{1}-e_{2}||_{X}}{t^{2} \log^{\delta-\delta_{2}}(t)}\left(\frac{1}{\log^{b}(t)}+\frac{1}{\log(t)}\right), \quad r > \frac{t}{2}\end{cases}$$
$$|\partial_{r}w_{2,lip,0}(t,r)| \leq \begin{cases} \frac{C r \lambda_{0}(t)^{2} \log(t) ||e_{1}-e_{2}||_{X}}{t^{2} g_{0}(t)^{2} \log^{\delta-\delta_{2}}(t)}\left(\frac{1}{\log^{b}(t)}+\frac{1}{\log(t)}\right), \quad r \leq g_{0}(t)\\
\frac{C \lambda_{0}(t)^{2} \log(t) ||e_{1}-e_{2}||_{X}}{t^{2} g_{0}(t) \log^{\delta-\delta_{2}}(t)}\left(\frac{1}{\log^{b}(t)}+\frac{1}{\log(t)}\right), \quad r > g_{0}(t)\end{cases}$$
$$|\partial_{r}^{2} w_{2,lip,0}(t,r)| \leq \frac{C \lambda_{0}(t)^{2} \log(t) ||e_{1}-e_{2}||_{X}}{t^{2} g_{0}(t) \log^{\delta-\delta_{2}}(t)}\left(\frac{1}{\log^{b}(t)}+\frac{1}{\log(t)}\right), \quad r > 0$$
Next, we consider $w_{3}^{\lambda_{1}}-w_{3}^{\lambda_{2}}$, and define $WRHS_{3,lip,0}$ by
\begin{equation}\begin{split}WRHS_{3,lip,0}(t,r) &:= \frac{6}{r^{2}}\left(Q_{\frac{1}{\lambda_{1}}}+v_{c}^{\lambda_{1}}\right) w_{2,lip,0}\left(w_{2}^{\lambda_{1}}+w_{2}^{\lambda_{2}}\right) + \frac{2}{r^{2}} w_{2,lip,0}\left((w_{2}^{\lambda_{1}})^{2}+w_{2}^{\lambda_{1}}w_{2}^{\lambda_{2}} + (w_{2}^{\lambda_{2}})^{2}\right) \\
&+ \frac{6}{r^{2}} w_{2,lip,0} \left((v_{c}^{\lambda_{1}})^{2}+2 v_{c}^{\lambda_{1}}Q_{\frac{1}{\lambda_{1}}}\right).\end{split}\end{equation}
As before, we also define
$$WRHS_{3,lip,1}(t,r):= WRHS_{3}^{\lambda_{1}}(t,r)-WRHS_{3}^{\lambda_{2}}(t,r) - WRHS_{3,lip,0}(t,r)$$
and, for $j=0,1$, we let $w_{3,lip,j}$ solve the following equation with zero Cauchy data at infinity.
$$-\partial_{tt}w_{3,lip,j}+\partial_{rr}w_{3,lip,j}+\frac{1}{r}\partial_{r}w_{3,lip,j}-\frac{4}{r^{2}}w_{3,lip,j} = WRHS_{3,lip,j}$$
Noting the similarities between $WRHS_{3,lip,0}$ and $WRHS_{3}$, we get
$$|WRHS_{3,lip,0}(t,r)| \leq  \begin{cases} \frac{C r^{2} \lambda_{0}(t)^{2} \log(2+\frac{r}{g_{0}(t)}) \log(t) ||e_{1}-e_{2}||_{X}}{(g_{0}(t)^{2}+r^{2}) t^{4} \log^{b}(t) \log^{\delta-\delta_{2}}(t)}\left(\frac{1}{\log^{b}(t)}+\frac{1}{\log(t)}\right), \quad r \leq \frac{t}{2}\\
\frac{C \lambda_{0}(t)^{2} ||e_{1}-e_{2}||_{X}}{t^{5/2} r^{3/2} \log^{b}(t) \log^{\delta-\delta_{2}}(t)}\left(\frac{1}{\log^{b}(t)}+\frac{1}{\log(t)}\right), \quad r > \frac{t}{2}\end{cases}$$

$$|\partial_{r}WHRS_{3,lip,0}(t,r)| \leq \frac{||e_{1}-e_{2}||_{X}}{\log^{\delta-\delta_{2}}(t)}\left(\frac{1}{\log^{b}(t)}+\frac{1}{\log(t)}\right) \cdot \begin{cases} \frac{C r \lambda_{0}(t)^{2} \log(t)}{g_{0}(t)^{2} t^{4} \log^{b}(t)}, \quad r \leq g_{0}(t)\\
\frac{C \lambda_{0}(t)^{2} \log(t)}{t^{4} \log^{b}(t) g_{0}(t)}, \quad g_{0}(t) \leq r \leq \frac{t}{2}\\
\frac{C \lambda_{0}(t)^{2} \log(t)}{t^{5/2} \log^{b}(t) r^{3/2} g_{0}(t)}, \quad r > \frac{t}{2}\end{cases}$$

$$|\partial_{r}^{2}WRHS_{3,lip,0}(t,r)| \leq \frac{||e_{1}-e_{2}||_{X}}{\log^{\delta-\delta_{2}}(t)}\left(\frac{1}{\log^{b}(t)}+\frac{1}{\log(t)}\right)\cdot \begin{cases} \frac{C \lambda_{0}(t)^{2} \log(t)}{t^{4} \log^{b}(t) g_{0}(t)^{2}}, \quad r \leq \frac{t}{2}\\
\frac{C \lambda_{0}(t)^{2} \log(t)}{t^{5/2}r^{3/2} \log^{b}(t) g_{0}(t)^{2}}, \quad r > \frac{t}{2}\end{cases}$$
The same procedure used to estimate $w_{3}$ then gives
$$|w_{3,lip,0}(t,r)| \leq \begin{cases} \frac{C r^{2} \lambda_{0}(t)^{2} \log(t) ||e_{1}-e_{2}||_{X}}{t^{2} g_{0}(t)^{2} \log^{b}(t) \log^{\delta-\delta_{2}}(t)}\left(\frac{1}{\log^{b}(t)}+\frac{1}{\log(t)}\right), \quad r \leq g_{0}(t)\\
\frac{C \lambda_{0}(t)^{2} \log^{2}(t) ||e_{1}-e_{2}||_{X}}{t^{2} \log^{b}(t) \log^{\delta-\delta_{2}}(t)}\left(\frac{1}{\log^{b}(t)}+\frac{1}{\log(t)}\right), \quad r > g_{0}(t)\end{cases}$$

$$|\partial_{r}w_{3,lip,0}(t,r)| \leq \begin{cases} \frac{C r \lambda_{0}(t)^{2} \log(t) ||e_{1}-e_{2}||_{X}}{t^{2} g_{0}(t)^{2} \log^{b}(t) \log^{\delta-\delta_{2}}(t)}\left(\frac{1}{\log^{b}(t)}+\frac{1}{\log(t)}\right), \quad r \leq g_{0}(t)\\
\frac{C \lambda_{0}(t)^{2} \log(t) ||e_{1}-e_{2}||_{X}}{t^{2} \log^{b}(t) g_{0}(t) \log^{\delta-\delta_{2}}(t)}\left(\frac{1}{\log^{b}(t)}+\frac{1}{\log(t)}\right), \quad r > g_{0}(t)\end{cases}$$

$$|\partial_{rr}w_{3,lip,0}(t,r)| \leq \frac{C \lambda_{0}(t)^{2} \log(t) ||e_{1}-e_{2}||_{X}}{t^{2} g_{0}(t)^{2} \log^{b}(t) \log^{\delta-\delta_{2}}(t)}\left(\frac{1}{\log^{b}(t)}+\frac{1}{\log(t)}\right), \quad r >0$$

Next, we note that
$$|WRHS_{3,lip,1}(t,r)| \leq \frac{C ||e_{1}-e_{2}||_{X} \lambda_{0}(t)^{4}}{\log^{\delta-\delta_{2}}(t)} \begin{cases} \frac{\log(t)}{t^{4} \log^{3b}(t) g_{0}(t)^{2}} + \frac{\log(2+\frac{r}{g_{0}(t)}) \log(t)}{t^{4} \log^{2b}(t)(g_{0}(t)^{2}+r^{2})}, \quad r \leq \frac{t}{2}\\
\frac{\log(t)}{t^{5/2} \log^{3b}(t) g_{0}(t)^{2} r^{3/2}} + \frac{\log^{2}(t)}{r^{3/2} t^{5/2} \log^{2b}(t) t^{2}}, \quad r > \frac{t}{2}\end{cases}.$$
After applying the procedure of \eqref{enest}, we get
$$|w_{3,lip,1}(t,r)| \leq \frac{C ||e_{1}-e_{2}||_{X} \lambda_{0}(t)^{4} \log(t)}{t^{2} \log^{3b}(t) \log^{\delta-\delta_{2}}(t) g_{0}(t)^{2}}.$$
Then, we define, for $j \geq 4$,
\begin{equation}\begin{split}WRHS_{j,lip,0}(t,r):&=\frac{6}{r^{2}}\left(Q_{\frac{1}{\lambda_{2}(t)}}+v_{c}^{\lambda_{2}}\right)\left(w_{j-1,lip,0}(w_{j-1}^{\lambda_{1}}+w_{j-2}^{\lambda_{2}})+2 \sum_{k=2}^{j-2} w_{k,lip,0}w_{j-1}^{\lambda_{1}} + 2 \sum_{k=2}^{j-2} w_{k}^{\lambda_{2}} w_{j-1,lip,0}\right)\\
&+\frac{2}{r^{2}}\left(w_{j-1,lip,0}\left((w_{j-1}^{\lambda_{1}})^{2}+w_{j-1}^{\lambda_{1}}w_{j-1}^{\lambda_{2}}+(w_{j-1}^{\lambda_{2}})^{2}\right) + 3 w_{j-1,lip,0}(w_{j-1}^{\lambda_{1}}+w_{j-1}^{\lambda_{2}}) \sum_{k=2}^{j-2} w_{k}^{\lambda_{1}}\right. \\
&+ \left.3 (w_{j-1}^{\lambda_{2}})^{2} \sum_{k=2}^{j-2} w_{k,lip,0} + 3 w_{j-1,lip,0}\left(\sum_{k=2}^{j-2} w_{k}^{\lambda_{1}}\right)^{2}  \right.\\
&\left.+3 w_{j-1}^{\lambda_{2}} \left(\sum_{k=2}^{j-2} w_{k,lip,0}\right)\left(\sum_{q=2}^{j-2} \left(w_{q}^{\lambda_{1}}+w_{q}^{\lambda_{2}}\right)\right)\right)+\frac{6 w_{j-1,lip,0}}{r^{2}} \left((v_{c}^{\lambda_{1}})^{2}+2v_{c}^{\lambda_{1}}Q_{\frac{1}{\lambda_{1}}}\right)\end{split}\end{equation}
and
$$WRHS_{j,lip,1}(t,r):= WRHS_{j}^{\lambda_{1}}(t,r)-WRHS_{j}^{\lambda_{2}}(t,r) - WRHS_{j,lip,0}(t,r).$$
As with previous estimates, all estimates which we will prove by induction are valid for all $t \geq T_{0}$, provided that $T_{0}$ is sufficiently large. We will no longer explicitly write this after each such estimate. By using a similar procedure used to estimate $w_{j}$ by induction, we get, for some constant $C_{3} \geq C_{2}$, and all $j \geq 4$,
$$|w_{j,lip,0}(t,r)| \leq \frac{||e_{1}-e_{2}||_{X}}{\log^{\delta-\delta_{2}}(t)}\left(\frac{1}{\log^{b}(t)}+\frac{1}{\log(t)}\right) \begin{cases} \frac{C_{3}^{j} r^{2} \lambda_{0}(t)^{2} \log^{2}(t)}{t^{2} g_{0}(t)^{2} \log^{b(j-2)}(t)}, \quad r \leq g_{0}(t)\\
\frac{C_{3}^{j} \lambda_{0}(t)^{2} \log^{2}(t)}{t^{2} \log^{b(j-2)}(t)}, \quad r > g_{0}(t)\end{cases}$$

$$|\partial_{r}w_{j,lip,0}(t,r)| \leq \begin{cases} \frac{C_{3}^{j} r \lambda_{0}(t)^{2} \log^{2}(t) ||e_{1}-e_{2}||_{X}}{t^{2} g_{0}(t)^{2} \log^{b(j-2)}(t) \log^{\delta-\delta_{2}}(t)}\left(\frac{1}{\log^{b}(t)}+\frac{1}{\log(t)}\right), \quad r \leq g_{0}(t)\\
\frac{C_{3}^{j} \lambda_{0}(t)^{2} \log^{2}(t) ||e_{1}-e_{2}||_{X}}{t^{2} \log^{b(j-2)}(t) g_{0}(t) \log^{\delta-\delta_{2}}(t)}\left(\frac{1}{\log^{b}(t)}+\frac{1}{\log(t)}\right), \quad r > g_{0}(t)\end{cases}$$

$$|\partial_{r}^{2} w_{j,lip,0}(t,r)| \leq \frac{C_{3}^{j} \lambda_{0}(t)^{2} \log^{2}(t) ||e_{1}-e_{2}||_{X}}{t^{2} g_{0}(t)^{2} \log^{b(j-2)}(t) \log^{\delta-\delta_{2}}(t)}\left(\frac{1}{\log^{b}(t)}+\frac{1}{\log(t)}\right), \quad r>0.$$
Using a procedure similar to that used to estimate $v_{j}^{\lambda_{1}}-v_{j}^{\lambda_{2}}$, we get
$$|w_{j,lip,1}(t,r)| \leq \frac{C_{3}^{j} ||e_{1}-e_{2}||_{X} \lambda_{0}(t)^{4} \log(t)}{t^{2} \log^{bj}(t) \log^{\delta-\delta_{2}}(t) g_{0}(t)^{2}}.$$
Finally, this gives
$$|w_{c,ip}(t,\lambda_{1}(t))-w_{c,ip}(t,\lambda_{2}(t))| \leq \frac{C ||e_{1}-e_{2}||_{X} \lambda_{0}(t)^{2} \log(t)}{t^{2} g_{0}(t)^{2} \log^{\delta-\delta_{2}}(t)}\left(\frac{1}{\log^{b}(t)}+\frac{1}{\log(t)}\right).$$
When combined with our previous estimates, we get
$$|G(t,\lambda_{1}(t))-G(t,\lambda_{2}(t))| \leq \frac{C ||e_{1}-e_{2}||_{X}}{\log^{\delta-\delta_{2}}(t) t^{2} \log^{1+\delta_{0}}(t)}$$
where $\delta_{0}=\min\{2b-4\epsilon-1,3b-4\epsilon-2\}.$ Note that $\delta_{0}>0$, by the constraints on $\epsilon$. This implies that there exists a constant $C$ \emph{independent} of $T_{0}$ such that, for all $e_{1},e_{2} \in \mathcal{B}$, and all $t \geq T_{0}$,
$$||T(e_{1})-T(e_{2})||_{X} \leq \frac{C ||e_{1}-e_{2}||_{X}}{\log^{\delta_{0}}(T_{0})}.$$
Combined with our previous estimates of $T(e)$ (and its derivatives) for $e \in \mathcal{B}$, we see that there exists $T_{4}>T_{3}$ such that, for all $T_{0} \geq T_{4}$, $T$ has a fixed point, say $e_{0} \in \mathcal{B}$. By inspection of the definition of $T$, this means that $\lambda(t) = \lambda_{0}(t)\left(1+e_{0}(t)\right)$ solves \eqref{lambdaeqn}. From now on, we fix $\lambda(t) = \lambda_{0}(t)\left(1+e_{0}(t)\right)$.
\subsubsection{Estimating $\lambda'''$}
In this section, we will show that $e_{0} \in C^{3}([T_{0},\infty))$, and estimate $e_{0}'''(t)$.  Estimating $e_{0}'''(t)$ will be done in two steps, exactly as in \cite{wm}. First, we obtain a preliminary estimate on $e_{0}'''(t)$ by differentiating an appropriate expression for $e_{0}''(t)$(see \eqref{e0eqndiff} below). Once we establish this preliminary estimate, we can differentiate $WRHS_{j}(t,r)$ in the $t$ variable, and this allows us to justify a different representation formula for $\partial_{t}w_{j}$ than what was used to establish \eqref{dtwkassump}. With this different representation formula for $\partial_{t}w_{j}$, we then proceed to prove an estimate on $e_{0}'''(t)$ which is stronger than our preliminary estimate. As a by-product of this procedure, we obtain an estimate on $\partial_{t}w_{j}$ which is much better than \eqref{dtwkassump}, in the region $r \leq t$. 
\begin{lemma}\label{e0ppplemma} We have the following regularity of $e_{0}$, and estimate. $e_{0} \in C^{3}([T_{0},\infty))$, with 
$$|e_{0}'''(t)| \leq \frac{C}{t^{3} \log^{b+\delta_{4}}(t)}, \quad t \geq T_{0}$$
\end{lemma}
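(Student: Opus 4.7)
The plan is to differentiate the second-order ODE satisfied by the fixed point $e_0$ and solve for $e_0'''$. From $e_0 = T(e_0)$ we obtain
\begin{equation}\label{e0eqndiff}
e_0''(t) + \tfrac{2\lambda_0'(t)}{\lambda_0(t)}\, e_0'(t) = \tfrac{3}{2}\bigl(1+e_0(t)\bigr) G\bigl(t,\lambda(t)\bigr),
\end{equation}
where $G = v_{sip}+lin_{ip}+w_{c,ip}-E_{v_1,ip}$ and $\lambda(t)=\lambda_0(t)(1+e_0(t))$. Formally differentiating in $t$ produces $e_0'''$ together with the term $\tfrac{3}{2}(1+e_0)\,\partial_t[G(t,\lambda(t))]$. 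The difficulty is that $lin_{ip}$ depends on $\lambda''$ through $\partial_t^2 Q_{1/\lambda(t)}$ and $w_{c,ip}$ depends on $\lambda''$ through $WRHS_2$, so naively $\partial_tG$ requires $\lambda'''$, which is precisely what we are trying to estimate. We break this circularity with a two-step bootstrap, exactly as in \cite{wm}.

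For the preliminary step, I would estimate $\partial_t w_{c,ip}$ without ever differentiating $WRHS_j$ in time, by exploiting the $(s-t)$-rescaled Duhamel representation of $w_j$ analogous to \eqref{w2fordt}. After substituting $\rho = q(s-t)$, the spatial argument of $WRHS_j$ becomes $\sqrt{q^2(s-t)^2+r^2+2rq(s-t)\cos\theta}$, so its $t$-dependence is confined to $(s-t)$-factors; differentiating under the integral sign therefore produces only \emph{spatial} derivatives of $WRHS_j$, which are already controlled by the estimates of Sections \ref{w2sec}--\ref{wjsection}. The remaining pieces $v_{sip}$ and $E_{v_1,ip}$ depend on $t$ only through $\lambda(t)$ and through the oscillatory factor $\sin(t\xi)$, and are differentiated directly using Lemma \ref{v11lemma}, the asymptotics of $K_1$ from \eqref{k1notation}, and the $X$-bound on $e_0'$; $lin_{ip}$ is treated by the same rescaling trick combined with the chain rule applied to $(\lambda,\lambda',\lambda'')$, in which $\lambda''$ appears but $\lambda'''$ does not. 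Substituting back into the differentiated form of \eqref{e0eqndiff} and solving algebraically gives a preliminary bound $|e_0'''(t)| \leq C\, t^{-3}\log^{-\eta}(t)$ for some $\eta>0$, which in particular shows that $e_0\in C^3([T_0,\infty))$.

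Having established this regularity, the second step uses it to upgrade the representation of $\partial_t w_j$: now that $\lambda\in C^3$, the function $WRHS_j$ is genuinely differentiable in $t$, and we may use the standard Duhamel formula with $\partial_t WRHS_j$ as forcing. The gain is that in the region $r\le t/2$, where the $(s-t)$-shifted formula loses a factor of $g(t)^{-1}=\lambda(t)^{-1}\log^{-(b-2\epsilon)}(t)$ per time derivative relative to the natural scale, the new representation recovers this factor. This yields $|\partial_t G(t,\lambda(t))| \leq C\, t^{-3}\log^{-(1+\delta_4)}(t)$ for a suitable $\delta_4>0$ determined by the constraints on $\epsilon$ in the Preliminary Definition of $\delta$, after which \eqref{e0eqndiff} differentiated once gives the stated estimate on $e_0'''$.

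The main obstacle is the bookkeeping of logarithms in the preliminary step: the rescaling trick introduces spatial derivatives of $WRHS_j$ which scale like $g(t)^{-1}$ rather than like a true $t$-derivative, so one picks up an $\epsilon$-dependent logarithmic loss that must be compensated by the slack built into the definition of $\delta$ in \eqref{deltadef}. The second step then requires showing that the genuine $\partial_t WRHS_j$ contains no new scale worse than $t^{-1}$ times what already appeared in $WRHS_j$, which follows from the just-proven $C^3$ regularity of $\lambda$ together with the symbol-type estimates of Proposition \ref{choosinglambdaprop}.
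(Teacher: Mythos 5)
Your high‑level plan — a two–step bootstrap, obtaining a preliminary bound on $e_0'''$ from estimates on $\partial_t w_j$ that avoid $t$‑differentiating $WRHS_j$, then upgrading once $\lambda\in C^3$ is known — is the right one and does match the paper's proof. However, there are genuine gaps. First, your claim that ``$lin_{ip}$ is treated by the same rescaling trick combined with the chain rule applied to $(\lambda,\lambda',\lambda'')$, in which $\lambda''$ appears but $\lambda'''$ does not'' is incorrect: $lin_{ip}(t,\lambda(t))$ contains $\partial_t^2 Q_{1/\lambda(t)}$, which depends on $\lambda''(t)$, so $\partial_t[lin_{ip}(t,\lambda(t))]$ necessarily produces $\lambda'''(t)\sim e_0'''(t)$ by the chain rule. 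Differentiating \eqref{e0eqn} therefore gives an equation in which $e_0'''$ appears on both sides, and this implicitness must be resolved before anything can be estimated. The paper does so by isolating the $e_0''$‑linear part of $lin_{ip}$ into the operator $i_0(t)\,e_0''(t)$, moving it to the left and dividing by $1-i_0(t)$ (equation \eqref{e0eqndiff}); this requires the smallness $|i_0(t)|\leq C\log^{-2(b-2\epsilon)}(t)\ll 1$. Without identifying and inverting $(1-i_0)$, ``solving algebraically for $e_0'''$'' is not well defined, and (separately) the $C^3$ regularity claim needs the right‑hand side of \eqref{e0eqndiff} to be $C^1$ in $t$, which is where the $C^2$ regularity of $v_s,w_s$ enters.

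Second, your announced preliminary bound $|e_0'''(t)|\le Ct^{-3}\log^{-\eta}(t)$ is too strong: using the raw estimate $|\partial_t w_{c,ip}|\lesssim \log(t)/(t^2\log^b(t)g(t))$ (which is essentially \eqref{dtwkassump}), and recalling that $\lambda(t)$ is bounded so $g(t)\sim\log^{b-2\epsilon}(t)$ rather than a power of $t$, one only obtains $|e_0'''(t)|\lesssim t^{-2}\log^{-\eta}(t)$ — one full power of $t$ short. That deficit is the whole reason a second step is needed; if the preliminary bound were already $t^{-3}$, the bootstrap would be redundant. Finally, the upgrade step does not directly yield the stated bound either: after differentiating $WRHS_j$ in $t$, the $\lambda'''$‑dependence is smeared over future times in the Duhamel integral, so the improved estimate on $\partial_t w_{c,ip}$ comes out in terms of $\sup_{x\ge t}(x^{3/2}|e_0'''(x)|)$, and one must close a Gronwall‑type inequality of the form $|e_0'''(t)|\le Ct^{-3}\log^{-(b+\delta_4)}(t)+C\log^{-\delta_3}(t)\,t^{-3/2}\sup_{x\ge t}(x^{3/2}|e_0'''(x)|)$ via a continuity argument (the paper's $y(t)$ maneuver). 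Your proposal omits this closure entirely and asserts the final bound as if it followed directly.
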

\begin{proof}
From \eqref{lambdaeqn}, $e_{0}$ solves
\begin{equation}\label{e0eqn}e_{0}''(t) + \frac{2 \lambda_{0}'(t)}{\lambda_{0}(t)} e_{0}'(t) = \frac{3}{2} G(t,\lambda(t)) \left(1+e_{0}(t)\right).\end{equation}
We start with \eqref{e0eqn}, written in the following form.
$$e_{0}''(t) + \frac{2 \lambda_{0}'(t)}{\lambda_{0}(t)} e_{0}'(t) = i_{0}(t) e_{0}''(t) + \frac{3}{2} G_{rest}(t,\lambda(t)) (1+e_{0}(t))$$
where
$$i_{0}(t) = \frac{3}{2} \lambda_{0}(t) \int_{0}^{\infty} \chi_{\geq 1}(\frac{r}{g(t)}) \frac{4 r^{2} \lambda(t)}{(\lambda(t)^{2}+r^{2})^{2}} \frac{\phi_{0}(\frac{r}{\lambda(t)}) r dr}{\lambda(t)^{2}} \left(1+e_{0}(t)\right)$$
$$G_{rest}(t,\lambda(t)) = G(t,\lambda(t)) - \frac{2 i_{0}(t) e_{0}''(t)}{3(1+e_{0}(t))}.$$
There exists a constant $C$, independent of $t, T_{0}$, such that $|i_{0}(t)| \leq \frac{C}{\log^{2(b-2\epsilon)}(t)}.$ So, there exists an absolute constant $T_{5}>T_{4}$ such that, for all $T_{0}> T_{5}$, we have (for instance) $|i_{0}(t)| \leq \frac{1}{900000}$. Then,
\begin{equation}\label{e0eqndiff} e_{0}''(t) = \frac{\frac{3}{2} G_{rest}(t,\lambda(t))\left(1+e_{0}(t)\right) - \frac{2 \lambda_{0}'(t)}{\lambda_{0}(t)} e_{0}'(t)}{1-i_{0}(t)}.\end{equation}
Because $v_{s},w_{s} \in C^{2}([T_{0},\infty) \times [0,\infty))$, the right-hand side of \eqref{e0eqndiff} is in $C^{1}([T_{0},\infty))$. In particular, $e_{0}\in C^{3}([T_{0},\infty))$. We will now estimate the $t$-derivative of the right-hand side of \eqref{e0eqndiff}. We have
$$v_{s,ip}(t,\lambda(t)) = \int_{0}^{\infty} \frac{6}{r^{2}} \sum_{k=2}^{\infty} v_{k}(t,r) \left(1-Q_{1}^{2}(\frac{r}{\lambda(t)})\right) \phi_{0}(\frac{r}{\lambda(t)}) \frac{r dr}{\lambda(t)^{2}}$$
which gives
$$|\partial_{t}v_{s,ip}(t,\lambda(t))| \leq \frac{C}{t^{3} \log^{2b}(t)}.$$
Next, we have
$$|\partial_{t}\left(\int_{0}^{\infty} \chi_{\geq 1}(\frac{r}{g(t)}) \left(\frac{4r^{2} (\lambda'(t))^{2}(r^{2}-3 \lambda(t)^{2})}{(r^{2}+\lambda(t)^{2})^{3}} - \frac{6 v_{c}(t,r)}{r^{2}} \left(1-Q_{\frac{1}{\lambda(t)}}^{2}(r)\right)\right)\phi_{0}(\frac{r}{\lambda(t)}) \frac{r dr}{\lambda(t)^{2}}\right)| \leq \frac{C \lambda(t)^{2}}{t^{3} \log^{b}(t) g(t)^{2}}$$
$$|i_{0}'(t)| \leq \frac{C \lambda_{0}(t)^{2}}{t g(t)^{2}}\left(\frac{1}{\log^{b}(t)} + \frac{1}{\log(t)}\right)$$
$$|\partial_{t}\left(\frac{i_{0}(t)\left(\lambda_{0}''(t)\left(1+e_{0}(t)\right)+2 \lambda_{0}'(t) e_{0}'(t)\right)}{(1+e_{0}(t))\lambda_{0}(t)}\right)| \leq \frac{C \lambda_{0}(t)^{2}}{t^{3}\log^{b}(t) g(t)^{2}}.$$
Using the same procedure used to estimate $E_{v_{1},ip}$, we get
$$|\partial_{t}E_{v_{1},ip}(t,\lambda(t))| \leq \frac{C \log(t)}{t^{7/2}}.$$
Using \eqref{dtwkassump}, we get
$$|\partial_{t}w_{c,ip}(t,\lambda(t))| \leq \frac{C \log(t)}{t^{2} \log^{b}(t) g(t)}.$$
As mentioned before, once we obtain a preliminary estimate on $e_{0}'''$, we will be able to prove a much stronger estimate on $e_{0}'''$, via improving \eqref{dtwkassump}.
This gives
$$|\partial_{t}G_{rest}(t,\lambda(t))| \leq \frac{C \log(t)}{t^{2} \log^{b}(t) g(t)}$$
which leads to the following \emph{preliminary} estimate on $e_{0}''''$.
\begin{equation}\label{e0pppprelim}|e_{0}'''(t)| \leq \frac{C \log(t)}{t^{2} \log^{b}(t) g(t)}\end{equation}
Now that we have this preliminary estimate on $e_{0}'''$, we can prove a better estimate on $\partial_{t}w_{j}$. The details for this argument involve some slightly long estimates, and are therefore presented in Appendix \ref{dtwjfinalestappendix}. The improved estimates on $\partial_{t}w_{j}$, proven in Appendix \ref{dtwjfinalestappendix} then give the following.
\begin{equation}\nonumber\begin{split}|\partial_{t}w_{c}(t,r)| &\leq \sum_{k=2}^{\infty} |\partial_{t}w_{k}(t,r)| \\
&\leq \begin{cases} \frac{C r^{2} \lambda(t)^{2} \log(t)}{g(t)^{2}} \left(\frac{1}{t^{3} \log^{b}(t)} + \frac{\sup_{x \geq t}(x^{3/2}|e_{0}'''(x)|)}{t^{3/2}}\right), \quad r \leq g(t)\\
C \lambda(t)^{2} \left(\log(2+\frac{r}{g(t)})+\frac{\log(t)}{\log^{b}(t)}\right) \log(t) \left(\frac{1}{t^{3} \log^{b}(t)} + \frac{\sup_{x \geq t}(x^{3/2} |e_{0}'''(x)|)}{t^{3/2}}\right), \quad g(t) < r < \frac{t}{2}\\
\frac{C \lambda(t)^{2} \log^{2}(t)}{\log^{b}(t) t^{5/2} \sqrt{\langle t-r \rangle} \log^{b}(\langle t-r \rangle)} + C \lambda(t)^{2} \log^{2}(t) \left(\frac{1}{t^{3} \log^{b}(t)} + \frac{\sup_{x \geq t}(x^{3/2}|e_{0}'''(x)|)}{t^{3/2}}\right), \quad \frac{t}{2} < r < t\end{cases}\end{split}\end{equation}
Using this estimate, we then get
$$|\partial_{t}w_{c,ip}(t,\lambda(t))| \leq C \left(\frac{1}{t^{3} \log^{b}(t)} + \frac{\sup_{x \geq t}(x^{3/2}|e_{0}'''(x)|)}{t^{3/2}}\right)\cdot\left(\frac{1}{\log^{2b-4\epsilon-1}(t)}+\frac{1}{\log^{4b-8\epsilon-1}(t)}+\frac{1}{\log^{5b-8\epsilon-2}(t)}\right).$$
Let $\delta_{3} = \min\{2b-4\epsilon-1,4b-8\epsilon-1,5b-8\epsilon-2,b\}.$ Combining our previous estimates on the other terms of $\partial_{t}G_{rest}$ gives
$$|\partial_{t}G_{rest}(t,\lambda(t))| \leq \frac{C}{t^{3} \log^{b+\delta_{3}}(t)} + \frac{C \sup_{x \geq t}(x^{3/2}|e_{0}'''(x)|)}{t^{3/2} \log^{\delta_{3}}(t)}.$$
If 
\begin{equation} \label{delta4def}\delta_{4} = \min\{\delta_{3},1+\delta-\delta_{2}\}\end{equation} then (for $C$ \emph{independent} of $t, T_{0}$),
\begin{equation}\label{e0pppbootstrap}|e_{0}'''(t)| \leq \frac{C}{t^{3} \log^{b+\delta_{4}}(t)} + \frac{C \sup_{x \geq t}(x^{3/2} |e_{0}'''(x)|)}{t^{3/2} \log^{\delta_{3}}(t)}, \quad t \geq T_{0}.\end{equation}
Recalling the preliminary estimate, \eqref{e0pppprelim}, we see that $x \mapsto x^{3/2} |e_{0}'''(x)|$ is a continuous function on $[T_{0},\infty)$, which decays at infinity. Therefore, for each $t \geq T_{0}$, there exists $y(t) \geq t$ such that
$$\sup_{x \geq t}(x^{3/2}|e_{0}'''(x)|) = y(t)^{3/2} |e_{0}'''(y(t))|.$$
But then, for any $t_{1}>T_{0}$, we evaluate \eqref{e0pppbootstrap} at the point $t = y(t_{1})$, and get
\begin{equation}\begin{split}y(t_{1})^{3/2} |e_{0}'''(y(t_{1}))| = \sup_{x \geq t_{1}}(x^{3/2}|e_{0}'''(x)|) &\leq \frac{C}{y(t_{1})^{3/2} \log^{b+\delta_{4}}(y(t_{1}))} + \frac{C \sup_{x \geq y(t_{1})}(x^{3/2} |e_{0}'''(x)|)}{\log^{\delta_{3}}(y(t_{1}))}\\
&\leq \frac{C}{t_{1}^{3/2} \log^{b+\delta_{4}}(t_{1})} + \frac{C \sup_{x \geq t_{1}}(x^{3/2} |e_{0}'''(x)|)}{\log^{\delta_{3}}(t_{1})}\end{split}\end{equation}
where we used $t \mapsto \sup_{x \geq t}(x^{3/2} |e_{0}'''(x)|) \text{ is decreasing, and } y(t_{1}) \geq t_{1}.$ Therefore, there exists $M_{2}>0$, $C>0$, \emph{independent} of $T_{0}$ such that for all $t_{1} > M_{2}$, we have
$$|e_{0}'''(t_{1})| \leq \frac{C}{t_{1}^{3} \log^{b+\delta_{4}}(t_{1})}.$$
Since $e_{0} \in C^{3}([T_{0},\infty))$, we conclude that there exists $C>0$ \emph{independent} of $T_{0}$ such that
$$|e_{0}'''(t)| \leq \frac{C}{t^{3} \log^{b+\delta_{4}}(t)}, \quad t \geq T_{0}$$
which completes the proof of the final estimate on $e_{0}'''$.\end{proof}
\subsubsection{Estimating $\lambda''''$}
\begin{lemma}\label{e0pppplemma}We have the following regularity of $e_{0}$, and estimate. $e_{0} \in C^{4}([T_{0},\infty))$
$$|e_{0}''''(t)| \leq \frac{C}{t^{4} \log^{b+\delta_{5}}(t)}, \quad t \geq T_{0}.$$\end{lemma}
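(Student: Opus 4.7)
The plan is to follow the two-step bootstrap strategy used in Lemma \ref{e0ppplemma}, applied one derivative higher. The starting point is the equation \eqref{e0eqndiff}: since $v_s, w_s \in C^2([T_0,\infty)\times[0,\infty))$ and $e_0 \in C^3([T_0,\infty))$ by Lemma \ref{e0ppplemma}, the right-hand side of \eqref{e0eqndiff} in fact lies in $C^2([T_0,\infty))$. Differentiating it once in $t$ produces an explicit formula for $e_0''''(t)$ that expresses it in terms of $\partial_t^2 G_{rest}$, $\partial_t i_0(t) \cdot e_0'''(t)$, $i_0''(t) \cdot e_0''(t)$, and lower-order pieces already bounded by Proposition \ref{choosinglambdaprop} and Lemma \ref{e0ppplemma}. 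This simultaneously establishes $e_0 \in C^4([T_0,\infty))$ and reduces the estimate to controlling each of the four pieces $\partial_t^2 v_{s,ip}$, $\partial_t^2 lin_{ip}$, $\partial_t^2 w_{c,ip}$, and $\partial_t^2 E_{v_1,ip}$ of $\partial_t^2 G(t,\lambda(t))$, modulo the usual absorption of the $\lambda''''$ contribution arising from $\partial_t^2 \partial_{tt}Q_{1/\lambda(t)}$ into the left-hand side via the smallness of $i_0(t)$ (exactly as was done in \eqref{e0eqndiff} for $e_0''$).

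As in the proof of Lemma \ref{e0ppplemma}, I would first record a preliminary estimate on $e_0''''$. For $\partial_t^2 v_{s,ip}$, the symbol-type estimates on the $v_k$ (Lemma \ref{vkinductionlemma}) give decay $t^{-4}\log^{-2b}(t)$ directly. For $\partial_t^2 E_{v_1,ip}$, one integrates by parts further in $\xi$ using Lemma \ref{v11lemma} and the asymptotics \eqref{k1notation}, gaining a factor of $t^{-1}$ per derivative. For $\partial_t^2 lin_{ip}$, the chain-rule expansion in $R=r/\lambda(t)$ produces derivatives of $\lambda$ up to fourth order; the $\lambda''''$ term is handled by absorption, and the remaining pieces are controlled by the already-proved bounds on $e_0, e_0', e_0'', e_0'''$, together with the structural estimates on $\lambda_0$. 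Finally, $\partial_t^2 w_{c,ip}$ is estimated using the chain rule and the preliminary estimates \eqref{dtrwkassump}; this is the lossy step that must be improved later.

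The main obstacle, exactly as in Lemma \ref{e0ppplemma}, is obtaining a sharp estimate on $\partial_t^2 w_{c,ip}(t,\lambda(t))$. The crude bounds from \eqref{dtrwkassump} do not exploit the intermediate scale $g(t)$ efficiently in the inner region $r \leq g(t)$, and they do not separate the sharp quasi-radiation contribution near the light cone from the symbol-type interior contribution. Moreover, the naive approach of differentiating $WRHS_j$ twice in time is not \emph{a priori} legitimate: $WRHS_2$ contains $\partial_{tt}Q_{1/\lambda(t)}$ and so already involves $\lambda''$, so two $t$-derivatives of $WRHS_2$ require $\lambda \in C^4$. The preliminary bound on $e_0''''$ secures exactly this regularity. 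Once it is in hand, I would extend the representation formulas of Appendix \ref{dtwjfinalestappendix} by one extra time derivative, thereby obtaining improved pointwise bounds on $\partial_t^2 w_c$ in the three regions $r \leq g(t)$, $g(t) \leq r \leq t/2$, and $t/2 \leq r \leq t$, of the schematic form
\begin{equation*}
|\partial_t^2 w_c(t,r)| \leq \frac{C\lambda(t)^2 \log^2(t)\, \Phi(r,g(t),t)}{t^4 \log^b(t)} + \frac{C\lambda(t)^2 \log^2(t)\, \Phi(r,g(t),t)\,\sup_{x\geq t}\bigl(x^{2}|e_0''''(x)|\bigr)}{t^{2}},
\end{equation*}
where $\Phi$ encodes the $r$-dependence analogous to that appearing at the end of the proof of Lemma \ref{e0ppplemma} for $\partial_t w_c$.

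Substituting these improved estimates into $\partial_t^2 w_{c,ip}(t,\lambda(t))$ and performing the $R$-integral against $\phi_0$ yields a bound of the form
\begin{equation*}
|\partial_t^2 G_{rest}(t,\lambda(t))| \leq \frac{C}{t^{4}\log^{b+\delta_5}(t)} + \frac{C\sup_{x\geq t}\bigl(x^{2}|e_0''''(x)|\bigr)}{t^{2}\log^{\delta_5'}(t)},
\end{equation*}
where $\delta_5 > 0$ and $\delta_5' > 0$ arise from the various factors $\log^{2b-4\epsilon-1}(t)$, $\log^{4b-8\epsilon-1}(t)$, $\log^{5b-8\epsilon-2}(t)$ generated by the integrals against $\phi_0$, taking $\delta_5$ as the minimum. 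The final step is the identical supremum bootstrap at the end of Lemma \ref{e0ppplemma}: for each $t_1 \geq T_0$, since the preliminary bound shows $x \mapsto x^{2}|e_0''''(x)|$ is continuous on $[T_0,\infty)$ and tends to zero, we may pick $y(t_1) \geq t_1$ attaining the supremum, evaluate the inequality at $y(t_1)$, and use monotonicity of the tail-supremum to absorb the last term, obtaining $|e_0''''(t)| \leq C t^{-4}\log^{-(b+\delta_5)}(t)$ uniformly in $t \geq T_0$ with $C$ independent of $T_0$.
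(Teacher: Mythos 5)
Your proposal follows the same two-step bootstrap strategy as the paper: inspect \eqref{e0eqndiff} for $C^4$ regularity, derive a preliminary estimate on $e_0''''$ using the crude bounds \eqref{dtrwkassump}, use that to justify improved estimates on $\partial_t^2 w_j$ (the content of Appendix \ref{dttwjfinalestimateappendix}), and close with the same tail-supremum absorption argument used for $e_0'''$. The only divergences are cosmetic (you write $\sup_{x\geq t}(x^2|e_0''''(x)|)$ where the paper uses the weight $x^{3/2}$, and you describe the absorption as happening anew at fourth order rather than being already built into \eqref{e0eqndiff}), neither of which affects the validity of the argument.
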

\begin{proof}
By inspection of \eqref{e0eqndiff}, $e_{0} \in C^{4}([T_{0},\infty))$. Our goal in this section will be to estimate $e_{0}''''$. From \eqref{e0eqndiff}, we get
\begin{equation}\begin{split}|e_{0}''''(t)| &\leq C \left(|\partial_{t}^{2}\left(G_{rest}(t,\lambda(t))(1+e_{0}(t))\right)|+|\partial_{t}^{2}\left(\frac{\lambda_{0}'(t) e_{0}'(t)}{\lambda_{0}(t)}\right)| + |i_{0}''(t) e_{0}''(t)| + |i_{0}'(t) e_{0}'''(t)|\right) \\
&+ C |i_{0}'(t)| \left(|\partial_{t}(G_{rest}(t,\lambda(t))\left(1+e_{0}(t)\right))|+|\partial_{t}\left(\frac{\lambda_{0}'(t) e_{0}'(t)}{\lambda_{0}(t)}\right)| + |i_{0}'(t) e_{0}''(t)|\right).\end{split}\end{equation}
Then, we note
$$|\partial_{t}^{2} v_{sip}(t,\lambda(t))| \leq \frac{C}{t^{4} \log^{2b}(t)}$$
$$|\partial_{t}^{2}\left(\frac{-2}{3} \frac{i_{0}(t) \left(\lambda_{0}''(t) \left(1+e_{0}(t)\right)+2\lambda_{0}'(t)e_{0}'(t)\right)}{\left(1+e_{0}(t)\right)\lambda_{0}(t)}\right)| \leq \frac{C \lambda(t)^{2}}{g(t)^{2} t^{4} \log^{b}(t)}$$
$$|i_{0}''(t)| \leq \frac{C \lambda(t)^{2}}{g(t)^{2} t^{2}}\left(\frac{1}{\log^{b}(t)} + \frac{1}{\log(t)}\right)$$
$$|\partial_{t}^{2}\left(\int_{0}^{\infty} \chi_{\geq 1}(\frac{r}{g(t)}) \left(\frac{4 r^{2} \lambda'(t)^{2} (r^{2}-3\lambda(t)^{2})}{(r^{2}+\lambda(t)^{2})^{3}} - \frac{6 v_{c}}{r^{2}} \left(1-Q_{1}^{2}(\frac{r}{\lambda(t)})\right)\right) \phi_{0}(\frac{r}{\lambda(t)}) \frac{r dr}{\lambda(t)^{2}}\right)| \leq \frac{C \lambda(t)^{2}}{g(t)^{2} t^{4} \log^{b}(t)}.$$
Using a similar procedure used to estimate $E_{v_{1},ip}$, we get
$$|\partial_{t}^{2} E_{v_{1},ip}(t,\lambda(t))| \leq \frac{C \log(t)}{t^{9/2}}.$$
Finally, we have the \emph{preliminary} estimate
$$|\partial_{t}^{2} w_{c,ip}(t,\lambda(t))| \leq \frac{C \log(t) \left(1+\frac{\log(t)}{\log^{b}(t)}\right)}{t^{2} g(t)^{2} \log^{b}(t)}.$$
In total, this gives the \emph{preliminary} estimate
\begin{equation}\label{e0ppppprelimest}|e_{0}''''(t)| \leq \frac{C \log(t) \left(1+\frac{\log(t)}{\log^{b}(t)}\right)}{t^{2} g(t)^{2} \log^{b}(t)}.\end{equation}
As before, we now start to record a better estimate on $\partial_{t}^{2} w_{j}$ than what was previously obtained, using a procedure which is justified by the preliminary estimate. We present these estimates and their proof in Appendix \ref{dttwjfinalestimateappendix}. The results of Appendix \ref{dttwjfinalestimateappendix} lead to the following improved estimate
$$|\partial_{t}^{2}w_{c,ip}(t,\lambda(t))| \leq \frac{C \log(t)}{t^{4} \log^{3b-4\epsilon}(t)} + \frac{C \log(t) \sup_{x \geq t}(x^{3/2} |e_{0}''''(x)|)}{\log^{2b-4\epsilon}(t) t^{3/2}}$$
which, when combined with our previous estimates from this section, yields
$$|e_{0}''''(t)| \leq \frac{C}{t^{4} \log^{b+\delta_{5}}(t)} + \frac{C \sup_{x \geq t}(x^{3/2} |e_{0}''''(x)|)}{\log^{2b-4\epsilon-1}(t) t^{3/2}}$$
where
\begin{equation}\label{delta5def}\delta_{5}=\min\{b,2b-4\epsilon-1\}.\end{equation}
Repeating the argument used to estimate $e_{0}'''$, we conclude that there exists $C>0$, \emph{independent} of $T_{0}$ such that
$$|e_{0}''''(t)| \leq \frac{C}{t^{4} \log^{b+\delta_{5}}(t)}, \quad t \geq T_{0}.$$
\end{proof}

\subsubsection{Symbol-Type estimates on $F_{4}$}
Our goal will be to establish the following symbol-type estimates on $F_{4}(t,r)$ in the region $r \leq \frac{t}{2}$. (Recall that $F_{4}(t,r) = 0$ if $r \geq \frac{t}{2}$, as per \eqref{f4def}).
\begin{lemma} For $0 \leq j, k \leq 2$, and $j+k \leq 2$, we have the following estimates.
\begin{equation}\label{f4symb}\begin{split}|r^{k}\partial_{r}^{k}t^{j}\partial_{t}^{j}F_{4}(t,r)| &\leq C \mathbbm{1}_{\{r \leq g(t)\}} \frac{r^{2} \lambda(t)^{2}}{t^{2} \log^{b}(t) (\lambda(t)^{2}+r^{2})^{2}} \\
&+ C \frac{\mathbbm{1}_{\{r \leq \frac{t}{2}\}} \lambda(t)^{2}}{(\lambda(t)^{2}+r^{2})^{2}} \begin{cases} \frac{r^{2} \lambda(t)^{2} \log(t)}{t^{2} g(t)^{2} \log^{b}(t)}, \quad r \leq g(t)\\
\frac{\lambda(t)^{2} \log(t)}{t^{2} \log^{b}(t)} \left(\log(2+\frac{r}{g(t)}) + \frac{\log(t)}{\log^{b}(t)}\right), \quad g(t) < r < \frac{t}{2}\end{cases}\end{split}\end{equation}\end{lemma}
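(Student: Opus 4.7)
The plan is to split $F_4$ into the three natural pieces dictated by its definition \eqref{f4def}, namely
\[
F_{4,s}(t,r) = \bigl(1-\chi_{\geq 1}(\tfrac{r}{g(t)})\bigr)\,\partial_t^2 Q_{\frac{1}{\lambda(t)}}(r), \qquad
F_{4,v}(t,r) = -\bigl(1-\chi_{\geq 1}(\tfrac{r}{g(t)})\bigr)\,\tfrac{6 v_c(t,r)}{r^2}\bigl(1-Q_{\frac{1}{\lambda(t)}}^2(r)\bigr),
\]
\[
F_{4,w}(t,r) = -\tfrac{6}{r^2}\chi_{\leq 1}(\tfrac{2r}{t})\bigl(1-Q_{\frac{1}{\lambda(t)}}^2(r)\bigr)\,w_c(t,r),
\]
observe that $F_{4,s}+F_{4,v}$ is supported in $r\le g(t)$ and $F_{4,w}$ is supported in $r\le t/2$, and verify separately that $F_{4,s}+F_{4,v}$ produces the first bound in \eqref{f4symb} and $F_{4,w}$ produces the second. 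The key algebraic identity I will use throughout is $1-Q_{\frac{1}{\lambda}}^2(r) = 4r^2\lambda^2/(r^2+\lambda^2)^2$, which accounts for every factor of the form $\lambda(t)^2/(\lambda(t)^2+r^2)^2$ in the target estimate.

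For the soliton piece $F_{4,s}$, I would use $Q_{\frac{1}{\lambda(t)}}(r) = Q_1(r/\lambda(t))$ to write $\partial_t^2 Q_{\frac{1}{\lambda(t)}}$ as a polynomial in $\lambda'/\lambda, \lambda''/\lambda$ with coefficients bounded by $C r^2\lambda^2/(r^2+\lambda^2)^2$. Inserting the symbol-type estimates $|\lambda^{(k)}/\lambda|\le C/(t^k\log^b(t))$ for $k=1,2$ (and $k=3,4$ from Lemma \ref{e0ppplemma} and Lemma \ref{e0pppplemma} for time derivatives of $F_{4,s}$), and noting that $r\partial_r$ preserves the rational structure of $Q_1(r/\lambda)$, this piece satisfies the first bound in \eqref{f4symb}. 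For $F_{4,v}$, combining the identity above with the estimate $|\partial_t^j\partial_r^k v_c(t,r)|\le C r^{2-k}/(t^{2+j}\log^b(t))$ from Corollary \ref{vkinductioncorollary2} (valid in $r\le t/2$), together with Leibniz expansion, yields the same bound. Derivatives of the cutoff $1-\chi_{\geq 1}(r/g(t))$ localize to $g(t)/2\le r\le g(t)$ and produce factors of $|g'(t)/g(t)|\le 1/(900!\,t)$ from \eqref{T0const} that are absorbed without difficulty.

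For $F_{4,w}$, the factor $\frac{6}{r^2}(1-Q_{\frac{1}{\lambda}}^2) = \frac{24\lambda^2}{(\lambda^2+r^2)^2}$ already carries the required $\lambda^2/(\lambda^2+r^2)^2$ weight. I would then sum the pointwise estimates on $w_c = w_2+w_s$ provided by Lemma \ref{w2prelim} (responsible for the $\log(2+r/g(t))\log(t)/\log^b(t)$ contribution in the regime $g(t)<r<t/2$) and by Lemma \ref{wkinductionlemma} and Corollary \ref{wkinductioncorollary}: the tail $w_s = \sum_{k\ge 3}w_k$ is a geometric series in $C_2^p/\log^b(t)$ whose dominant $k=3$ term furnishes the $\log(t)/\log^b(t)$ addendum in the bound. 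For the time-differentiated pieces $\partial_t F_{4,w}$ and $\partial_t^2 F_{4,w}$, I would use the improved pointwise estimates on $\partial_t w_c$ and $\partial_t^2 w_c$ established in Appendices \ref{dtwjfinalestappendix} and \ref{dttwjfinalestimateappendix} (these rely precisely on the $\lambda'''$ and $\lambda''''$ bounds just proved). Derivatives falling on $\chi_{\leq 1}(2r/t)$ are localized to $t/4\le r\le t/2$ and contribute benign factors of $1/t$.

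The argument is essentially a bookkeeping verification: every term that appears after applying Leibniz has already been estimated in the lemmas of the preceding subsections, and the rational weight $\lambda^2 r^2/(\lambda^2+r^2)^2$ in \eqref{f4symb} is built into the structure of $1-Q_{\frac{1}{\lambda}}^2$ and of $\partial_t^2 Q_{\frac{1}{\lambda}}$. The main obstacle I anticipate is not any single estimate but the need to track, in each of the three pieces and for each $(j,k)$, which of the two bands $r\le g(t)$ or $g(t)<r<t/2$ contributes the dominant behavior, and to confirm that derivatives of the cutoffs never produce terms worse than those already present. Once this case-splitting is carried out, the symbol-type estimates on $\lambda$ through fourth order ensure that commuting $t\partial_t$ past the ansatz costs no additional logarithmic powers, and the stated bound follows.
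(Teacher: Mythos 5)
The decomposition into $F_{4,s}$, $F_{4,v}$, $F_{4,w}$ is correct, and your treatment of the first two pieces and of the cutoff derivatives is fine. The gap is in $F_{4,w}$: for that piece with $k\geq 1$ you propose to use the \emph{preliminary} bounds on $\partial_r w_c$, $\partial_r^2 w_c$, $\partial_{tr}w_c$ from Lemmas \ref{w2prelim}, \ref{w3prelim}, and \ref{wkinductionlemma}. Those estimates are not symbol-type in the band $g(t) < r < \tfrac{t}{2}$: they give, e.g., $|\partial_r w_c(t,r)| \leq \tfrac{C\lambda(t)^2\log(t)}{t^2\log^b(t)\,g(t)}$ for $r>g(t)$, so $|r\,\partial_r w_c|$ can reach size $\tfrac{C\,t\,\lambda(t)^2\log(t)}{t^2\log^b(t)\,g(t)}$ near $r\sim t/2$. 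That is larger than what \eqref{f4symb} demands by a factor of order $t/g(t)$, which diverges. What is needed for \eqref{f4symb} is the genuinely stronger bound $|r\,\partial_r w_c| \lesssim \tfrac{\lambda(t)^2\log(t)}{t^2\log^b(t)}\left(\log(2+\tfrac{r}{g(t)})+\tfrac{\log(t)}{\log^b(t)}\right)$ in that band, and this is not a consequence of the earlier lemmas you cite.

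This is precisely why the paper inserts an intermediate lemma (the one giving \eqref{wcsymb}) before concluding \eqref{f4symb}, and the proof of that lemma is not bookkeeping. The tool is to estimate $(-\partial_t + \partial_r)w_2$ rather than $\partial_r w_2$ alone, using the representation of $w_2$ by Duhamel from data vanishing at $t=+\infty$ and the algebraic cancellation identity \eqref{mdtpdrcancel}: $(-\partial_t+\partial_r)$ applied to $g(|x+y|)$ splits into a term that gains a factor $\max\{r,\rho\}^{-1}$ (because the numerator carries $1+\cos\theta$) and a term vanishing at $q=1$, which neutralizes the $\tfrac{1}{\sqrt{1-q^2}}$ singularity after an integration by parts. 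This yields \eqref{drw2final} and then the symbol-type bounds on $\partial_r w_j$, $\partial_{tr}w_j$, and $r^2\partial_r^2 w_j$ (the last read off from the equation). Combined with the improved $\partial_t w_j$, $\partial_t^2 w_j$ estimates from Appendices \ref{dtwjfinalestappendix} and \ref{dttwjfinalestimateappendix}, this gives \eqref{wcsymb}, after which \eqref{f4symb} is indeed the direct verification you describe. As written, your proof misses this step entirely; you should either prove \eqref{wcsymb} or point to it explicitly, but the preliminary Lemmas \ref{w2prelim}--\ref{wkinductionlemma} alone do not suffice.
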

In order to obtain these estimates, we first have to obtain improved estimates on $\partial_{r}w_{c}$, $\partial_{r}^{2} w_{c}$, and $\partial_{tr}w_{c}$ in the region $\frac{t}{2} > r > g(t)$. So, we start with the following lemma.
\begin{lemma} We have the following symbol-type estimates on $w_{c}$. For $0 \leq j,k \leq 2, \quad j+k \leq 2$,
\begin{equation}\label{wcsymb}t^{j} r^{k} |\partial_{r}^{k}\partial_{t}^{j} w_{c}(t,r)| \leq \begin{cases} \frac{C r^{2} \lambda(t)^{2} \log(t)}{t^{2} g(t)^{2} \log^{b}(t)}, \quad r \leq g(t)\\
\frac{C \lambda(t)^{2} \log(t)}{t^{2} \log^{b}(t)}\left(\log(2+\frac{r}{g(t)})+\frac{\log(t)}{\log^{b}(t)}\right), \quad g(t) < r < \frac{t}{2}\end{cases}\end{equation}
\end{lemma}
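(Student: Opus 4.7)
The plan is to improve the preliminary bounds of Lemmas \ref{w2prelim} and \ref{wkinductionlemma} so that every derivative acquires a symbol-type cost of $1/t$ or $1/r$ (rather than the crude $1/g(t)$ cost used previously), in the region $g(t) < r < t/2$. For the derivatives $\partial_t w_c$ and $\partial_t^2 w_c$, the estimates have essentially been established already in Appendices~\ref{dtwjfinalestappendix} and \ref{dttwjfinalestimateappendix} once we know $\lambda \in C^4([T_0,\infty))$ (Lemmas \ref{e0ppplemma} and \ref{e0pppplemma}); one reads off the $j=1,k=0$ and $j=2,k=0$ cases of \eqref{wcsymb} from those results. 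The $j=k=0$ case is immediate from the preliminary bound on $w_2$ together with the summed bound on $w_s$ in \eqref{wkassump}. So the main content is to upgrade the spatial-derivative estimates.

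First I would treat $w_2$. The key point is that the preliminary argument for $\partial_r w_2$ in Lemma~\ref{w2prelim} differentiated a Duhamel formula by hand, producing a factor of $1/g(t)$ because we only had regularity $\lambda\in C^2$, which forced us to keep all derivatives on the cutoff $\chi_{\geq 1}(r/g(t))$ built into $WRHS_2$. Now that we know $\lambda \in C^4$, we can freely differentiate $WRHS_2$ up to twice in $t$ and use the equation $-\partial_{tt}w_2+\partial_{rr}w_2+\tfrac{1}{r}\partial_r w_2-\tfrac{4}{r^2}w_2=WRHS_2$ to trade spatial derivatives for the combination $\partial_{tt}w_2+\tfrac{4}{r^2}w_2+WRHS_2$. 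Concretely, to estimate $\partial_r w_2$ in $g(t)<r<t/2$, I would work with the representation \eqref{w2noderiv}, switched by $\rho=q(s-t)$ as in \eqref{w2fordt}, and differentiate in $r$ under the integral. The resulting integrals are of the same type as the ones already handled in the proof of Lemma~\ref{w2prelim} (estimates of $\theta$-integrals via residue, and of $\rho$-integrals broken into $\rho<r/2$, $r/2\le\rho\le 2r$, $\rho>2r$), but with the key improvement that the relevant length scale in the denominator is now $|x+y|\sim r$ rather than $g(s)$, because differentiating in $r$ now hits factors of $\partial_2 WRHS_2$ and geometric factors like $(r+\rho\cos\theta)/|x+y|$ that are controlled by $|x+y|$. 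This produces the logarithmic factor $\log(2+r/g(t))$ from the $\rho$-integral in the middle regime, exactly as it appeared for $w_2$ itself in the preliminary proof. The estimates on $\partial_r^2 w_2$ and $\partial_{tr}w_2$ are obtained in the same way (differentiating \eqref{w2fordt} twice, and invoking the bound on $\partial_t^2 w_2$ from Appendix~\ref{dttwjfinalestimateappendix} to close the argument via the wave equation in the transition region $r\sim g(t)$).

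Next I would sum the series $w_s=\sum_{k\ge3} w_k$ with symbol-type bounds by induction on $k$, mirroring Lemma~\ref{wkinductionlemma}. The induction hypothesis is that \eqref{wcsymb}, with $\log^{b(k-1)}(t)$ replaced by $\log^{b(k-1)}(t)/D_{p,k}$, holds for $w_k$. Plugging these into the definition of $WRHS_{j+1}$ together with the symbol-type bounds on $v_c$ (Corollary~\ref{vkinductioncorollary2}) and the trivial bounds on $Q_{1/\lambda(t)}$, one obtains symbol-type bounds on $\partial_t^j\partial_r^k WRHS_{j+1}$. Running the same Duhamel argument as for $w_2$, one then recovers the inductive bound on $w_{j+1}$ with an extra factor $\log^{-b}(t)\cdot C$ gained from the nonlinear estimates, so choosing $T_0$ large enough makes the series converge and sums to the desired bound on $w_s$. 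Adding $w_2$ then yields \eqref{wcsymb}.

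The main obstacle is the $\partial_{tr}w_2$ estimate at the boundary $r\sim g(t)$: here both the $t$-differentiation (which requires the improved $\lambda\in C^3$ bound to exploit \eqref{dtwjfinalestappendix}) and the $r$-differentiation (which requires the new representation argument above) are needed simultaneously, and one must show that their combined contribution does not suffer an uncontrolled loss at the transition between the two regimes. Once this is handled, the step to $\partial_r^2 w_2$ is completed using the wave equation for $w_2$ plus the now-known bound on $\partial_{tt}w_2$, and the inductive summation over $k\ge3$ is a routine extension.
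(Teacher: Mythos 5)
There is a genuine gap in the treatment of the pure spatial derivatives $\partial_r w_2$ (and hence all $\partial_r w_j$). You assert that differentiating the representation \eqref{w2fordt} directly in $r$ yields a symbol-type bound because ``the relevant length scale in the denominator is now $|x+y|\sim r$ rather than $g(s)$.'' This does not hold: in the spherical means integral, for $s-t \gtrsim r$, the point $|x+y|$ ranges over an interval containing $[\,\max\{0,r-(s-t)\},\ r+(s-t)\,]$, and the integrand $\partial_2 WRHS_2(s,|x+y|)$ peaks precisely near $|x+y|\sim g(s)\ll r$, not near $|x+y|\sim r$. The geometric factor $\tfrac{|r+\rho\cos\theta|}{|x+y|}$ produced by the $r$-derivative is only bounded by $1$ and gives no smallness in this regime. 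Tracing through the $\theta$-integral estimate $\int_0^{2\pi}\tfrac{d\theta}{(g(s)^2+r^2+\rho^2+2r\rho\cos\theta)^2}\lesssim \tfrac{1}{\sqrt{g^2+r^2+\rho^2}\,(g^2+(\rho-r)^2)^{3/2}}$, the $\rho$-integral near $\rho\approx r$ then delivers precisely the $1/g(t)$ loss that already appeared in the preliminary bound of Lemma~\ref{w2prelim}; you would not improve on it.

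The paper's actual mechanism is to use the operator $-\partial_t+\partial_r$ (rather than $\partial_r$), for which the action on $g(|x+y|)$ in the $q$-substituted spherical means decomposes, per \eqref{mdtpdrcancel}, into a term proportional to $\tfrac{((s-t)q+r)(1+\cos\theta)}{(r-q(s-t))^2+2rq(s-t)(1+\cos\theta)}$, which is $\lesssim\tfrac1{\max\{r,\,q(s-t)\}}$ thanks to the $(1+\cos\theta)$ factor that vanishes exactly where the kernel singularity at $|x+y|\to0$ would occur, and a second term carrying a factor $(q-1)$ that cancels the Duhamel singularity $\tfrac1{\sqrt{1-q^2}}$ and is integrated by parts in $q$. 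Combined with the already-improved $\partial_t w_2$ estimate from Appendix~\ref{dtwjfinalestappendix}, this yields \eqref{drw2final}, i.e.\ the $1/\sqrt{r^2+g^2}$ decay, and the same decomposition applied to $\partial_t w_2$ yields $\partial_{tr}w_2$. Everything else in your plan (reading off $\partial_r^2 w_j$ from the wave equation and $\partial_t^2 w_j$, extracting $j=1,2,\,k=0$ from the appendices, and the inductive summation over $j\geq3$) matches the paper, but without the $-\partial_t+\partial_r$ idea the spatial-derivative estimates cannot be upgraded as you claim.
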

\begin{proof} First, we recall that $w_{2}(t,r) = \int_{t}^{\infty} ds w_{2,s}(t,r)$, where $w_{2,s}$ solves
$$\begin{cases} -\partial_{tt}w_{2,s}+\partial_{rr}w_{2,s}+\frac{1}{r}\partial_{r}w_{2,s}-\frac{4}{r^{2}}w_{2,s}=0\\
w_{2,s}(s,r)=0\\
\partial_{t}w_{2,s}(s,r) = WRHS_{2}(s,r)\end{cases}.$$
Therefore, since $t<s$, we expect to obtain better decay for $\left(-\partial_{t}+\partial_{r}\right)w_{2,s}$ than what we would have for simply $\partial_{r}w_{2,s}$. We compute $\partial_{t}w_{2}$ by starting with \eqref{w2fordt}, and differentiating under the integral sign. The key point we will use to obtain our estimate is that, for a differentiable function $g$, and $s -t>0, \quad 2\pi > \theta >0$, and $q,r>0$, we have
\begin{equation}\nonumber\begin{split}&-\partial_{t}\left(g(\sqrt{r^{2}+q^{2}(s-t)^{2}+2 r q (s-t) \cos(\theta)})\right) \\
&= g'(\sqrt{r^{2}+q^{2}(s-t)^{2}+2 r q (s-t) \cos(\theta)}) \frac{(s-t)q^{2}+r q \cos(\theta)}{\sqrt{r^{2}+(s-t)^{2}q^{2}+2 r q (s-t) \cos(\theta)}}\end{split}\end{equation}
and
\begin{equation}\nonumber\begin{split}&\partial_{r}\left(g(\sqrt{r^{2}+q^{2}(s-t)^{2}+2 r q (s-t) \cos(\theta)})\right) \\
&= g'(\sqrt{r^{2}+q^{2}(s-t)^{2}+2 r q (s-t) \cos(\theta)}) \frac{r+q(s-t) \cos(\theta)}{\sqrt{r^{2}+(s-t)^{2}q^{2}+2 r q (s-t) \cos(\theta)}}.\end{split}\end{equation}
So,
\begin{equation}\nonumber\begin{split}&\left(-\partial_{t}+\partial_{r}\right)\left(g(\sqrt{r^{2}+q^{2}(s-t)^{2}+2 r q (s-t) \cos(\theta)})\right) \\
&= g'(\sqrt{r^{2}+q^{2}(s-t)^{2}+2 r q (s-t) \cos(\theta)}) \frac{\left((s-t)q+r\right)\left(1+\cos(\theta)\right)}{\sqrt{r^{2}+q^{2}(s-t)^{2}+2 r q (s-t) \cos(\theta)}}\\
&+g'(\sqrt{r^{2}+q^{2}(s-t)^{2}+2 r q (s-t) \cos(\theta)}) \left(q-1\right) \frac{(s-t)q+r  \cos(\theta)}{\sqrt{r^{2}+(s-t)^{2}q^{2}+2 r q (s-t) \cos(\theta)}} .\end{split}\end{equation}
For ease of notation, and later use, we introduce the following vectors in $\mathbb{R}^{2}$: $x = r \textbf{e}_{1}, \text{ } y=q(s-t)(\cos(\theta),\sin(\theta)).$ So, the above can be written as 
\begin{equation}\label{mdtpdrcancel}\begin{split}\left(-\partial_{t}+\partial_{r}\right)\left(g(|x+y|)\right) &= \left(g'(|x+y|) \cdot |x+y|\right) \frac{\left((s-t)q+r\right)\left(1+\cos(\theta)\right)}{r^{2}+q^{2}(s-t)^{2}+2 r q (s-t) \cos(\theta)}\\
&+g'(|x+y|) \left(q-1\right) \frac{ \widehat{y} \cdot (x+y)}{|x+y|}\\
&= \left(g'(|x+y|) \cdot |x+y|\right) \frac{\left((s-t)q+r\right)\left(1+\cos(\theta)\right)}{(r-q(s-t))^{2}+2 r q (s-t)(1+ \cos(\theta))}\\
&+g'(|x+y|) \left(q-1\right) \frac{ \widehat{y} \cdot (x+y)}{|x+y|}. \end{split}\end{equation}
We will apply this observation to the integrand of the representation formula for $w_{2}$. The third line of \eqref{mdtpdrcancel} has a gain of decay, as can be seen below. We note, for $s-t, q,r>0,$ and $0<\theta<2\pi$, that
$$(r-q(s-t))^{2}+2 r q (s-t)(1+ \cos(\theta)) > \begin{cases} C r^{2}, \quad q(s-t) < \frac{r}{2}\\
C r q (s-t)(1+\cos(\theta)), \quad \frac{r}{2} < q(s-t) < 2r\\
C q^{2}(s-t)^{2}, \quad 2r < q(s-t)\end{cases}.$$
Therefore,
\begin{equation}\label{dtplusdrintstep2} \frac{\left((s-t)q+r\right)\left(1+\cos(\theta)\right)}{(r-q(s-t))^{2}+2 r q (s-t)(1+ \cos(\theta))} \leq \frac{C}{\max\{r, q(s-t)\}}.\end{equation}
Note that the factor $1+\cos(\theta)$ in the numerator of the above expression is crucial in obtaining this estimate. On the other hand, the fourth line of \eqref{mdtpdrcancel} has the factor $q-1$. The integrand of the representation formula for $w_{2}$ contains a factor of $\frac{1}{\sqrt{1-q^{2}}}$. Therefore, the factor $q-1$ cancels the singularity of the integrand of the representation formula for $w_{2}$, and we can obtain a gain of decay for this term, by appropriately integrating by parts in $q$, as will be seen below. The decomposition of \eqref{mdtpdrcancel}, into one term which gains decay in $r+q(s-t)$, and another, which vanishes at $q=1$, would not be possible if we only applied, for example, $\partial_{r}$ to $w_{2}$.\\
\\
Letting $\rho =q(s-t)$, we now have $|x+y| = \sqrt{r^{2}+\rho^{2}+2 r \rho \cos(\theta)}$, like before, and we get
\begin{equation}\begin{split}&-\left(-\partial_{t}+\partial_{r}\right)w_{2}(t,r)\\
&= \frac{1}{2\pi} \int_{t}^{\infty} ds \int_{0}^{s-t} \frac{\rho d\rho}{(s-t) \sqrt{(s-t)^{2}-\rho^{2}}} \int_{0}^{2\pi} d\theta WRHS_{2}(s,|x+y|) \left(1-\frac{2\rho^{2}\sin^{2}(\theta)}{r^{2}+\rho^{2}+2 r \rho \cos(\theta)}\right)\\
&+\frac{1}{2\pi} \int_{t}^{\infty} ds \int_{0}^{s-t} \frac{\rho d\rho}{\sqrt{(s-t)^{2}-\rho^{2}}} \int_{0}^{2\pi} d\theta \partial_{2}WRHS_{2}(s,|x+y|)|x+y| \left(1-\frac{2\rho^{2}\sin^{2}(\theta)}{r^{2}+\rho^{2}+2 r \rho \cos(\theta)}\right) \\
&\qquad\qquad\qquad\qquad\qquad\qquad\qquad\qquad\qquad\qquad\qquad\qquad\qquad\qquad\qquad\cdot\left(\frac{(r+\rho)(1+\cos(\theta))}{(r^{2}+\rho^{2}+2 r \rho \cos(\theta))}\right)\\
&-\frac{1}{2\pi} \int_{t}^{\infty} ds \int_{0}^{s-t} \frac{\rho d\rho}{\sqrt{(s-t)^{2}-\rho^{2}}} \int_{0}^{2\pi} d\theta WRHS_{2}(s,|x+y|) \frac{4 \rho^{2} \sin^{2}(\theta)}{(s-t)(r^{2}+\rho^{2} + 2 r \rho \cos(\theta))}\\
&+\frac{1}{2\pi} \int_{t}^{\infty} ds \int_{0}^{s-t} \frac{\rho d\rho}{\sqrt{(s-t)^{2}-\rho^{2}}} \int_{0}^{2\pi} d\theta WRHS_{2}(s,|x+y|) \frac{2\rho^{2}\sin^{2}(\theta)}{(r^{2}+\rho^{2}+2 r \rho\cos(\theta))^{2}} \cdot 2(r+\rho) (1+\cos(\theta))\\
&-\frac{1}{2\pi} \int_{t}^{\infty} ds \int_{0}^{s-t} \rho d\rho \frac{\sqrt{s-t-\rho}}{\sqrt{s-t+\rho}} \int_{0}^{2\pi} \frac{d\theta}{(s-t)} \partial_{\rho}\left(WRHS_{2}(s,\sqrt{r^{2}+\rho^{2}+2 r \rho\cos(\theta)})\left(1-\frac{2\rho^{2}\sin^{2}(\theta)}{r^{2}+\rho^{2}+2 r \rho \cos(\theta)}\right)\right)\\
&-\frac{1}{2\pi} \int_{t}^{\infty} ds \int_{0}^{s-t} \frac{\rho d\rho}{\sqrt{(s-t)^{2}-\rho^{2}}} \int_{0}^{2\pi} d\theta WRHS_{2}(s,|x+y|) \cdot \frac{-4 \rho \sin^{2}(\theta)}{(r^{2}+\rho^{2}+2 r \rho \cos(\theta))} \left(\frac{\rho}{s-t} -1\right).\end{split}\end{equation}
We split the $s$ integration into two regions, one with $s-t \leq \frac{r}{8}$, and another, with $s-t \geq \frac{r}{8}$. In the region with $s-t \leq \frac{r}{8}$, we have $\rho \leq s-t \leq \frac{r}{8}$, and can proceed roughly as we did when previously estimating $\partial_{r}w_{2}$. On the other hand, in the region $s-t \geq \frac{r}{8}$, we use our observation just after \eqref{dtplusdrintstep2}, and integrate by parts in the fifth line of the above expression to get
\begin{equation}\begin{split}&|\left(-\partial_{t}+\partial_{r}\right)w_{2}(t,r)| \leq C \int_{t}^{t+\frac{r}{8}} ds \int_{0}^{s-t} \frac{\rho d\rho}{(s-t)\sqrt{(s-t)^{2}-\rho^{2}}} \int_{0}^{2\pi} d\theta |WRHS_{2}(s,|x+y|)|\\
&+ C \int_{t}^{t+\frac{r}{8}} ds \int_{0}^{s-t} \frac{\rho d\rho}{\sqrt{(s-t)^{2}-\rho^{2}}} \int_{0}^{2\pi} d\theta \left(|\partial_{2}WRHS_{2}(s,|x+y|)|+\frac{|WRHS_{2}(s,|x+y|)|}{|x+y|}\right) \\
&+C \int_{t+\frac{r}{8}}^{\infty} ds \int_{0}^{s-t} \frac{\rho d\rho}{\sqrt{(s-t)^{2}-\rho^{2}}} \int_{0}^{2\pi} d\theta \frac{\left(|\partial_{2}WRHS_{2}(s,|x+y|)|\cdot|x+y|+|WRHS_{2}(s,|x+y|)|\right)}{(r+\rho)}\\
&+C \int_{t+\frac{r}{8}}^{\infty} ds \int_{0}^{s-t} d\rho \int_{0}^{2\pi} \frac{d\theta}{(s-t)} |WRHS_{2}(s,|x+y|)|.\end{split}\end{equation}
Finally, the third and fourth lines of the above expression show the gain we obtain when applying $-\partial_{t}+\partial_{r}$ to $w_{2}$. Using a procedure similar to that used to estimate $w_{2}$, we get
\begin{equation}\label{drw2final}|\partial_{r}w_{2}(t,r)| \leq \begin{cases} \frac{C \lambda(t)^{2} \log(2+\frac{r}{g(t)}) \log(t)}{t^{2} \log^{b}(t) \sqrt{r^{2}+g(t)^{2}}}, \quad \frac{t}{2} > r > g(t)\\
\frac{C \lambda(t)^{2} \log^{2}(t)}{t^{5/2} \sqrt{\langle t-r \rangle} \log^{b}(\langle t-r \rangle)}, \quad t > r > \frac{t}{2}\end{cases}.\end{equation}
Using our estimate on $\partial_{r}w_{2}$ from Lemma \ref{w2prelim} in the region $r \leq g(t)$, as well as our improved estimate, \eqref{drw2final} for $t> r > g(t)$, we get
$$|\partial_{r}WRHS_{3}(t,r)| + \frac{|WRHS_{3}(t,r)|}{r} \leq \begin{cases} \frac{C r \lambda(t)^{2} \log(t) \log(2+\frac{r}{g(t)})}{t^{4} \log^{2b}(t) (r^{2}+g(t)^{2})}, \quad r \leq \frac{t}{2}\\
\frac{C \lambda(t)^{2} \log^{2}(t)}{t^{9/2} \langle t-r \rangle \log^{b}(t) \log^{b}(\langle t-r \rangle)}, \quad t > r > \frac{t}{2}\end{cases}.$$
We then use the same procedure as above to estimate $\left(-\partial_{t}+\partial_{r}\right)w_{3}$. Combining this with the final estimate on $\partial_{t}w_{3}$, namely \eqref{dtw3finalest}, we get
$$|\partial_{r}w_{3}(t,r)| \leq \begin{cases} \frac{C \lambda(t)^{2} \log^{2}(t)}{t^{3} \log^{2b}(t)}, \quad g(t) < r < \frac{t}{2}\\
\frac{C \lambda(t)^{2} \log^{2}(t)}{t^{5/2} \sqrt{\langle t-r \rangle} \log^{b}(t) \log^{b}(\langle t-r \rangle)}, \quad \frac{t}{2} < r < t\end{cases}.$$
With the same procedure used to estimate $\partial_{r}w_{3}$, and an induction argument similar to those previously used, we get: there exists $C_{6}> C_{4}+C_{2}^{p}$ such that, for all $j \geq 4$, we have
$$|\partial_{r}w_{j}(t,r)| \leq \begin{cases} \frac{C_{6}^{j} \lambda(t)^{2} \log^{2}(t)}{t^{3} \log^{b(j-1)}(t)}, \quad g(t) < r < \frac{t}{2}\\
\frac{C_{6}^{j} \lambda(t)^{2} \log^{2}(t)}{t^{5/2} \sqrt{\langle t-r \rangle} \log^{b(j-2)}(t) \log^{b}(\langle t-r \rangle)}, \quad \frac{t}{2} < r < t\end{cases}.$$
Next, we will estimate $\partial_{tr}w_{j}$, for $j \geq 2$. For this, we start with the formula
\begin{equation}\nonumber\begin{split}&\partial_{t}w_{2}(t,r) \\
&= \frac{-1}{2\pi} \int_{t}^{\infty} ds \int_{0}^{s-t} \frac{\rho d\rho}{\sqrt{(s-t)^{2}-\rho^{2}}} \int_{0}^{2\pi} d\theta \partial_{1}WRHS_{2}(s,\sqrt{r^{2}+\rho^{2}+2 r \rho \cos(\theta)}) \left(1-\frac{2\rho^{2}\sin^{2}(\theta)}{r^{2}+\rho^{2}+2 r \rho \cos(\theta)}\right)\end{split}\end{equation}
which was used when obtaining the final estimate on $\lambda'''$, and which follows from the fact that $\partial_{t}w_{2}$ solves the same equation as $w_{2}$ does, also with zero Cauchy data at infinity, except with $\partial_{t}WRHS_{2}$ on the right-hand side. Then, we estimate $\left(-\partial_{t}+\partial_{r}\right)\partial_{t}w_{2}$ with the same argument used to estimate $\left(-\partial_{t}+\partial_{r}\right)w_{2}$. Then, we repeat this argument for $w_{j}$, for $j \geq 3$, and get
$$|\partial_{tr}w_{2}(t,r)| \leq \begin{cases} \frac{C \lambda(t)^{2} \log(t) \log(2+\frac{r}{g(t)})}{\sqrt{r^{2}+g(t)^{2}} t^{3} \log^{b}(t)}, \quad g(t) < r < \frac{t}{2}\\
\frac{C \lambda(t)^{2} \log^{2}(t)}{t^{5/2} \langle t-r \rangle^{3/2} \log^{b}(t)}, \quad \frac{t}{2} < r < t \end{cases}$$
$$|\partial_{tr}w_{3}(t,r)| \leq \begin{cases} \frac{C \lambda(t)^{2} \log^{2}(t)}{t^{4} \log^{2b}(t)}, \quad g(t) < r \leq \frac{t}{2}\\
\frac{C \lambda(t)^{2} \log^{2}(t)}{t^{5/2} \langle t-r \rangle^{3/2} \log^{b}(t) \log^{b}(\langle t- r\rangle)}, \quad \frac{t}{2} \leq r < t\end{cases}.$$
There exists $C_{7} > 3\left(C_{2}^{p}+C_{4}+C_{5}+C_{6}\right)$ such that for $j \geq 4$, we have 
$$|\partial_{tr}w_{j}(t,r)| \leq \begin{cases} \frac{C_{7}^{j} \lambda(t)^{2} \log^{2}(t)}{t^{4} \log^{b(j-1)}(t)}, \quad g(t) < r < \frac{t}{2}\\
\frac{C_{7}^{j} \lambda(t)^{2} \log^{2}(t)}{t^{5/2} \langle t-r \rangle^{3/2} \log^{b(j-2)}(t) \log^{b}(\langle t-r \rangle)}, \quad \frac{t}{2} < r < t\end{cases}$$
Finally, we read off estimates on $r^{2}\partial_{r}^{2}w_{j}$ by inspection of the equation it solves, and our previous estimates. This gives
$$|r^{2}\partial_{r}^{2}w_{2}(t,r)| \leq \frac{C \lambda(t)^{2} \log(2+\frac{r}{g(t)}) \log(t)}{t^{2} \log^{b}(t)}, \quad g(t) \leq r \leq \frac{t}{2}$$
$$|r^{2} \partial_{r}^{2}w_{3}(t,r)| \leq \frac{C \lambda(t)^{2} \log^{2}(t)}{t^{2} \log^{2b}(t)}, \quad g(t) \leq r \leq \frac{t}{2}$$
and, for $j \geq 4$,
$$|r^{2} \partial_{r}^{2} w_{j}(t,r)| \leq \frac{C_{7}^{j} \lambda(t)^{2} \log^{2}(t)}{t^{2} \log^{b(j-1)}(t)}, \quad g(t) \leq r \leq \frac{t}{2}.$$
Combining all of our previous estimates, we get \eqref{wcsymb}\end{proof}
Then, we obtain \eqref{f4symb}. Finally, we make the following definition.
\begin{definition}\label{vcorrdef}$v_{corr}(t,r) = v_{c}(t,r)+w_{c}(t,r).$\end{definition}
\noindent The last estimate of Theorem \ref{approxsolnthm}, namely, \eqref{vcorrinftyest}, follows directly from our estimates on $v_{c}$ and $w_{c}$.
\section{Solving the final equation}
Substituting $u(t,r) = Q_{\frac{1}{\lambda(t)}}(r) + v_{c}(t,r) + w_{c}(t,r) + v(t,r)$ into \eqref{ym}, we get
\begin{equation}\label{veqnfinal}-\partial_{tt}v+\partial_{rr}v+\frac{1}{r}\partial_{r}v+\frac{2}{r^{2}}\left(1-3Q_{\frac{1}{\lambda(t)}}(r)^{2}\right) v = F_{4}(t,r)+F_{5}(t,r)+F_{3}(t,r)\end{equation}
where $F_{4}$ and $F_{5}$ were defined in \eqref{f4def} and \eqref{f5def}, and
\begin{equation}\label{f3def}\begin{split}F_{3}(t,r) &= \frac{2 v(t,r)^{3}}{r^{2}} + \frac{6}{r^{2}}\left(Q_{\frac{1}{\lambda(t)}}(r) + v_{c}(t,r)+w_{c}(t,r)\right)v(t,r)^{2} \\
&+ \frac{6 v(t,r)}{r^{2}}\left(\left(v_{c}(t,r)+Q_{\frac{1}{\lambda(t)}}(r)+w_{c}(t,r)\right)^{2}-Q_{\frac{1}{\lambda(t)}}(r)^{2}\right).\end{split}\end{equation}
We will (formally) derive the equation for a re-scaling of the distorted Fourier transform (as defined in \cite{kstym}, and discussed in \eqref{distortedfouriernotation}) of an appropriate function associated to $v$. We will call this function $y$. Then, we will show that the equation for $y$ has a (weak) solution with enough regularity to rigorously justify the inverse of each step we perform to derive its equation, thereby obtaining a (weak) solution to the original equation, \eqref{veqnfinal}. \\
\\
\textbf{Outline of the argument}
We provide the reader with a more detailed outline of the argument of this section.\\
\textbf{1}. We derive the equation \eqref{yeqn} for $y$ given by
$$\mathcal{F}(\sqrt{\cdot}v(t,\cdot\lambda(t)))(\xi) = \begin{bmatrix} y_{0}(t)& y_{1}(t,\frac{\xi}{\lambda(t)^{2}})\end{bmatrix}^{\text{T}}.$$
We remind the reader that the definition of $\mathcal{F}$, the distorted Fourier transform is discussed in \eqref{distortedfouriernotation}.\\
\\
\textbf{2}. Next, we introduce a space, $Z$, in which we will solve the equation \eqref{yeqn}. This will be done by finding a fixed point of the map $T$ (defined in \eqref{finalTdef}) in $\overline{B_{1}(0)}\subset Z$.  The norm for the space $Z$ is defined in \eqref{znormdef}.\\
\\
\textbf{3}. We then estimate appropriate norms of $F_{2}$ and $F_{3}$ (in section \ref{f2estimatessection} and Proposition \ref{f3estimatesprop}), for all $y \in \overline{B_{1}(0)} \subset Z$. These estimates are sufficient for our purposes because the $F_{2}$ and $F_{3}$ contributions to $T$ (see \eqref{finalTdef}) can be estimated in appropriate norms, by essentially using Minkowski's inequality.
\\
\\
\textbf{4}. In Lemma \ref{f4lemma}, we estimate the $F_{4}$ contributions to $T$ (see again \eqref{finalTdef}). Here, we integrate by parts in the $x$ variable, for example, in the integral
$$\lambda(t)\left(\omega \lambda(t)^{2}\right)^{3/2} \int_{t}^{\infty} dx \frac{\sin((t-x)\sqrt{\omega})}{\sqrt{\omega}} \mathcal{F}(\sqrt{\cdot} F_{4}(x,\cdot \lambda(x)))_{1}(\omega \lambda(x)^{2})$$
which arises from estimating an appropriate weighted norm of \eqref{finalTdef}, taking advantage of both the orthogonality condition, and the symbol-type estimates on $F_{4}$.\\
\\
\textbf{5}. The results from steps \textbf{3} and \textbf{4}, combined with our previous estimates on $F_{5}$, namely \eqref{f5l2} and \eqref{lstarlf5l2}, are sufficient to show that $T$ maps $\overline{B_{1}(0)} \subset Z$ into itself, see \eqref{Test}.\\
\\
\textbf{6}. We then conclude this section by showing that $T$ is a strict contraction on $\overline{B_{1}(0)} \subset Z$, see \eqref{Tlip}.\\
\\
Returning to the main argument of this section, we start by defining $w$ by
$$v(t,r) = w(t,\frac{r}{\lambda(t)})\sqrt{\frac{\lambda(t)}{r}}$$
and evaluate \eqref{veqnfinal} at the point $(t,R\lambda(t))$. Then, we get
\begin{equation}\label{weqn}\begin{split} &-\partial_{tt}w+\frac{2 R \lambda'(t)}{\lambda(t)} \partial_{tR}w - \frac{\lambda'(t)}{\lambda(t)} \partial_{t}w - R^{2} \left(\frac{\lambda'(t)}{\lambda(t)}\right)^{2} \left(\partial_{RR}w-\frac{1}{R} \partial_{R}w+\frac{3}{4R^{2}} w\right)\\
&+ R\left(\partial_{R}w-\frac{w}{2R}\right)\left(\frac{\lambda''(t)}{\lambda(t)} - 2 \left(\frac{\lambda'(t)}{\lambda(t)}\right)^{2}\right)  + \frac{1}{\lambda(t)^{2}}\left(\partial_{RR} w+\frac{24}{(1+R^{2})^{2}} w - \frac{15}{4 R^{2}} w\right) \\
&= \sqrt{R} \left(F_{4}(t,R\lambda(t))+F_{5}(t,R\lambda(t))+F_{3}(t,R\lambda(t))\right).\end{split}\end{equation}
We remind the reader that the definition of the distorted Fourier transform of \cite{kstym} is given, following \cite{kstym}, in \eqref{distortedfouriernotation}, and the transference operator, $\mathcal{K}$, of \cite{kstym}, is recalled in \eqref{transference}. Then, for $y$ defined by
$$\mathcal{F}(w)(t,\xi) = \begin{bmatrix} y_{0}(t)&y_{1}(t,\frac{\xi}{\lambda(t)^{2}})\end{bmatrix}^{\text{T}}$$
we evaluate the distorted Fourier transform of \eqref{weqn} at the point $(t,\omega \lambda(t)^{2})$ to get
\begin{equation}\label{yeqn}\begin{bmatrix} -\partial_{tt}y_{0}&-\partial_{tt}y_{1}-\omega y_{1}\end{bmatrix}^{\text{T}} = F_{2} + \mathcal{F}\left(\sqrt{\cdot}\left(F_{3}+F_{4}+F_{5}\right)\left(t,\cdot \lambda(t)\right)\right)\left(\omega \lambda(t)^{2}\right)\end{equation}
where
\begin{equation}\label{F2def}\begin{split}F_{2} &= -\left(\frac{-\lambda'(t)}{\lambda(t)} \begin{bmatrix}y_{0}'(t)\\
\partial_{1}y_{1}(t,\omega)\end{bmatrix} - \frac{3}{4} \left(\frac{\lambda'(t)}{\lambda(t)}\right)^{2} \begin{bmatrix}y_{0}(t)\\
y_{1}(t,\omega)\end{bmatrix} -\frac{1}{2}\begin{bmatrix} y_{0}(t)\\
y_{1}(t,\omega)\end{bmatrix} \left(\frac{\lambda''(t)}{\lambda(t)} - 2 \left(\frac{\lambda'(t)}{\lambda(t)}\right)^{2}\right)\right. \\
&+\left. 2 \left(\frac{\lambda'(t)}{\lambda(t)}\right) \mathcal{K}\left(\begin{bmatrix} y_{0}'(t)\\
\partial_{1}y_{1}(t,\frac{\cdot}{\lambda(t)^{2}})\end{bmatrix}\right)(\omega \lambda(t)^{2})-2 \left(\frac{\lambda'(t)}{\lambda(t)}\right)^{2} [\mathcal{K},\xi \partial_{\xi}]\left(\begin{bmatrix} y_{0}(t)\\
y_{1}(t,\frac{\cdot}{\lambda(t)^{2}})\end{bmatrix}\right)(\omega\lambda(t)^{2})\right.\\
&\left. - \left(\frac{\lambda'(t)}{\lambda(t)}\right)^{2} \mathcal{K}\left(\mathcal{K}\left(\begin{bmatrix} y_{0}(t)\\
y_{1}(t,\frac{\cdot}{\lambda(t)^{2}})\end{bmatrix}\right)\right)(\omega \lambda(t)^{2})+ \frac{\lambda''(t)}{\lambda(t)} \mathcal{K}\left(\begin{bmatrix} y_{0}(t)\\
y_{1}(t,\frac{\cdot}{\lambda(t)^{2}})\end{bmatrix}\right)( \omega \lambda(t)^{2})\right).\end{split}\end{equation}
As in the definition of the transference operator, in the expression for $F_{2}$, the operator $\xi \partial_{\xi}$ appearing in the term involving $[\mathcal{K},\xi \partial_{\xi}]\left(\begin{bmatrix} y_{0}(t)\\
y_{1}(t,\frac{\cdot}{\lambda(t)^{2}})\end{bmatrix}\right)$ only acts on the second component of $\begin{bmatrix} y_{0}(t)\\
y_{1}(t,\frac{\cdot}{\lambda(t)^{2}})\end{bmatrix}$. Also, the symbols $( \omega \lambda(t)^{2})$ appearing after, for instance $\mathcal{K}\left(\begin{bmatrix} y_{0}(t)\\
y_{1}(t,\frac{\cdot}{\lambda(t)^{2}})\end{bmatrix}\right)$ mean that the second component of $\mathcal{K}\left(\begin{bmatrix} y_{0}(t)\\
y_{1}(t,\frac{\cdot}{\lambda(t)^{2}})\end{bmatrix}\right)$ is evaluated at the point $\omega \lambda(t)^{2}$. (Recall that the first component of $\mathcal{K}\left(\begin{bmatrix} y_{0}(t)\\
y_{1}(t,\frac{\cdot}{\lambda(t)^{2}})\end{bmatrix}\right)$ is a real number, rather than a function of frequency).
\subsection{The Iteration Space}
We will now describe the space in which we plan to solve \eqref{yeqn}. We let $(Z,||\cdot||_{Z})$ denote the normed vector space defined as follows. $Z$ is the set of elements $\begin{bmatrix} y_{0}(t)\\
y_{1}(t,\omega)\end{bmatrix}$ where $y_{0}:[T_{0},\infty) \rightarrow \mathbb{R}$, and $y_{1}$ is an (equivalence class) of measurable functions, $y_{1}:[T_{0},\infty) \times (0,\infty) \rightarrow \mathbb{R}$ such that
$$y_{0}(t) \in C^{1}_{t}([T_{0},\infty))$$
$$y_{1}(t,\omega) \frac{t^{2} \log^{\epsilon}(t)}{\lambda(t)} \langle \omega \lambda(t)^{2}\rangle^{3/2}\sqrt{\rho(\omega \lambda(t)^{2})} \in C^{0}_{t}([T_{0},\infty), L^{2}(d\omega))$$
$$\partial_{t}y_{1}(t,\omega) \frac{t^{3} \log^{\epsilon}(t)}{\lambda(t)} \langle \omega \lambda(t)^{2}\rangle \sqrt{\rho(\omega \lambda(t)^{2})} \in C^{0}_{t}([T_{0},\infty), L^{2}(d\omega))$$
and $||\begin{bmatrix} y_{0}\\
y_{1}\end{bmatrix}||_{Z} < \infty$
where
\begin{equation}\label{znormdef}\begin{split}||\begin{bmatrix} y_{0}\\
y_{1}\end{bmatrix}||_{Z} &= \sup_{t \geq T_{0}}\left(\frac{t^{2} \log^{\epsilon}(t) |y_{0}(t)|}{\lambda(t)^{2}} + \frac{\log^{\epsilon}(t) \lambda(t) t^{2} ||y_{1}(t,\omega)\langle \omega \lambda(t)^{2} \rangle^{3/2}||_{L^{2}(\rho(\omega \lambda(t)^{2}) d\omega)}}{\lambda(t)^{2}} + \frac{t^{3} \log^{\epsilon}(t) |y_{0}'(t)|}{\lambda(t)^{2}}\right.\\
&\left. + \frac{t^{3} \log^{\epsilon}(t) \lambda(t) ||\partial_{t}y_{1}(t,\omega) \langle \omega \lambda(t)^{2} \rangle||_{L^{2}(\rho(\omega \lambda(t)^{2}) d\omega)}}{\lambda(t)^{2}}\right).\end{split}\end{equation} 
\subsection{$F_{2}$ estimates}\label{f2estimatessection}
We will now estimate $F_{2}$, for all $\begin{bmatrix} y_{0}&y_{1}\end{bmatrix}^{\text{T}} \in \overline{B_{1}(0)} \subset Z$. We use Proposition 5.2 of \cite{kstym}, which states that $\mathcal{K}$ and $[\mathcal{K},\xi \partial_{\xi}]$ are bounded on $L^{2,\alpha}_{\rho}$, for all $\alpha \in \mathbb{R}$, which, as defined in \cite{kstym}, has norm given in \eqref{l2rhoalphanorm}. Recalling the definitions of $F_{2}$ (in \eqref{F2def}), and $||\cdot||_{Z}$ (in \eqref{znormdef}), we immediately get
$$|F_{2,0}(x)| \leq \frac{C \lambda(x)^{2}}{x^{4} \log^{b+\epsilon}(x)}$$
$$\lambda(x)||F_{2,1}(x,\omega)||_{L^{2}(\rho(\omega \lambda(x)^{2}) d\omega)} \leq \frac{C \lambda(x)^{2}}{x^{4} \log^{b+\epsilon}(x)}$$
$$\lambda(x)^{4} ||\omega F_{2,1}(x,\omega)||_{L^{2}(\rho(\omega \lambda(x)^{2})d\omega)} \leq C \frac{\lambda(x)^{3}}{x^{4} \log^{b+\epsilon}(x)}$$
where we define $F_{2,i}$ by $F_{2} = \begin{bmatrix}F_{2,0}&F_{2,1}\end{bmatrix}^{\text{T}}.$
\subsection{$F_{3}$ estimates}
The main result of this section is the following. (We remind the reader that $\phi(r,\xi)$ has been defined in \eqref{phiefuncdef}, and that $v_{corr}$ has been defined in Definition \ref{vcorrdef}).
\begin{proposition}\label{f3estimatesprop} Let $\begin{bmatrix} y_{0}\\
y_{1}\end{bmatrix}$ satisfy $y_{1}(t,\omega)\langle \omega \lambda(t)^{2}\rangle^{3/2} \in L^{2}(\rho(\omega \lambda(t)^{2})d\omega)$. Let $F_{3}$ be given by \eqref{f3def}, for
$$v(t,r) = y_{0}(t) \frac{\phi_{0}(\frac{r}{\lambda(t)})}{||\phi_{0}||^{2}_{L^{2}(rdr)}} + \int_{0}^{\infty} y_{1}(t,\frac{\xi}{\lambda(t)^{2}}) \phi(\frac{r}{\lambda(t)},\xi) \sqrt{\frac{\lambda(t)}{r}} \rho(\xi) d\xi.$$ 
Then, we have the following estimates.
\begin{equation}\label{f3totalest}\begin{split} &||F_{3}(t,R\lambda(t))||_{L^{2}(R dR)}+||L^{*}L(F_{3}(t,R\lambda(t)))||_{L^{2}(R dR)} \\
&\leq C \frac{\lambda(t)^{2}}{t^{4} \log^{2\epsilon}(t)} ||\begin{bmatrix} y_{0}\\
y_{1}\end{bmatrix}||_{Z}^{2} +C \frac{\lambda(t)^{4}}{t^{6} \log^{3\epsilon}(t)} ||\begin{bmatrix} y_{0}\\
y_{1}\end{bmatrix}||_{Z}^{3}  + C \frac{\lambda(t)^{2}}{t^{4} \log^{\epsilon+b}(t)} ||\begin{bmatrix} y_{0}\\
y_{1}\end{bmatrix}||_{Z} \end{split}\end{equation}
\end{proposition}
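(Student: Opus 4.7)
The plan is to split $F_{3}$ as $F_{3} = C_{1}v + C_{2}v^{2} + C_{3}v^{3}$ where
$$C_{1} = \frac{6}{r^{2}}\bigl(2Q_{\frac{1}{\lambda(t)}}(r)(v_{c}+w_{c}) + (v_{c}+w_{c})^{2}\bigr), \quad C_{2} = \frac{6}{r^{2}}\bigl(Q_{\frac{1}{\lambda(t)}}(r)+v_{c}+w_{c}\bigr), \quad C_{3} = \frac{2}{r^{2}},$$
and to estimate each of the three contributions to $||F_{3}(t,R\lambda(t))||_{L^{2}(RdR)}$ and $||L^{*}L(F_{3}(t,R\lambda(t)))||_{L^{2}(RdR)}$ separately. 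The three distinct terms in \eqref{f3totalest} correspond exactly to these pieces: the $||y||_{Z}^{3}$ term comes from $C_{3}v^{3}$, the $||y||_{Z}^{2}$ term from $C_{2}v^{2}$, and the $||y||_{Z}$ term (together with its $\log^{-b}(t)$ improvement) from $C_{1}v$, where the improvement is furnished by \eqref{vcorrinftyest} applied to the $v_{c}+w_{c}$ factor in $C_{1}$.

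The first technical step is to convert the $Z$-norm control on $y$ into pointwise-in-$R$ and $L^{2}$ bounds on $v(t,R\lambda(t))$ and its first spatial derivative. The $y_{0}$-contribution $y_{0}(t)\phi_{0}(R)/||\phi_{0}||^{2}_{L^{2}(rdr)}$ is bounded by $C|y_{0}(t)|$ pointwise since $\phi_{0}(R)=R^{2}/(1+R^{2})^{2}\leq 1/4$. For the $y_{1}$-contribution, after the change of variable $\eta = \xi/\lambda(t)^{2}$, Cauchy--Schwarz in $\eta$ isolates the weight $\langle \eta\lambda^{2}\rangle^{3/2}\sqrt{\rho(\eta\lambda^{2})}$ controlled by $||y||_{Z}$, reducing the pointwise bound to verifying
$$\frac{1}{R}\int_{0}^{\infty}\frac{|\phi(R,\xi)|^{2}\rho(\xi)}{\langle \xi\rangle^{3}}d\xi \leq C, \qquad R>0,$$
which is checked directly from the series \eqref{phiseries} in the region $R^{2}\xi\lesssim 1$ and from the $\psi^{+}$ representation \eqref{psiplusdef} combined with \eqref{asymbol} and \eqref{rhoest} in the region $R^{2}\xi\gtrsim 1$. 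This produces $||v(t,R\lambda(t))||_{L^{\infty}_{R}} \lesssim \lambda(t)^{2}||y||_{Z}/(t^{2}\log^{\epsilon}(t))$, and the matching $L^{2}(RdR)$ bound follows immediately from the isometry property of $\mathcal{F}$; the $\partial_{R}v$ bounds are obtained by differentiating the inversion formula and repeating the argument with the analogous estimates on $\partial_{R}\phi$.

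For the $L^{*}L$ part, the identity $L^{*}L(\phi(\cdot,\xi)/\sqrt{\cdot})(R) = \xi\phi(R,\xi)/\sqrt{R}$ together with $L^{*}L\phi_{0}=0$ gives
$$L^{*}L(v(t,R\lambda(t))) = \frac{1}{\sqrt{R}}\int_{0}^{\infty} y_{1}(t,\eta)(\eta\lambda(t)^{2})\phi(R,\eta\lambda(t)^{2})\rho(\eta\lambda(t)^{2})\lambda(t)^{2}\,d\eta,$$
whose $L^{2}(RdR)$ norm is controlled by the $\partial_{t}y_{1}$-weighted part of $||y||_{Z}$; the gain of one factor of $\eta\lambda^{2}$ is comfortably absorbed by the $\langle\cdot\rangle^{3/2}$ weight. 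For the product terms in $F_{3}$ I would then invoke the Leibniz identity
$$L^{*}L(fg) = (L^{*}Lf)g + f(L^{*}Lg) - 2(\partial_{R}f)(\partial_{R}g) + \frac{2}{R^{2}}\bigl(1-3Q_{1}(R)^{2}\bigr)fg,$$
placing the factor carrying two spatial derivatives in $L^{2}(RdR)$ and the remaining factors in $L^{\infty}_{R}$. The coefficient bounds on $C_{1},C_{2},C_{3}$ and their first two $R$-derivatives are read off from \eqref{vcorrinftyest} (which is phrased precisely in the combination needed here, controlling $(2v_{corr}Q_{1}+v_{corr}^{2})/(R^{2}\lambda^{2})$ and its derivatives in $L^{\infty}_{R}$) together with standard bounds on $Q_{1}$.

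The main technical obstacle will be the careful uniform-in-$R$ analysis of the oscillatory integrals defining $v$, $\partial_{R}v$, and $L^{*}L v$: the transition zone $R^{2}\xi\sim 1$ between the power-series and $\psi^{+}$-asymptotic regimes for $\phi(R,\xi)$ must be treated uniformly, and the high-frequency behavior of $\rho$ measured against the decay of $a$ and $\psi^{+}$ is precisely what forces the $\langle\omega\lambda^{2}\rangle^{3/2}$ weight into $Z$. A secondary delicate point is the bookkeeping near $R=0$ when applying Leibniz to terms such as $L^{*}L(v^{k}/R^{2})$: the singular $R^{-2}$ factor must be absorbed by the vanishing of $v$ like $R^{2}$ (and the compatible vanishing of $v_{c}+w_{c}$ encoded in \eqref{vcorrinftyest}), and one must verify that the cross terms $\partial_{R}v\cdot\partial_{R}(R^{-2}\cdot\ldots)$ produced by Leibniz are controlled with the same bookkeeping.
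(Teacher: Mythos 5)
Your approach is essentially the one the paper takes: establish the pointwise and $L^{2}$ estimates on $v$, $Lv$, $L^{*}Lv$ implied by the $Z$-norm (this is exactly the content of the paper's Lemma $\ref{ptwselemma}$), then distribute $L^{*}L$ over the products in $F_{3}$ and estimate each piece by placing the factor carrying the second-order operator in $L^{2}(RdR)$ and the remaining factors in $L^{\infty}_{R}$, with the coefficient bounds coming from $\eqref{vcorrinftyest}$. The Leibniz identity you cite for $L^{*}L$ is correct and makes this explicit. The decomposition into the $||y||_{Z}$, $||y||_{Z}^{2}$, $||y||_{Z}^{3}$ contributions, and attribution of the extra $\log^{-b}(t)$ on the linear piece to the $v_{c}+w_{c}$ factor via $\eqref{vcorrinftyest}$, are all in agreement with the paper.

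One imprecision is worth flagging. You phrase the kernel check as $\frac{1}{R}\int_{0}^{\infty}\frac{|\phi(R,\xi)|^{2}\rho(\xi)}{\langle\xi\rangle^{3}}d\xi\leq C$, and state the conclusion as an $L^{\infty}_{R}$ bound $||v(t,R\lambda(t))||_{L^{\infty}_{R}}\lesssim\lambda(t)^{2}||y||_{Z}/(t^{2}\log^{\epsilon}(t))$. This on its own does \emph{not} control $\frac{v^{3}}{R^{2}\lambda^{2}}$ near $R=0$, let alone $L^{*}L$ of that term, which after the Leibniz expansion produces terms like $\frac{v^{3}}{R^{4}}$: you need the quantitative near-origin vanishing. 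The paper's Lemma $\ref{ptwselemma}$ makes this precise, with $\frac{|\overline{v}(r)|}{r^{2}\sqrt{\langle\log r\rangle}}\leq C(|\overline{y_{0}}|+||\overline{y_{1}}\langle\xi\rangle^{3/2}||_{L^{2}(\rho d\xi)})$ for $r\leq 1$, together with the analogous $r\sqrt{\langle\log r\rangle}$ and $\sqrt{\langle\log r\rangle}$ small-$r$ bounds on $L\overline{v}$ and $L^{*}L\overline{v}$ (and $L^{\infty}$/$\dot{H}^{1}_{e}$ bounds on $\overline{v}$ and $L\overline{v}$). The kernel integral you wrote down actually behaves like $R^{4}\langle\log R\rangle$ as $R\to 0$, so the refined bound does come out of the same computation; you simply need to state and use it in that sharper form, since you yourself point out (correctly, in your last paragraph) that the $R^{-2}$ singularity must be absorbed by this vanishing. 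Closing that circle is the only thing missing from your sketch.
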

\begin{proof} We recall the operators $L$ and $L^{*}$, which were defined in \eqref{Ldef} and \eqref{Lstardef}, respectively. As noted in \cite{rr}, $L$ satisfies
$$L^{*}L(f)(r) = -\partial_{rr}f-\frac{1}{r}\partial_{r}f-\frac{2}{r^{2}}\left(1-3Q_{1}(r)^{2}\right) f.$$
In order to estimate $F_{3}$, which we recall is given by \eqref{f3def}, we must first understand what kinds of (weighted) estimates we can conclude on $v$, $Lv$, and $L^{*}Lv$, for $v$ given by
\begin{equation}\label{vyrelationbeforelemma51}\sqrt{r} v(t,r \lambda(t)) =\mathcal{F}^{-1}\left( \begin{bmatrix} y_{0}(t)&y_{1}(t,\frac{\cdot}{\lambda(t)^{2}})\end{bmatrix}^{\text{T}}\right)(r)\end{equation}
where
$\begin{bmatrix} y_{0}&y_{1}\end{bmatrix}^{\text{T}} \in \overline{B_{1}(0)} \subset Z$.
This is the purpose of the following lemma, which is similar to Lemma 5.1 of \cite{wm}.
\begin{lemma}\label{ptwselemma} There exists $C>0$ such that, for all $\begin{bmatrix} \overline{y_{0}}&\overline{y_{1}}(\xi)\end{bmatrix}^{\text{T}}$ with $\overline{y_{1}}(\xi) \langle \xi \rangle^{3/2} \in L^{2}((0,\infty),\rho(\xi)d\xi)$, if $\overline{v}$ is given by
$$\overline{v}(r) = \frac{1}{\sqrt{r}}\mathcal{F}^{-1}\left(\begin{bmatrix} \overline{y_{0}}&\overline{y_{1}}\end{bmatrix}^{\text{T}}\right)(r) = \overline{y_{0}} \frac{\phi_{0}(r)}{||\phi_{0}||_{L^{2}(r dr)}^{2}} + \int_{0}^{\infty} \frac{\phi(r,\xi)}{\sqrt{r}} \overline{y_{1}}(\xi) \rho(\xi) d\xi$$
then, $\partial_{r}\overline{v}$ and $\overline{v}$ admit continuous extensions to $[0,\infty)$ (which we also denote by $\partial_{r}\overline{v}$ and $\overline{v}$):
$$\partial_{r}\overline{v}, \overline{v} \in C^{0}([0,\infty))$$ 
$$\overline{v}(0)=\partial_{r}\overline{v}(0)=\lim_{r \rightarrow \infty} \overline{v}(r)=\lim_{r\rightarrow\infty} \partial_{r}\overline{v}(r)=0$$
\begin{equation}\label{vptwse} \frac{|\overline{v}(r)|}{r^{2}\sqrt{\langle \log(r)\rangle}} \leq C \left(|\overline{y_{0}}|+ ||\overline{y_{1}}(\xi) \langle \xi \rangle^{3/2}||_{L^{2}(\rho(\xi) d\xi)}\right), \quad r \leq 1\end{equation}
\begin{equation}\label{lvptwse} |L\overline{v}(r)| \leq C r \sqrt{\langle \log(r)\rangle}  ||\overline{y_{1}}(\xi) \langle \xi \rangle^{3/2}||_{L^{2}(\rho(\xi) d\xi)}, \quad r \leq 1\end{equation}
\begin{equation}\label{lstarlvptwse} |L^{*}L\overline{v}(r)| \leq C \sqrt{\langle \log(r) \rangle} ||\overline{y_{1}}(\xi) \langle \xi \rangle^{3/2}||_{L^{2}(\rho(\xi) d\xi)}, \quad 0< r \leq 1\end{equation}
\begin{equation}\label{l2trans} ||\overline{v}||_{L^{2}(r dr)} \leq C \left(|\overline{y_{0}}| + ||\overline{y_{1}}(\xi)||_{L^{2}(\rho(\xi) d\xi)}\right)\end{equation}
\begin{equation}\label{lstarltrans} ||L^{*}L\overline{v}||_{L^{2}(r dr)} = ||\xi \overline{y_{1}}(\xi)||_{L^{2}(\rho(\xi) d\xi)}\end{equation}
\begin{equation}\label{ltrans} ||L\overline{v}||_{L^{2}(r dr)} = ||\sqrt{\xi} \overline{y_{1}}(\xi)||_{L^{2}(\rho(\xi)d\xi)}\end{equation}
\begin{equation} \label{h1dotest} ||\overline{v}||_{\dot{H}^{1}_{e}} \leq C \left(||L(\overline{v})||_{L^{2}(r dr)} + ||\overline{v}||_{L^{2}(r dr)} \right)\end{equation}
\begin{equation} \label{inftyest} ||\overline{v}||_{L^{\infty}} \leq C ||\overline{v}||_{\dot{H}^{1}_{e}}\end{equation}
\begin{equation}\label{lstarl2} ||L\overline{v}||_{L^{\infty}} + ||L\overline{v}||_{\dot{H}^{1}_{e}} \leq C ||L^{*}L\overline{v}||_{L^{2}(r dr)}\end{equation}
\end{lemma}
\begin{proof}
The proof of this lemma is very similar to that of Lemma 5.1 of \cite{wm}, but, we will include a proof here for completeness. To establish \eqref{vptwse}, it suffices to estimate the following integral, using \eqref{phiseries}, \eqref{philgrasymp}, \eqref{asymbol}, and \eqref{rhoest} (i.e., properties of $\phi(r,\xi)$, $a$, and $\rho$ from section 4 of \cite{kstym}). For $r \leq 1$,
\begin{equation}\begin{split}&|\int_{0}^{\infty} \overline{y_{1}}(\xi) \frac{\phi(r,\xi)}{r^{5/2}} \frac{\rho(\xi) d\xi}{\sqrt{\langle \log(r)\rangle}}| \leq \frac{C}{\sqrt{\langle \log(r) \rangle}}\int_{0}^{\frac{4}{r^{2}}} |\overline{y_{1}}(\xi)| \langle \xi\rangle^{3/2} \sqrt{\rho(\xi)} \frac{\sqrt{\rho(\xi)}d\xi}{\langle \xi\rangle^{3/2}} + C \int_{\frac{4}{r^{2}}}^{\infty} \frac{|\overline{y_{1}}(\xi)| \cdot |a(\xi)| \rho(\xi) d\xi}{\xi^{1/4} r^{5/2} \sqrt{\langle \log(r) \rangle}}\\
&\leq \frac{C}{\sqrt{\langle \log(r) \rangle}} ||\overline{y_{1}}(\xi) \langle \xi \rangle^{3/2}||_{L^{2}(\rho(\xi) d\xi)} \left(\int_{0}^{\frac{4}{r^{2}}} \frac{\rho(\xi) d\xi}{\langle \xi\rangle^{3}}\right)^{1/2} + \frac{C}{\sqrt{\langle \log(r) \rangle}} ||\overline{y_{1}}(\xi) \langle \xi \rangle^{3/2}||_{L^{2}(\rho(\xi) d\xi)}.\end{split}\end{equation}
The proofs of \eqref{lvptwse} and \eqref{lstarlvptwse} are similar. The only new point to note is that $L(\phi_{0})=0$, which explains why there is no $\overline{y_{0}}$ term on the right-hand sides of these inequalities. Also, we use the fact that the functions $\widetilde{\phi_{j}}$ (which appear in the formula \eqref{phiseries} for $\phi(r,\xi)$) in \cite{kstym} satisfy symbol-type estimates. This is not directly stated in Proposition 4.5 of \cite{kstym}. However, this fact can be proven in a straightforward manner by noting that
$$ \widetilde{f_{0}}(u) = \frac{u^{2}}{(1+u)^{2}}, \quad \widetilde{\phi_{j}}(u) = u^{-j} \widetilde{f_{j}}(u), \text{ for } j \geq 1, \quad \widetilde{f_{1}}(u) = \frac{-u^{3}(2+u)}{24(1+u)^{2}}$$
and then using an argument by induction to estimate $\widetilde{f_{j}}'(u)$, given the representation formula of $\widetilde{f_{j}}$ in terms of $\widetilde{f_{j-1}}$, which is
$$\widetilde{f_{j}}(u) = \int_{0}^{u} \left(k(u,v)-k(v,u)\right)\frac{\widetilde{f_{j-1}}(v)}{8v^{2}} dv, \quad \text{where }k(u,v) = \frac{u^{2}\left(-1-8v+12 v^{2} \log(v)+8v^{3}+v^{4}\right)}{(1+u)^{2}(1+v)^{2}}.$$
The lemma statement regarding continuity of $\overline{v}$ and $\partial_{r}\overline{v}$ is proven with the Dominated convergence theorem, again using the properties \eqref{phiseries} and \eqref{philgrasymp} of $\phi(r,\xi)$ (which are from section 4 of \cite{kstym}). The Dominated convergence theorem also shows that $\overline{v}$ and $\partial_{r}\overline{v}$ extend continuously to $[0,\infty)$ with $\overline{v}(0)=\partial_{r}\overline{v}(0)=\lim_{r \rightarrow \infty} \overline{v}(r)=\lim_{r\rightarrow\infty} \partial_{r}\overline{v}(r)=0$. The inequality \eqref{l2trans} follows directly from the $L^{2}$ isometry property of $\mathcal{F}$. To prove \eqref{ltrans}, we first recall the conjugation of $L^{*}L$ used in \cite{kstym}, namely $\mathcal{L}$, which satisfies
$$\mathcal{L}(u)(r) = L^{*}L\left(\frac{u(\cdot)}{\sqrt{\cdot}}\right)(r) \sqrt{r}$$
and for which $\phi(r,\xi)$ are eigenfunctions. Using the Dominated convergence theorem, we have, for $r>0$,
$$\mathcal{L}(\overline{v}(\cdot) \sqrt{\cdot})(r) = \int_{0}^{\infty} \mathcal{L}(\phi(\cdot,\xi))(r) \overline{y_{1}}(\xi) \rho(\xi) d\xi = \int_{0}^{\infty} \xi \phi(r,\xi) \overline{y_{1}}(\xi) \rho(\xi) d\xi = \mathcal{F}^{-1}\left(\begin{bmatrix} 0&\xi \overline{y_{1}}(\xi)\end{bmatrix}^{\text{T}}\right)(r)$$
where we again emphasize that we regard the distorted Fourier transform as a two-component vector, following \cite{kstym}. We can now prove \eqref{lstarltrans} using the $L^{2}$ isometry property of $\mathcal{F}$.
$$||\xi \overline{y_{1}}(\xi)||_{L^{2}(\rho(\xi) d\xi)} = ||\mathcal{L}(\overline{v}(\cdot) \sqrt{\cdot})(r)||_{L^{2}(dr)} = ||L^{*}L(\overline{v})||_{L^{2}(r dr)}$$ 
Continuing the proof of \eqref{ltrans}, we have
\begin{equation}\label{intstep}\begin{split}\langle \mathcal{L}\left(\overline{v}(\cdot)\sqrt{\cdot}\right)(r),\overline{v}(r)\sqrt{r} \rangle_{L^{2}(dr)} &= \langle \mathcal{F}^{-1}\left(\begin{bmatrix} 0&\xi \overline{y_{1}}(\xi)\end{bmatrix}^{\text{T}}\right),\mathcal{F}^{-1}\left(\begin{bmatrix} \overline{y_{0}}&\overline{y_{1}}\end{bmatrix}^{\text{T}}\right)(r)\rangle_{L^{2}(dr)} \\
&= \langle \xi \overline{y_{1}}(\xi),\overline{y_{1}}(\xi)\rangle_{L^{2}(\rho(\xi) d\xi)} = ||\sqrt{\xi} \overline{y_{1}}(\xi)||^{2}_{L^{2}(\rho(\xi) d\xi)}.\end{split}\end{equation}
On the other hand, 
\begin{equation}\begin{split}\langle \mathcal{L}\left(\overline{v}(\cdot)\sqrt{\cdot}\right)(r),\overline{v}(r)\sqrt{r} \rangle_{L^{2}(dr)} &= \int_{0}^{\infty} L^{*}L(\overline{v})(r) \overline{v}(r) r dr.\end{split}\end{equation}
Recalling the assumptions of the lemma, and \eqref{l2trans} and \eqref{lstarltrans}, which we have proven at this point, we see that $L^{*}L(\overline{v})(r) \overline{v}(r) r \in L^{1}((0,\infty),dr).$ Therefore, by the Dominated convergence theorem, 
$$\int_{0}^{\infty} \overline{v}(r) L^{*}L\overline{v}(r) r dr = \lim_{M \rightarrow \infty} \int_{0}^{\infty} \overline{v}(r) L^{*}L\overline{v}(r) \psi_{\leq 1}(\frac{r}{M}) r dr$$
where $0 \leq \psi_{\leq 1}(x) \leq 1, \quad \psi_{\leq 1} \in C^{\infty}(\mathbb{R}), \quad \psi_{\leq 1}'(x) \leq 0, \psi_{\leq 1}(x) = \begin{cases} 1, \quad x \leq \frac{1}{2}\\
0, \quad x \geq 1\end{cases}.$ Then, we inspect integral which arises from the term $\partial_{r}$ in the expression for $L^{*}$ (recall \eqref{Lstardef}), and integrate by parts once to get, for instance, for all $M \geq 1$,
\begin{equation}\begin{split}&\int_{0}^{\infty} \overline{v}(r) \partial_{r}L\overline{v}(r) \psi_{\leq 1}(\frac{r}{M}) r dr = -\int_{0}^{\infty} L\overline{v}(r) \partial_{r}\left(\overline{v}(r) \psi_{\leq 1}(\frac{r}{M}) r\right) dr   \\
&= -\int_{0}^{\infty} L(\overline{v})(r) \left(\partial_{r}\overline{v}(r) + \frac{\overline{v}(r)}{r}\right) \psi_{\leq 1}(\frac{r}{M}) r dr + \int_{0}^{\infty} \frac{\partial_{r}\left((\overline{v}(r))^{2}\right)}{2} \frac{\psi_{\leq 1}'(\frac{r}{M})}{M} r dr \\
&- \int_{0}^{\infty} 2 \left(\frac{1-r^{2}}{1+r^{2}}\right) \frac{\overline{v}(r)^{2}}{r} \frac{\psi_{\leq 1}'(\frac{r}{M})}{M} r dr. \end{split} \end{equation}
For the integral below, we integrate by parts again, to get
$$|\int_{0}^{\infty} \frac{\partial_{r}\left((\overline{v}(r))^{2}\right)}{2} \frac{\psi_{\leq 1}'(\frac{r}{M})}{M} r dr| = |-\int_{0}^{\infty} \frac{\overline{v}(r)^{2}}{2} \left(\frac{\psi_{\leq 1}''(\frac{r}{M})}{M^{2}} + \frac{\psi_{\leq 1}'(\frac{r}{M})}{M r}\right) r dr| \leq \frac{C}{M^{2}} ||\overline{v}||^{2}_{L^{2}(r dr)}.$$
In total, we have (recall \eqref{Ldef})
$$\int_{0}^{\infty} \overline{v}(r) L^{*}L\overline{v}(r) r dr=\lim_{M \rightarrow \infty} \int_{0}^{\infty} \overline{v}(r) L^{*}L\overline{v}(r) \psi_{\leq 1}(\frac{r}{M}) r dr = \lim_{M \rightarrow \infty} \int_{0}^{\infty} (L\overline{v}(r))^{2} \psi_{\leq 1}(\frac{r}{M}) r dr.$$
By the Monotone convergence theorem (recall that $\psi_{\leq 1}'(x) \leq 0$) we get
$$\int_{0}^{\infty} (L\overline{v}(r))^{2} r dr = \int_{0}^{\infty} \overline{v}(r) L^{*}L\overline{v}(r) r dr.$$
Combining this with the observation \eqref{intstep}, we get
$$||\sqrt{\xi} \overline{y_{1}}(\xi)||^{2}_{L^{2}(\rho(\xi) d\xi)} = \int_{0}^{\infty} (L\overline{v}(r))^{2} r dr$$
which is \eqref{ltrans}. The inequality \eqref{h1dotest} is proven similarly to the analogous estimate in \cite{wm}, except that we will not need to use an approximation argument. The details of the proof are as follows. From the lemma hypotheses, and what we have established up to now,  $\overline{v} \in C^{1}([0,\infty)) \cap L^{2}((0,\infty), r dr)$, and $L\overline{v} \in L^{2}((0,\infty), r dr)$ with $\overline{v}(0) = \lim_{r \rightarrow \infty} \overline{v}(r) =0$. So,
$$L\overline{v}(r) = -\overline{v}'(r) + \frac{2}{r} \overline{v}(r) + \overline{v}(r) \cdot \left(\frac{-4 r}{1+r^{2}}\right)$$
which shows that \begin{equation}\label{h1dotint}-\overline{v}'(r) + \frac{2}{r} \overline{v}(r) = L\overline{v}(r)-\overline{v}(r) \cdot \left(\frac{-4 r}{1+r^{2}}\right) \in L^{2}((0,\infty), r dr).\end{equation}
So, for any $M>4$,
\begin{equation}\label{h1dotint2}\int_{\frac{1}{M}}^{M} (-\overline{v}'(r)+\frac{2}{r}\overline{v}(r))^{2} r dr = \int_{\frac{1}{M}}^{M} \left((\overline{v}'(r))^{2} + \frac{4}{r^{2}} \overline{v}(r)^{2}\right) r dr - 2 \int_{\frac{1}{M}}^{M} \frac{d}{dr}((\overline{v}(r)^{2}) dr. \end{equation}
By the Dominated convergence theorem, and the observation \eqref{h1dotint}, we have
$$\lim_{M \rightarrow \infty} \int_{\frac{1}{M}}^{M} (-\overline{v}'(r)+\frac{2}{r}\overline{v}(r))^{2} r dr = \int_{0}^{\infty} (-\overline{v}'(r)+\frac{2}{r}\overline{v}(r))^{2} r dr.$$
We then let $M\rightarrow \infty$ in \eqref{h1dotint2} and use the Monotone convergence theorem, to get
$$\int_{0}^{\infty} \left((\overline{v}'(r))^{2}+\frac{4}{r^{2}}\overline{v}(r)^{2}\right) r dr = \int_{0}^{\infty} (-\overline{v}'(r)+\frac{2}{r}\overline{v}(r))^{2} r dr\leq C \left(||L\overline{v}||^{2}_{L^{2}(r dr)} + ||\overline{v}||^{2}_{L^{2}(r dr)}\right)$$
where the last inequality follows from \eqref{h1dotint}. This in particular, proves that $\overline{v} \in \dot{H}^{1}_{e}$, with the estimate \eqref{h1dotest}. The next inequality to prove, \eqref{inftyest}, is proven in the same way as in \cite{wm}. In particular, since $\overline{v} \in C^{1}([0,\infty))\cap \dot{H}^{1}_{e}$, with $\overline{v}(0)=0$, we use the Fundamental theorem of calculus for $\overline{v}^{2}$, to get
\begin{equation}\label{ptwseh1doteintstep}\overline{v}(r)^{2} \leq C \int_{0}^{r} \frac{|\overline{v}(r)|}{\sqrt{r}} |\partial_{r}\overline{v}| \sqrt{r} dr \leq C ||\frac{\overline{v}}{r}||_{L^{2}(r dr)} ||\partial_{r}\overline{v}||_{L^{2}(r dr)} \leq C ||\overline{v}||^{2}_{\dot{H}^{1}_{e}}.\end{equation} 
The final estimate to prove is \eqref{lstarl2}. If we have $g \in C^{1}([0,\infty))\cap L^{2}((0,\infty), r dr)$, and $L^{*}g \in L^{2}((0,\infty), r dr)$ with $g(0) = \lim_{r \rightarrow \infty} g(r) =0$, then, we recall the definition of $L^{*}$: 
$$L^{*}(f)(r) = f'(r) + V(r) f(r) , \quad \text{ where }V(r) = \frac{2}{r}\left(\frac{1-r^{2}}{1+r^{2}}\right) + \frac{1}{r}.$$
For $M > 4$,
$$\int_{\frac{1}{M}}^{M} \left(L^{*}g\right)^{2} r dr = \int_{\frac{1}{M}}^{M} (g'(r))^{2} r dr + \int_{\frac{1}{M}}^{M} (V(r))^{2} g(r)^{2} r dr + 2 \int_{\frac{1}{M}}^{M} V(r) \frac{d}{dr}(g(r)^{2}) \frac{r}{2} dr.$$
The last term in the expression above is
$$V(r) g(r)^{2} r \Bigr|_{\frac{1}{M}}^{M} - \int_{\frac{1}{M}}^{M} (g(r))^{2} \frac{d}{dr}\left(V(r) r\right) dr$$
So, 
\begin{equation}\begin{split}\int_{\frac{1}{M}}^{M} \left(L^{*}g\right)^{2} r dr &= \int_{\frac{1}{M}}^{M} \left((g'(r))^{2}+(V(r))^{2} g(r)^{2} -\frac{(g(r))^{2}}{r} \frac{d}{dr}\left(V(r)r\right)\right) rdr + V(r) g(r)^{2} r \Bigr|_{\frac{1}{M}}^{M}\\
&= \int_{\frac{1}{M}}^{M} \left((g'(r))^{2}+ g(r)^{2} \left(\frac{9+2r^{2}+r^{4}}{(r+r^{3})^{2}}\right)\right) r dr + V(r) g(r)^{2} r \Bigr|_{\frac{1}{M}}^{M}\end{split}\end{equation}
By our assumptions on $g$,
$$\lim_{M \rightarrow \infty} \left(\int_{\frac{1}{M}}^{M} (L^{*}g)^{2} r dr - \left(V(r) g(r)^{2} r \Bigr|_{\frac{1}{M}}^{M}\right)\right) = \int_{0}^{\infty} (L^{*}g)^{2} r dr$$
(in particular, the limit exists). Therefore,
$$\lim_{M \rightarrow \infty} \left(\int_{\frac{1}{M}}^{M} \left((g'(r))^{2}+ g(r)^{2} \left(\frac{9+2r^{2}+r^{4}}{(r+r^{3})^{2}}\right)\right) r dr\right)=\int_{0}^{\infty} (L^{*}g)^{2} r dr.$$
By the Monotone convergence theorem, we then get that $g \in \dot{H}^{1}_{e}$, and 
$$||L^{*}g||^{2}_{L^{2}(r dr)} = \int_{0}^{\infty} \left((g'(r))^{2}+ g(r)^{2} \left(\frac{9+2r^{2}+r^{4}}{(r+r^{3})^{2}}\right)\right) r dr \geq C \int_{0}^{\infty} \left((g'(r))^{2}+\frac{g(r)^{2}}{r^{2}}\right) r dr$$
In other words, $||g||_{\dot{H}^{1}_{e}} \leq C ||L^{*}g||_{L^{2}(r dr)}.$ Then, we can apply \eqref{ptwseh1doteintstep} (note that $g$ satisfies all of the assumptions we used to establish \eqref{ptwseh1doteintstep}, now that we showed that $g \in \dot{H}^{1}_{e}$)  to get
\begin{equation}\label{gconc}||g||_{L^{\infty}} \leq C ||g||_{\dot{H}^{1}_{e}} \leq C ||L^{*}g||_{L^{2}(r dr)}.\end{equation}
Note that $L\overline{v}$ does not quite satisfy all of the stated assumptions on $g$ used in the preceding argument. Therefore, we define, for $M \geq 4$,
$$\overline{v}_{M}(r) = \overline{y_{0}} \frac{\phi_{0}(r)}{||\phi_{0}||^{2}_{L^{2}(r dr)}} + \int_{0}^{\infty} \frac{\phi(r,\xi)}{\sqrt{r}} \overline{y_{1}}(\xi) \rho(\xi) \chi_{\leq 1}(\frac{\xi}{M}) d\xi.$$
As in the proof of Lemma 5.1 in \cite{wm}, we verify, via the Dominated convergence theorem, that $L\overline{v_{M}} \in C^{1}([0,\infty))$, and that $L\overline{v_{M}} \in  L^{2}((0,\infty), r dr)$, $L^{*}L\overline{v_{M}} \in L^{2}((0,\infty), r dr)$ with $L(\overline{v_{M}})(0) = \lim_{r \rightarrow \infty} L(\overline{v_{M}})(r) =0$. Therefore, we have \eqref{gconc} for $g=L\overline{v_{M}}$. An approximation argument then establishes \eqref{lstarl2}.
\end{proof}
We recall the definition of $F_{3}$. 
\begin{equation}\label{f3def2}\begin{split}F_{3}(t,r) &= \frac{2 v(t,r)^{3}}{r^{2}} + \frac{6}{r^{2}}\left(Q_{\frac{1}{\lambda(t)}}(r) + v_{c}(t,r)+w_{c}(t,r)\right)v(t,r)^{2} \\
&+ \frac{6 v(t,r)}{r^{2}}\left(\left(v_{c}(t,r)+Q_{\frac{1}{\lambda(t)}}(r)+w_{c}(t,r)\right)^{2}-Q_{\frac{1}{\lambda(t)}}(r)^{2}\right)\end{split}\end{equation}
where $v$ is given in terms of our previously defined function $y$, by
$$v(t,r) = y_{0}(t) \frac{\phi_{0}(\frac{r}{\lambda(t)})}{||\phi_{0}||^{2}_{L^{2}(rdr)}} + \int_{0}^{\infty} y_{1}(t,\frac{\xi}{\lambda(t)^{2}}) \phi(\frac{r}{\lambda(t)},\xi) \sqrt{\frac{\lambda(t)}{r}} \rho(\xi) d\xi.$$
Now, we will record estimates on quantities associated to $F_{3}$, for any $\begin{bmatrix} y_{0}&y_{1}\end{bmatrix}^{\text{T}}$ satisfying $y_{1}(t,\omega)\langle \omega \lambda(t)^{2}\rangle^{3/2} \in L^{2}(\rho(\omega \lambda(t)^{2})d\omega)$. We recall that $v_{corr}$ has been defined in Definition \ref{vcorrdef}, and estimated in \eqref{vcorrinftyest}. Then, directly estimating each term of \eqref{f3def2} (and their derivatives), and using the above lemma, we get \eqref{f3totalest}. \end{proof}

\subsection{$F_{4}$ Estimates}
Here, we will estimate certain oscillatory integrals applied to $F_{4}$, for later use. First, we will need an estimate related to the function $\rho$ (which was defined in \eqref{rhoest}, following \cite{kstym}) appearing in the spectral measure associated to the distorted Fourier transform of \cite{kstym}. Using  \eqref{rhoest} (which follows from Lemma 4.7 of \cite{kstym}) we get
\begin{equation}\label{rhoscaling}\frac{\rho(\omega \lambda(t)^{2})}{\rho(\omega \lambda(x)^{2})} \leq C \text{max}\{\frac{\lambda(t)^{4}}{\lambda(x)^{4}} ,1\}.\end{equation}
For ease of notation, we define $\mathcal{F}\left(\sqrt{\cdot}F_{4}\left(x,\cdot \lambda(x)\right)\right)_{i}$ by
$$\mathcal{F}\left(\sqrt{\cdot}F_{4}\left(x,\cdot \lambda(x)\right)\right)\left(\omega \lambda(x)^{2}\right) = \begin{bmatrix} \mathcal{F}\left(\sqrt{\cdot}F_{4}\left(x,\cdot \lambda(x)\right)\right)_{0}&
\mathcal{F}\left(\sqrt{\cdot}F_{4}\left(x,\cdot \lambda(x)\right)\right)_{1}\left(\omega \lambda(x)^{2}\right)\end{bmatrix}^{\text{T}}.$$
Later on, we will use this notation for $F_{3}$ and $F_{5}$ as well. Because $F_{4}(x,R\lambda(x))$ is orthogonal to $\phi_{0}(R)$ in $L^{2}(R dR)$, we have 
$$\mathcal{F}(\sqrt{\cdot} F_{4}(x,\cdot \lambda(x)))(\omega \lambda(x)^{2})=\begin{bmatrix} 0&
\mathcal{F}(\sqrt{\cdot} F_{4}(x,\cdot \lambda(x)))_{1}(\omega \lambda(x)^{2})\end{bmatrix}^{\text{T}}.$$  
Now, we will prove the following lemma
\begin{lemma}\label{f4lemma} We have the following estimates
\begin{equation} ||\lambda(t) \left(\omega \lambda(t)^{2}\right)^{3/2} \int_{t}^{\infty} dx \frac{\sin((t-x)\sqrt{\omega})}{\sqrt{\omega}} \mathcal{F}(\sqrt{\cdot} F_{4}(x,\cdot \lambda(x))_{1}(\omega \lambda(x)^{2})||_{L^{2}(\rho(\omega \lambda(t)^{2})d\omega)} \leq \frac{C \lambda(t)^{2}}{t^{2} \log^{b}(t)}\end{equation}
\begin{equation}\begin{split}&||\lambda(t) \int_{t}^{\infty} \frac{\sin((t-x)\sqrt{\omega})}{\sqrt{\omega}} \mathcal{F}(\sqrt{\cdot} F_{4}(x,\cdot \lambda(x)))_{1}(\omega \lambda(x)^{2}) dx ||_{L^{2}(\rho(\omega \lambda(t)^{2})d\omega)}\\
&\leq \frac{C \lambda(t)^{2}}{t^{2}} \left(\frac{1}{\log^{2b-2\epsilon-1}(t)} + \frac{1}{\log^{3b-2-2\epsilon}(t)} + \frac{1}{\log^{2\epsilon}(t)} + \frac{1}{\log^{b}(t)}\right)\end{split}\end{equation}
\begin{equation}\begin{split} &||\lambda(t) \int_{t}^{\infty} dx \cos((t-x)\sqrt{\omega}) \mathcal{F}(\sqrt{\cdot} F_{4}(x,\cdot \lambda(x)))_{1}(\omega \lambda(x)^{2})||_{L^{2}(\rho(\omega \lambda(t)^{2})d\omega)}\\
&\leq C \frac{\lambda(t)^{2}}{t^{3}}\left(\frac{1}{\log^{b}(t)} + \frac{1}{\log^{2\epsilon}(t)} + \frac{1}{\log^{2b-2\epsilon-1}(t)} + \frac{1}{\log^{3b-2-2\epsilon}(t)}\right)\end{split}\end{equation}
\begin{equation}\begin{split}&||\lambda(t) \left(\omega \lambda(t)^{2}\right) \int_{t}^{\infty} dx \cos((t-x)\sqrt{\omega}) \mathcal{F}(\sqrt{\cdot}F_{4}(x,\cdot\lambda(x)))_{1}(\omega \lambda(x)^{2})||_{L^{2}(\rho(\omega \lambda(t)^{2})d\omega)}\\
&\leq \frac{C \lambda(t)^{2}}{t^{3}} \left(\frac{1}{\log^{2b-2\epsilon-1}(t)} + \frac{1}{\log^{3b-2\epsilon-2}(t)} + \frac{1}{\log^{2\epsilon}(t)} + \frac{1}{\log^{b}(t)}\right)\end{split}\end{equation}\end{lemma}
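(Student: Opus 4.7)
The plan is to combine three structural ingredients. First, the orthogonality hypothesis $\langle F_4(x,R\lambda(x)),\phi_0(R)\rangle_{L^2(R\,dR)}=0$ forces the $\widetilde{\phi_0}$-component of $\mathcal{F}(\sqrt{\cdot}\,F_4(x,\cdot\lambda(x)))$ to vanish identically (because $\widetilde{\phi_0}=\sqrt{\cdot}\,\phi_0$), and then the series representation \eqref{phiseries} yields
\[
\mathcal{F}(\sqrt{\cdot}\,F_4(x,\cdot\lambda(x)))_1(\xi) = \int_0^\infty \frac{1}{r^{3/2}}\sum_{j\geq 1}(r^2\xi)^j \widetilde{\phi_j}(r^2)\sqrt{r}\,F_4(x,r\lambda(x))\,dr,
\]
so in particular $|\mathcal{F}(\sqrt{\cdot}\,F_4(x,\cdot\lambda(x)))_1(\xi)|\lesssim \xi \lambda(x)^2\cdot \|F_4(x,\cdot\lambda(x))\|$ for small $\xi$, providing enough vanishing at the origin to absorb the $1/\sqrt{\omega}$ in the sin-Duhamel factor. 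Second, the eigenfunction relation from \cite{kstym} combined with $\mathcal{L}[\sqrt{\cdot}\,h](r)=\sqrt{r}\,L^*L[h](r)$ gives
\[
(\omega\lambda(x)^2)\,\mathcal{F}(\sqrt{\cdot}\,F_4(x,\cdot\lambda(x)))_1(\omega\lambda(x)^2) = \mathcal{F}(\sqrt{\cdot}\,L^*L[F_4(x,\cdot\lambda(x))])_1(\omega\lambda(x)^2),
\]
converting positive weights in $\omega\lambda^2$ into applications of $L^*L$ in the spatial variable; combined with the $L^2$-isometry of $\mathcal{F}$ and the rescaling relation \eqref{rhoscaling}, this reduces weighted $L^2(\rho(\omega\lambda(t)^2)d\omega)$ norms to $L^2(r\,dr)$ norms of either $F_4(x,\cdot\lambda(x))$ or $L^*L[F_4(x,\cdot\lambda(x))]$, both of which are controlled by the symbol-type bounds \eqref{f4symb}.

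To execute this, I would first apply Minkowski's inequality in $x$ and use \eqref{rhoscaling} to replace $\rho(\omega\lambda(t)^2)$ by $\rho(\omega\lambda(x)^2)$ up to a harmless factor, which allows me to exploit the isometry under the change of variables $\eta=\omega\lambda(x)^2$. For the first estimate, the heavy weight $(\omega\lambda(t)^2)^{3/2}/\sqrt{\omega}\sim \lambda(t)^3\cdot(\omega\lambda(x)^2)\cdot(\lambda(t)/\lambda(x))^{?}$ directly pulls out, via the transference identity above, a factor of $L^*L$; evaluating the resulting $L^2(r\,dr)$ norm by \eqref{f4symb} (which gives $\|L^*LF_4(x,\cdot\lambda(x))\|_{L^2(R\,dR)}\lesssim 1/(x^2\lambda(x)^{-2}\log^b x)$ after absorbing $\lambda(x)$-powers from the substitution $r=R\lambda(x)$) and integrating in $x\in[t,\infty)$ yields the stated $\lambda(t)^2/(t^2\log^b t)$. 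The second estimate carries the more delicate weight $1/\sqrt{\omega}$, so I would split the $\omega$-integral at $\omega\sim 1/\lambda(x)^2$: on high frequencies direct isometry handles things, while on low frequencies I combine the vanishing $\mathcal{F}(\sqrt{\cdot}\,F_4)_1(\xi)=O(\xi)$ with an $x$-integration by parts — writing $\sin((t-x)\sqrt{\omega})/\sqrt{\omega}$ as $-\partial_x(\cos((t-x)\sqrt{\omega}))/\omega$ is singular, so instead I exploit the symbol-type estimate on $\partial_x F_4$ directly to gain one power of $t$, and this is how the distinct logarithmic gains $1/\log^{2\epsilon}(t)$, $1/\log^{2b-2\epsilon-1}(t)$, $1/\log^{3b-2\epsilon-2}(t)$ appear from the three regimes in \eqref{f4symb} (the $r\le g(t)$ soliton piece, the $g(t)<r<t/2$ logarithmically-weighted $w_c$-piece, and their interactions).

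For the third and fourth estimates, observe that formally differentiating in $t$ the integral $\int_t^\infty \sin((t-x)\sqrt{\omega})/\sqrt{\omega}\,G(x,\omega)\,dx$ produces $\int_t^\infty \cos((t-x)\sqrt{\omega})\,G(x,\omega)\,dx$ (the boundary term at $x=t$ vanishes), so estimates three and four are the $\cos$-analogues of estimates two and one respectively. The additional factor of $1/t$ in the stated bounds is gained by performing one $x$-integration by parts in the $\cos$-integral: setting $\cos((t-x)\sqrt{\omega})=-\partial_x(\sin((t-x)\sqrt{\omega})/\sqrt{\omega})$, the boundary term at $x=t$ vanishes (since $\sin 0=0$) and the boundary at $\infty$ vanishes by the decay of $F_4$, leaving a derivative either on $F_4(x,\cdot\lambda(x))$ directly — handled by the $\partial_x$ symbol estimate of \eqref{f4symb} — or on the rescaling $\lambda(x)^2$, which after commuting with the spatial rescaling produces terms involving $r\partial_r F_4$ controlled again by \eqref{f4symb}.

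The principal obstacle will be carefully reconciling three competing considerations in the low-frequency analysis for the second and third estimates: the quantitative rate of vanishing of $\mathcal{F}(\sqrt{\cdot}\,F_4(x,\cdot\lambda(x)))_1$ at $\xi=0$ (which requires the full series \eqref{phiseries}, not just the leading term, to get the correct $\log$-power), the dependence of the rescaling $\xi=\omega\lambda(x)^2$ on $x$ (which couples the $\omega$- and $x$-integrals in a nontrivial way during integration by parts), and keeping track of which piece of \eqref{f4symb} produces which logarithmic factor in the final bound. Getting all the logarithmic exponents to match those stated, rather than merely obtaining a qualitative estimate with some loss, is where the main technical work lies.
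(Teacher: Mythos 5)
Your outline correctly identifies the two structural ingredients — orthogonality giving the vanishing of $\mathcal F(\sqrt{\cdot}F_4)_1(\xi)$ near $\xi=0$, and the eigenvalue relation $\xi\,\mathcal F(\sqrt{\cdot}F_4)_1(\xi)=\mathcal F(\sqrt{\cdot}L^*LF_4)_1(\xi)$ converting $\omega$-weights into $L^*L$ — but the plan for the first estimate has a gap that costs a full power of $t$, and this is not a bookkeeping issue.

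For the first estimate you propose to bound $\lvert\sin((t-x)\sqrt\omega)\rvert$ by $1$, apply Minkowski in $x$, use the isometry and \eqref{rhoscaling}, and then integrate the symbol bound in $x$. This cannot produce the stated bound. The weight $(\omega\lambda(t)^2)^{3/2}/\sqrt\omega=\lambda(t)^3\omega$ pulls out one factor of $\omega\lambda(x)^2$, i.e. one application of $L^*L$, and $\|L^*L(F_4(x,\cdot\lambda(x)))\|_{L^2(R\,dR)}\lesssim 1/(x^2\log^b x)$ — note there is no $\lambda(x)^2$ gain here, contrary to what you wrote; the symbol estimate \eqref{f4symb} already scales so that after $r=R\lambda(x)$ the $\lambda$-dependence cancels. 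Integrating $1/(x^2\log^b x)$ over $x\in[t,\infty)$ produces $1/(t\log^b t)$, not the required $1/(t^2\log^b t)$: you are short by a factor of $t$. The paper gains this factor by integrating by parts once in $x$ (writing $\sin((t-x)\sqrt\omega)=\partial_x[\cos((t-x)\sqrt\omega)/\sqrt\omega]$): the boundary term at $x=t$ is evaluated at a single time and gives $\lambda(t)^2\|L(F_4(t,\cdot\lambda(t)))\|_{L^2(R\,dR)}\lesssim\lambda(t)^2/(t^2\log^b t)$ with no $x$-integral to smear it out, and the remaining integral carries $\partial_x$ acting on $\mathcal F(\sqrt{\cdot}F_4(x,\cdot\lambda(x)))_1(\omega\lambda(x)^2)$, producing the extra $1/x$ decay that makes the $x$-integral converge to a $1/t^2$ scale. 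You cannot skip this step.

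A related confusion appears in your treatment of the second estimate. You describe the substitution $\sin((t-x)\sqrt\omega)/\sqrt\omega=-\partial_x[\cos((t-x)\sqrt\omega)/\omega]$ as ``singular'' and propose to avoid it, but it is not singular: after integrating by parts the boundary term is $\mathcal F(\sqrt{\cdot}F_4(t,\cdot\lambda(t)))_1(\omega\lambda(t)^2)/\omega$, and the orthogonality plus the compact support of $F_4$ force $\mathcal F(\sqrt{\cdot}F_4)_1(\xi)$ to \emph{vanish identically} for $\xi$ below $\sim\lambda(x)^2/x^2$ (the paper's Case 1), not merely to be $O(\xi)$, so $\mathcal F_1/\omega$ is perfectly well-defined and lies in $L^2(\rho)$ — this is exactly what the paper establishes in \eqref{f4overomega} via the four-case analysis in $\omega$. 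Your proposed alternative (high-frequency isometry, low-frequency direct use of $\partial_x F_4$) suffers from the same $1/t$ loss as in the first estimate, since without integration by parts there is nothing to convert the bulk $x$-integral into decay at the time $t$. Note also that simply subtracting $\widetilde{\phi_0}$ globally and invoking \eqref{phiseries} does not by itself give a useful pointwise bound: the term-by-term estimate $|\widetilde{\phi_j}(R^2)|\leq C^j/j!\cdot R^4/\langle R^2\rangle$ sums to $e^{CR^2\xi}$, which is useless once $R^2\xi\gtrsim1$, so one must split at $R^2\xi\sim1$ and switch to the oscillatory form $2\operatorname{Re}(a(\xi)\psi^+(R,\xi))$ exactly as in \eqref{fourierest}; this is where the distinct logarithmic exponents you were worried about actually come from. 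Finally, identifying $\xi\,\mathcal F(\sqrt\cdot\,u)_1(\xi)=\mathcal F(\sqrt\cdot\,L^*Lu)_1(\xi)$ as ``the transference identity'' is a terminological slip: the transference identity \eqref{transference} concerns $\mathcal F(R\partial_R u)$ and is used in the paper for the term involving $\mathcal F'\cdot\lambda\lambda'\omega$ after the integration by parts, which your outline does not account for at all.
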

\begin{proof}
We remind the reader of the symbol-type estimates, \eqref{f4symb}, on $F_{4}(t,r)$. We start by estimating $\mathcal{F}(\sqrt{\cdot} F_{4}(x,\cdot \lambda(x)))_{1}(\omega \lambda(x)^{2})$, and use the same procedure we used in \cite{wm}. For completeness, we write out the steps here. We have
\begin{equation}\begin{split}\mathcal{F}(\sqrt{\cdot} F_{4}(x,\cdot \lambda(x)))_{1}(\omega \lambda(x)^{2}) &= \int_{0}^{\infty} \sqrt{R} F_{4}(x,R\lambda(x)) \phi(R, \omega \lambda(x)^{2}) dR \\
&= \frac{1}{\lambda(x)^{3/2}} \int_{0}^{\infty} \sqrt{u} F_{4}(x,u) \phi(\frac{u}{\lambda(x)}, \omega \lambda(x)^{2}) du.\end{split}\end{equation}
Using \eqref{phiseries} and \eqref{philgrasymp} (which are from \cite{kstym}) we divide into two regions: $r^{2} \xi \leq 4$ and $r^{2} \xi > 4$, and get
\begin{equation}\label{fourierest}\begin{split}&\frac{1}{\lambda(x)^{3/2}} \int_{0}^{\infty} \sqrt{u} F_{4}(x,u) \phi(\frac{u}{\lambda(x)}, \omega \lambda(x)^{2}) du \\
&= \frac{1}{\lambda(x)^{3/2}} \int_{0}^{\frac{2}{\sqrt{\omega}}} \sqrt{u} F_{4}(x,u) \widetilde{\phi_{0}}(\frac{u}{\lambda(x)}) du + \frac{1}{\lambda(x)^{3/2}} \int_{0}^{\frac{2}{\sqrt{\omega}}} \sqrt{u} F_{4}(x,u) \sum_{j=1}^{\infty} \frac{\widetilde{\phi_{j}}(\frac{u^{2}}{\lambda(x)^{2}})}{u^{3/2}} \lambda(x)^{3/2} (u^{2}\omega)^{j} du\\
&+ \frac{2}{\lambda(x)^{3/2}} \text{Re}\left(\int_{\frac{2}{\sqrt{\omega}}}^{\infty} \sqrt{u} F_{4}(x,u) a(\omega \lambda(x)^{2}) \psi^{+}(\frac{u}{\lambda(x)},\lambda(x)^{2} \omega) du\right).\end{split}\end{equation}
We use the orthogonality of $F_{4}(x,R\lambda(x))$ to $\phi_{0}(R)$ in $L^{2}(R dR)$, as part of estimating 
$$I:= \frac{1}{\lambda(x)^{3/2}} \int_{0}^{\frac{2}{\sqrt{\omega}}} \sqrt{u} F_{4}(x,u) \widetilde{\phi_{0}}(\frac{u}{\lambda(x)}) du = \frac{1}{\lambda(x)^{2}} \int_{0}^{\frac{2}{\sqrt{\omega}}} F_{4}(x,u)u \phi_{0}(\frac{u}{\lambda(x)}) du.$$
We split into 4 cases, depending on the range of $\omega$.\\
\\
\textbf{Case 1}: $\omega < \frac{16}{x^{2}}$. In this case, we use the orthogonality to get 
$$I = \frac{1}{\lambda(x)^{2}} \int_{0}^{\frac{2}{\sqrt{\omega}}} F_{4}(x,u)u \phi_{0}(\frac{u}{\lambda(x)}) du = -\frac{1}{\lambda(x)^{2}} \int_{\frac{2}{\sqrt{\omega}}}^{\infty} F_{4}(x,u) u \phi_{0}(\frac{u}{\lambda(x)}) du =0$$
where we used the support conditions on $\chi_{\geq 1}(x)$, and the fact that $\frac{t}{g(t)} > 1600$.\\
\\
\textbf{Case 2}: $\frac{16}{x^{2}} < \omega \leq \frac{4}{g(x)^{2}}$. In this case, we again use the orthogonality to get
\begin{equation}\begin{split}|\frac{1}{\lambda(x)^{2}} \int_{0}^{\frac{2}{\sqrt{\omega}}} F_{4}(x,u)u \phi_{0}(\frac{u}{\lambda(x)}) du| &= |-\frac{1}{\lambda(x)^{2}} \int_{\frac{2}{\sqrt{\omega}}}^{\infty} F_{4}(x,u) \phi_{0}(\frac{u}{\lambda(x)}) u du| \\
&\leq \frac{C \lambda(x)^{4} \log(x)}{x^{2} \log^{b}(x)} \omega^{2} \left(\log(2+\frac{2}{\sqrt{\omega} g(x)}) + \frac{\log(x)}{\log^{b}(x)}\right).\end{split}\end{equation}
\textbf{Case 3}: $\frac{4}{g(x)^{2}} \leq \omega \leq \frac{4}{\lambda(x)^{2}}$. Using the orthogonality, we get
\begin{equation}\nonumber\begin{split}|\frac{1}{\lambda(x)^{2}} \int_{0}^{\frac{2}{\sqrt{\omega}}} F_{4}(x,u)u \phi_{0}(\frac{u}{\lambda(x)}) du| &= |-\frac{1}{\lambda(x)^{2}} \int_{\frac{2}{\sqrt{\omega}}}^{\infty} F_{4}(x,u) \phi_{0}(\frac{u}{\lambda(x)}) u du| \\
&\leq \frac{C \lambda(x)^{4} \log(x)\left(1+\frac{\log(x)}{\log^{b}(x)}\right)}{x^{2} \log^{b}(x) g(x)^{4}} + \frac{C \omega \lambda(x)^{2}}{x^{2}\log^{b}(x)}.\end{split}\end{equation}
\textbf{Case 4}: $\frac{4}{\lambda(x)^{2}} \leq \omega$. Here, we do not need to use the orthogonality, and simply directly estimate
$$|\frac{1}{\lambda(x)^{2}} \int_{0}^{\frac{2}{\sqrt{\omega}}} F_{4}(x,u)u \phi_{0}(\frac{u}{\lambda(x)}) du| \leq \frac{C}{\omega^{3} \lambda(x)^{6} x^{2} \log^{b}(x)}.$$
To estimate the other two integrals of \eqref{fourierest}, we treat the same 4 cases considered while estimating $I$, namely \textbf{1}. $\omega < \frac{16}{x^{2}}$, \textbf{2}. $\frac{16}{x^{2}} < \omega \leq \frac{4}{g(x)^{2}}$, \textbf{3}. $\frac{4}{g(x)^{2}} \leq \omega \leq \frac{4}{\lambda(x)^{2}}$, and \textbf{4}. $\frac{4}{\lambda(x)^{2}} \leq \omega$. For each of the other two integrals of \eqref{fourierest}, and in each of the aforementioned cases, we directly  insert \eqref{phiseries} or \eqref{philgrasymp}, as appropriate, and estimate the integral, using \eqref{f4symb}. 
In total, we get
\begin{equation}\label{f4overomega}\begin{split}&\lambda(x) ||\frac{\mathcal{F}(\sqrt{\cdot} F_{4}(x,\cdot \lambda(x)))_{1}(\omega \lambda(x)^{2})}{\omega}||_{L^{2}(\rho(\omega \lambda(x)^{2}) d\omega)} \\
&\leq \frac{C \lambda(x)^{2}}{x^{2} \log^{b}(x)} + \frac{C \lambda(x)^{2}}{x^{2} \log^{2\epsilon}(x)} + \frac{C \lambda(x)^{2}}{x^{2} \log^{2b-2\epsilon-1}(x)} + \frac{C \lambda(x)^{2}}{x^{2} \log^{3b-2-2\epsilon}(x)}\end{split}\end{equation}
which will be used later on. The first integral involving $\mathcal{F}(\sqrt{\cdot} F_{4}(x,\cdot \lambda(x)))_{1}(\omega \lambda(x)^{2})$ that we need to estimate is
\begin{equation}\begin{split}&\lambda(t)\left(\omega \lambda(t)^{2}\right)^{3/2} \int_{t}^{\infty} dx \frac{\sin((t-x)\sqrt{\omega})}{\sqrt{\omega}} \mathcal{F}(\sqrt{\cdot} F_{4}(x,\cdot \lambda(x)))_{1}(\omega \lambda(x)^{2})\\
&=-\lambda(t)^{4} \mathcal{F}(\sqrt{\cdot} F_{4}(t,\cdot \lambda(t)))_{1}(\omega \lambda(t)^{2}) \sqrt{\omega} - \lambda(t)^{4} \int_{t}^{\infty} \frac{\cos((t-x)\sqrt{\omega})}{\sqrt{\omega}} \omega \partial_{x}\left(\mathcal{F}(\sqrt{\cdot} F_{4}(x,\cdot \lambda(x)))_{1}(\omega \lambda(x)^{2})\right) dx.\end{split}\end{equation}
Using \eqref{ltrans}, we have $||\lambda(t)^{4} \mathcal{F}(\sqrt{\cdot} F_{4}(t,\cdot \lambda(t)))_{1}(\omega \lambda(t)^{2}) \sqrt{\omega}||_{L^{2}(\rho(\omega \lambda(t)^{2})d\omega)} = \lambda(t)^{2} ||L(F_{4}(t,\cdot \lambda(t)))||_{L^{2}(R dR)}.$ Then, we use the symbol-type estimates on $F_{4}$, namely \eqref{f4symb}, to get
$$||L(F_{4}(t,\cdot\lambda(t)))||_{L^{2}(R dR)} \leq \frac{C}{t^{2} \log^{b}(t)}$$
which gives
$$||\lambda(t)^{4} \mathcal{F}(\sqrt{\cdot} F_{4}(t,\cdot \lambda(t)))_{1}(\omega \lambda(t)^{2}) \sqrt{\omega}||_{L^{2}(\rho(\omega \lambda(t)^{2})d\omega)} \leq \frac{C \lambda(t)^{2}}{t^{2} \log^{b}(t)}.$$
Next, we note that 
$$\partial_{x}\left(\mathcal{F}(\sqrt{\cdot} F_{4}(x,\cdot \lambda(x)))_{1}(\omega \lambda(x)^{2})\right) = \mathcal{F}(\sqrt{\cdot} \partial_{x}\left(F_{4}(x,\cdot\lambda(x))\right))_{1}(\omega \lambda(x)^{2}) + \mathcal{F}(\sqrt{\cdot} F_{4}(x,\cdot \lambda(x)))_{1}'(\omega \lambda(x)^{2}) 2 \lambda(x) \lambda'(x) \omega.$$
Then, we recall the transference identity of \cite{kstym}, which says
$$\mathcal{F}(R \partial_{R}u)(\xi) = \begin{bmatrix} 0&
-2 \xi \partial_{\xi} \mathcal{F}(u)_{1}\end{bmatrix}^{\text{T}} + \mathcal{K}(\mathcal{F}(u))(\xi)$$
where we write
$$\mathcal{F}(u)=\begin{bmatrix} \mathcal{F}(u)_{0}&
\mathcal{F}(u)_{1}\end{bmatrix}^{\text{T}}.$$
This gives \begin{equation}\begin{split}&||2 \lambda(x) \lambda'(x) \omega \mathcal{F}(\sqrt{\cdot} F_{4}(x,\cdot \lambda(x)))_{1}'(\omega \lambda(x)^{2}) \sqrt{\omega}||^{2}_{L^{2}(\rho(\omega \lambda(x)^{2}) d\omega)} \\
&\leq \frac{C \lambda'(x)^{2}}{\lambda(x)^{6}}\left(||\mathcal{F}(R \partial_{R}(\sqrt{R} F_{4}(x,R\lambda(x))))(\xi)||^{2}_{L^{2,\frac{1}{2}}_{\rho}} + ||\mathcal{K}(\mathcal{F}(\sqrt{\cdot} F_{4}(x,\cdot \lambda(x))))(\xi)||^{2}_{L^{2,\frac{1}{2}}_{\rho}}\right).\end{split}\end{equation}
Then, we estimate
$$|\langle R \partial_{R}\left(\sqrt{R} F_{4}(x,R\lambda(x))\right),\widetilde{\phi_{0}}\rangle_{L^{2}(dR)}| \leq \frac{C}{x^{2} \log^{b}(x)}.$$
Next, we again use \eqref{ltrans} to get
\begin{equation}\begin{split}&||\mathcal{F}(R \partial_{R}(\sqrt{R} F_{4}(x,R\lambda(x))))_{1}(\xi) \langle \xi \rangle^{1/2}||^{2}_{L^{2}(\rho(\xi)) d\xi)} \\
&\leq C ||L(\sqrt{R} \partial_{R}(\sqrt{R}F_{4}(x,R\lambda(x))))||^{2}_{L^{2}(R dR)} + C ||R \partial_{R}(\sqrt{R} F_{4}(x,R\lambda(x)))||^{2}_{L^{2}(dR)}\end{split}\end{equation}
Using \eqref{f4symb}, we get
$$||L(\sqrt{R} \partial_{R}(\sqrt{R} F_{4}(x,R\lambda(x))))||^{2}_{L^{2}(R dR)} \leq \frac{C}{x^{4} \log^{2b}(x)}$$
$$||R \partial_{R}(\sqrt{R} F_{4}(x,R\lambda(x)))||_{L^{2}(dR)} \leq \frac{C}{x^{2} \log^{b}(x)}$$
which gives
$$||\mathcal{F}(R \partial_{R}(\sqrt{R} F_{4}(x,R\lambda(x))))_{1}(\xi) \langle \xi \rangle^{1/2}||^{2}_{L^{2}(\rho(\xi) d\xi)} \leq \frac{C}{x^{4} \log^{2b}(x)}.$$
Similarly
\begin{equation}\begin{split}||\mathcal{F}(\sqrt{\cdot} F_{4}(x,\cdot \lambda(x)))||^{2}_{L^{2,\frac{1}{2}}_{\rho}} &\leq C \int_{0}^{\infty} R |F_{4}(x,R\lambda(x))|^{2} dR + C \int_{0}^{\infty} |L(F_{4}(x,R\lambda(x)))|^{2} R dR\\
&\leq \frac{C}{x^{4} \log^{2b}(x)}.\end{split}\end{equation}
This, combined with Proposition 5.2 of \cite{kstym} (which gives the boundedness of $\mathcal{K}$ on $L^{2,\alpha}_{\rho}$, for $\alpha \in \mathbb{R}$) finally gives
$$||2 \lambda(x) \lambda'(x) \omega \mathcal{F}(\sqrt{\cdot} F_{4}(x,\cdot \lambda(x)))_{1}'(\omega \lambda(x)^{2}) \sqrt{\omega}||^{2}_{L^{2}(\rho(\omega \lambda(x)^{2}) d\omega)} \leq \frac{C \lambda'(x)^{2}}{x^{4} \lambda(x)^{6} \log^{2b}(x)}.$$
We again use \eqref{ltrans}, and \eqref{f4symb} to get
\begin{equation}\begin{split}||\sqrt{\omega} \mathcal{F}(\sqrt{\cdot} \partial_{x}\left(F_{4}(x,\cdot \lambda(x))\right))_{1}(\omega \lambda(x)^{2})||_{L^{2}(\rho(\omega \lambda(x)^{2})d\omega)} &\leq \frac{C}{\lambda(x)^{2}} ||L(\partial_{x}(F_{4}(x,R\lambda(x))))||_{L^{2}(R dR)} \\
&\leq \frac{C}{\lambda(x)^{2} x^{3} \log^{b}(x)}.\end{split}\end{equation}
So, using Minkowski's inequality and \eqref{rhoscaling}, we get
\begin{equation}\begin{split}&||\lambda(t)^{4} \int_{t}^{\infty} dx \cos((t-x)\sqrt{\omega}) \sqrt{\omega} \partial_{x}\left(\mathcal{F}(\sqrt{\cdot} F_{4}(x,\cdot\lambda(x)))_{1}(\omega \lambda(x)^{2})\right)||_{L^{2}(\rho(\omega \lambda(t)^{2}) d\omega)} \\
&\leq C \lambda(t)^{4} \int_{t}^{\infty} dx \left(1+\frac{\lambda(t)^{2}}{\lambda(x)^{2}}\right) \frac{1}{\lambda(x)^{2} x^{3} \log^{b}(x)} \leq \frac{C \lambda(t)^{2}}{t^{2} \log^{b}(t)}\end{split}\end{equation}
where we obtained the last inequality using the same procedure that we used to treat the integral \eqref{i2b}. In total, we then obtain
$$||\lambda(t) \left(\omega \lambda(t)^{2}\right)^{3/2} \int_{t}^{\infty} dx \frac{\sin((t-x)\sqrt{\omega})}{\sqrt{\omega}} \mathcal{F}(\sqrt{\cdot} F_{4}(x,\cdot \lambda(x))_{1}(\omega \lambda(x)^{2})||_{L^{2}(\rho(\omega \lambda(t)^{2})d\omega)} \leq \frac{C \lambda(t)^{2}}{t^{2} \log^{b}(t)}.$$
The next integral to estimate is
\begin{equation}\label{f4int2}\begin{split}&\lambda(t) \int_{t}^{\infty} \frac{\sin((t-x)\sqrt{\omega})}{\sqrt{\omega}} \mathcal{F}(\sqrt{\cdot} F_{4}(x,\cdot \lambda(x)))_{1}(\omega \lambda(x)^{2}) dx\\
&= - \lambda(t)\frac{\mathcal{F}(\sqrt{\cdot} F_{4}(t,\cdot \lambda(t)))_{1}(\omega \lambda(t)^{2})}{\omega}-\lambda(t) \int_{t}^{\infty} \frac{\cos((t-x)\sqrt{\omega})}{\omega} \partial_{x}\left(\mathcal{F}(\sqrt{\cdot} F_{4}(x,\cdot \lambda(x))_{1}(\omega \lambda(x)^{2})\right) dx.\end{split}\end{equation} 
We recall
$$\partial_{x}\left(\mathcal{F}(\sqrt{\cdot} F_{4}(x,\cdot \lambda(x)))_{1}(\omega \lambda(x)^{2})\right) = \mathcal{F}(\sqrt{\cdot} \partial_{x}\left(F_{4}(x,\cdot\lambda(x))\right))_{1}(\omega \lambda(x)^{2}) + \mathcal{F}(\sqrt{\cdot} F_{4}(x,\cdot \lambda(x)))_{1}'(\omega \lambda(x)^{2}) 2 \lambda(x) \lambda'(x) \omega.$$
This time, however, since the integrand of the second line of \eqref{f4int2} has a factor of $\frac{1}{\omega}$, we will directly estimate $\mathcal{F}(\sqrt{\cdot}F_{4}(x,\cdot\lambda(x)))_{1}'(\xi)$, rather than using an argument based on the transference identity. We have
$$\mathcal{F}(\sqrt{\cdot} F_{4}(x,\cdot \lambda(x)))_{1}'(\omega \lambda(x)^{2}) = IV + V$$ 
where
$$IV = \int_{0}^{\frac{2}{\sqrt{\omega}}} \partial_{2}\phi(\frac{u}{\lambda(x)},\omega \lambda(x)^{2}) \frac{\sqrt{u}}{\lambda(x)^{3/2}} F_{4}(x,u) du$$
$$V = \int_{\frac{2}{\sqrt{\omega}}}^{\infty} \partial_{2}\phi(\frac{u}{\lambda(x)},\omega \lambda(x)^{2}) \frac{\sqrt{u}}{\lambda(x)^{3/2}} F_{4}(x,u) du.$$
Then, we get
$$|IV| \leq \begin{cases} \frac{C g(x)^{2}}{\lambda(x)^{2} x^{2} \log^{b}(x)} + \frac{C \log^{3}(x)}{x^{2} \log^{b}(x)}, \quad \omega < \frac{16}{x^{2}}\\
\frac{C g(x)^{2}}{\lambda(x)^{2} x^{2} \log^{b}(x)} + \frac{C \log(x) \log(2+\frac{2}{\sqrt{\omega} g(x)})}{x^{2} \log^{b}(x)}\left(\log(2+\frac{2}{\sqrt{\omega}g(x)})+\frac{\log(x)}{\log^{b}(x)}\right), \quad \frac{16}{x^{2}} < \omega \leq \frac{4}{g(x)^{2}}\\
\frac{C}{\omega \lambda(x)^{2} x^{2} \log^{b}(x)}, \quad \frac{4}{g(x)^{2}} < \omega \leq \frac{4}{\lambda(x)^{2}}\\
\frac{C}{x^{2} \log^{b}(x) \lambda(x)^{8} \omega^{4}}, \quad \frac{4}{\lambda(x)^{2}} < \omega \end{cases}$$
$$|V| \leq \begin{cases}0, \quad \omega \leq \frac{16}{x^{2}}\\
\frac{C |a(\omega \lambda(x)^{2})| \log(x)\left(\log(2+\frac{2}{\sqrt{\omega} g(x)})+\frac{\log(x)}{\log^{b}(x)}\right)}{x^{2} \log^{b}(x)}, \quad \frac{16}{x^{2}} < \omega < \frac{4}{g(x)^{2}}\\
\frac{C |a(\omega \lambda(x)^{2})| \sqrt{g(x)}}{\omega^{3/4} \lambda(x)^{2} x^{2} \log^{b}(x)}, \quad \frac{4}{g(x)^{2}} < \omega \leq \frac{4}{\lambda(x)^{2}}\\
\frac{C |a(\omega \lambda(x)^{2})| \sqrt{g(x)}}{\lambda(x)^{2} \omega^{3/4} x^{2} \log^{b}(x)}, \quad \frac{4}{\lambda(x)^{2}} < \omega \end{cases}.$$
In order to estimate 
$$||\frac{\mathcal{F}(\sqrt{\cdot} \partial_{x}(F_{4}(x,\cdot \lambda(x))))_{1}(\omega \lambda(x)^{2})}{\omega}||_{L^{2}(\rho(\omega \lambda(x)^{2}) d\omega)}$$
we note that $\partial_{x}(F_{4}(x,R\lambda(x)))$ is still orthogonal to $\phi_{0}(R)$ in $L^{2}(R dR)$, and we recall our symbol type estimates on $F_{4}$, namely \eqref{f4symb}. These two observations, along with an inspection of the procedure used to obtain \eqref{f4overomega}
give
\begin{equation}\begin{split}&||\frac{\mathcal{F}(\sqrt{\cdot} \partial_{x}(F_{4}(x,\cdot \lambda(x))))_{1}(\omega \lambda(x)^{2})}{\omega}||_{L^{2}(\rho(\omega \lambda(x)^{2}) d\omega)} \\
&\leq \frac{C \lambda(x)}{x^{3}} \left(\frac{1}{\log^{2b-2\epsilon-1}(x)} + \frac{1}{\log^{3b-2-2\epsilon}(x)} + \frac{1}{\log^{2\epsilon}(x)} + \frac{1}{\log^{b}(x)}\right).\end{split}\end{equation}
Therefore,
\begin{equation}\begin{split}&||\lambda(t) \int_{t}^{\infty} \frac{\cos((t-x)\sqrt{\omega})}{\omega} \partial_{x}\left(\mathcal{F}(\sqrt{\cdot} F_{4}(x,\cdot \lambda(x)))_{1}(\omega \lambda(x)^{2})\right) dx ||_{L^{2}(\rho(\omega \lambda(t)^{2})d\omega)}\\
&\leq \frac{C \lambda(t)^{2}}{t^{2}} \left(\frac{1}{\log^{2b-2\epsilon-1}(t)} + \frac{1}{\log^{3b-2-2\epsilon}(t)} + \frac{1}{\log^{2\epsilon}(t)} + \frac{1}{\log^{b}(t)}\right)\end{split}\end{equation}
which, when combined with \eqref{f4overomega}, gives
\begin{equation}\begin{split}&||\lambda(t) \int_{t}^{\infty} \frac{\sin((t-x)\sqrt{\omega})}{\sqrt{\omega}} \mathcal{F}(\sqrt{\cdot} F_{4}(x,\cdot \lambda(x)))_{1}(\omega \lambda(x)^{2}) dx ||_{L^{2}(\rho(\omega \lambda(t)^{2})d\omega)}\\
&\leq \frac{C \lambda(t)^{2}}{t^{2}} \left(\frac{1}{\log^{2b-2\epsilon-1}(t)} + \frac{1}{\log^{3b-2-2\epsilon}(t)} + \frac{1}{\log^{2\epsilon}(t)} + \frac{1}{\log^{b}(t)}\right).\end{split}\end{equation}
The next integral to estimate is
\begin{equation}\begin{split}&\lambda(t) \int_{t}^{\infty} dx \cos((t-x)\sqrt{\omega}) \mathcal{F}(\sqrt{\cdot} F_{4}(x,\cdot \lambda(x))_{1}(\omega \lambda(x)^{2})\\
&=-\lambda(t) \frac{\partial_{t}\left(\mathcal{F}(\sqrt{\cdot} F_{4}(t,\cdot \lambda(t)))_{1}(\omega \lambda(t)^{2})\right)}{\omega} -\lambda(t) \int_{t}^{\infty} \frac{\cos((t-x)\sqrt{\omega})}{\omega} \partial_{x}^{2}\left(\mathcal{F}(\sqrt{\cdot} F_{4}(x,\cdot \lambda(x)))_{1}(\omega \lambda(x)^{2})\right) dx.\end{split}\end{equation}
First, we will estimate $\mathcal{F}(\sqrt{\cdot} F_{4}(x,\cdot \lambda(x)))_{1}''(\omega \lambda(x)^{2})$, which is one of the terms arising in the expression $\partial_{x}^{2}\left(\mathcal{F}(\sqrt{\cdot} F_{4}(x,\cdot \lambda(x)))_{1}(\omega \lambda(x)^{2})\right)$. We have
$$\mathcal{F}(\sqrt{\cdot} F_{4}(x,\cdot \lambda(x)))_{1}''(\omega \lambda(x)^{2}) = VI+VII$$
where
$$VI = \int_{0}^{\frac{2}{\sqrt{\omega} \lambda(x)}} \partial_{2}^{2} \phi(R,\omega \lambda(x)^{2}) \sqrt{R} F_{4}(x,R\lambda(x)) dR$$
\begin{equation}\label{VIIdef}VII = \int_{\frac{2}{\sqrt{\omega} \lambda(x)}}^{\infty} \partial_{2}^{2}\phi(R,\omega \lambda(x)^{2}) \sqrt{R} F_{4}(x,R\lambda(x)) dR.\end{equation}
We estimate $VI$ directly, using \eqref{f4symb}, and \eqref{phiseries} (which is from \cite{kstym}). This leads to
$$\omega |VI| \leq \begin{cases} \frac{C \log^{2}(x)}{x^{2} \log^{b}(x) \lambda(x)^{2}}, \quad \omega < \frac{16}{x^{2}}\\
\frac{C g(x)^{2}}{x^{2} \log^{b}(x) \lambda(x)^{4}} \left(1+ \frac{\lambda(x)^{2}}{g(x)^{2}} \log(x) \log(2+\frac{2}{\sqrt{\omega}g(x)})\right), \quad \frac{16}{x^{2}} < \omega < \frac{4}{g(x)^{2}}\\
\frac{C}{\lambda(x)^{2} x^{2} \log^{b}(x)} \left(1+\frac{1}{\omega \lambda(x)^{2}}\right), \quad \frac{4}{g(x)^{2}} < \omega < \frac{4}{\lambda(x)^{2}}\\
\frac{C}{\lambda(x)^{10} \omega^{4} x^{2} \log^{b}(x)}, \quad \frac{4}{\lambda(x)^{2}} < \omega\end{cases}.$$
On the other hand, to estimate $VII$, we will need to integrate by parts in $R$, when $\omega > \frac{4}{\lambda(x)^{2}}$. In particular, we have
$$\omega |VII| =0, \quad \omega \leq \frac{16}{x^{2}}$$
$$\omega |VII| \leq \frac{C |a(\omega \lambda(x)^{2})| \log(x)}{\lambda(x)^{2} x^{2} \log^{b}(x)} \left(\log(2+\frac{2}{\sqrt{\omega}g(x)}) + \frac{\log(x)}{\log^{b}(x)}\right), \quad \frac{16}{x^{2}} < \omega < \frac{4}{g(x)^{2}}$$
$$\omega |VII| \leq \frac{C |a(\omega \lambda(x)^{2})| g(x)^{3/2}}{\omega^{1/4} \lambda(x)^{4} x^{2} \log^{b}(x)}, \quad \frac{4}{g(x)^{2}} < \omega < \frac{4}{\lambda(x)^{2}}.$$
To estimate $VII$ in the region $\omega > \frac{4}{\lambda(x)^{2}}$, we first use \eqref{philgrasymp}(which follows from Lemma 4.7 of \cite{kstym}) to get, for $R^{2} \xi > 4$, $\phi(R,\xi) = 2\text{Re}\left(a(\xi) \psi^{+}(R,\xi)\right).$ Using \eqref{psiplusdef} and \eqref{asymbol}, we get
$$\partial_{2}^{2}\phi(R,\xi) = 2 \text{Re}\left(\frac{a(\xi) \cdot -R^{2}}{4 \xi^{5/4}} e^{i R \sqrt{\xi}} \sigma(R\sqrt{\xi},R)\right) + \text{Err}$$
where
$$|Err| \leq C \frac{|a(\xi)|}{\xi^{9/4}} + C \frac{|a(\xi)| R}{\xi^{7/4}}, \quad R^{2} \xi \geq 4.$$
Then, we integrate by parts in $R$ for the term 
$$\omega \int_{\frac{2}{\sqrt{\omega} \lambda(x)}}^{\infty} -\text{Re}\left(\frac{a(\omega \lambda(x)^{2})}{2} \frac{R^{2}}{\omega^{5/4} \lambda(x)^{5/2}} e^{i R \lambda(x) \sqrt{\omega}} \sigma(R \sqrt{\omega} \lambda(x),R)\right) \sqrt{R} F_{4}(x,R\lambda(x)) dR$$
which arises as part of $w\cdot VII$ (recall \eqref{VIIdef}). This leads to
$$||\omega VII||^{2}_{L^{2}(\rho(\omega \lambda(x)^{2}) d\omega)} \leq \frac{C g(x)^{3}}{\lambda(x)^{9} x^{4} \log^{2b}(x)}$$
which then leads to
$$||-\lambda(t) \int_{t}^{\infty} \frac{\cos((t-x)\sqrt{\omega})}{\omega} \cdot 4 \lambda(x)^{2} \lambda'(x)^{2} \omega^{2} \mathcal{F}(\sqrt{\cdot}F_{4}(x,\cdot \lambda(x)))_{1}''(\omega\lambda(x)^{2}) dx||_{L^{2}(\rho(\omega \lambda(t)^{2}) d\omega)} \leq \frac{C \lambda(t)^{2}}{t^{3} \log^{\frac{3b}{2}+\epsilon}(t)}.$$
The rest of the terms arising in the expression $\partial_{x}^{2}\left(\mathcal{F}(\sqrt{\cdot} F_{4}(x,\cdot \lambda(x)))_{1}(\omega \lambda(x)^{2})\right)$ can be treated by using the symbol-type nature of the estimates \eqref{f4symb}, along with the fact that $\partial_{x}^{2}(F_{4}(x,R \lambda(x)))$ is still orthogonal to $\phi_{0}(R)$ in $L^{2}(R dR)$. This observation leads to
\begin{equation}\begin{split}&||-\lambda(t) \int_{t}^{\infty} dx \frac{\cos((t-x)\sqrt{\omega})}{\omega} \mathcal{F}(\sqrt{\cdot} \partial_{x}^{2}\left(F_{4}(x,\cdot \lambda(x))\right))_{1}(\omega \lambda(x)^{2})||_{L^{2}(\rho(\omega \lambda(t)^{2})d\omega)}\\
&\leq C \frac{\lambda(t)^{2}}{t^{3}}\left(\frac{1}{\log^{b}(t)} + \frac{1}{\log^{2\epsilon}(t)} + \frac{1}{\log^{2b-2\epsilon-1}(t)} + \frac{1}{\log^{3b-2-2\epsilon}(t)}\right)\end{split}\end{equation}
and this finally gives  
\begin{equation}\begin{split} &||\lambda(t) \int_{t}^{\infty} dx \cos((t-x)\sqrt{\omega}) \mathcal{F}(\sqrt{\cdot} F_{4}(x,\cdot \lambda(x)))_{1}(\omega \lambda(x)^{2})||_{L^{2}(\rho(\omega \lambda(t)^{2})d\omega)}\\
&\leq C \frac{\lambda(t)^{2}}{t^{3}}\left(\frac{1}{\log^{b}(t)} + \frac{1}{\log^{2\epsilon}(t)} + \frac{1}{\log^{2b-2\epsilon-1}(t)} + \frac{1}{\log^{3b-2-2\epsilon}(t)}\right).\end{split}\end{equation}
The final integral to treat in this section is
\begin{equation}\begin{split}&\lambda(t) \cdot \omega \lambda(t)^{2} \cdot \int_{t}^{\infty} dx \cos((t-x)\sqrt{\omega}) \mathcal{F}(\sqrt{\cdot} F_{4}(x,\cdot \lambda(x)))_{1}(\omega \lambda(x)^{2})\\
&=-\lambda(t) \lambda(t)^{2} \partial_{t}\left(\mathcal{F}(\sqrt{\cdot} F_{4}(t,\cdot \lambda(t)))_{1}(\omega \lambda(t)^{2})\right)\\
&-\lambda(t) \cdot \int_{t}^{\infty} \lambda(t)^{2} \cos((t-x)\sqrt{\omega}) \partial_{x}^{2}\left(\mathcal{F}(\sqrt{\cdot} F_{4}(x,\cdot \lambda(x)))_{1}(\omega \lambda(x)^{2})\right)dx.\end{split}\end{equation}
Here, most terms are treated exactly as previously. We will write out in detail how to estimate the term
$\lambda(x)^{4} \omega^{2} \mathcal{F}(\sqrt{\cdot} F_{4}(x,\cdot \lambda(x)))_{1}''(\omega \lambda(x)^{2})$, which is part of $\partial_{x}^{2}\left(\mathcal{F}(\sqrt{\cdot} F_{4}(x,\cdot \lambda(x)))_{1}(\omega \lambda(x)^{2}\right)$, and for which we use a slightly different argument than previously. We start with
\begin{equation}\begin{split} \lambda(x)^{4} \omega^{2} \mathcal{F}(\sqrt{\cdot} F_{4}(x,\cdot \lambda(x)))_{1}''(\omega \lambda(x)^{2}) &= (\xi \partial_{\xi}) \circ (\xi \partial_{\xi})\left(\mathcal{F}(\sqrt{\cdot} F_{4}(x,\cdot \lambda(x)))_{1}(\xi)\right)\Bigr|_{\xi = \omega \lambda(x)^{2}}\\
& -\xi\partial_{\xi}\left(\mathcal{F}(\sqrt{\cdot} F_{4}(x,\cdot \lambda(x)))_{1}(\xi)\right)\Bigr|_{\xi = \omega \lambda(x)^{2}}.\end{split}\end{equation}
Then, we use the transference identity, which leads to
\begin{equation}\begin{split} &\lambda(x)^{4} \omega^{2} \mathcal{F}(\sqrt{\cdot} F_{4}(x,\cdot \lambda(x)))_{1}''(\omega \lambda(x)^{2})\\
&=\frac{1}{4}\left(\mathcal{F}(R \partial_{R}(R \partial_{R}(\sqrt{R} F_{4}(x,R\lambda(x)))))_{1}(\omega \lambda(x)^{2}) - \pi_{1}\circ\mathcal{K}(\mathcal{F}(R \partial_{R}(\sqrt{R}F_{4}(x,R\lambda(x))))(\omega \lambda(x)^{2})\right.\\
&+\left.2\pi_{1}\circ[\xi \partial_{\xi},\mathcal{K}] \mathcal{F}(\sqrt{\cdot}F_{4}(x,\cdot\lambda(x)))(\omega \lambda(x)^{2})\right. \\
&-\left. \pi_{1}\left( \mathcal{K}\left(\begin{bmatrix}0\\
\mathcal{F}(R \partial_{R}(\sqrt{R} F_{4}(x,R\lambda(x))))_{1} - \pi_{1}\circ \mathcal{K}(\mathcal{F}(\sqrt{\cdot} F_{4}(x,\cdot \lambda(x))))\end{bmatrix}\right)\right)\right)(\omega \lambda(x)^{2})\\
&+\frac{1}{2} \left(\mathcal{F}(R \partial_{R}(\sqrt{R} F_{4}(x,R\lambda(x)))_{1}(\omega \lambda(x)^{2}) - \pi_{1}\circ \mathcal{K}(\mathcal{F}(\sqrt{\cdot}F_{4}(x,\cdot\lambda(x))))(\omega \lambda(x)^{2})\right)\end{split}\end{equation}
where $\pi_{1}(\begin{bmatrix} v_{0}\\
v_{1}\end{bmatrix}) = v_{1}$. Using \eqref{f4symb}, and the boundedness of $\mathcal{K}$ and $[\mathcal{K},\xi\partial_{\xi}]$ on $L^{2,\alpha}_{\rho}$ (as per Proposition 5.2 of \cite{kstym}) we get
$$||\lambda(x)^{4} \omega^{2} \mathcal{F}(\sqrt{\cdot} F_{4}(x,\cdot \lambda(x))_{1}''(\omega \lambda(x)^{2})||_{L^{2}(\rho(\omega \lambda(x)^{2})d\omega)} \leq \frac{C}{\lambda(x) x^{2} \log^{b}(x)}. $$
We use the same procedure as before to estimate the other terms arising from $\partial_{x}^{2}\left(\mathcal{F}(\sqrt{\cdot} F_{4}(x,\cdot \lambda(x)))_{1}(\omega \lambda(x)^{2}\right)$. Finally, we note that
\begin{equation}\begin{split}&|\lambda(t)^{2} \partial_{t}(\mathcal{F}(\sqrt{\cdot} F_{4}(t,\cdot \lambda(t)))_{1})(\omega\lambda(t)^{2})| \\
&\leq C |\frac{\partial_{t}(\mathcal{F}(\sqrt{\cdot} F_{4}(t,\cdot \lambda(t)))_{1}(\omega \lambda(t)^{2}))}{\omega}| + C \lambda(t)^{3} \sqrt{\omega} |\partial_{t}(\mathcal{F}(\sqrt{\cdot} F_{4}(t,\cdot \lambda(t)))(\omega\lambda(t)^{2}))|\end{split}\end{equation}
and both terms on the right-hand side of the above inequality have been previously estimated. In total, we then get
\begin{equation}\begin{split}&||\lambda(t) \left(\omega \lambda(t)^{2}\right) \int_{t}^{\infty} dx \cos((t-x)\sqrt{\omega}) \mathcal{F}(\sqrt{\cdot}F_{4}(x,\cdot\lambda(x)))_{1}(\omega \lambda(x)^{2})||_{L^{2}(\rho(\omega \lambda(t)^{2})d\omega)}\\
&\leq \frac{C \lambda(t)^{2}}{t^{3}} \left(\frac{1}{\log^{2b-2\epsilon-1}(t)} + \frac{1}{\log^{3b-2\epsilon-2}(t)} + \frac{1}{\log^{2\epsilon}(t)} + \frac{1}{\log^{b}(t)}\right).\end{split}\end{equation} 
\end{proof}
\subsection{$F_{5}$ Estimates}\label{f5estimatessection}
In this section, we translate our estimates \eqref{f5l2} and \eqref{lstarlf5l2} using \eqref{l2trans} and \eqref{lstarltrans}, which gives
$$|\mathcal{F}(\sqrt{\cdot} F_{5}(x,\cdot\lambda(x)))_{0}| \leq \frac{C \lambda(x)^{3}}{x^{5} \log^{b-2}(x)}$$
$$||\mathcal{F}(\sqrt{\cdot} F_{5}(x,\cdot\lambda(x)))_{1}(\omega\lambda(x)^{2})||_{L^{2}(\rho(\omega\lambda(x)^{2})d\omega)} \leq \frac{C \lambda(x)^{2}}{x^{5} \log^{b-2}(x)}$$
$$||\omega \lambda(x)^{2} \mathcal{F}(\sqrt{\cdot}F_{5}(x,\cdot\lambda(x)))_{1}(\omega \lambda(x)^{2})||_{L^{2}(\rho(\omega \lambda(x)^{2}) d\omega)} \leq \frac{C \lambda(x)^{4} \log^{2}(x)}{g(x)^{2} \log^{b}(x) x^{5}}.$$
\subsection{Setup of the iteration}
Define $T$ on $Z$ by
\begin{equation}\label{finalTdef}T(\begin{bmatrix} y_{0}\\
y_{1}\end{bmatrix})(t,\omega) = \begin{bmatrix} -\int_{t}^{\infty} ds \int_{s}^{\infty} ds_{1} \left(F_{2,0} + \mathcal{F}\left(\sqrt{\cdot}\left(F_{3}+F_{4}+F_{5}\right)\left(s_{1},\cdot \lambda(s_{1})\right)\right)_{0}\right)\\
\int_{t}^{\infty} dx \frac{\sin((t-x)\sqrt{\omega})}{\sqrt{\omega}} \left(F_{2,1}(x,\omega) + \mathcal{F}\left(\sqrt{\cdot}\left(F_{3}+F_{4}+F_{5}\right)\left(x,\cdot \lambda(x)\right)\right)_{1}\left(\omega \lambda(x)^{2}\right)\right)\end{bmatrix}\end{equation}
where we define $F_{2,i}$ by $F_{2} = \begin{bmatrix}F_{2,0}&
F_{2,1}\end{bmatrix}^{\text{T}}$ and $\mathcal{F}\left(\sqrt{\cdot}\left(F_{3}+F_{4}+F_{5}\right)\left(x,\cdot \lambda(x)\right)\right)_{i}$ is defined by
$$\mathcal{F}\left(\sqrt{\cdot}\left(F_{3}+F_{4}+F_{5}\right)\left(x,\cdot \lambda(x)\right)\right)\left(\omega \lambda(x)^{2}\right) = \begin{bmatrix} \mathcal{F}\left(\sqrt{\cdot}\left(F_{3}+F_{4}+F_{5}\right)\left(x,\cdot \lambda(x)\right)\right)_{0}\\
\mathcal{F}\left(\sqrt{\cdot}\left(F_{3}+F_{4}+F_{5}\right)\left(x,\cdot \lambda(x)\right)\right)_{1}\left(\omega \lambda(x)^{2}\right)\end{bmatrix}.$$
Now, we can proceed with estimating $T$ on $\overline{B_{1}(0)} \subset Z$. If $\begin{bmatrix} y_{0}\\
y_{1}\end{bmatrix} \in \overline{B_{1}(0)} \subset Z$, then, we combine our estimates from Section \ref{f2estimatessection} (for $F_{2}$), Proposition \ref{f3estimatesprop} (for $F_{3}$), Lemma \ref{f4lemma} (for $F_{4}$), and Section \ref{f5estimatessection} (for $F_{5}$), along with \eqref{rhoscaling}(to estimate $\frac{\rho(\omega \lambda(t)^{2})}{\rho(\omega \lambda(x)^{2})}$) to get, for a constant $C>0$, \emph{independent} of $T_{0}$:
\begin{equation}\label{Test}||T\left(\begin{bmatrix} y_{0}\\
y_{1}\end{bmatrix}\right)||_{Z} \leq C\left(\frac{1}{\log^{b-\epsilon}(T_{0})} + \frac{1}{\log^{\epsilon}(T_{0})} + \frac{1}{\log^{2b-3\epsilon-1}(T_{0})} + \frac{1}{\log^{3b-2-3\epsilon}(T_{0})}\right), \quad \begin{bmatrix} y_{0}\\
y_{1}\end{bmatrix} \in \overline{B_{1}(0)} \subset Z. \end{equation}
Next, we will prove a Lipschitz estimate on $T$ restricted to $\overline{B_{1}(0)} \subset Z$. For this, it will be useful to use the notation
$$v_{y}(t,R\lambda(t)) = \frac{1}{\sqrt{R}} \mathcal{F}^{-1}\left(\begin{bmatrix} y_{0}(t)&
y_{1}(t,\frac{\cdot}{\lambda(t)^{2}})\end{bmatrix}^{\text{T}}\right)(R)$$
where $$y=\begin{bmatrix} y_{0}(t)&y_{1}(t,\frac{\cdot}{\lambda(t)^{2}})\end{bmatrix}^{\text{T}}.$$
Then, for $y,z \in \overline{B_{1}(0)} \subset Z$, we have
$$F_{3}(v_{y}) -F_{3}(v_{z}) = (v_{y}-v_{z})\left(\frac{6}{r^{2}} (Q_{\frac{1}{\lambda(t)}}+v_{corr}) (v_{y}+v_{z}) + \frac{2}{r^{2}} (v_{y}^{2}+v_{y}v_{z}+v_{z}^{2}) + \frac{6}{r^{2}}\left((v_{corr}+Q_{\frac{1}{\lambda(t)}})^{2}-Q_{\frac{1}{\lambda(t)}}^{2}\right)\right)$$
where, by a slight abuse of notation, we denote by $F_{3}(v_{y})$ the expression \eqref{f3def}, with $v=v_{y}$, and similarly for $F_{3}(v_{z})$. This leads to
$$||\left(F_{3}(v_{y})-F_{3}(v_{z})\right)(x,R \lambda(x))||_{L^{2}(R dR)} \leq \frac{C\lambda(x)^{2}}{x^{4} \log^{\epsilon}(x)} ||\begin{bmatrix} y_{0}-z_{0}\\
y_{1}-z_{1}\end{bmatrix}||_{Z} \left(\frac{1}{\log^{\epsilon}(x)} + \frac{1}{\log^{b}(x)}\right)$$
$$||L^{*}L\left((F_{3}(v_{y})-F_{3}(v_{z}))(t,\cdot \lambda(t))\right)||_{L^{2}(R dR)} \leq \frac{C \lambda(t)^{2}}{t^{4} \log^{\epsilon}(t)} ||\begin{bmatrix} y_{0}-z_{0}\\
y_{1}-z_{1}\end{bmatrix}||_{Z} \left(\frac{1}{\log^{\epsilon}(t)} + \frac{1}{\log^{b}(t)}\right).$$
Since $F_{2}$ depends on $y$ linearly, we get, for some $C>0$, \emph{independent} of $T_{0}$
\begin{equation}\label{Tlip}||T\left(\begin{bmatrix} y_{0}\\
y_{1}\end{bmatrix}\right)-T\left(\begin{bmatrix} z_{0}\\
z_{1}\end{bmatrix}\right)||_{Z} \leq C ||y-z||_{Z}\left(\frac{1}{\log^{\epsilon}(T_{0})} + \frac{1}{\log^{b}(T_{0})}\right), \quad y,z \in \overline{B_{1}(0)} \subset Z.\end{equation}
Combining this with \eqref{Test}, we get that there exists $M>0$ such that, for all $T_{0} > M$, $T$ is a strict contraction on $\overline{B_{1}(0)} \subset Z$. If $T_{0}>M$, then, by Banach's fixed point theorem, $T$ has a fixed point, say $y_{f}=\begin{bmatrix} y_{f,0}\\
y_{f,1}\end{bmatrix} \in \overline{B_{1}(0)}\subset Z$.
\section{Decomposition of the solution as in Theorem \ref{mainthm}}
We define $v_{f}$ by
$$v_{f}(t,r):=\begin{cases}\sqrt{\frac{\lambda(t)}{r}} \mathcal{F}^{-1}\left(\begin{bmatrix} y_{f,0}(t)\\
y_{f,1}(t,\frac{\cdot}{\lambda(t)^{2}})\end{bmatrix}\right)\left(\frac{r}{\lambda(t)}\right), \quad r >0\\
0, \quad r=0\end{cases}$$
and note that $v_{f}(t,\cdot) \in C^{1}([0,\infty))$, by Lemma \ref{ptwselemma}. By the derivation of \eqref{yeqn}, and the regularity of elements in $\overline{B_{1}(0)} \subset Z$, $u(t,r)=Q_{\frac{1}{\lambda(t)}}(r)+v_{c}(t,r)+w_{c}(t,r)+v_{f}(t,r)$ solves \eqref{ym}. It now remains to estimate the energy of $v_{c}-v_{1}+w_{c}+v_{f}$.
For example, for $v_{2}$, we have
$$v_{2}(t,r) = \int_{t}^{\infty} v_{2,s}(t,r) ds$$
where $v_{2,s}$ solves
$$\begin{cases} -\partial_{tt}v_{2,s}+\partial_{rr}v_{2,s}+\frac{1}{r}\partial_{r}v_{2,s}-\frac{4}{r^{2}}v_{2,s}=0\\
v_{2,s}(s,r)=0\\
\partial_{t}v_{2,s}(s,r) = RHS_{2}(s,r)\end{cases}.$$
By using $$\left(\partial_{x}+\frac{2}{x}\right) J_{2}(x) = J_{1}(x)$$
and the representation formula for $v_{2,s}$ using the Hankel transform of order 2, namely
$$v_{2,s}(t,r) = \int_{0}^{\infty} d\xi J_{2}(r\xi) \sin((t-s)\xi) \widehat{RHS_{2}}(s,\xi)$$
we can justify the energy estimate
$$||\partial_{t}v_{2,s}(t,r)||_{L^{2}(r dr)} +||\left(\partial_{r}+\frac{2}{r}\right) v_{2,s}(t,r)||_{L^{2}(r dr)} \leq C ||RHS_{2}(s,r)||_{L^{2}(r dr)}$$
exactly as was done in \cite{wm} for the correction denoted by $v_{4}$ in that work. Then, we have
\begin{equation}\label{energycross}\int_{0}^{\infty} \left(\left(\partial_{r}+\frac{2}{r}\right) v_{2,s}(t,r)\right)^{2} r dr = \int_{0}^{\infty} \left(\partial_{r}v_{2,s}(t,r)\right)^{2} r dr + \int_{0}^{\infty} \frac{4 v_{2,s}^{2}}{r^{2}} r dr + 2 \int_{0}^{\infty} \partial_{r}\left(v_{2,s}^{2}\right) dr.\end{equation}
Even though the pointwise estimates we recorded for $v_{2,s}$ do not imply that $v_{2,s}(t,r)\rightarrow 0, \quad r \rightarrow \infty$, we can prove that $\lim_{r \rightarrow \infty} v_{2,s}(t,r) =0$ by the Dominated convergence theorem, applied to the spherical means formula for $v_{2,s}$ (for instance, the analog of the formula \eqref{dtv2noderiv}). Then, the last integral in the expression above is zero, and we get 
$$||\partial_{t}v_{2,s}(t,r)||_{L^{2}(r dr)} +||v_{2,s}(t)||_{\dot{H}^{1}_{e}} \leq C ||RHS_{2}(s,r)||_{L^{2}(r dr)}.$$
Using Minkowski's inequality, we then get
$$||\partial_{t}v_{2}(t,r)||_{L^{2}(r dr)} + ||v_{2}(t)||_{\dot{H}^{1}_{e}} \leq C \int_{t}^{\infty} ds ||RHS_{2}(s,r)||_{L^{2}(r dr)}.$$
This same procedure can be applied for $v_{k}$ and all $w_{k}$. We recall that $v_{c}-v_{1}=\sum_{k=2}^{\infty} v_{k}$. We then get
$$||\partial_{t}\left(v_{c}-v_{1}\right)(t,r)||_{L^{2}(r dr)} + ||\left(v_{c}-v_{1}\right)(t)||_{\dot{H}^{1}_{e}} \leq \frac{C}{\log^{2b-1}(t)}$$
and
$$||\partial_{t}w_{c}(t,r)||_{L^{2}(r dr)} + ||w_{c}(t)||_{\dot{H}^{1}_{e}} \leq \frac{C \lambda(t)^{2}}{g(t) t \log^{b}(t)}.$$
Finally, the transference identity of \cite{kstym} gives
\begin{equation}\begin{split}||\partial_{1}v_{f}(t,R\lambda(t))||_{L^{2}(R dR)} &\leq \frac{C |\lambda'(t)|}{\lambda(t)} ||\mathcal{F}^{-1}\left(\begin{bmatrix}y_{f,0}(t)\\
y_{f,1}(t,\frac{\cdot}{\lambda(t)^{2}})\end{bmatrix}\right)||_{L^{2}(dR)} + C ||\mathcal{F}^{-1}\left(\begin{bmatrix} y_{f,0}'(t)\\
\partial_{1}y_{f,1}(t,\frac{\cdot}{\lambda(t)^{2}})\end{bmatrix}\right)||_{L^{2}(dR)}\\\
& + \frac{C |\lambda'(t)|}{\lambda(t)} ||\mathcal{F}^{-1}\left(\mathcal{K}\left(\begin{bmatrix} y_{f,0}(t)\\
y_{f,1}(t,\frac{\cdot}{\lambda(t)^{2}})\end{bmatrix}\right)\right)||_{L^{2}(dR)}.\end{split}\end{equation}
Therefore, we get
$$||\partial_{1}v_{f}(t,r)||_{L^{2}(r dr)} \leq \frac{C \lambda(t)^{3}}{t^{3} \log^{\epsilon}(t)}$$
$$||v_{f}(t,r)||_{\dot{H}^{1}_{e}} = ||v_{f}(t,\cdot\lambda(t))||_{\dot{H}^{1}_{e}} \leq C(||L(v_{f}(t,R\lambda(t)))||_{L^{2}(R dR)} + ||v_{f}(t,R\lambda(t))||_{L^{2}(R dR)} \leq \frac{C \lambda(t)^{2}}{t^{2} \log^{\epsilon}(t)}.$$
Next, we use the pointwise estimates recorded in Corollary \ref{vkinductioncorollary2} and \eqref{wcsymb}, to get
$$||v_{c}(t,\cdot) + w_{c}(t,\cdot)||_{L^{\infty}} \leq \frac{C}{\log^{b}(t)}.$$
For $v_{f}$, we have
$$||v_{f}(t,\cdot)||_{L^{\infty}}\leq ||v_{f}(t,\cdot \lambda(t))||_{\dot{H}^{1}_{e}} \leq \frac{C \lambda(t)^{2}}{t^{2} \log^{\epsilon}(t)}$$
We also need to verify that 
$$||\partial_{t}v_{1}(t,r)||_{L^{2}(r dr)} + ||v_{1}(t,\cdot)||_{\dot{H}^{1}_{e}} < \infty.$$
This can be done by noting that, for example, \eqref{v1fourierrep} implies
$$||\partial_{t}v_{1}||^{2}_{L^{2}(r dr)} + ||\left(\partial_{r}+\frac{2}{r}\right)v_{1}||^{2}_{L^{2}(r dr)} = ||\widehat{v_{1,1}}(\xi)||^{2}_{L^{2}(\xi d\xi)}.$$
Then, we use the same observation as in \eqref{energycross}, and the fact that $b>\frac{2}{3}$, which shows that $ \widehat{v_{1,1}}(\xi) \in L^{2}(\xi d\xi)$, to conclude
$$||\partial_{t}v_{1}(t,r)||_{L^{2}(r dr)} + ||v_{1}(t,\cdot)||_{\dot{H}^{1}_{e}} < \infty.$$
Finally, we can verify that our solution has finite energy, by noting that
$$E_{YM}(u,\partial_{t}u) \leq C\left(||\partial_{t}u(t,r)||^{2}_{L^{2}(r dr)} + ||\partial_{r}u||^{2}_{L^{2}(r dr)} + \int_{0}^{\infty} \frac{r dr}{r^{2}} \left(1-Q_{\frac{1}{\lambda(t)}}(r)^{2}\right)^{2} + \int_{0}^{\infty} \frac{r dr}{r^{2}} \left(v_{c}+w_{c}+v_{f}\right)^{2}\right)$$
where we used the fact that
$$||v_{c}(t,\cdot)+w_{c}(t,\cdot)+v_{f}(t,\cdot)||_{L^{\infty}} \rightarrow 0,\quad \text{ as }t \rightarrow \infty.$$
Also, we have
$$||\partial_{t}\left(v_{c}-v_{1}+v_{f}+w_{c}\right)||_{L^{2}(r dr)} + ||v_{c}-v_{1}+v_{f}+w_{c}||_{\dot{H}^{1}_{e}} \leq \frac{C}{\log^{2b-1}(t)}$$
which finishes the verification of the energy-related statements in theorem \ref{mainthm}.
\appendix
\section{Improved estimates on $\partial_{t}w_{j}$}\label{dtwjfinalestappendix}
This appendix contains a proof of improved estimates on $\partial_{t}w_{j}(t,r)$, which are obtained just after the preliminary estimate on $e_{0}'''(t)$, in \eqref{e0pppprelim}. We start with $\partial_{t}w_{2}$. We remind the reader of the definition of $WRHS_{2}$ (which is \eqref{wrhs2firstdef}), and get
\begin{equation}\begin{split}|\partial_{t}WRHS_{2}(t,r)| &\leq C \frac{|\chi_{\geq 1}(\frac{r}{g(t)})|}{\log^{b}(t)} \left(\frac{1}{\log^{b}(t)} + \frac{1}{\log(t)}\right) \frac{r^{2} \lambda(t)^{2}}{t^{3}(r^{2}+\lambda(t)^{2})^{2}}\\
&+ C \chi_{\geq 1}(\frac{r}{g(t)}) \begin{cases} \frac{r^{2} \lambda(t)^{2}}{t^{3} \log^{b}(t) (r^{2}+\lambda(t)^{2})^{2}} + \frac{r^{2} \lambda(t)^{2} |e_{0}'''(t)|}{(r^{2}+\lambda(t)^{2})^{2}}, \quad r \leq \frac{t}{2}\\
\frac{\lambda(t)^{2}}{r^{2} t^{2}} \left(\frac{1}{\sqrt{\langle t-r \rangle} \sqrt{r} \log^{b}(\langle t-r \rangle)} + \frac{1}{t \log^{b}(t)}+ t^{2} |e_{0}'''(t)|\right), \quad r > \frac{t}{2}\end{cases}\end{split}\end{equation} 

\begin{equation}\begin{split} &|\partial_{tr} WRHS_{2}(t,r)| \\
&\leq C \frac{\mathbbm{1}_{\{r \geq \frac{g(t)}{2}\}}}{\log^{b}(t)} \left(\frac{1}{\log^{b}(t)} + \frac{1}{\log(t)}\right) \frac{r \lambda(t)^{2}}{t^{3}(r^{2}+\lambda(t)^{2})^{2}}\\
&+ C \mathbbm{1}_{\{r \geq \frac{g(t)}{2}\}} \begin{cases} \frac{r \lambda(t)^{2}}{(r^{2}+\lambda(t)^{2})^{2}} \left(\frac{1}{t^{3} \log^{b}(t)} + |e_{0}'''(t)|\right), \quad r \leq \frac{t}{2}\\
\frac{\lambda(t)^{2}}{r^{3} t^{2}} \left(\frac{1}{\sqrt{\langle t-r \rangle} \sqrt{r} \log^{b}(\langle t-r \rangle)} + \frac{1}{t \log^{b}(t)} + t^{2}|e_{0}'''(t)|\right) \\
+ \frac{\lambda(t)^{2}}{r^{4}} \left(\frac{1}{\sqrt{r} \log^{b}(\langle t-r \rangle) \langle t-r \rangle^{3/2}} + \frac{1}{t \langle t-r \rangle \log^{b}(\langle t-r\rangle) \log^{b}(t)}\right), \quad t > r > \frac{t}{2}\end{cases}\end{split}\end{equation}

\begin{equation}\label{dtrrwrhs2forfinalest}\begin{split} |\partial_{trr}WRHS_{2}(t,r)| &\leq C \frac{\mathbbm{1}_{\{r \geq \frac{g(t)}{2}\}}}{\log^{b}(t)} \left(\frac{1}{\log^{b}(t)} + \frac{1}{\log(t)}\right) \frac{\lambda(t)^{2}}{t^{3} (r^{2}+\lambda(t)^{2})^{2}}\\
&+ C \mathbbm{1}_{\{r \geq \frac{g(t)}{2}\}} \begin{cases} \frac{\lambda(t)^{2}}{(r^{2}+\lambda(t)^{2})^{2}} \left(\frac{1}{t^{3} \log^{b}(t)} + |e_{0}'''(t)|\right), \quad r \leq \frac{t}{2}\\
\frac{\lambda(t)^{2}}{r^{4} \sqrt{t} \log^{b}(\langle t-r \rangle) \langle t-r \rangle^{5/2}} + \frac{\lambda(t)^{2} |e_{0}'''(t)|}{r^{4}}, \quad t > r > \frac{t}{2}\end{cases}.\end{split}\end{equation}

Now that we have the preliminary estimate on $e_{0}'''$, namely \eqref{e0pppprelim}, we can justify the analog of step 4 of the proof of Lemma \ref{v2lemma}, for $\partial_{t}w_{2}$, and carry out the same procedure, to get

\begin{equation} |\partial_{t}w_{2}(t,r)| \leq \begin{cases} \frac{C r^{2} \lambda(t)^{2} \log(2+\frac{r}{g(t)}) \log(t)}{(g(t)^{2}+r^{2})} \left(\frac{1}{t^{3} \log^{b}(t)} + \frac{\sup_{x \geq t} (|e_{0}'''(x)| x^{3/2})}{t^{3/2}}\right), \quad r \leq \frac{t}{2}\\
C\left(\frac{\lambda(t)^{2}}{t^{5/2} \sqrt{\langle t-r \rangle} \log^{b}(\langle t-r \rangle)} + \frac{\sup_{x \geq t}(x^{3/2} |e_{0}'''(x)|) \lambda(t)^{2}}{t^{3/2}}\right) \log^{2}(t), \quad t > r > \frac{t}{2}\end{cases}\end{equation}

\begin{equation}|\partial_{tr}w_{2}(t,r)| \leq \begin{cases} \frac{C r \lambda(t)^{2} \log(t)}{g(t)^{2}} \left(\frac{1}{t^{3} \log^{b}(t)} + \frac{\sup_{x \geq t}(x^{3/2} |e_{0}'''(x)|)}{t^{3/2}}\right), \quad r \leq g(t)\\
\frac{C \lambda(t)^{2}}{\log^{b}(\langle t-r \rangle) t^{5/2} \langle t-r \rangle^{3/2}} + \frac{C \lambda(t)^{2} \sup_{x \geq t}(x^{3/2} |e_{0}'''(x)|)}{t^{5/2}} \\
+ \frac{C \lambda(t)^{2} \log(t)}{g(t)} \left(\frac{1}{t^{3} \log^{b}(t)} + \frac{\sup_{x \geq t}(x^{3/2} |e_{0}'''(x)|)}{t^{3/2}}\right), \quad g(t) < r < t \end{cases}\end{equation}

\begin{equation} |\partial_{trr}w_{2}(t,r)| \leq \frac{C \lambda(t)^{2} \log(t)}{g(t)^{2}} \left(\frac{1}{t^{3} \log^{b}(t)} + \frac{\sup_{x \geq t}(x^{3/2} |e_{0}'''(x)|)}{t^{3/2}}\right) + \frac{C \lambda(t)^{2}}{t^{5/2} \log^{b}(\langle t-r \rangle) \langle t-r \rangle^{5/2}}.\end{equation}

Now that we have the above estimates, we proceed to estimate $\partial_{t}w_{j}$ in the region $r \leq t$. We start with
$$|\partial_{t}WRHS_{3}(t,r)| \leq \begin{cases}\frac{C r^{2} \lambda(t)^{2} \log(2+\frac{r}{g(t)}) \log(t)}{t^{2} \log^{b}(t) (g(t)^{2}+r^{2})} \left(\frac{1}{t^{3} \log^{b}(t)} + \frac{\sup_{x \geq t}(x^{3/2}|e_{0}'''(x)|)}{t^{3/2}}\right), \quad r \leq \frac{t}{2}\\
\frac{C \lambda(t)^{2} \log^{2}(t)}{r^{3/2} t^{1/2} \log^{b}(t)} \left(\frac{1}{t^{5/2} \sqrt{\langle t-r \rangle} \log^{b}(\langle t-r \rangle)} + \frac{\sup_{x \geq t}(x^{3/2} |e_{0}'''(x)|)}{t^{3/2}}\right), \quad t > r > \frac{t}{2}\end{cases}$$

$$|\partial_{tr}WRHS_{3}(t,r)| \leq \begin{cases} \frac{C r \lambda(t)^{2} \log(t)}{t^{2} \log^{b}(t) g(t)^{2}} \left(\frac{1}{t^{3} \log^{b}(t)} + \frac{\sup_{x \geq t}(x^{3/2} |e_{0}'''(x)|)}{t^{3/2}}\right), \quad r \leq g(t)\\
\frac{C \lambda(t)^{2} \log(t)}{t^{2} \log^{b}(t) g(t)} \left(\frac{\sup_{x \geq t} (x^{3/2} |e_{0}'''(x)|)}{t^{3/2}} + \frac{1}{t^{3} \log^{b}(t)}\right), \quad \frac{t}{2} > r > g(t)\\
\frac{C \lambda(t)^{2}}{r^{2} \log^{b}(t)} \left(\frac{\log^{2}(t)}{\log^{b}(\langle t-r \rangle) t^{5/2} \langle t-r \rangle^{3/2}} + \frac{\log(t)}{g(t)} \left(\frac{\sup_{x \geq t}(x^{3/2} |e_{0}'''(x)|)}{t^{3/2}}+\frac{1}{t^{3}\log^{b}(t)}\right)\right)\\
+\frac{C \lambda(t)^{2} \log(t)}{r^{5/2} t^{2} \log^{b}(t) g(t) \log^{b}(\langle t-r \rangle) \sqrt{\langle t-r \rangle}}, \quad t > r > \frac{t}{2}\end{cases}$$
\begin{equation}\nonumber\begin{split}|\partial_{trr}WRHS_{3}(t,r)| &\leq \frac{C \lambda(t)^{2} \log(t)}{t^{2} \log^{b}(t) g(t)^{2}} \left(\frac{1}{t^{3} \log^{b}(t)} + \frac{\sup_{x \geq t}(x^{3/2} |e_{0}
'''(x)|)}{t^{3/2}}\right) \\
&+ \frac{C \lambda(t)^{2}}{t^{2} \log^{b}(t) t^{5/2} \log^{b}(\langle t-r \rangle) \langle t-r \rangle^{5/2}}, \quad r \leq g(t)\end{split}\end{equation}

$$|\partial_{trr}WRHS_{3}(t,r)| \leq \frac{C \lambda(t)^{2} \log(t)}{t^{2} \log^{b}(t) g(t)^{2}} \left(\frac{\sup_{x \geq t} (x^{3/2} |e_{0}'''(x)|)}{t^{3/2}} + \frac{1}{t^{3} \log^{b}(t)}\right), \quad \frac{t}{2} > r > g(t)$$

\begin{equation}\nonumber \begin{split} |\partial_{trr}WRHS_{3}(t,r)| &\leq \frac{C}{r^{2} \log^{b}(t)} \frac{\lambda(t)^{2} \log(t)}{g(t)^{2}} \left(\frac{1}{t^{3} \log^{b}(t)} + \frac{\sup_{x \geq t}(x^{3/2} |e_{0}'''(x)|)}{t^{3/2}}\right)\\
&+ \frac{C \lambda(t)^{2} \log(t)}{r^{5/2} t^{2} \log^{b}(t) g(t)^{2} \log^{b}(\langle t-r \rangle)\sqrt{\langle t-r \rangle}} + \frac{C \lambda(t)^{2} \log^{2}(t)}{r^{5/2} t^{2} \log^{b}(t) g(t) \log^{b}(\langle t-r \rangle) \langle t -r \rangle^{3/2}} \\
&+ \frac{C \lambda(t)^{2} \log^{2}(t)}{r^{5/2} t^{2} \log^{b}(t) \log^{b}(\langle t-r \rangle) \langle t-r \rangle^{5/2}}, \quad t > r > \frac{t}{2}.\end{split}\end{equation}

Then, we use the same observation appearing after \eqref{dtrrwrhs2forfinalest}, except for $\partial_{t}w_{3}$, to get
\begin{equation}\label{dtw3finalest}|\partial_{t}w_{3}(t,r)| \leq \begin{cases} C r^{2} \left(\frac{\lambda(t)^{2} \log(t)}{t^{3} \log^{2b}(t) g(t)^{2}} + \frac{\lambda(t)^{2} \log(t) \sup_{x \geq t}(x^{3/2}|e_{0}'''(x)|)}{\log^{b}(t) g(t)^{2} t^{3/2}}\right), \quad r \leq g(t)\\
\frac{C \lambda(t)^{2} \log^{2}(t)}{\log^{b}(t) t^{5/2} \sqrt{\langle t-r \rangle}\log^{b}(\langle t-r \rangle)} + \frac{C \lambda(t)^{2} \log^{2}(t) \sup_{x \geq t} (x^{3/2} |e_{0}'''(x)|)}{t^{3/2} \log^{b}(t)}, \quad t > r > g(t)\end{cases}\end{equation}

\begin{equation}\nonumber\begin{split}&|\partial_{tr} w_{3}(t,r)| \\
&\leq \begin{cases} C r \left(\frac{\lambda(t)^{2} \log(t)}{t^{3} \log^{2b}(t) g(t)^{2}} + \frac{\lambda(t)^{2} \log(t) \sup_{x \geq t}(x^{3/2}|e_{0}'''(x)|)}{\log^{b}(t) g(t)^{2} t^{3/2}}\right), \quad r \leq g(t)\\ 
C\left(\frac{\lambda(t)^{2} \log(t)}{g(t)\log^{b}(t)} \frac{\sup_{x \geq t} (x^{3/2} |e_{0}'''(x)|)}{t^{3/2}} + \frac{\lambda(t)^{2} \log^{2}(t)}{\log^{b}(t) \log^{b}(\langle t-r \rangle) t^{5/2} \langle t-r\rangle^{3/2}} + \frac{\lambda(t)^{2} \log(t)}{t^{5/2} \sqrt{\langle t-r \rangle} \log^{b}(\langle t-r \rangle) \log^{b}(t) g(t)}\right), \quad t > r > g(t)\end{cases}\end{split}\end{equation}
and
\begin{equation}\nonumber\begin{split}&|\partial_{trr}w_{3}(t,r) \\
&\leq \begin{cases} C  \left(\frac{\lambda(t)^{2} \log(t)}{t^{3} \log^{2b}(t) g(t)^{2}} + \frac{\lambda(t)^{2} \log(t) \sup_{x \geq t}(x^{3/2}|e_{0}'''(x)|)}{\log^{b}(t) g(t)^{2} t^{3/2}}\right), \quad r \leq g(t)\\
\frac{C \lambda(t)^{2} \log(t)}{\log^{b}(t) g(t)^{2}} \frac{\sup_{x \geq t}(x^{3/2} |e_{0}'''(x)|)}{t^{3/2}} + \frac{\lambda(t)^{2} \log(t)}{t^{5/2} \log^{b}(t) \log^{b}(\langle t-r \rangle) \sqrt{\langle t-r \rangle}}\left(\frac{1}{g(t)^{2}} + \frac{\log(t)}{g(t)\langle t-r \rangle} + \frac{\log(t)}{\langle t-r \rangle^{2}}\right), \quad t > r > g(t)\end{cases}.\end{split}\end{equation}

Using an argument similar to that used to establish \eqref{wkassump}, etc., we get, after a lengthy computation, that there exists $C_{4} > \max\{1,C_{2}^{p}\}$, such that, for all $j \geq 4$,
\begin{equation}|\partial_{t}w_{j}(t,r)| \leq \begin{cases} \frac{C_{4}^{j} r^{2} \lambda(t)^{2} \log^{2}(t)}{g(t)^{2}} \left(\frac{1}{t^{3} \log^{b(j-1)}(t)} + \frac{\sup_{x \geq t}(x^{3/2} |e_{0}'''(x)|)}{t^{3/2} \log^{b(j-2)}(t)}\right), \quad r \leq g(t)\\
C_{4}^{j} \lambda(t)^{2} \log^{2}(t) \left(\frac{1}{t^{5/2} \sqrt{\langle t-r \rangle} \log^{b}(\langle t-r \rangle) \log^{b(j-2)}(t)} + \frac{\sup_{x \geq t}(x^{3/2} |e_{0}'''(x)|)}{t^{3/2} \log^{b(j-2)}(t)}\right), \quad t > r > g(t)\end{cases}\end{equation}
\begin{equation} |\partial_{tr}w_{j}(t,r)| \leq \begin{cases} \frac{C_{4}^{j} r \lambda(t)^{2} \log^{2}(t)}{g(t)^{2}} \left(\frac{1}{t^{3} \log^{b(j-1)}(t)} + \frac{\sup_{x \geq t}(x^{3/2} |e_{0}'''(x)|)}{t^{3/2} \log^{b(j-2)}(t)}\right), \quad r \leq g(t)\\
\frac{C_{4}^{j} \lambda(t)^{2} \log^{2}(t)}{g(t)} \left(\frac{1}{t^{5/2} \sqrt{\langle t-r \rangle} \log^{b}(\langle t-r \rangle) \log^{b(j-2)}(t)} + \frac{\sup_{x \geq t}(x^{3/2} |e_{0}'''(x)|)}{t^{3/2} \log^{b(j-2)}(t)}\right)\\
+\frac{C_{4}^{j} \lambda(t)^{2} \log^{2}(t)}{\log^{b(j-2)}(t) \log^{b}(\langle t-r \rangle) t^{5/2} \langle t-r \rangle^{3/2}}, \quad t > r > g(t)\end{cases}\end{equation}

\begin{equation} |\partial_{trr}w_{j}(t,r)| \leq \begin{cases} \frac{C_{4}^{j}  \lambda(t)^{2} \log^{2}(t)}{g(t)^{2}} \left(\frac{1}{t^{3} \log^{b(j-1)}(t)} + \frac{\sup_{x \geq t}(x^{3/2} |e_{0}'''(x)|)}{t^{3/2} \log^{b(j-2)}(t)}\right), \quad r \leq g(t)\\
\frac{C_{4}^{j} \lambda(t)^{2} \log^{2}(t)}{g(t)^{2}}\left(\frac{1}{t^{5/2} \sqrt{\langle t-r \rangle} \log^{b}(\langle t-r \rangle) \log^{b(j-2)}(t)} + \frac{\sup_{x \geq t}(x^{3/2} |e_{0}'''(x)|)}{t^{3/2} \log^{b(j-2)}(t)}\right)\\
+\frac{C_{4}^{j} \lambda(t)^{2} \log^{2}(t)}{t^{5/2} \log^{b(j-2)}(t) \log^{b}(\langle t-r \rangle) \sqrt{\langle t-r \rangle}} \left(\frac{1}{g(t) \langle t-r\rangle}+\frac{1}{\langle t-r\rangle^{2}}\right), \quad t > r > g(t)\end{cases}.\end{equation}
\section{Improved estimates on $\partial_{t}^{2}w_{j}$}\label{dttwjfinalestimateappendix}
This appendix contains the proof of improved estimates on $\partial_{t}^{2}w_{j}(t,r)$, which are obtained after obtaining the preliminary estimate on $e_{0}''''(t)$, namely \eqref{e0ppppprelimest}. We start with
$$|\partial_{t}^{2} WRHS_{2}(t,r)| \leq \frac{C \mathbbm{1}_{\{r \geq \frac{g(t)}{2}\}} \lambda(t)^{2}}{(g^{2}+r^{2})} \left(|e_{0}''''(t)| + \begin{cases} \frac{1}{t^{4} \log^{b}(t)}, \quad r \leq \frac{t}{2}\\
\frac{1}{r^{5/2} \log^{b}(\langle t-r \rangle) \langle t-r \rangle^{3/2}}, \quad t > r > \frac{t}{2}\end{cases}\right)$$
$$|\partial_{ttr} WRHS_{2}(t,r)| \leq C \begin{cases} \frac{\mathbbm{1}_{\{r \geq \frac{g(t)}{2}\}} \lambda(t)^{2}}{(r^{2}+g(t)^{2})r} \left(\frac{1}{t^{4} \log^{b}(t)} + |e_{0}''''(t)|\right), \quad r \leq \frac{t}{2}\\
\frac{\lambda(t)^{2} |e_{0}''''(t)|}{r^{3}} + \frac{\lambda(t)^{2}}{r^{9/2} \langle t-r \rangle^{5/2} \log^{b}(\langle t-r \rangle)},\quad t > r > \frac{t}{2}\end{cases}$$
$$|\partial_{ttrr}WRHS_{2}(t,r)| \leq C \begin{cases} \frac{\mathbbm{1}_{\{r \geq \frac{g(t)}{2}\}} \lambda(t)^{2}}{r^{2}(r^{2}+g(t)^{2})} \left(\frac{1}{t^{4} \log^{b}(t)} + |e_{0}''''(t)|\right), \quad r \leq \frac{t}{2}\\
\frac{\lambda(t)^{2} |e_{0}''''(t)|}{r^{4}} + \frac{\lambda(t)^{2}}{r^{9/2} \langle t-r \rangle^{7/2} \log^{b}(\langle t-r \rangle)}, \quad t > r > \frac{t}{2}\end{cases}$$
which gives
$$|\partial_{t}^{2} w_{2}(t,r)| \leq \begin{cases} \frac{C r^{2} \lambda(t)^{2} \log(2+\frac{r}{g(t)}) \log(t)}{(g(t)^{2}+r^{2})} \left(\frac{1}{t^{4} \log^{b}(t)} + \frac{\sup_{x \geq t}(x^{3/2} |e_{0}''''(x)|)}{t^{3/2}}\right), \quad r \leq \frac{t}{2}\\
C\lambda(t)^{2}\left(\frac{1}{t^{4} \log^{b}(t)} + \frac{\sup_{x \geq t}(x^{3/2}|e_{0}''''(x)|)}{t^{3/2}}\right) \log^{2}(t) + \frac{C \lambda(t)^{2}}{t^{5/2} \langle t-r \rangle^{3/2} \log^{b}(\langle t-r \rangle)}, \quad t > r > \frac{t}{2}\end{cases}$$
\begin{equation}\begin{split}&|\partial_{ttr}w_{2}(t,r)| \\
&\leq \begin{cases} \frac{C r \lambda(t)^{2} \log(t)}{g(t)^{2}} \left(\frac{1}{t^{4} \log^{b}(t)} + \frac{\sup_{x \geq t}(x^{3/2} |e_{0}''''(x)|)}{t^{3/2}}\right), \quad r \leq g(t)\\
\frac{C \lambda(t)^{2} \log(t)}{g(t)} \left(\frac{1}{t^{4} \log^{b}(t)} + |e_{0}''''(t)|\right) + C \lambda(t)^{2} \left(\frac{1}{t^{5/2} \langle t-r \rangle^{5/2} \log^{b}(\langle t-r \rangle)} + \frac{\sup_{x \geq t}(x^{3/2} |e_{0}''''(x)|)}{t^{5/2}}\right), \quad t > r > g(t)\end{cases}\end{split}\end{equation}
$$|\partial_{ttrr}w_{2}(t,r)| \leq \frac{C \lambda(t)^{2} \log(t)}{g(t)^{2}}\left(\frac{1}{t^{4} \log^{b}(t)} + \frac{\sup_{x \geq t}(x^{3/2} |e_{0}''''(x)|)}{t^{3/2}}\right) + \frac{C \lambda(t)^{2}}{t^{5/2} \log^{b}(\langle t-r \rangle) \langle t-r \rangle^{7/2}}, \text{  } r <t$$
$$|\partial_{t}^{2} WRHS_{3}(t,r)| \leq C \begin{cases} \frac{\lambda(t)^{2} r^{2} \log(2+\frac{r}{g(t)}) \log(t)}{t^{2} \log^{b}(t) (g(t)^{2}+r^{2})} \left(\frac{1}{t^{4} \log^{b}(t)} + \frac{\sup_{x \geq t} (x^{3/2} |e_{0}''''(x)|)}{t^{3/2}}\right), \quad r \leq \frac{t}{2}\\
\frac{\lambda(t)^{2} \log^{2}(t) \sup_{x \geq t}(x^{3/2} |e_{0}''''(x)|)}{t^{2} \log^{b}(t) t^{3/2}} + \frac{\lambda(t)^{2} \log^{2}(t)}{t^{9/2} \log^{b}(t) \log^{b}(\langle t-r \rangle) \langle t-r \rangle^{3/2}}, \quad t > r > \frac{t}{2}\end{cases}$$
$$|\partial_{ttr} WRHS_{3}(t,r)| \leq \begin{cases} \frac{C r \lambda(t)^{2} \log(t)}{t^{6} \log^{2b}(t) g(t)^{2}} + \frac{C \lambda(t)^{2} r \log(t) \sup_{x \geq t}(x^{3/2} |e_{0}''''(x)|)}{t^{2} \log^{b}(t) g(t)^{2} t^{3/2}}, \quad r \leq g(t)\\
\frac{C \lambda(t)^{2} \log(t)}{t^{2} \log^{b}(t) g(t)} \left(\frac{1}{t^{4} \log^{b}(t)} + \frac{\sup_{x \geq t}(x^{3/2} |e_{0}''''(x)|)}{t^{3/2}}\right), \quad g(t) < r < \frac{t}{2}\\
\frac{C \lambda(t)^{2} \log(t)}{t^{9/2} \langle t-r \rangle^{3/2} \log^{b}(t) \log^{b}(\langle t-r \rangle) g(t)} + \frac{C \lambda(t)^{2}}{t^{5} \langle t-r \rangle^{2} \log^{2b}(\langle t-r \rangle)}\\
+\frac{C \lambda(t)^{2} \log(t)}{g(t)r^{2}\log^{b}(t)} \left(\frac{1}{t^{4} \log^{b}(t)} + \frac{\sup_{x \geq t}(x^{3/2} |e_{0}''''(x)|)}{t^{3/2}} \right) \\
+ \frac{C \lambda(t)^{2}}{t^{9/2} \langle t-r \rangle^{5/2} \log^{b}(t) \log^{b}(\langle t-r \rangle)}, \quad \frac{t}{2} < r < t \end{cases}$$
$$|\partial_{ttrr}WRHS_{3}(t,r)| \leq C\begin{cases} \frac{\lambda(t)^{2} \log(t)}{t^{6} \log^{2b}(t) g(t)^{2}} + \frac{\lambda(t)^{2} \log(t) \sup_{x \geq t}(x^{3/2} |e_{0}''''(x)|)}{t^{2} \log^{b}(t) g(t)^{2} t^{3/2}}, \quad r \leq \frac{t}{2}\\
\frac{\lambda(t)^{2} \log(t)}{t^{9/2} \langle t-r \rangle^{5/2} \log^{b}(t) \log^{b}(\langle t-r \rangle) g(t)} + \frac{C \lambda(t)^{2} \log(t) \sup_{x \geq t}(x^{3/2} |e_{0}''''(x)|)}{t^{2} \log^{b}(t) g(t)^{2} t^{3/2}}\\
+\frac{C \lambda(t)^{2}}{t^{9/2} \langle t-r \rangle^{7/2} \log^{b}(t) \log^{b}(\langle t-r \rangle)} + \frac{C \lambda(t)^{2} \log(t)}{t^{9/2} g(t)^{2} \log^{b}(t) \langle t-r \rangle^{3/2} \log^{b}(\langle t-r \rangle)}\\
+ \frac{C \lambda(t)^{2} \log^{2}(t)}{t^{9/2} \langle t-r \rangle^{7/2} \log^{b}(\langle t-r \rangle) \log^{b}(t)}, \quad t > r > \frac{t}{2}\end{cases}$$
and these give
$$|\partial_{t}^{2}w_{3}(t,r)| \leq \begin{cases} C r^{2} \left(\frac{\lambda(t)^{2} \log(t)}{t^{4} g(t)^{2} \log^{2b}(t)} + \frac{\lambda(t)^{2} \log(t) \sup_{x \geq t}(x^{3/2} |e_{0}''''(x)|)}{t^{3/2} \log^{b}(t) g(t)^{2}}\right), \quad r \leq g(t)\\
\frac{C \lambda(t)^{2} \log^{2}(t) \sup_{x \geq t}(x^{3/2} |e_{0}''''(x)|)}{\log^{b}(t) t^{3/2}} + \frac{C \lambda(t)^{2} \log^{2}(t)}{t^{5/2} \langle t-r \rangle^{3/2} \log^{b}(t) \log^{b}(\langle t-r \rangle)}, \quad t > r > g(t)\end{cases}$$
$$|\partial_{ttr} w_{3}(t,r)| \leq \begin{cases} C r \left(\frac{\lambda(t)^{2} \log(t)}{t^{4} g(t)^{2} \log^{2b}(t)} + \frac{\lambda(t)^{2} \log(t) \sup_{x \geq t}(x^{3/2} |e_{0}''''(x)|)}{t^{3/2} \log^{b}(t) g(t)^{2}}\right), \quad r \leq g(t)\\
\frac{C \lambda(t)^{2} \log(t) \sup_{x \geq t}(x^{3/2} |e_{0}''''(x)|)}{t^{3/2} g(t) \log^{b}(t)} + \frac{C \lambda(t)^{2} \log(t)}{\log^{b}(t) g(t) t^{5/2} \langle t-r \rangle^{3/2} \log^{b}(\langle t-r \rangle)}\\
+ \frac{C \lambda(t)^{2}}{\log^{b}(t) \log^{b}(\langle t-r \rangle)t^{5/2} \langle t-r \rangle^{5/2}}, \quad t > r > g(t)\end{cases}$$
$$|\partial_{ttrr}w_{3}(t,r)| \leq \begin{cases}C \left(\frac{\lambda(t)^{2} \log(t)}{t^{4} g(t)^{2} \log^{2b}(t)} + \frac{\lambda(t)^{2} \log(t) \sup_{x \geq t}(x^{3/2} |e_{0}''''(x)|)}{t^{3/2} \log^{b}(t) g(t)^{2}}\right), \quad r \leq g(t)\\
\frac{C \lambda(t)^{2} \log(t) \sup_{x \geq t}(x^{3/2} |e_{0}''''(x)|)}{\log^{b}(g) g(t)^{2} t^{3/2}} + \frac{C\lambda(t)^{2} \log(t)}{t^{5/2} \langle t-r \rangle^{5/2} \log^{b}(t) \log^{b}(\langle t-r \rangle) g(t)}\\
+ \frac{C \lambda(t)^{2} \log(t)}{t^{5/2} g(t)^{2} \log^{b}(t) \langle t-r \rangle^{3/2} \log^{b}(\langle t-r \rangle)} + \frac{C \lambda(t)^{2} \log^{2}(t)}{t^{5/2} \langle t-r \rangle^{7/2} \log^{b}(\langle t-r \rangle) \log^{b}(t)}, \quad t > r > g(t)\end{cases}$$
Then, using an induction argument similar to that used to estimate $\partial_{t}w_{j}$, we get that there exists $C_{5} > C_{4}+C_{2}^{p}$ such that, for all $j \geq 4$,
$$|\partial_{t}^{2} w_{j}(t,r)| \leq \begin{cases} C_{5}^{j} r^{2} \left(\frac{\lambda(t)^{2} \log^{2}(t)}{t^{4} g(t)^{2} \log^{b(j-1)}(t)} + \frac{\lambda(t)^{2} \log^{2}(t) \sup_{x \geq t}(x^{3/2} |e_{0}''''(x)|)}{t^{3/2} \log^{b(j-2)}(t) g(t)^{2}}\right), \quad r \leq g(t)\\
\frac{C_{5}^{j} \lambda(t)^{2} \log^{2}(t)}{\log^{b(j-2)}(t)} \frac{\sup_{x \geq t}(x^{3/2} |e_{0}''''(x)|)}{t^{3/2}} + \frac{C_{5}^{j} \lambda(t)^{2} \log^{2}(t)}{t^{5/2} \langle t-r \rangle^{3/2} \log^{b(j-2)}(t) \log^{b}(\langle t-r \rangle)}, \quad t > r > g(t)\end{cases}$$
$$|\partial_{ttr}w_{j}(t,r)| \leq \begin{cases} C_{5}^{j} r \left(\frac{\lambda(t)^{2} \log^{2}(t)}{t^{4} g(t)^{2} \log^{b(j-1)}(t)} + \frac{\lambda(t)^{2} \log^{2}(t) \sup_{x \geq t}(x^{3/2} |e_{0}''''(x)|)}{t^{3/2} \log^{b(j-2)}(t) g(t)^{2}}\right), \quad r \leq g(t)\\
\frac{C_{5}^{j} \lambda(t)^{2} \log^{2}(t) \sup_{x \geq t}(x^{3/2} |e_{0}''''(x)|)}{t^{3/2} g(t) \log^{b(j-2)}(t)} + \frac{C_{5}^{j} \lambda(t)^{2} \log^{2}(t)}{\log^{b(j-2)}(t)  t^{5/2} \langle t-r \rangle^{3/2} \log^{b}(\langle t-r \rangle)} \left(\frac{1}{g(t)} + \frac{1}{\langle t-r \rangle}\right) , \quad t > r > g(t)\end{cases}$$
\begin{equation}\nonumber\begin{split}&|\partial_{ttrr}w_{j}(t,r)| \\
&\leq \begin{cases} C_{5}^{j} \left(\frac{\lambda(t)^{2} \log^{2}(t)}{t^{4} g(t)^{2} \log^{b(j-1)}(t)} + \frac{\lambda(t)^{2} \log^{2}(t) \sup_{x \geq t}(x^{3/2} |e_{0}''''(x)|)}{t^{3/2} g(t)^{2} \log^{b(j-2)}(t)}\right), \quad r \leq g(t)\\
\frac{C_{5}^{j} \lambda(t)^{2} \log^{2}(t) \sup_{x \geq t}(x^{3/2} |e_{0}''''(x)|)}{\log^{b)(j-2)}(t) g(t)^{2} t^{3/2}} + \frac{C_{5}^{j} \lambda(t)^{2} \log^{2}(t)}{t^{5/2} \langle t-r \rangle^{3/2} \log^{b(j-2)}(t) \log^{b}(\langle t-r \rangle)} \left(\frac{1}{\langle t-r \rangle g(t)} + \frac{1}{g(t)^{2}} + \frac{1}{\langle t-r \rangle^{2}}\right), \quad t > r > g(t)\end{cases}\end{split}\end{equation}

Department of Mathematics, University of California, Berkeley\\
\emph{E-mail address:} mpillai@berkeley.edu
\end{document}